
\documentclass[12pt,reqno,english]{amsart}
\usepackage[T1]{fontenc}
\usepackage[latin9]{inputenc}
\usepackage{geometry}
\geometry{verbose,tmargin=3cm,bmargin=3cm,lmargin=3cm,rmargin=3cm,headsep=1cm,footskip=1cm}
\setcounter{tocdepth}{1}
\usepackage{color}
\usepackage{mathrsfs}
\usepackage{mathtools}
\usepackage{bm}
\usepackage{amsbsy}
\usepackage{amstext}
\usepackage{amsthm}
\usepackage{amssymb}
\usepackage{graphicx}
\usepackage{setspace}
\usepackage{esint}
\setstretch{1.2}

\makeatletter
\numberwithin{equation}{section}
\numberwithin{figure}{section}
\theoremstyle{plain}
\newtheorem{thm}{\protect\theoremname}[section]
\theoremstyle{remark}
\newtheorem{notation}[thm]{\protect\notationname}
\theoremstyle{plain}
\newtheorem{prop}[thm]{\protect\propositionname}
\theoremstyle{remark}
\newtheorem{rem}[thm]{\protect\remarkname}
\theoremstyle{plain}

\theoremstyle{remark}
\newtheorem*{rem*}{\protect\remarkname}
\theoremstyle{plain}
\newtheorem{lem}[thm]{\protect\lemmaname}
\theoremstyle{remark}
\newtheorem*{acknowledgement*}{\protect\acknowledgementname}

\usepackage{babel}
\usepackage{stmaryrd}

\makeatother

\usepackage{babel}
\providecommand{\acknowledgementname}{Acknowledgement}
\providecommand{\corollaryname}{Corollary}
\providecommand{\lemmaname}{Lemma}
\providecommand{\notationname}{Notation}
\providecommand{\propositionname}{Proposition}
\providecommand{\remarkname}{Remark}
\providecommand{\theoremname}{Theorem}

\begin{document}

\title[Non-reversible Metastable Diffusions with Gibbs Invariant Measure
II]{Non-reversible Metastable Diffusions with Gibbs Invariant Measure
II: Markov Chain Convergence}
\author{Jungkyoung Lee and Insuk Seo}
\address{J. Lee. Department of Mathematical Sciences, Seoul National University,
Republic of Korea.}
\email{ljk9316@snu.ac.kr}
\address{I. Seo. Department of Mathematical Sciences and Research Institute
of Mathematics, Seoul National University, Republic of Korea.}
\email{insuk.seo@snu.ac.kr}
\begin{abstract}
This article considers a class of metastable non-reversible diffusion
processes whose invariant measure is a Gibbs measure associated with
a Morse potential. In a companion paper \cite{LeeSeo}, we proved
the Eyring--Kramers formula for the corresponding class of metastable
diffusion processes. In this article, we further develop this result
by proving that a suitably time-rescaled metastable diffusion process
converges to a Markov chain on the deepest metastable valleys. This
article is also an extension of \cite{RS}, which considered the same
problem for metastable reversible diffusion processes. Our proof is
based on the recently developed resolvent approach to metastability.
\end{abstract}

\maketitle

\section{\textcolor{black}{\label{sec1}Introduction }}

\textcolor{black}{In this article, we focus on the analysis of the
metastable behavior of a class of diffusion processes given by a stochastic
differential equation (SDE) in $\mathbb{R}^{d}$ of the form
\begin{equation}
d\bm{x}_{\epsilon}(t)\,=\,-(\nabla U+\boldsymbol{\ell})(\bm{x}_{\epsilon}(t))\,dt+\sqrt{2\epsilon}\,d\bm{w}_{t}\;,\label{e_sdex}
\end{equation}
where $\epsilon>0$ is a small constant, $U\in C^{2}(\mathbb{R}^{d})$
is a Morse function, and $\boldsymbol{\ell}\in C^{1}(\mathbb{R}^{d},\,\mathbb{R}^{d})$
is a smooth vector field satisfying two constraints, $\nabla U\cdot\boldsymbol{\ell}\equiv0$
and $\nabla\cdot\boldsymbol{\ell}\equiv0$ whose meaning will be explained
later in detail. In a companion paper \cite{LeeSeo}, we investigated
this model as a generalization of the well-known metastable reversible
overdamped Langevin dynamics given by the SDE
\begin{equation}
d\boldsymbol{y}_{\epsilon}(t)\,=\,-\nabla U(\boldsymbol{y}_{\epsilon}(t))\,dt+\sqrt{2\epsilon}\,d\bm{w}_{t}\;.\label{e_sdey}
\end{equation}
The process of the form $\boldsymbol{x}_{\epsilon}(\cdot)$ has been
investigated in many studies and applied to stochastic optimization
and Markov chain Monte Carlo, see e.g., \cite{DLP,HHS,HHS2,LeeSeo,LM,LNP,ReS1,ReS2}
and the references therein, mainly}\textbf{\textcolor{black}{{} }}\textcolor{black}{because
this model is a natural generalization of the Langevin dynamics $\boldsymbol{y}_{\epsilon}(\cdot)$
and has a Gibbs invariant measure. Moreover, it is widely believed
that the process $\boldsymbol{x}_{\epsilon}(\cdot)$ has a better
mixing property than the process $\boldsymbol{y}_{\epsilon}(\cdot)$.
In fact, this belief has been quantitatively verified in \cite{LeeSeo}
and \cite{LM} in view of the so-called Eyring--Kramers formula and
low-lying spectra, respectively. }

\subsubsection*{\textcolor{black}{Main contribution of the article}}

\textcolor{black}{The metastable behaviors of the }\textit{\textcolor{black}{reversible}}\textcolor{black}{{}
process $\boldsymbol{y}_{\epsilon}(\cdot)$, exhibited when $U$ has
multiple local minima, have attracted considerable attention in recent
decades, and their accurate quantitative analysis has been thoroughly
investigated in many studies. For instance, \cite{BEGK1,new_HKN}
established the Eyring--Kramers formula, \cite{BEGK2} provided the
sharp asymptotics of low-lying spectra, \cite{RS,new_Su1} described
the metastable behavior as a limiting Markov chain under a suitable
exponential time-rescaling, and \cite{new_GLLNreview,new_GLLNresearch,new_GLLNexitpoint,new_GLLNexitpoint2,new_LLN book,new_LNdouble well,new_Nectoux_criticalbdry}
developed the quasi-stationary distribution approach for this process.
The last approach is based on the theories from semi-classical analysis
developed in \cite{new_HKN,new_Michel}. We note that these approaches
are the most typical methods for quantitatively investigating the
metastable behavior of a metastable process. }

\textcolor{black}{One of the main features of the process $\boldsymbol{y}_{\epsilon}(\cdot)$
is the fact that it is reversible with respect to its Gibbs invariant
measure of the form
\begin{equation}
\mu_{\epsilon}(d\boldsymbol{x})\,=\,Z_{\epsilon}^{-1}\,e^{-U(\bm{x})/\epsilon}\,d\boldsymbol{x}\;,\label{e_inv}
\end{equation}
where $Z_{\epsilon}$ is the partition function given by
\begin{equation}
Z_{\epsilon}\,=\,\int_{\mathbb{R}^{d}}\,e^{-U(\bm{x})/\epsilon}\,d\bm{x}\;.\label{e_partion}
\end{equation}
Owing to this reversibility, many tools are available to investigate
the process $\boldsymbol{y}_{\epsilon}(\cdot)$. However, }\textit{\textcolor{black}{nearly
none of these tools is applicable to non-reversible processes such
as $\boldsymbol{x}_{\epsilon}(\cdot)$}}\textcolor{black}{. Hence,
the quantitative analysis of the metastability of non-reversible processes
has long been an open issue. To this end, many innovative studies
such as \cite{GL,Lan3,LMS,LS1,Seo} have been conducted in recent
years, and several non-reversible metastable processes have been analyzed.
In particular, the non-reversible process $\boldsymbol{x}_{\epsilon}(\cdot)$
has been analyzed in two recent studies. The Eyring--Kramers formula
has been proven in \cite{LeeSeo}, and low-lying spectra have been
analyzed in \cite{LM}. In the present article, we present the Markov
chain description of the metastable behavior of the process $\boldsymbol{x}_{\epsilon}(\cdot)$,
which is a highly precise description of such metastable behavior. }

\subsubsection*{\textcolor{black}{Markov chain description of metastable behavior}}

\textcolor{black}{Now, we explain why the Markov chain description
is a natural and effective description of the metastable behavior
of the process. To explain this in a more intuitive manner, we first
consider the reversible dynamics $\boldsymbol{y}_{\epsilon}(\cdot)$
studied in \cite{RS}. Consider this process as a small random perturbation
of the dynamical system given by an ordinary differential equation
(ODE) of the form
\begin{equation}
d\boldsymbol{y}(t)\,=\,-\nabla U(\boldsymbol{y}(t))\,dt\;.\label{e_odey}
\end{equation}
Note that a local minimum of the potential function $U$ is a stable
equilibrium of the dynamics $\boldsymbol{y}(\cdot)$. Now, suppose
that $U$ has multiple global minima as shown in Figure \ref{fig1}}\footnote{\textcolor{black}{This figure has been excerpted from \cite[Figure 1.2]{RS}.}}\textcolor{black}{{}
and that the process $\boldsymbol{y}_{\epsilon}(\cdot)$ starts from
a small neighborhood of a minimum, which is called a (metastable)
}\textit{\textcolor{black}{valley}}\textcolor{black}{. In the first
stage, the process $\boldsymbol{y}_{\epsilon}(\cdot)$ stays in this
valley for a long time because of the strong force $-\nabla U(\boldsymbol{y}_{\epsilon}(t))dt$
toward the minimum. However, the small noise term $\sqrt{2\epsilon}\,d\boldsymbol{w}_{t}$
will finally push the process toward another valley after an exponentially
long time, and this can be understood via the large-deviation principle
(cf. \cite{FW}). This movement from one valley to another is called
a }\textit{\textcolor{black}{metastable transition}}\textcolor{black}{,
and the formula providing the precise asymptotics, as $\epsilon\rightarrow0$,
of the mean time taken to observe this transition is called the Eyring--Kramers
formula. Here, we emphasize that the Eyring--Kramers formulas for
the processes $\boldsymbol{y}_{\epsilon}(\cdot)$ and $\boldsymbol{x}_{\epsilon}(\cdot)$
have been established in \cite{BEGK1} and \cite{LeeSeo}, respectively. }

\textcolor{black}{}
\begin{figure}
\textcolor{black}{\includegraphics[scale=0.2]{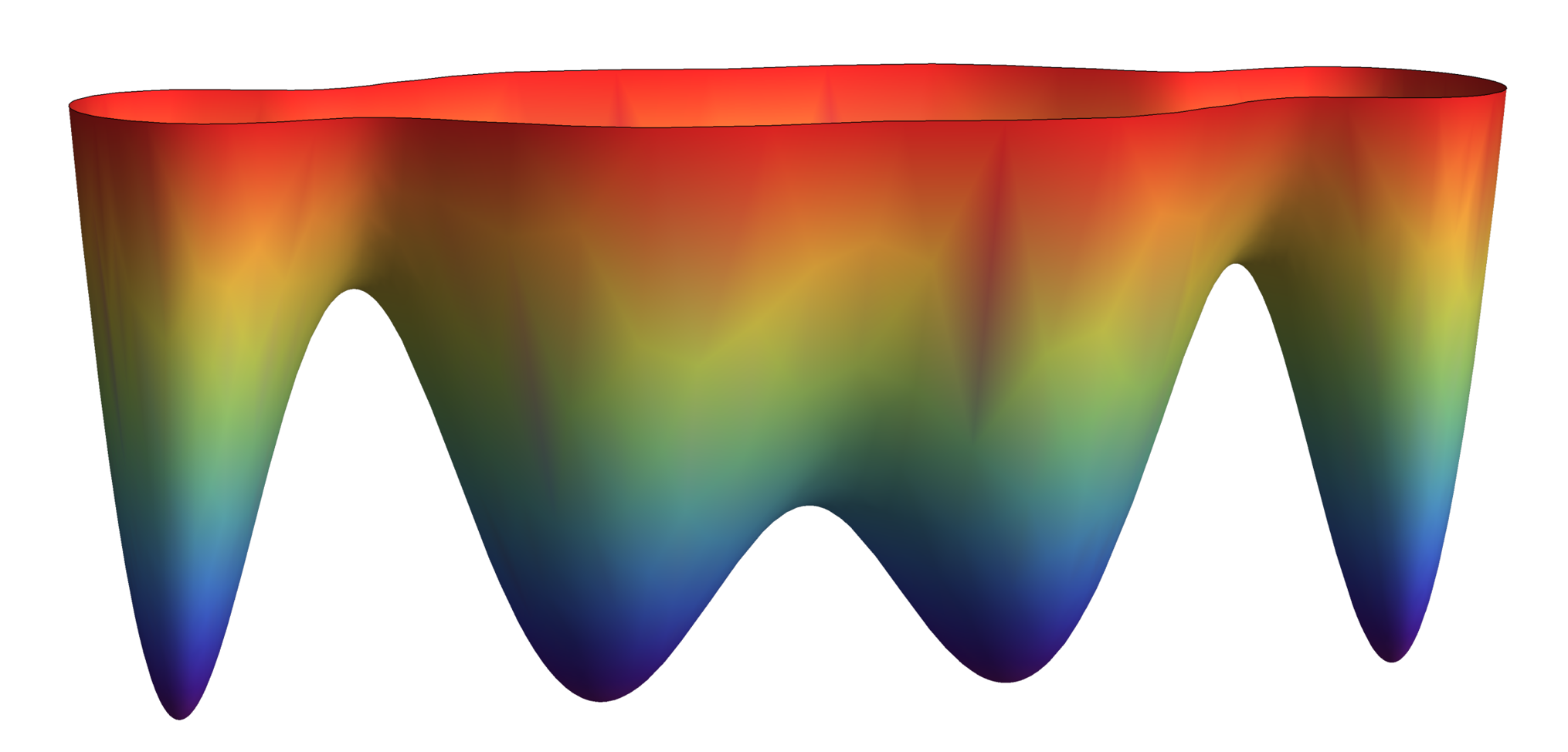}}

\textcolor{black}{\caption{\label{fig1}Example of potential $U$ with multiple global minima.}
}
\end{figure}
\textcolor{black}{{} }

\textcolor{black}{Our focus in this paper is not on such a single
transition but on the full description of successive transitions via
a suitable scaling limit. More precisely, from Figure}\textbf{\textcolor{black}{{}
}}\textcolor{black}{\ref{fig1}, we can expect that once the process
$\boldsymbol{y}_{\epsilon}(\cdot)$ makes a transition from one valley
to another, then the next transition to another valley will take place
after another exponentially long time. Hence, to comprehensively describe
the metastable behavior, it is natural to prove that these successive
metastable transitions converge in some sense to a continuous-time
Markov process whose state space consists of the valleys of $U$.
This proof of course requires highly accurate knowledge regarding
the transition time in the level of the Eyring--Kramers formula,
thereby providing a more detailed description of the metastable behavior.
This proof has been presented for the reversible process $\boldsymbol{y}_{\epsilon}(\cdot)$
in \cite{RS} based on the partial differential equation (PDE) approach}.
In this article, we extend the PDE approach to the non-reversible
setting in a robust manner and apply this method to the process $\boldsymbol{x}_{\epsilon}(\cdot)$.
We note that the method of \cite{RS} relies on analysis of solutions
of certain form of Poisson equations. It has been observed in \cite{LMS2,LMS3}
that considering resolvent equation, instead of Poisson equation,
simplifies several argument and provide more robust methodology in
the analysis of metastability. Hence, we will rely on this resolvent
approach to analyze the metastability of the process $\boldsymbol{x}_{\epsilon}(\cdot)$
in the current article.

Now, we review existing studies on the Markov chain description of
metastable behavior. In \cite{BL1,BL2}, a ro\textcolor{black}{bust
methodology based on potential theory has been introduced for the
case in which the underlying dynamic is a Markov process on a }\textit{\textcolor{black}{discrete}}\textcolor{black}{{}
set. This method has been applied to many models such as the zero-range
process \cite{BL3,Lan3,Seo}, the inclusion process \cite{BDG,Kim,KS},
the discrete version of the overdamped Langevin dynamics \cite{LMT,LS1},
and the ferromagnetic systems \cite{LL,LS2}. We refer to the references
in the above-mentioned articles for numerous other applications. On
the other hand, for metastable }\textit{\textcolor{black}{diffusion}}\textcolor{black}{{}
processes, a different methodology known as the PDE approach based
on the analysis of a certain Poisson equation has been introduced
in \cite{LS3,RS}. In \cite{LS3}, a general non-reversible metastable
diffusion process (cf. the process $\boldsymbol{z}_{\epsilon}(\cdot)$
below) on a}\textcolor{black}{\emph{ one-dimensional torus}}\textcolor{black}{{}
has been analyzed based on the explicit form of the solution to the
Poisson equation. In \cite{RS}, a general methodology to deal with
the solution to the corresponding Poisson equation when the underlying
dynamics is reversible has been developed and successfully applied
to the process $\boldsymbol{y}_{\epsilon}(\cdot)$}. It is observed
in \cite{LMS3} that replacing the corresponding Poisson equation
with \textbf{\emph{resolvent equation}} provides even more robust
and convenient methodology which in some sense provides a necessary
and sufficient condition for the Markov chain description of metastable
behavior. This method has been applied to a critical reversible zero-range
process \cite{LMS2} to which the method of \cite{BL1,BL2} is not
applicable because the metastable valley is too large. Our purpose
is to provide the first application of this method to a non-reversible
model.

\subsubsection*{Remarks on the process $\boldsymbol{x}_{\epsilon}(\cdot)$}

We conclude the introduction by explaining the importance of the process
$\boldsymbol{x}_{\epsilon}(\cdot)$ in the study of metastability.
To this end, let us consider a diffusion process given by an SDE in
$\mathbb{R}^{d}$ of the form
\begin{equation}
d\boldsymbol{z}_{\epsilon}(t)\,=\,-\boldsymbol{b}(\boldsymbol{z}_{\epsilon}(t))\,dt+\sqrt{2\epsilon}\,d\bm{w}_{t}\label{e_sdez}
\end{equation}
for some vector field $\boldsymbol{b}:\mathbb{R}^{d}\rightarrow\mathbb{R}^{d}$.
Suppose that the dynamical system in $\mathbb{R}^{d}$ given by the
ODE
\begin{equation}
d\boldsymbol{z}(t)\,=\,-\boldsymbol{b}(\boldsymbol{z}(t))\,dt\label{e_odez}
\end{equation}
has several stable equilibria so that the process $\boldsymbol{z}_{\epsilon}(\cdot)$
exhibits metastability. For this model, Freidlin and Wentzell \cite{FW}
established the large-deviation-type analysis of the metastable behavior.
However, rigorous accurate quantitative analysis such as that based
on the Eyring--Kramers formula or the Markov chain description is
unknown for this general model and remains as a primary open question
in this field. We refer to \cite{BR} for the Eyring--Kramers formula
for $\boldsymbol{z}_{\epsilon}(\cdot)$ under a special set of assumptions.

The difficulty in the rigorous analysis of the process $\boldsymbol{z}_{\epsilon}(\cdot)$
is due to two factors: the non-reversibility and the lack of an explicit
formula for the invariant measure. In Theorem \ref{t22} below, we
prove that the process $\boldsymbol{z}_{\epsilon}(\cdot)$ defined
in \eqref{e_sdez} has a Gibbs invariant measure \eqref{e_inv} if
and only if $\boldsymbol{b}=\nabla U+\boldsymbol{\ell}$ for some
$\boldsymbol{\ell}$ such that $\nabla U\cdot\boldsymbol{\ell}\equiv0$
and $\nabla\cdot\boldsymbol{\ell}\equiv0$; hence, this is the model
considered in this article. Thus, we completely overcome the difficulty
arising from the non-reversibility in the study of the process $\boldsymbol{z}_{\epsilon}(\cdot)$
in this article as wel\textcolor{black}{l as in \cite[Theorem 3.5]{LeeSeo}.
The} problem arising from the lack of an understanding of the invariant
measure of $\boldsymbol{z}_{\epsilon}(\cdot)$ is not addressed in
our studies, as the model considered has an explicit Gibbs invariant
measure; this problem should be investigated in future research.

\textcolor{black}{We finally remark that an important model which
is not discussed in this introduction is the underdamped Langevin
dynamics. This dynamics is non-reversible and has the Gibbs measure
as the invariant measure so that it seems at first glance that this
model falls into our framework. However, the main challenge in this
model is the fact that the diffusion coefficient is degenerate. Accordingly,
rigorous quantitative study for this model is barely known (cf. \cite{new_LRR})
and is an important future research problem. }

\section{\label{sec2}Model}

In this section, we review several basic features of the diffusion
process $\boldsymbol{x}_{\epsilon}(\cdot)$.

\subsubsection*{Potential function $U$ and vector field $\boldsymbol{\ell}$}

Initially, we present several assumptions on the potential function
$U$ and vector field $\boldsymbol{\ell}$ appearing in SDE \eqref{e_sdex}.
First, we assume that the potential function $U\in C^{2}(\mathbb{R}^{d})$
satisfies the following well-known growth conditions:
\begin{equation}
\begin{aligned} & \lim_{n\to\infty}\,\inf_{|\bm{x}|\geq n}\frac{U(\bm{x})}{|\bm{x}|}\,=\,\infty\;,\;\;\;\lim_{|\bm{x}|\to\infty}\frac{\bm{x}}{|\bm{x}|}\cdot\nabla U(\bm{x})\,=\,\infty\;,\text{ and }\\
 & \qquad\lim_{|\bm{x}|\to\infty}\{\,|\nabla U(\bm{x})|-2\Delta U(\bm{x})\,\}=\,\infty\;.
\end{aligned}
\label{e_conU}
\end{equation}
 A consequence (cf. \cite{BEGK1}) of these conditions is a bound
of the form
\begin{equation}
\int_{\{\bm{x}:U(\bm{x})\geq a\}}\,e^{-U(\bm{x})/\epsilon}\,d\bm{x}\,\leq\,C_{a}\,e^{-a/\epsilon}\text{\;\;\;for all }a\in\mathbb{R}\;,\label{e_tight}
\end{equation}
where $C_{a}$ is a constant depending only on $a$. We can also deduce
from the last bound along with the first condition of \eqref{e_conU}
that $Z_{\epsilon}<\infty$ where $Z_{\epsilon}$ is the partition
function defined in \eqref{e_partion}. Finally, we also assume that
$U$ is a Morse function, i.e., all the critical points of $U$ are
non-degenerate.

Now, we assume that $\boldsymbol{\ell}\in C^{1}(\mathbb{R}^{d},\,\mathbb{R}^{d})$
is a vector field that is orthogonal to the gradient field $\nabla U$
in the sense that
\begin{equation}
\nabla U(\boldsymbol{x})\cdot\boldsymbol{\ell}(\boldsymbol{x})\,=\,0\;\text{\;\;for all }\boldsymbol{x}\in\mathbb{R}^{d}\;.\label{e_conell1}
\end{equation}
With this assumption along with \eqref{e_conU}, we can prove that
the process $\boldsymbol{x}_{\epsilon}(\cdot)$ defined by SDE \eqref{e_sdex}
is non-explosive and positive recurrent (cf. \cite[Theorem 2.2]{LeeSeo}).
We also remark that \eqref{e_conell1} is equivalent to saying that
$U$ is the quasi-potential of the process $\boldsymbol{x}_{\epsilon}(\cdot)$
(see \cite[Theorem 3.3.1]{FW}) and hence is a natural assumption.

\subsubsection*{Dynamical system $\boldsymbol{x}(\cdot)$}

The process $\boldsymbol{x}_{\epsilon}(\cdot)$ exhibits metastability
when the following zero-noise (i.e., $\epsilon=0$) dynamics $\bm{x}(\cdot)$
has multiple stable equilibria:
\begin{equation}
d\bm{x}(t)\,=\,-(\nabla U+\boldsymbol{\ell})(\bm{x}(t))\,dt\;.\label{e_odex}
\end{equation}
It is also important to know the saddle points of this dynamical system
since they are crucial in the investigation of metastable transitions.
The next theorem analyzes the equilibria of the dynamical system \eqref{e_odex}
in terms of the critical points of $U$.
\begin{thm}
\label{t21}Under the assumptions on $U$ and $\boldsymbol{\ell}$
given above, a point $\boldsymbol{x}\in\mathbb{R}^{d}$ is an equilibrium
of the dynamical system \eqref{e_odex} if and only if $\boldsymbol{x}$
is a critical point of $U$. Moreover, an equilibrium $\boldsymbol{m}\in\mathbb{R}^{d}$
of \eqref{e_odex} is stable if and only if $\boldsymbol{m}$ is a
local minimum of $U$.
\end{thm}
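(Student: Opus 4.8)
The plan is to treat the two equivalences separately, both via local analysis near the critical points of $U$. For the characterization of equilibria, one direction is immediate: if $(\nabla U+\boldsymbol{\ell})(\boldsymbol{x})=\boldsymbol{0}$, then $\boldsymbol{\ell}(\boldsymbol{x})=-\nabla U(\boldsymbol{x})$, and pairing this identity with $\nabla U(\boldsymbol{x})$ and using \eqref{e_conell1} yields $|\nabla U(\boldsymbol{x})|^{2}=0$, so $\boldsymbol{x}$ is a critical point of $U$. For the converse, I would first show that $\boldsymbol{\ell}$ vanishes at every critical point $\boldsymbol{m}$ of $U$: translating $\boldsymbol{m}$ to the origin and writing $H=\nabla^{2}U(\boldsymbol{m})$, so that $\nabla U(\boldsymbol{x})=H\boldsymbol{x}+o(|\boldsymbol{x}|)$, the identity $\nabla U\cdot\boldsymbol{\ell}\equiv0$ evaluated along rays $\boldsymbol{x}=t\boldsymbol{v}\to\boldsymbol{0}$ forces $(H\boldsymbol{v})\cdot\boldsymbol{\ell}(\boldsymbol{m})=0$ for every $\boldsymbol{v}$; since $U$ is Morse, $H$ is invertible, and therefore $\boldsymbol{\ell}(\boldsymbol{m})=\boldsymbol{0}$, making $\boldsymbol{m}$ an equilibrium. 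Hence the equilibrium set of \eqref{e_odex} is exactly the set of critical points of $U$.

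For the stability statement, fix an equilibrium $\boldsymbol{m}$, set $H=\nabla^{2}U(\boldsymbol{m})$, $L=D\boldsymbol{\ell}(\boldsymbol{m})$ and $A=H+L$, so that the linearization of \eqref{e_odex} at $\boldsymbol{m}$ is $\dot{\boldsymbol{y}}=-A\boldsymbol{y}$. Taylor-expanding the constraint $\nabla U(\boldsymbol{x})\cdot\boldsymbol{\ell}(\boldsymbol{x})\equiv0$ to second order around $\boldsymbol{m}$ (using $\boldsymbol{\ell}(\boldsymbol{x})=L\boldsymbol{x}+o(|\boldsymbol{x}|)$, which is valid since $\boldsymbol{\ell}(\boldsymbol{m})=\boldsymbol{0}$) gives $\boldsymbol{v}^{\mathsf{T}}HL\boldsymbol{v}=0$ for all $\boldsymbol{v}$, i.e. $HL$ is skew-symmetric, equivalently $A^{\mathsf{T}}H+HA=2H^{2}$. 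This algebraic identity is the structural input behind the whole argument, and extracting it cleanly is the only mildly delicate point.

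To conclude, I would run a homotopy argument on the family $A_{s}:=H+sL$, $s\in[0,1]$, each of which inherits $A_{s}^{\mathsf{T}}H+HA_{s}=2H^{2}$. For any eigenpair $A_{s}\boldsymbol{v}=\lambda\boldsymbol{v}$ over $\mathbb{C}$, the Hermitian pairing of $HA_{s}\boldsymbol{v}=\lambda H\boldsymbol{v}$ with $\boldsymbol{v}$, together with the fact that $\boldsymbol{v}^{*}(HL)\boldsymbol{v}$ is purely imaginary, yields $(\operatorname{Re}\lambda)\,\boldsymbol{v}^{*}H\boldsymbol{v}=|H\boldsymbol{v}|^{2}>0$; here invertibility of $H$, hence the Morse hypothesis, is used once more. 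In particular no $A_{s}$ has an eigenvalue on the imaginary axis, so the number of eigenvalues of $A_{s}$ in the open right half-plane (counted with multiplicity) does not depend on $s$. At $s=0$ this number equals the number of positive eigenvalues of $H$, and at $s=1$ it equals the number of eigenvalues of $A$ with positive real part. Therefore $-A$ is a Hurwitz matrix precisely when $H$ is positive definite, and otherwise $-A$ has an eigenvalue with positive real part. Since $U$ is Morse, $\boldsymbol{m}$ is a local minimum exactly when $H$ is positive definite, so by the principle of linearized stability and instability — available because $A$ has no eigenvalue on the imaginary axis — the equilibrium $\boldsymbol{m}$ is stable if and only if it is a local minimum of $U$. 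The main obstacle is organizing this second half; once $HL+L^{\mathsf{T}}H=0$ is in hand, the rest is an elementary eigenvalue count.
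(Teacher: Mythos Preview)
Your argument is correct. The paper itself does not prove this statement but defers to \cite[Theorem 2.1]{LeeSeo}, so there is no in-paper proof to compare against directly; however, the structural ingredients you isolate---that $\boldsymbol{\ell}(\boldsymbol{m})=\boldsymbol{0}$ at every critical point of $U$, and that $HL$ is skew-symmetric---are exactly the facts established in that companion paper (the latter is \cite[Lemma 4.5]{LeeSeo}, invoked later in the present article). Your homotopy $A_{s}=H+sL$ combined with the identity $(\operatorname{Re}\lambda)\,\boldsymbol{v}^{*}H\boldsymbol{v}=|H\boldsymbol{v}|^{2}$ cleanly shows that the inertia of $H$ matches the sign pattern of $\operatorname{Re}\operatorname{spec}(H+L)$, which treats the stable and unstable cases simultaneously and recovers, in particular, the saddle-point eigenvalue count quoted from \cite[Lemma 3.3]{LeeSeo}.

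One remark: for the direction ``local minimum $\Rightarrow$ stable'' there is a shorter route that bypasses the linearization entirely, namely using $U$ itself as a strict Lyapunov function: along trajectories of \eqref{e_odex} one has $\frac{d}{dt}U(\boldsymbol{x}(t))=-\nabla U\cdot(\nabla U+\boldsymbol{\ell})=-|\nabla U|^{2}\le 0$ by \eqref{e_conell1}, with equality only at critical points, so asymptotic stability is immediate when $H$ is positive definite. Your spectral argument is still needed for the converse (instability when $\boldsymbol{m}$ is not a minimum), and the homotopy packages that direction neatly.
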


\begin{proof}
See \cite[Theorem 2.1]{LeeSeo}.
\end{proof}
As a consequence of the previous theorem, we can observe that the
process $\boldsymbol{x}_{\epsilon}(\cdot)$ exhibits metastability
when $U$ has multiple local minima.

\subsubsection*{Divergence-free condition and Gibbs invariant measure}

Finally, we impose the incompressibility condition for vector field
$\boldsymbol{\ell}$:
\begin{equation}
(\nabla\cdot\boldsymbol{\ell})(\boldsymbol{x})\,=\,0\;\;\;\text{for all }\boldsymbol{x}\in\mathbb{R}^{d}\;.\label{e_conell2}
\end{equation}
This assumption is introduced to guarantee that the process $\boldsymbol{x}_{\epsilon}(\cdot)$
has a Gibbs invariant measure in the following sense. Recall that
the Gibbs measure $\mu_{\epsilon}(\cdot)$ on $\mathbb{R}^{d}$ is
defined by \eqref{e_inv}.
\begin{thm}
\label{t22}If $\boldsymbol{\ell}$ satisfies conditions \eqref{e_conell1}
and \eqref{e_conell2}, then the Gibbs measure $\mu_{\epsilon}(\cdot)$
is the unique invariant measure for the diffusion process $\boldsymbol{x}_{\epsilon}(\cdot)$.
Conversely, if the Gibbs measure $\mu_{\epsilon}(\cdot)$ is the invariant
measure for the diffusion process $\boldsymbol{z}_{\epsilon}(\cdot)$
defined in \eqref{e_sdez} with $\boldsymbol{b}\in C^{1}(\mathbb{R}^{d},\,\mathbb{R}^{d})$
for all $\epsilon>0$, then, the vector field can be written as $\boldsymbol{b}=\nabla U+\boldsymbol{\ell}$,
where the function $U$ and vector field $\boldsymbol{\ell}$ satisfy
\eqref{e_conell1} and \eqref{e_conell2}.
\end{thm}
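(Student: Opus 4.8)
The plan is to translate the notion of invariance into the stationary Fokker--Planck equation and then reduce everything to a short explicit computation. Recall that the generator of the diffusion $\boldsymbol{z}_{\epsilon}(\cdot)$ with drift $-\boldsymbol{b}$ is $\mathcal{L}_{\epsilon}f=\epsilon\Delta f-\boldsymbol{b}\cdot\nabla f$, and that for a non-explosive diffusion a probability measure $\nu$ is invariant if and only if $\int_{\mathbb{R}^{d}}\mathcal{L}_{\epsilon}f\,d\nu=0$ for all $f\in C_{c}^{\infty}(\mathbb{R}^{d})$. Taking $\nu=\mu_{\epsilon}$, writing $\rho_{\epsilon}=Z_{\epsilon}^{-1}e^{-U/\epsilon}$, and integrating by parts twice (legitimate since $\rho_{\epsilon}\in C^{2}$, $\boldsymbol{b}\in C^{1}$, and $f$ has compact support), this condition becomes $\int_{\mathbb{R}^{d}}f\,[\epsilon\Delta\rho_{\epsilon}+\nabla\cdot(\boldsymbol{b}\rho_{\epsilon})]\,d\boldsymbol{x}=0$ for all such $f$; since the bracketed function is continuous, $\mu_{\epsilon}$ is invariant if and only if $\epsilon\Delta\rho_{\epsilon}+\nabla\cdot(\boldsymbol{b}\rho_{\epsilon})\equiv0$ pointwise.

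Next, using the identity $\nabla\rho_{\epsilon}=-\epsilon^{-1}\rho_{\epsilon}\nabla U$ and expanding both terms, one computes
\begin{equation*}
\epsilon\Delta\rho_{\epsilon}+\nabla\cdot(\boldsymbol{b}\rho_{\epsilon})\,=\,\rho_{\epsilon}\Big(-\frac{1}{\epsilon}\,\nabla U\cdot\boldsymbol{\ell}+\nabla\cdot\boldsymbol{\ell}\Big),\qquad\text{where }\boldsymbol{\ell}:=\boldsymbol{b}-\nabla U.
\end{equation*}
Since $\rho_{\epsilon}>0$, we conclude that $\mu_{\epsilon}$ is invariant for $\boldsymbol{z}_{\epsilon}(\cdot)$ if and only if $-\epsilon^{-1}\nabla U\cdot\boldsymbol{\ell}+\nabla\cdot\boldsymbol{\ell}\equiv0$.

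For the first assertion, suppose $\boldsymbol{\ell}$ satisfies \eqref{e_conell1} and \eqref{e_conell2}; then the last identity holds trivially with $\boldsymbol{b}=\nabla U+\boldsymbol{\ell}$, so $\mu_{\epsilon}$ is invariant for $\boldsymbol{x}_{\epsilon}(\cdot)$. To obtain uniqueness, note that the diffusion matrix is the constant $\epsilon I$, hence the generator is uniformly elliptic; consequently $\boldsymbol{x}_{\epsilon}(\cdot)$ is irreducible and strong Feller, and it is non-explosive by \cite[Theorem 2.2]{LeeSeo}. A non-explosive, irreducible, strong Feller Markov process admits at most one invariant probability measure, so $\mu_{\epsilon}$ is the unique one. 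For the converse, the hypothesis gives the identity $-\epsilon^{-1}\nabla U\cdot\boldsymbol{\ell}+\nabla\cdot\boldsymbol{\ell}\equiv0$ for \emph{every} $\epsilon>0$ with one and the same $\boldsymbol{\ell}=\boldsymbol{b}-\nabla U$, whose coefficients $\nabla U\cdot\boldsymbol{\ell}$ and $\nabla\cdot\boldsymbol{\ell}$ are independent of $\epsilon$; an affine function of $1/\epsilon$ that vanishes for all $\epsilon>0$ must have both coefficients identically zero, which is precisely \eqref{e_conell1} and \eqref{e_conell2}.

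The computations above are routine; the only genuinely non-mechanical points are the two equivalences used at the start, namely that (i) the test-function characterization of invariance is available because the process is non-explosive, and (ii) it is equivalent to the pointwise stationary Fokker--Planck equation after integration by parts (needing only $\rho_{\epsilon}\in C^{2}$ and $\boldsymbol{b}\in C^{1}$), together with the uniqueness input from uniform ellipticity and non-explosiveness. I expect step (i)--(ii) and the uniqueness claim to be the part requiring the most care; once they are in place the theorem follows from the displayed identity.
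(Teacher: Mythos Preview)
Your argument is correct: the Fokker--Planck computation reducing invariance of $\mu_{\epsilon}$ to the identity $-\epsilon^{-1}\nabla U\cdot\boldsymbol{\ell}+\nabla\cdot\boldsymbol{\ell}\equiv0$, together with the ``polynomial in $1/\epsilon$'' trick for the converse and the ellipticity/non-explosion input for uniqueness, is exactly the standard route. The paper itself gives no proof here---it simply cites \cite[Theorem 2.3]{LeeSeo}---so your write-up already supplies more detail than the paper does, and the method you use is the same one carried out in that reference.
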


\begin{proof}
See \cite[Theorem 2.3]{LeeSeo}
\end{proof}
One may ask whether it is easy to find a vector field $\boldsymbol{\ell}$
satisfying \eqref{e_conell1} and \eqref{e_conell2}. For a given
potential $U$, there is a well-known, simple procedure that provides
a sufficiently wide variety of selections for $\boldsymbol{\ell}$.
To explain this, let $J:\mathbb{R}\rightarrow\mathcal{M}_{d\times d}(\mathbb{R})$
be a smooth map from $\mathbb{R}$ to the space of $d\times d$ matrices
such that $J(a)$ is skew-symmetric for all $a\in\mathbb{R}$. Then,
the vector field of the form $\boldsymbol{\ell}(\boldsymbol{x})=J(U(\boldsymbol{x}))\nabla U(\boldsymbol{x})$
satisfies \eqref{e_conell1} and \eqref{e_conell2}, as observed in
\cite[Section 1]{LL}.

\subsubsection*{Generator and its adjoint}

We conclude this section by introducing the generator of the process
$\boldsymbol{x}_{\epsilon}(\cdot)$ and its adjoint. The generator
$\mathscr{L}_{\epsilon}$ associated with the process $\boldsymbol{x}_{\epsilon}(\cdot)$
acts on $f\in C^{2}(\mathbb{R}^{d})$ such that
\[
(\mathscr{L}_{\epsilon}f)(\boldsymbol{x})\,=\,-(\nabla U(\bm{x})+\boldsymbol{\ell}(\bm{x}))\cdot\nabla f(\bm{x})+\epsilon\,\Delta f(\bm{x})\;.
\]
By \eqref{e_conell1} and \eqref{e_conell2}, we can rewrite this
generator in divergence form as
\begin{equation}
(\mathscr{L}_{\epsilon}f)(\boldsymbol{x})\,=\,\epsilon\,e^{U(\bm{x})/\epsilon\,}\nabla\cdot\,\Big[\,e^{-U(\bm{x})/\epsilon}\,\Big(\,\nabla f(\bm{x})-\frac{1}{\epsilon}\,f(\bm{x})\,\boldsymbol{\ell}(\bm{x})\,\Big)\,\Big]\;.\label{e_gen}
\end{equation}
The adjoint operator $\mathscr{L}_{\epsilon}^{*}$ of $\mathscr{L}_{\epsilon}$
with respect to the measure $d\mu_{\epsilon}$ can be written as
\begin{align}
(\mathscr{L}_{\epsilon}^{*}f)(\bm{x})\, & =\,\epsilon\,e^{U(\bm{x})/\epsilon}\,\nabla\cdot\,\Big[\,e^{-U(\bm{x})/\epsilon}\,\Big(\,\nabla f(\bm{x})+\frac{1}{\epsilon}\,f(\bm{x})\,\boldsymbol{\ell}(\bm{x})\,\Big)\,\Big]\;.\label{e_genadj}
\end{align}

\section{\label{sec3}Main Result}

In this section, we explain our main result regarding the Markov chain
description of the metastable behavior of the process $\boldsymbol{x}_{\epsilon}(\cdot)$
when $U$ has several local minima.

\subsection{\label{sec31}Landscape of $U$ and invariant measure}

We first analyze the landscape of $U$. We refer to Figure \ref{fig31}
for an illustration of the notations introduced in this subsection.

\begin{figure}
\includegraphics[scale=0.21]{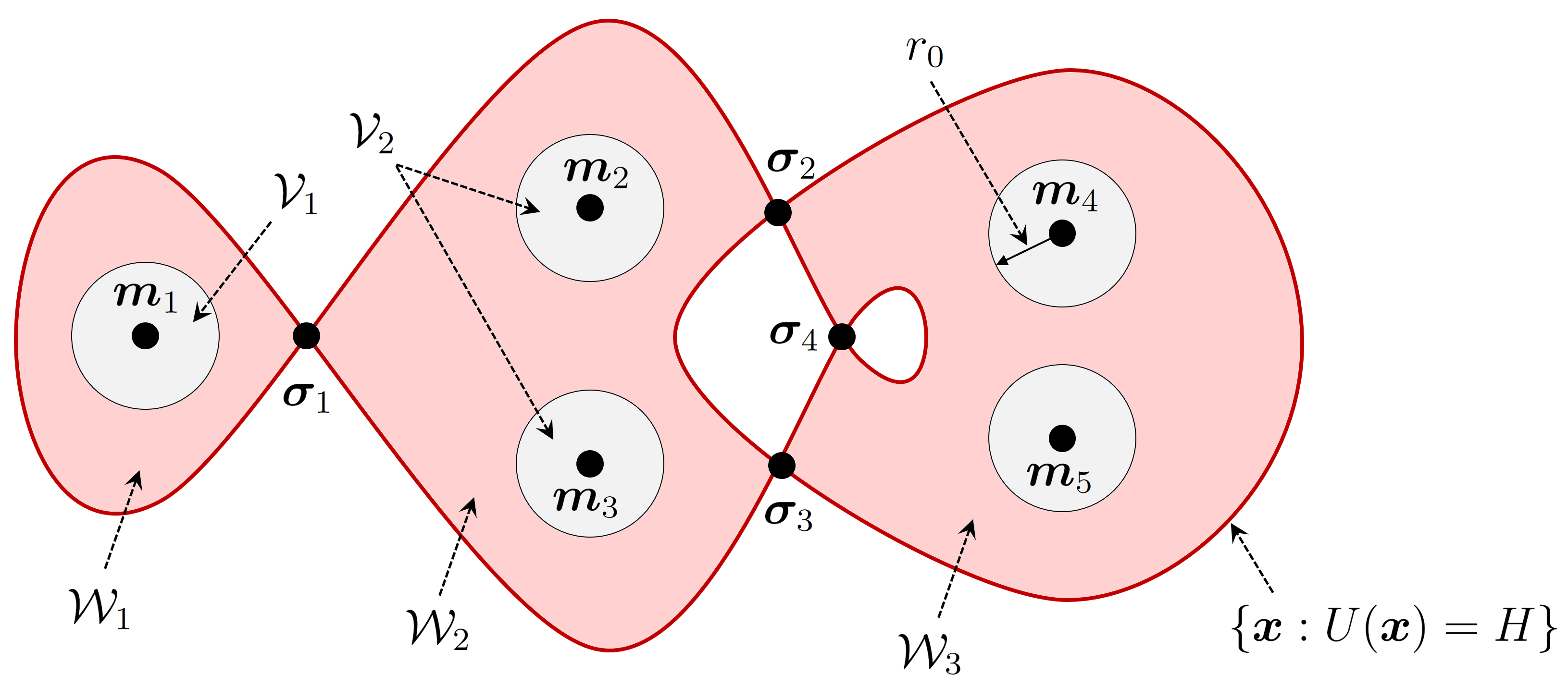}\caption{\label{fig31}Example of landscape of $U$. In this example, we have
$\Sigma=\{\boldsymbol{\sigma}_{1},\,\boldsymbol{\sigma}_{2},\,\boldsymbol{\sigma}_{3},\,\boldsymbol{\sigma}_{4}\}$
and the set $\{\boldsymbol{x}:U(\boldsymbol{x})<H\}$ consists of
three components $\mathcal{W}_{1},\,\mathcal{W}_{2},\,\mathcal{W}_{3}$.
Hence, $S=\{1,\,2,\,3\}$. We have $\Sigma_{1,\,2}=\{\boldsymbol{\sigma}_{1}\}$,
$\Sigma_{2,\,3}=\{\boldsymbol{\sigma}_{2},\,\boldsymbol{\sigma}_{3}\}$,
and $\Sigma_{1,\,3}=\emptyset$. Therefore, $\Sigma^{*}=\{\boldsymbol{\sigma}_{1},\,\boldsymbol{\sigma}_{2},\,\boldsymbol{\sigma}_{3}\}\subsetneq\Sigma$.
Suppose that $h_{1}=h_{2}=h<h_{3}$. Then, we have $S_{\star}=\{1,\,2\}$.
By assuming that $U(\boldsymbol{m}_{2})=U(\boldsymbol{m}_{3})=h$,
two metastable valleys are defined by $\mathcal{V}_{1}=\overline{\mathcal{D}_{r_{0}}(\boldsymbol{m}_{1})}$
and $\mathcal{V}_{2}=\overline{\mathcal{D}_{r_{0}}(\boldsymbol{m}_{2})}\cup\overline{\mathcal{D}_{r_{0}}(\boldsymbol{m}_{3})}$.
Metastable valley is not defined for the shallow well $\mathcal{W}_{3}$. }
\end{figure}

For a concrete description, we fix a level $H$ and define $\Sigma=\Sigma_{H}$
as the set of saddle points of level $H$:
\[
\Sigma\,:=\,\{\bm{\sigma}:U(\bm{\sigma})=H\text{ and }\boldsymbol{\sigma}\text{ is a }\text{saddle point of }U\}\ .
\]
By selecting $H$ appropriately, we shall assume that $\Sigma$ is
a non-empty set. We now define
\[
\mathcal{H}\,:=\,\{\bm{x}\in\mathbb{R}^{d}:U(\bm{x})<H\}\;,
\]
and denote by $\mathcal{W}_{1},\,\dots,\,\mathcal{W}_{K}$ the connected
components of the set $\mathcal{H}$. These sets are called (metastable)
wells for the potential function $U$ corresponding to the level $H$.
We focus on the transition of the process $\boldsymbol{x}_{\epsilon}(\cdot)$
among these wells. Various selections of $H$ are possible; however,
we focus on one fixed level to get a concrete result. The last paragraph
of the current section explain how we can select various $H$ to get
a variety of results that provide a full description of the metastable
behavior.

If $K=1$, there is no interesting metastable behavior at level $H$,
and we must take a smaller level to observe the metastable behavior.
Therefore, we assume that $K\ge2$. Now, we shall assume that the
closure $\mathcal{\overline{H}}$ of $\mathcal{H}$ is a connected
set. Otherwise, our analysis can be applied to each connected component
of $\mathcal{\overline{H}}$, and this general situation is explained
later. See the discussion after Theorem \ref{t_main}.

Write $S=\{1,\cdots,K\}$. For $i,\,j\in S$,\footnote{In this article, writing ``$a,\,b$'' implies that $a$ and $b$
are distinct.} we write
\[
\Sigma_{i,\,j}\,=\,\overline{\mathcal{W}}_{i}\cap\overline{\mathcal{W}}_{j}\ ,
\]
which denotes the set of saddle points between $\mathcal{W}_{i}$
and $\mathcal{W}_{j}$ of level $H$. Note that this set can be empty.
Now, assume further that $\Sigma_{i,\,j}\cap\Sigma_{k,\,l}=\emptyset$
unless $\{i,\,j\}=\{k,\,l\}$; hence, there is no saddle point connecting
three or more wells simultaneously. Write
\begin{equation}
\Sigma^{*}\,=\,\bigcup_{i,\,j\in S}\,\Sigma_{i,\,j}\;.\label{e_sig*}
\end{equation}
Then, we have $\Sigma^{*}\subseteq\Sigma$, and the equality may not
hold (cf. Figure \ref{fig31}). By the Morse lemma,\textbf{ }for each
$\boldsymbol{\sigma}\in\Sigma^{*}$, the Hessian $(\nabla^{2}U)(\boldsymbol{\sigma})$
has only one negative eigenvalue and $(d-1)$ positive eigenvalues,
as we have assumed that $U$ is a Morse function. We remark that this
may not be true for $\boldsymbol{\sigma}\in\Sigma\setminus\Sigma^{*}$.

\subsubsection*{Metastable valleys}

Now, we define the metastable valleys. We fix $i\in S$ and denote
by $h_{i}$ the minimum value of the potential $U$ on the well $\mathcal{W}_{i}$,
i.e.,
\begin{equation}
h_{i}\,:=\,\min\{U(\boldsymbol{x}):\boldsymbol{x}\in\mathcal{W}_{i}\}\;.\label{e_hi}
\end{equation}
Define $\mathcal{M}_{i}$ as the set of the deepest minima of $U$
on $\mathcal{W}_{i}$:
\[
\mathcal{M}_{i}\,:=\,\{\bm{m}\in\mathcal{W}_{i}:U(\bm{m})=h_{i}\}\;.
\]
Then, we can regard $H-h_{i}$ as the depth of the well $\mathcal{W}_{i}$.
We write the ball in $\mathbb{R}^{d}$ centered at $\boldsymbol{x}$
with radius $r$ as
\[
\mathcal{D}_{r}(\bm{x})\,:=\,\{\boldsymbol{y}\in\mathbb{R}^{d}:|\boldsymbol{y}-\boldsymbol{x}|<r\}\;.
\]
We take $r_{0}>0$ to be sufficiently small so that, for all $i\in S$
and for all $\boldsymbol{m}\in\mathcal{M}_{i}$,
\begin{equation}
\mathcal{D}_{2r_{\text{0}}}(\bm{m})\subset\mathcal{W}_{i}\text{ and \ensuremath{\overline{\mathcal{D}_{2r_{\text{0}}}(\bm{m})}\setminus\{\boldsymbol{m}\}} does not contain a critical point of }U.\label{conr0}
\end{equation}

Finally, the \textit{metastable valley} corresponding to the well
$\mathcal{W}_{i}$ is defined as
\begin{equation}
\mathcal{V}_{i}\,:=\,\bigcup_{\bm{m}\in\mathcal{M}_{i}}\overline{\mathcal{D}_{r_{\text{0 }}}(\bm{m})}\;,\label{e_Vi}
\end{equation}
where $\overline{\mathcal{D}_{r_{\text{0 }}}(\bm{m})}=\{\boldsymbol{y}\in\mathbb{R}^{d}:|\boldsymbol{y}-\boldsymbol{x}|\le r_{0}\}$
denotes the closed ball. Our primary focus is the inter-valley dynamics
among these sets $\mathcal{V}_{i}$.

\subsubsection*{Deepest valleys}

We now characterize the deepest valleys of $U$, which will be the
state space of the limiting Markov chain describing the metastable
behavior. Recall $h_{i}$ from \eqref{e_hi} and define
\[
h\,:=\,\min_{i\in S}h_{i}\;\;\;\text{and\;\;\;}S_{\star}\,:=\,\{i\in S:h_{i}=h\}\;,
\]
so that $\{\mathcal{W}_{i}:i\in S_{\star}\}$ denotes the collection
of the deepest wells. We assume that \textbf{$|S_{\star}|\ge2$} since
the Markov chain description is trivial when $|S_{\star}|=1$.\textbf{
}Let
\[
\mathcal{M}_{\star}:=\,\bigcup_{i\in S_{\star}}\mathcal{M}_{i}\,=\,\{\boldsymbol{x}\in\mathbb{R}^{d}:U(\boldsymbol{x})=h\}\;,
\]
so that the set $\mathcal{M}_{\star}$ denotes the set of global minima
of $U$. Write $\mathcal{V}_{\star}=\bigcup_{i\in S_{\star}}\mathcal{V}_{i}$
so that $\mathcal{V}_{\star}$ denotes the set of deepest valleys.
Finally, we write $\Delta=\mathbb{R}^{d}\setminus\mathcal{V}_{\star}.$

\subsubsection*{Invariant measure}

With the construction of the metastable valleys, we can conclude that
the invariant measure $\mu_{\epsilon}(\cdot)$ is concentrated on
the set $\mathcal{V}_{\star}$. Moreover, we can compute the precise
asymptotics for $\mu_{\epsilon}(\mathcal{V}_{i})$ for each $i\in S_{\star}.$
To this end, we first introduce several notations.
\begin{notation}
\label{not31}For each $\boldsymbol{x}\in\mathbb{R}^{d}$, we write
$\mathbb{H}^{\boldsymbol{x}}=(\nabla^{2}U)(\boldsymbol{x})$ as the
Hessian of $U$ at $\boldsymbol{x}$ and $\mathbb{L}^{\boldsymbol{x}}=D\boldsymbol{\ell}(\bm{x})$
as the Jacobian of $\boldsymbol{\ell}$ at $\boldsymbol{x}$.
\end{notation}

For each $i\in S$, we define
\[
\nu_{i}\,:=\,\sum_{\bm{m}\in\mathcal{M}_{i}}\frac{1}{\sqrt{\det\mathbb{H}^{\boldsymbol{m}}}}\ ,
\]
and write $\nu_{\star}=\sum_{i\in S_{\star}}\nu_{i}$. For a sequence
$(a_{\epsilon})_{\epsilon>0}$ of real numbers, we write $a_{\epsilon}=o_{\epsilon}(1)$
if $\lim_{\epsilon\to0}a_{\epsilon}=0$.The following asymptotics
are useful in our discussion.
\begin{prop}
\label{p32}We have
\begin{align}
Z_{\epsilon} & \,=\,[\,1+o_{\epsilon}(1)\,]\,(2\pi\epsilon)^{d/2}\,e^{-h/\epsilon}\,\nu_{\star}\;,\label{e_Zeps}\\
\mu_{\epsilon}(\mathcal{V}_{i}) & \,=\,[\,1+o_{\epsilon}(1)\,]\,\frac{\nu_{i}}{\nu_{\star}}\ \text{\;\;;\;}i\ensuremath{\in S_{\star}}\;\;\;\;\text{and\;\;\;\;}\mu_{\epsilon}(\Delta)\,=\,o_{\epsilon}(1)\;.\nonumber
\end{align}
\end{prop}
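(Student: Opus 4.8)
The plan is to compute the asymptotics via the Laplace method (Morse lemma expansion) applied to the Gaussian-type integrals that define $Z_\epsilon$ and $\mu_\epsilon(\mathcal{V}_i)$. First I would establish \eqref{e_Zeps}. Partition $\mathbb{R}^d$ as $\mathcal{V}_\star \cup \bigl(\bigcup_{i\in S_\star}(\mathcal{W}_i\setminus\mathcal{V}_i)\bigr)\cup (\mathbb{R}^d\setminus\bigcup_{i\in S_\star}\mathcal{W}_i)$ and integrate $e^{-U/\epsilon}$ over each piece. On each ball $\overline{\mathcal{D}_{r_0}(\boldsymbol{m})}$ with $\boldsymbol{m}\in\mathcal{M}_\star$, the Morse lemma (or simply a second-order Taylor expansion, since $\nabla U(\boldsymbol{m})=0$ and $\mathbb{H}^{\boldsymbol{m}}>0$ is nondegenerate by assumption \eqref{conr0}) gives $U(\boldsymbol{x}) = h + \tfrac12 (\boldsymbol{x}-\boldsymbol{m})\cdot \mathbb{H}^{\boldsymbol{m}}(\boldsymbol{x}-\boldsymbol{m}) + O(|\boldsymbol{x}-\boldsymbol{m}|^3)$, so that a standard Gaussian integral calculation yields
\[
\int_{\overline{\mathcal{D}_{r_0}(\boldsymbol{m})}} e^{-U(\boldsymbol{x})/\epsilon}\,d\boldsymbol{x} \,=\, [\,1+o_\epsilon(1)\,]\,(2\pi\epsilon)^{d/2}\,e^{-h/\epsilon}\,\frac{1}{\sqrt{\det \mathbb{H}^{\boldsymbol{m}}}}\;.
\]
Summing over $\boldsymbol{m}\in\mathcal{M}_\star$ produces the factor $\nu_\star$. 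For the remaining regions, one argues that the integral is of strictly smaller exponential order: on $\mathcal{W}_i\setminus\mathcal{V}_i$ (for $i\in S_\star$), condition \eqref{conr0} guarantees there is no critical point of $U$ in $\overline{\mathcal{D}_{2r_0}(\boldsymbol{m})}\setminus\{\boldsymbol{m}\}$, so $U\ge h+c$ for some $c>0$ on a neighborhood boundary; more carefully, one uses that $U$ restricted to the complement of $\bigcup_{\boldsymbol{m}}\mathcal{D}_{r_0}(\boldsymbol{m})$ within any compact set attains a minimum strictly above $h$, and on the non-deepest wells $U\ge h_i > h$; the tail $\{U\ge a\}$ for $a$ slightly below $\min_{i\notin S_\star} h_i$ is handled by the a priori bound \eqref{e_tight}. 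Combining these estimates gives \eqref{e_Zeps}.

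Next I would derive the formula for $\mu_\epsilon(\mathcal{V}_i)$. By definition $\mu_\epsilon(\mathcal{V}_i) = Z_\epsilon^{-1}\int_{\mathcal{V}_i} e^{-U/\epsilon}\,d\boldsymbol{x}$, and the same local Gaussian computation gives $\int_{\mathcal{V}_i} e^{-U/\epsilon}\,d\boldsymbol{x} = [1+o_\epsilon(1)]\,(2\pi\epsilon)^{d/2}\,e^{-h/\epsilon}\,\nu_i$ for $i\in S_\star$. Dividing by \eqref{e_Zeps} cancels the $(2\pi\epsilon)^{d/2}e^{-h/\epsilon}$ factor and yields $\mu_\epsilon(\mathcal{V}_i) = [1+o_\epsilon(1)]\,\nu_i/\nu_\star$. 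For $\mu_\epsilon(\Delta) = 1 - \sum_{i\in S_\star}\mu_\epsilon(\mathcal{V}_i)$, since $\sum_{i\in S_\star}\nu_i = \nu_\star$, the leading terms sum to $1$ and hence $\mu_\epsilon(\Delta) = o_\epsilon(1)$; alternatively one estimates $\int_\Delta e^{-U/\epsilon}\,d\boldsymbol{x}$ directly as being of smaller order than $Z_\epsilon$ by the exponential-separation argument above.

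The main obstacle is the uniform control of the sub-leading regions — precisely, showing that $\int_{\mathbb{R}^d\setminus\mathcal{V}_\star} e^{-U/\epsilon}\,d\boldsymbol{x} = o(\epsilon^{d/2}e^{-h/\epsilon})$. This splits into a compact part and a tail part. The tail is immediately dispatched by \eqref{e_tight} with $a$ chosen in $(h,\min_{i\notin S_\star}h_i)$ if $S_\star\ne S$, or any $a>h$ if $S_\star=S$; then $C_a e^{-a/\epsilon} = o(e^{-h/\epsilon})$ since $a>h$. On the compact part, one needs $\min\{U(\boldsymbol{x}) : \boldsymbol{x}\in K\setminus \bigcup_{\boldsymbol{m}\in\mathcal{M}_\star}\mathcal{D}_{r_0}(\boldsymbol{m})\} > h$ for the relevant compact set $K$; this follows because every global minimizer of $U$ lies in $\mathcal{M}_\star$ and each is isolated (Morse), so removing the open balls around them removes all points where $U = h$. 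A mild care point is that the error in the Gaussian approximation must be shown to be genuinely $o_\epsilon(1)$ multiplicatively and not merely $O(\sqrt\epsilon)$-type — but the cubic remainder in Taylor's theorem gives $e^{O(|\boldsymbol{x}-\boldsymbol{m}|^3/\epsilon)}$, and after the change of variables $\boldsymbol{x} = \boldsymbol{m} + \sqrt\epsilon\,\boldsymbol{u}$ this becomes $e^{O(\sqrt\epsilon |\boldsymbol{u}|^3)}$ which is $1+o_\epsilon(1)$ uniformly on the rescaled ball of radius $r_0/\sqrt\epsilon$ after a routine dominated-convergence argument; this is entirely standard and I would cite \cite{BEGK1} or \cite{LeeSeo} for it rather than reproduce the computation.
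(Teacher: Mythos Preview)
Your proposal is correct and follows exactly the approach the paper indicates: the paper's proof consists of a single sentence invoking ``elementary computation based on the Laplace asymptotics'' and a reference to \cite[Proposition~2.2]{RS}, which is precisely the Gaussian-integral-around-each-global-minimum plus tail-bound-via-\eqref{e_tight} argument you outline. Your treatment is in fact more detailed than what the paper provides, and the care points you flag (uniform lower bound on $U$ away from $\mathcal{M}_\star$ on compacts, dominated convergence for the cubic remainder) are the right ones.
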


\begin{proof}
The proof is a consequence of an elementary computation based on the
Laplace asymptotics. For further detail, we refer to \cite[Proposition 2.2]{RS}.
\end{proof}

\subsubsection*{Eyring--Kramers constants}

For $\boldsymbol{\sigma}\in\Sigma^{*}$, we previously mentioned that
the Hessian $\mathbb{H}^{\boldsymbol{\sigma}}$ has only one negative
eigenvalue by the Morse lemma. We can further show that the matrix
$\mathbb{H}^{\boldsymbol{\sigma}}+\mathbb{L}^{\boldsymbol{\sigma}}$
also has only one negative eigenvalue by an elementary computation
carried out in the proof of \cite[Lemma 3.3]{LeeSeo}. Denote by $-\mu^{\boldsymbol{\sigma}}$
the unique negative eigenvalue of $\mathbb{H}^{\boldsymbol{\sigma}}+\mathbb{L}^{\boldsymbol{\sigma}}$.
Then, the \textit{Eyring--Kramers constant} at $\boldsymbol{\sigma}\in\Sigma^{*}$
is defined by
\begin{equation}
\omega^{\boldsymbol{\sigma}}\,:=\,\frac{\mu^{\bm{\sigma}}}{2\pi\sqrt{-\det\mathbb{H}^{\boldsymbol{\sigma}}}}\;.\label{e_EKconst}
\end{equation}
For $i,\,j\in S$, we define
\[
\omega_{i,\,j}\,:=\,\sum_{\bm{\sigma}\in\Sigma_{i,\,j}}\omega^{\boldsymbol{\sigma}}\;\;\;\text{and\;\;\;}\omega_{i}\,:=\,\sum_{j\in S}\omega_{i,\,j}\;,
\]
where we set $\omega_{i,\,i}=0$ for $i\in S$ for convenience of
notation. Note that the connectedness of $\overline{\mathcal{H}}$
implies that $\omega_{i}>0$ for all $i\in S$.

\subsection{\label{sec32}Two Markov chains}

Now, we construct two continuous-time Markov chains: $(\mathbf{x}(t))_{t\ge0}$
and $(\mathbf{y}(t))_{t\ge0}$. The Markov chain $\mathbf{y}(\cdot)$
describes the limiting metastable behavior of the diffusion process
$\boldsymbol{x}_{\epsilon}(\cdot)$. The auxiliary Markov chain $\mathbf{x}(\cdot)$
is used in the construction of this limiting chain $\mathbf{y}(\cdot)$;
moreover, it plays a crucial role in the proof. \textcolor{black}{We
refer to Remark \ref{rem33} for the meaning of these Markov chains. }

\textcolor{black}{The construction of the limiting chain $\mathbf{y}(\cdot)$
is simple when all the wells have the same depth, i.e., $S_{\star}=S$
(cf. Remark \ref{rem33}).}\textcolor{red}{{} }However, if $S_{\star}\subsetneq S$,
the behavior of the process $\boldsymbol{x}_{\epsilon}(\cdot)$ on
each shallow valley $\mathcal{V}_{i}$, $i\in S\setminus S_{\star}$,
should be properly reflected in the construction; hence, the definition
of $\mathbf{y}(\cdot)$ becomes more complex and should be done via
the auxiliary chain $\mathbf{x}(\cdot)$ defined from now on.

\subsubsection*{Auxiliary Markov chain $\mathbf{x}(\cdot)$ on $S$}

We define a probability measure $m(\cdot)$ on $S$ by
\[
m(i)\,:=\,\omega_{i}/\sum_{j\in S}\omega_{j}\;\;\;;\;i\in S\;.
\]
Let $(\mathbf{x}(t))_{t\geq0}$ be the continuous-time Markov chain
on $S$ whose jump rate from $i\in S$ to $j\in S$ is given by $r_{\mathbf{x}}(i,\,j)=\omega_{i,\,j}/m(i)$.
It is clear that the invariant measure for the Markov chain $\mathbf{x}(\cdot)$
is $m(\cdot)$, and moreover the process $\mathbf{x}(\cdot)$ is reversible
with respect to $m(\cdot)$. We now introduce several potential theoretic
notions regarding the process $\mathbf{x}(\cdot)$. These notions
are used in the definition of the limiting Markov chain $\mathbf{y}(\cdot)$.

Denote by $L_{\mathbf{x}}$ the generator associated with the Markov
chain $\mathbf{x}(\cdot)$ acting on $\mathbf{f}:S\rightarrow\mathbb{R}$
such that
\[
(L_{\mathbf{x}}\mathbf{f})(i)\,=\,\sum_{j\in S}r_{\mathbf{x}}(i,\,j)\,[\,\mathbf{f}(j)-\mathbf{f}(i)\,]\;\;\ ;\ i\in S\;.
\]
Denote by $\mathbf{P}_{i}$ the law of process $\mathbf{x}(\cdot)$
starting at $i\in S$. For two disjoint non-empty subsets $A,\,B$
of $S$, the equilibrium potential between $A$ and $B$ with respect
to the process $\mathbf{x}(\cdot)$ is a function $\mathbf{h}_{A,\,B}:S\rightarrow\mathbb{R}$
defined by
\[
\mathbf{h}_{A,\,B}(i)\,:=\,\mathbf{P}_{i}\,[\,\tau_{A}<\tau_{B}\,]\;\;\;;\;i\in S\;,
\]
where $\tau_{A},\,A\subset S$, denotes the hitting time of the set
\textcolor{black}{$A$, i.e., $\tau_{A}=\inf\{t\ge0\,:\,{\bf x}(t)\in A\}$.}
Define a bi-linear form $D_{\mathbf{x}}(\cdot,\cdot)$ by, for all
$\mathbf{f},\,\mathbf{g}:S\rightarrow\mathbb{R}$,
\begin{equation}
D_{\mathbf{x}}(\mathbf{f},\,\mathbf{g})\,:=\,\sum_{i\in S}\mu(i)\,\mathbf{f}(i)\,[\,-(L_{\mathbf{x}}\mathbf{g})(i)\,]=\frac{1}{2}\,\sum_{i,\,j\in S}\omega_{i,\,j}\,[\,\mathbf{f}(i)-\mathbf{f}(j)\,]\,[\,\mathbf{g}(i)-\mathbf{g}(j)\,]\;.\label{e_Dx}
\end{equation}
Note that $D_{\mathbf{x}}(\mathbf{f},\,\mathbf{f})$ represents the
Dirichlet form associated with the Markov chain $\mathbf{x}(\cdot)$.
Finally, the capacity between two disjoint non-empty subsets $A$
and $B$ of $S$ with respect to the process $\mathbf{x}(\cdot)$
is defined by
\begin{equation}
\textrm{cap}_{\mathbf{x}}(A,\,B)\,:=\,D_{\mathbf{x}}(\mathbf{h}_{A,\,B},\,\mathbf{h}_{A,\,B})\;.\label{e_capx}
\end{equation}

\subsubsection*{Limiting Markov chain $\mathbf{y}(\cdot)$ on $S_{\star}$}

Recall that we assumed $|S_{\star}|\geq2$. For $i,\,j\in S_{\star}$,
define
\[
\beta_{i,\,j}\,:=\,\frac{1}{2}[\textrm{cap}_{\mathbf{x}}(\{i\},\,S_{\star}\setminus\{i\})+\textrm{cap}_{\mathbf{x}}(\{j\},\,S_{\star}\setminus\{j\})-\textrm{cap}_{\mathbf{x}}(\{i,j\},\,S_{\star}\setminus\{i,j\})]\;.
\]
We set $\beta_{i,\,i}=0$, $i\in S_{\star}$, for convenience and
note that we have $\beta_{i,\,j}=\beta_{j,\,i}$ for all $i,\,j\in S_{\star}$.
Then, we define $(\mathbf{y}(t))_{t\ge0}$ as a continuous-time Markov
chain on $S_{\star}$ with jump rate $r_{\mathbf{y}}(i,\,j)$ from
$i\in S_{\star}$ to $j\in S_{\star}$ given by $r_{\mathbf{y}}(i,\,j)=\beta_{i,j}/\nu_{i}$.\textcolor{black}{{}
The process $\mathbf{y}(\cdot)$ defined in this manner is indeed
the so-called trace process of $\mathbf{x}(\cdot)$ (cf. \cite[Appendix]{BL1})}
\begin{rem}[Comments on the processes ${\bf x}(\cdot)$ and ${\bf y}(\cdot)$]
\textcolor{black}{\label{rem33} The auxiliary process $\mathbf{x}(\cdot)$
represents the inter-valley dynamics of the process $\boldsymbol{x}_{\epsilon}(\cdot)$
by assuming that it spends the same time scale at all valleys (which
is not true in general). Since the process $\boldsymbol{x}_{\epsilon}(\cdot)$
spends a negligible time scale on shallow valleys, we can take the
suitable trace of the process ${\bf x}(\cdot)$ on the (indices corresponding
to) deepest valleys to get the correct process representing the inter-(deepest)
valley dynamics of the process $\boldsymbol{x}_{\epsilon}(\cdot)$.
This trace process is $\mathbf{y}(\cdot)$. }
\end{rem}

\subsection{\label{sec33}Markov chain description via convergence of order process}

Recall that $H-h$ represents the depth of the deepest wells. We can
expect from Eyring--Kramers formula for $\boldsymbol{x}_{\epsilon}(\cdot)$
obtained in \cite{LeeSeo} that the order of the time scale for a
metastable transition is
\[
\theta_{\epsilon}\,:=\,\exp\frac{H-h}{\epsilon}\;.
\]
Hence, we speed up the process $\boldsymbol{x}_{\epsilon}(\cdot)$
by a factor of $\theta_{\epsilon}$ and then observe the index of
the valley in which the speeded-up process is staying. To that end,
we write
\[
\widetilde{\boldsymbol{x}}_{\epsilon}(t)=\boldsymbol{x}_{\epsilon}(\theta_{\epsilon}t)\;\;\;\;;\;t\ge0
\]
the speeded-up process. In view of the fact that $\mu_{\epsilon}(\mathcal{V}_{\star})=1-o_{\epsilon}(1)$
(cf. Proposition \ref{p32}), this index belongs to the set $S_{\star}$
with dominating probability. We wish to prove that this index process
converges to the process $\mathbf{y}(\cdot)$ defined in the previous
subsection. The major technical issue in this heuristic explanation
is the fact that the speeded-up process $\widetilde{\boldsymbol{x}}_{\epsilon}(\cdot)$
may stay in the set $\Delta=\mathbb{R}\setminus\mathcal{V}_{\star}$
with small probability, and for this case, the index process is not
defined. Thus, to formulate this convergent result in a rigorous manner,
we recall the notion of the \textit{order process} introduced in \cite{BL1,BL2}.
To define the order process, define
\[
T_{\epsilon}(t)\,:=\,\int_{0}^{t}\mathbf{1}\{\widetilde{\boldsymbol{x}}_{\epsilon}(s)\in\mathcal{V}_{\star}\}\,ds\;\;\;\ ;\ t\geq0\ ,
\]
which measures the amount of time for which the speeded-up process
$\widetilde{\boldsymbol{x}}_{\epsilon}(\cdot)$ stayed in $\mathcal{V}_{\star}$
until time $t$. Then, define $S_{\epsilon}(t)$ as the generalized
inverse of the random increasing function $T_{\epsilon}(\cdot)$:
\begin{equation}
S_{\epsilon}(t)\,:=\,\sup\{s\geq0:T_{\epsilon}(s)\leq t\}\ \;\;;\ t\geq0\;.\label{e_St}
\end{equation}
Define the trace process $\boldsymbol{\xi}_{\epsilon}(\cdot)$ as
\[
\boldsymbol{\xi}_{\epsilon}(t)\,:=\,\widetilde{\boldsymbol{x}}_{\epsilon}(S_{\epsilon}(t))\;\;\ ;\ t\geq0\;.
\]
This process the one is obtained from the process $\widetilde{\boldsymbol{x}}_{\epsilon}(\cdot)=\boldsymbol{x}_{\epsilon}(\theta_{\epsilon}\cdot)$
by turning off the clock when the process $\widetilde{\boldsymbol{x}}_{\epsilon}(\cdot)$
does not belong to $\mathcal{V}_{\star}$. In other words, the trajectory
of $\boldsymbol{\xi}_{\epsilon}(\cdot)$ is obtained by removing the
excursions of $\widetilde{\boldsymbol{x}}_{\epsilon}(\cdot)$ at $\Delta$.
Hence, we have $\boldsymbol{\xi}_{\epsilon}(t)\in\mathcal{V}_{\star}$
for all $t\ge0$; furthermore, the process $\boldsymbol{\xi}_{\epsilon}(\cdot)$
is a Markov process (with jump) on $\mathcal{V}_{\star}$.

First, we show that the process $\boldsymbol{\xi}_{\epsilon}(\cdot)$
is a relevant approximation of the process $\widetilde{\boldsymbol{x}}_{\epsilon}(\cdot)$
in the sense that the excursion of $\widetilde{\boldsymbol{x}}_{\epsilon}(\cdot)$
at $\Delta$ is negligible. Denote by $\mathbb{P}_{\boldsymbol{x}}^{\epsilon}$
the law of the original process $\boldsymbol{x}_{\epsilon}(\cdot)$
starting from $\bm{x}\in\mathbb{R}^{d}$ and by $\mathbb{E}_{\boldsymbol{x}}^{\epsilon}$
the expectation with respect to it.
\begin{thm}
\label{t33_neg}For all $t\ge0$, it holds that
\[
\lim_{\epsilon\to0}\sup_{\boldsymbol{x}\in\mathcal{V}_{\star}}\mathbb{E}_{\boldsymbol{x}}^{\epsilon}\,\Big[\,\int_{0}^{t}\,\mathbf{1}_{\Delta}(\widetilde{\boldsymbol{x}}_{\epsilon}(s))\,ds\,\Big]\,=\,0\;.
\]
\end{thm}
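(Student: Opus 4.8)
The plan is to first strip away the time rescaling and the deterministic starting point, and then exploit the stationarity of $\mu_\epsilon$. By the substitution $u=\theta_\epsilon s$,
\[
\mathbb{E}_{\boldsymbol{x}}^\epsilon\Big[\int_{0}^{t}\mathbf{1}_\Delta(\widetilde{\boldsymbol{x}}_\epsilon(s))\,ds\Big]\;=\;\frac{1}{\theta_\epsilon}\,\mathbb{E}_{\boldsymbol{x}}^\epsilon\Big[\int_{0}^{\theta_\epsilon t}\mathbf{1}_\Delta(\boldsymbol{x}_\epsilon(u))\,du\Big]\;,
\]
so it suffices to bound the right-hand side. The engine is the identity, valid by Fubini and the invariance of $\mu_\epsilon$,
\[
\mathbb{E}_{\mu_\epsilon}^\epsilon\Big[\int_{0}^{\theta_\epsilon t}\mathbf{1}_\Delta(\boldsymbol{x}_\epsilon(u))\,du\Big]\;=\;\theta_\epsilon\,t\,\mu_\epsilon(\Delta)\;,
\]
which after division by $\theta_\epsilon$ becomes $t\,\mu_\epsilon(\Delta)=o_\epsilon(1)$ by Proposition~\ref{p32}. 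The one thing to fix is that the process starts from a deterministic point $\boldsymbol{x}\in\mathcal{V}_\star$ rather than from $\mu_\epsilon$; for this I would insert a short ``burn-in''. Fix $t_0=t_0(\epsilon)=A\log(1/\epsilon)$ with $A$ a large constant, so that $t_0/\theta_\epsilon\to0$, split $\int_0^{\theta_\epsilon t}=\int_0^{t_0}+\int_{t_0}^{\theta_\epsilon t}$ with the first integral at most $t_0$, and apply the Markov property at time $t_0$ to the second. (Equivalently one may work with the resolvent $\mathbb{E}_{\boldsymbol{x}}^\epsilon[\int_0^\infty e^{-s}\mathbf{1}_\Delta(\widetilde{\boldsymbol{x}}_\epsilon(s))\,ds]$, which solves $(1-\theta_\epsilon\mathscr{L}_\epsilon)w_\epsilon=\mathbf{1}_\Delta$, in the spirit of the methodology of \cite{LMS3,RS}.)

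The whole argument then reduces to the following local ergodicity estimate, which is the only non-trivial input: there is $C<\infty$ such that, for all small $\epsilon$ and all $\boldsymbol{x}\in\mathcal{V}_\star$,
\[
\mathbb{P}_{\boldsymbol{x}}^\epsilon\big[\boldsymbol{x}_\epsilon(t_0)\in\,\cdot\,\big]\;\leq\;C\,\mu_\epsilon(\,\cdot\,)\qquad\text{as measures on }\mathbb{R}^d\;.
\]
Granting this, the Markov property at $t_0$ and the positivity of $\mathbf{1}_\Delta$ give
\[
\mathbb{E}_{\boldsymbol{x}}^\epsilon\Big[\int_{t_0}^{\theta_\epsilon t}\mathbf{1}_\Delta(\boldsymbol{x}_\epsilon(u))\,du\Big]\;=\;\int_{\mathbb{R}^d}\mathbb{P}_{\boldsymbol{x}}^\epsilon\big[\boldsymbol{x}_\epsilon(t_0)\in d\boldsymbol{y}\big]\,\mathbb{E}_{\boldsymbol{y}}^\epsilon\Big[\int_{0}^{\theta_\epsilon t}\mathbf{1}_\Delta(\boldsymbol{x}_\epsilon(u))\,du\Big]\;\leq\;C\,\theta_\epsilon\,t\,\mu_\epsilon(\Delta)\;,
\]
so that, adding the two pieces and dividing by $\theta_\epsilon$,
\[
\sup_{\boldsymbol{x}\in\mathcal{V}_\star}\mathbb{E}_{\boldsymbol{x}}^\epsilon\Big[\int_{0}^{t}\mathbf{1}_\Delta(\widetilde{\boldsymbol{x}}_\epsilon(s))\,ds\Big]\;\leq\;\frac{t_0}{\theta_\epsilon}\,+\,C\,t\,\mu_\epsilon(\Delta)\;,
\]
and the right-hand side tends to $0$ as $\epsilon\to0$, the first term because $\theta_\epsilon\to\infty$ and the second by Proposition~\ref{p32}. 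This proves the theorem.

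It remains to establish the local ergodicity estimate, and this is where the real work lies. A point $\boldsymbol{x}\in\mathcal{V}_\star$ lies within $r_0$ of some global minimum $\boldsymbol{m}\in\mathcal{M}_\star$, which by Theorem~\ref{t21} is a stable equilibrium of the flow \eqref{e_odex}; over the time $t_0=A\log(1/\epsilon)$ the deterministic trajectory started at $\boldsymbol{x}$ is driven to within $o(\sqrt{\epsilon})$ of $\boldsymbol{m}$, and with overwhelming probability $\boldsymbol{x}_\epsilon(t_0)$ stays in $\mathcal{D}_{r_0}(\boldsymbol{m})$ and concentrates on scale $\sqrt{\epsilon}$ about $\boldsymbol{m}$ according to the Ornstein--Uhlenbeck equilibrium of the linearized dynamics $d\boldsymbol{\xi}=-(\mathbb{H}^{\boldsymbol{m}}+\mathbb{L}^{\boldsymbol{m}})\boldsymbol{\xi}\,dt+\sqrt{2\epsilon}\,d\boldsymbol{w}$. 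A linearization of \eqref{e_conell1} at $\boldsymbol{m}$, using the non-degeneracy of $\mathbb{H}^{\boldsymbol{m}}$, shows $\boldsymbol{\ell}(\boldsymbol{m})=0$ and that $\mathbb{L}^{\boldsymbol{m}}(\mathbb{H}^{\boldsymbol{m}})^{-1}$ is skew-symmetric; hence the stationary covariance of that Ornstein--Uhlenbeck process is exactly $\epsilon(\mathbb{H}^{\boldsymbol{m}})^{-1}$, i.e., its equilibrium density is proportional to $e^{-\boldsymbol{\xi}^{\top}\mathbb{H}^{\boldsymbol{m}}\boldsymbol{\xi}/(2\epsilon)}$, matching the Gaussian approximation of $\mu_\epsilon$ near $\boldsymbol{m}$. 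Combining this with $Z_\epsilon=(1+o_\epsilon(1))(2\pi\epsilon)^{d/2}e^{-h/\epsilon}\nu_\star$ from Proposition~\ref{p32} yields, on a fixed neighborhood of $\boldsymbol{m}$, a transition density bounded by $C\,\mu_\epsilon$-density with $C$ of order $\max_{\boldsymbol{m}\in\mathcal{M}_\star}\nu_\star\sqrt{\det\mathbb{H}^{\boldsymbol{m}}}$, the higher-order Taylor corrections contributing only $o_\epsilon(1)$ to the exponent once $t_0$ diverges. Away from the minima the transition density is far smaller than $\mu_\epsilon$-density: since, by \eqref{e_conell1}, $U$ is the quasi-potential of $\boldsymbol{x}_\epsilon(\cdot)$ (cf.\ \cite{FW}), the Freidlin--Wentzell cost of reaching a point $\boldsymbol{y}$ at positive distance from $\boldsymbol{m}$ in bounded time is at least $2(U(\boldsymbol{y})-h)$, which beats the $e^{-(U(\boldsymbol{y})-h)/\epsilon}$ weight of $\mu_\epsilon$; the far field is controlled using the growth conditions \eqref{e_conU}--\eqref{e_tight} and the non-explosivity of $\boldsymbol{x}_\epsilon(\cdot)$. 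The non-reversibility is harmless here, because $\boldsymbol{\ell}$ is divergence-free and the diffusion matrix is $\epsilon I$, so only Gaussian \emph{upper} bounds on the transition density are needed. The genuinely delicate point --- the uniform-in-$\epsilon$ matching of the heat-kernel decay rate to the Hessian $\mathbb{H}^{\boldsymbol{m}}$, together with the far-field control --- is the main obstacle; I expect to import these transition-density bounds from \cite{LeeSeo}, where the analogous estimates underpin the Eyring--Kramers formula, or to adapt the reversible argument of \cite{RS}.
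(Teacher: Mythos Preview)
Your approach is genuinely different from the paper's, and the paper's proof is worth knowing because it avoids exactly the obstacle you identify as ``where the real work lies.''  The paper does \emph{not} use any heat-kernel or burn-in argument.  Instead it bounds $\int_0^t \mathbf 1_\Delta(\widetilde{\boldsymbol x}_\epsilon(s))\,ds \le e^{\lambda t}\int_0^\infty e^{-\lambda s}\mathbf 1_\Delta(\widetilde{\boldsymbol x}_\epsilon(s))\,ds$, replaces $\mathbf 1_\Delta$ by $1-\sum_i\zeta_{i,-}$ for smooth cutoffs $\zeta_{i,-}\le\mathbf 1_{\mathcal V_i}$, and observes that $\mathbb E^\epsilon_{\boldsymbol x}\big[\int_0^\infty e^{-\lambda s}\sum_i\zeta_{i,-}(\widetilde{\boldsymbol x}_\epsilon(s))\,ds\big]=\psi_{\epsilon,-}^{\mathbf c}(\boldsymbol x)$ with $\mathbf c\equiv 1/\lambda$; the whole statement then reduces to $\psi_{\epsilon,-}^{\mathbf c}\to 1/\lambda$ uniformly on $\mathcal V_\star$, which is exactly the content of the resolvent analysis (Theorem~\ref{t_poi} and Proposition~\ref{pro_psi}).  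You gesture at this route in your parenthetical remark, but it is the whole proof, not an equivalent reformulation of yours.

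On your own route, the local-ergodicity bound $\mathbb P^\epsilon_{\boldsymbol x}[\boldsymbol x_\epsilon(t_0)\in\cdot]\le C\,\mu_\epsilon(\cdot)$ is a genuine gap.  First, the claim that the Freidlin--Wentzell cost of reaching $\boldsymbol y$ is at least $2(U(\boldsymbol y)-h)$ is off by a factor of two: since $U$ is the quasi-potential (as you note), the cost is $U(\boldsymbol y)-h$, so away from $\boldsymbol m$ the transition density and the $\mu_\epsilon$-density share the \emph{same} exponential rate and the comparison hinges entirely on the sub-exponential prefactor, which the large-deviation principle does not control.  Second, such uniform-in-$\epsilon$ heat-kernel upper bounds are not proved in \cite{LeeSeo} (that paper works with capacities and test functions, not transition densities) nor in \cite{RS}, so there is nothing to import; establishing the prefactor-level bound you need would be a separate, non-trivial piece of work comparable in difficulty to the resolvent analysis the paper carries out.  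Your stationarity identity and burn-in decomposition are correct and elegant, but without the heat-kernel lemma the argument is incomplete.
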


The proof of this result is a direct consequence of the analysis of
resolvent equation explained in Section \ref{sec_res} and will be
explained therein.

By assuming this theorem, it now suffices to analyze the inter-valley
behavior of the trace process $\boldsymbol{\xi}_{\epsilon}(\cdot)$.
To this end, we define a projection $\Psi:\mathcal{V}_{\star}\rightarrow S_{\star}$
simply by
\begin{equation}
\Psi(\bm{x})\,=\,i\ \text{\;\;\;if}\ \bm{x}\in\mathcal{V}_{i}\;\;\;;\;i\in S_{\star}\;,\label{e_Psi}
\end{equation}
which maps a point belonging to a deepest valley to the index of that
valley. Finally, define a process on $S_{\star}$ as
\[
\mathbf{y}_{\epsilon}(t)\,:=\,\Psi(\boldsymbol{\xi}_{\epsilon}(t))\;\;\;\;;\;t\ge0\;,
\]
which represents the valley where the trace process $\boldsymbol{\xi}_{\epsilon}(t)$
is staying. This process $\mathbf{y}_{\epsilon}(\cdot)$ is called
the \textit{order process}. Denote by $\mathbf{Q}_{\pi_{\epsilon}}^{\epsilon}$
the law of the order process $\mathbf{y}_{\epsilon}(\cdot)$ when
the underlying process $\boldsymbol{x}_{\epsilon}(\cdot)$ starts
from a distribution $\pi_{\epsilon}$ on $\mathbb{R}^{d}$, and denote
by $\mathbf{Q}_{i}$ the law of the limiting Markov chain $\mathbf{y}(\cdot)$
starting from $i\in S_{\star}$. The following convergence theorem
is the main result of the current paper.
\begin{thm}
\label{t_main}For every $i\in S_{\star}$ and for any sequence of
Borel probability measures $(\pi_{\epsilon})_{\epsilon>0}$ concentrated
on $\mathcal{V}_{i}$, the law $\mathbf{Q}_{\pi_{\epsilon}}^{\epsilon}$
of the order process converges to $\mathbf{Q}_{i}$ as $\epsilon\rightarrow0$.
\end{thm}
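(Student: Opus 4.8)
The plan is to follow the resolvent approach to metastability of \cite{LMS3}, adapted here to the non-reversible diffusion $\boldsymbol{x}_{\epsilon}(\cdot)$. The convergence $\mathbf{Q}_{\pi_{\epsilon}}^{\epsilon}\to\mathbf{Q}_{i}$ in the Skorokhod space $D([0,\infty),\,S_{\star})$ will be reduced to three ingredients: (a) Theorem \ref{t33_neg}, which ensures that the time spent by $\widetilde{\boldsymbol{x}}_{\epsilon}(\cdot)$ in $\Delta$ is negligible, so that the trace process $\boldsymbol{\xi}_{\epsilon}(\cdot)$ is an asymptotically faithful random time change of $\widetilde{\boldsymbol{x}}_{\epsilon}(\cdot)$; (b) convergence of the resolvent of the order process $\mathbf{y}_{\epsilon}(\cdot)$ to that of the limiting chain $\mathbf{y}(\cdot)$; and (c) tightness of the family $\{\mathbf{Q}_{\pi_{\epsilon}}^{\epsilon}\}_{\epsilon>0}$. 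Since the martingale problem for the finite-state generator of $\mathbf{y}(\cdot)$ is well posed, ingredient (b) identifies the finite-dimensional distributions of every subsequential limit with those of $\mathbf{y}(\cdot)$ started at $i$, and (c) promotes this to convergence in path space.

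The core of the argument is ingredient (b), which I would formulate as an estimate on the resolvent equation for the time-rescaled generator $\theta_{\epsilon}\mathscr{L}_{\epsilon}$. Fix $\lambda>0$ and $\mathbf{g}:S_{\star}\to\mathbb{R}$, and let $\mathbf{u}:S_{\star}\to\mathbb{R}$ be the unique solution of the resolvent equation $\lambda\mathbf{u}-L_{\mathbf{y}}\mathbf{u}=\mathbf{g}$, where $L_{\mathbf{y}}$ denotes the generator of $\mathbf{y}(\cdot)$ with rates $r_{\mathbf{y}}(i,j)=\beta_{i,j}/\nu_{i}$. The goal is to construct a trial function $u_{\epsilon}\in C^{2}(\mathbb{R}^{d})$ that is asymptotically constant and equal to $\mathbf{u}(i)$ on each deepest valley $\mathcal{V}_{i}$, $i\in S_{\star}$, and such that $\lambda u_{\epsilon}-\theta_{\epsilon}\mathscr{L}_{\epsilon}u_{\epsilon}$ is close to $\mathbf{g}\circ\Psi$ in $L^{1}(\mu_{\epsilon})$ and when tested against equilibrium potentials. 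I would build $u_{\epsilon}$ by interpolating the values $\mathbf{u}(i)$ with the equilibrium potentials of $\boldsymbol{x}_{\epsilon}(\cdot)$ between the valleys $\mathcal{V}_{j}$, $j\in S$; on the shallow wells $\mathcal{W}_{j}$, $j\in S\setminus S_{\star}$, the trial function must follow the harmonic profile dictated by the equilibrium potentials $\mathbf{h}_{A,\,B}$ of the auxiliary chain $\mathbf{x}(\cdot)$, which is exactly the mechanism by which the trace structure --- and hence the constants $\beta_{i,j}$ --- enters the limit. Substituting $u_{\epsilon}$ into the Dynkin formula for $\boldsymbol{\xi}_{\epsilon}(\cdot)$, discarding the contribution of $\Delta$ via Theorem \ref{t33_neg}, and using Proposition \ref{p32} for the normalization $\mu_{\epsilon}(\mathcal{V}_{i})\to\nu_{i}/\nu_{\star}$, one obtains $\mathbb{E}_{\pi_{\epsilon}}^{\epsilon}[\int_{0}^{\infty}e^{-\lambda t}\mathbf{g}(\mathbf{y}_{\epsilon}(t))\,dt]\to\mathbf{u}(i)$ whenever $\pi_{\epsilon}$ is concentrated on $\mathcal{V}_{i}$, which is ingredient (b).

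Two quantitative inputs, both imported from the companion paper \cite{LeeSeo}, make the previous paragraph work: first, the sharp asymptotics of the capacities and Dirichlet-type forms of $\boldsymbol{x}_{\epsilon}(\cdot)$ between the valleys, which after multiplication by $\theta_{\epsilon}$ converge to the constants $\omega_{i,j}$ assembled from the Eyring--Kramers constants $\omega^{\boldsymbol{\sigma}}$ of \eqref{e_EKconst}; and second, local equilibration estimates showing that, started anywhere in $\mathcal{D}_{r_{0}}(\boldsymbol{m})$, the process relaxes to a neighbourhood of $\boldsymbol{m}$ and loses the memory of its initial point on a time scale that is $o(\theta_{\epsilon})$, so that $u_{\epsilon}$ is genuinely constant on each $\mathcal{V}_{i}$ in the limit. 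I expect the main obstacle to be the construction and error analysis of the trial function $u_{\epsilon}$, and this is precisely where the non-reversibility bites: since $\boldsymbol{x}_{\epsilon}(\cdot)$ is not reversible, the capacity is no longer given by the Dirichlet principle, so the error $\theta_{\epsilon}\mathscr{L}_{\epsilon}u_{\epsilon}$ cannot be bounded by a one-sided variational argument. Instead one must control it using the divergence form \eqref{e_gen}, the flow associated with the invariant measure $\mu_{\epsilon}$, the adjoint generator $\mathscr{L}_{\epsilon}^{*}$ of \eqref{e_genadj}, and the eigenvalue $-\mu^{\boldsymbol{\sigma}}$ of $\mathbb{H}^{\boldsymbol{\sigma}}+\mathbb{L}^{\boldsymbol{\sigma}}$ at the saddles; this is the content of the resolvent analysis of Section \ref{sec_res}, and it is here that the robustness of the resolvent (rather than Poisson) formulation is essential.

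It remains to address ingredient (c). For tightness of $\{\mathbf{Q}_{\pi_{\epsilon}}^{\epsilon}\}$ in $D([0,\infty),\,S_{\star})$ I would invoke the Aldous--Rebolledo criterion, the only nontrivial input being a uniform lower bound on the time that $\boldsymbol{\xi}_{\epsilon}(\cdot)$ spends in a deepest valley before leaving it; this again follows from the Eyring--Kramers lower bound of \cite{LeeSeo} and rules out an accumulation of jumps of $\mathbf{y}_{\epsilon}(\cdot)$. Together with ingredient (b), which forces every limit point of $\mathbf{Q}_{\pi_{\epsilon}}^{\epsilon}$ to solve the martingale problem for $L_{\mathbf{y}}$ with initial condition $i$, and the uniqueness of that martingale problem, we conclude $\mathbf{Q}_{\pi_{\epsilon}}^{\epsilon}\to\mathbf{Q}_{i}$.
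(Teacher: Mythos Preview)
Your overall framework---reduce the statement to the resolvent criterion of \cite{LMS3}, use Theorem \ref{t33_neg} to pass to the trace process, and identify the limit through the martingale problem for $L_{\mathbf{y}}$---matches the paper. The difference, and the gap, is in how you handle the core step (b).

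You propose to \emph{construct} a trial function $u_{\epsilon}\in C^{2}(\mathbb{R}^{d})$, assembled from equilibrium potentials of the diffusion, that approximately solves $(\lambda-\theta_{\epsilon}\mathscr{L}_{\epsilon})u_{\epsilon}\approx\mathbf{g}\circ\Psi$. The paper does the opposite: it takes the \emph{exact} solution $\phi_{\epsilon}^{\mathbf{f}}$ of $(\lambda-\theta_{\epsilon}\mathscr{L}_{\epsilon})u=\sum_{i\in S_{\star}}[(\lambda-L_{\mathbf{y}})\mathbf{f}](i)\,\mathbf{1}_{\mathcal{V}_{i}}$ (whose boundedness comes for free from \cite{LMS3}) and proves $\phi_{\epsilon}^{\mathbf{f}}\to\mathbf{f}(i)$ uniformly on $\widehat{\mathcal{V}}_{i}$; this is Theorem \ref{t_poi}. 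The Dynkin martingale for $\phi_{\epsilon}^{\mathbf{f}}$ then contains \emph{exactly} the integrand $[(\lambda-L_{\mathbf{y}})\mathbf{f}]\circ\Psi$ restricted to $\mathcal{V}_{\star}$, so after the time change $S_{\epsilon}$ one lands directly on the martingale problem for $L_{\mathbf{y}}$---no error term to control outside the valleys. The only technical wrinkle is that $\phi_{\epsilon}^{\mathbf{f}}\notin C^{2}$, which the paper fixes by the smooth sandwich $\psi_{\epsilon,\pm}^{\mathbf{f}}$ (Proposition \ref{pro_psi}) and the comparison \eqref{eq:rep}.

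Your construction, by contrast, would require controlling $\theta_{\epsilon}\mathscr{L}_{\epsilon}u_{\epsilon}$ pointwise. If $u_{\epsilon}$ equals $\mathbf{u}(i)$ on $\mathcal{V}_{i}$ and is glued from equilibrium potentials elsewhere, then on $\mathcal{V}_{i}$ you get $(\lambda-\theta_{\epsilon}\mathscr{L}_{\epsilon})u_{\epsilon}=\lambda\mathbf{u}(i)$, not $\mathbf{g}(i)=\lambda\mathbf{u}(i)-(L_{\mathbf{y}}\mathbf{u})(i)$; the missing $(L_{\mathbf{y}}\mathbf{u})(i)$ must come from the saddle regions, where the equilibrium potentials of the non-reversible diffusion are not explicit and \cite{LeeSeo} supplies no pointwise control over them (only mean-transition-time asymptotics). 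Without the Dirichlet principle you have no two-sided variational bound to fall back on, so the ``error analysis of the trial function $u_{\epsilon}$'' that you flag as the main obstacle is, as stated, stuck. The paper sidesteps this entirely: rather than applying $\mathscr{L}_{\epsilon}$ to an approximate object, it tests the exact $\phi_{\epsilon}^{\mathbf{f}}$ against a function $Q_{\epsilon}^{\mathbf{g}}$ built from the explicit approximate $\mathscr{L}_{\epsilon}^{*}$-harmonic function $p_{\epsilon}^{\boldsymbol{\sigma}}$ of \eqref{e_pesB} near each saddle (Sections \ref{sec5}--\ref{sec7}); the Eyring--Kramers constants $\omega^{\boldsymbol{\sigma}}$ emerge from $\int Q_{\epsilon}^{\mathbf{g}}(\mathscr{L}_{\epsilon}\phi_{\epsilon}^{\mathbf{f}})\,d\mu_{\epsilon}$, not from capacity asymptotics.

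Two minor points: tightness is not handled separately via Aldous--Rebolledo but is absorbed into the martingale-problem argument of \cite[Proposition 4.4]{LMS3}; and what you call ``the resolvent analysis of Section \ref{sec_res}'' is only the reduction to Theorem \ref{t_poi}---the substance is in Sections \ref{sec5}--\ref{sec7}.
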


The proof of the theorem based on the resolvent approach developed
in \cite{LMS3} is given in the next subsection. We remark that this
is a generalization of \cite[Theorem 2.3]{RS}, as the reversible
case is the special $\boldsymbol{\ell}=\boldsymbol{0}$ case of our
model. \textcolor{black}{Moreover, a careful reading of our arguments
reveals that, the speed of the convergence of the finite dimensional
marginals is given by}
\[
\mathbf{Q}_{\pi_{\epsilon}}^{\epsilon}[\mathbf{y}_{\epsilon}(t_{i})\in A_{i}\;\;\text{for }i=1,\,\dots,\,k]=\big(1+O\big(\epsilon^{1/2}\log\frac{1}{\epsilon}\big)\big)\,\mathbf{Q}_{i}[\mathbf{y}(t_{i})\in A_{i}\;\;\text{for }i=1,\,\dots,\,k]
\]
under the conditions of Theorem \ref{t_main}, where the error term
$O(\epsilon^{1/2}\log\frac{1}{\epsilon})$ is identical to the one
appeared in \cite[Theorems 3.1 and 3.2]{BEGK1} and depends on $t_{1},\,\dots,\,t_{k}$.

\subsubsection*{Discussion on general case}

Thus far, we have assumed that $\overline{\mathcal{H}}=\{\boldsymbol{x}\in\mathbb{R}^{d}:U(\boldsymbol{x})\le H\}$
is connected. However, our argument can be readily applied to the
general situation without this assumption as follows. If $\mathcal{\overline{H}}$
is not connected, we take a connected component $\mathcal{X}$ and
denote by $\mathcal{W}_{1},\,\dots,\,\mathcal{W}_{K}$ the connected
component of $\mathcal{H}$ contained in $\mathcal{X}$. Let $S=\{1,\,\dots,\,K\}$.
Then, we can define all the notations as before, and Theorem \ref{t_main}
holds unchanged. This can be readily verified by coupling with the
same dynamical systems reflected at the boundary of the connected
component of the domain $\{\boldsymbol{x}\in\mathbb{R}^{d}:U(\boldsymbol{x})<H+a\}$
for small enough $a$ containing $\mathcal{X}$. This will be more
precisely explained in \cite{LLS} at which all the inter-valley (not
restricted to the deepest valleys) dynamics are completely analyzed.
Therefore, we can vary $H$ to get different convergence results,
and an example is given in Figure \ref{fig2}.

\begin{figure}
\includegraphics[scale=0.21]{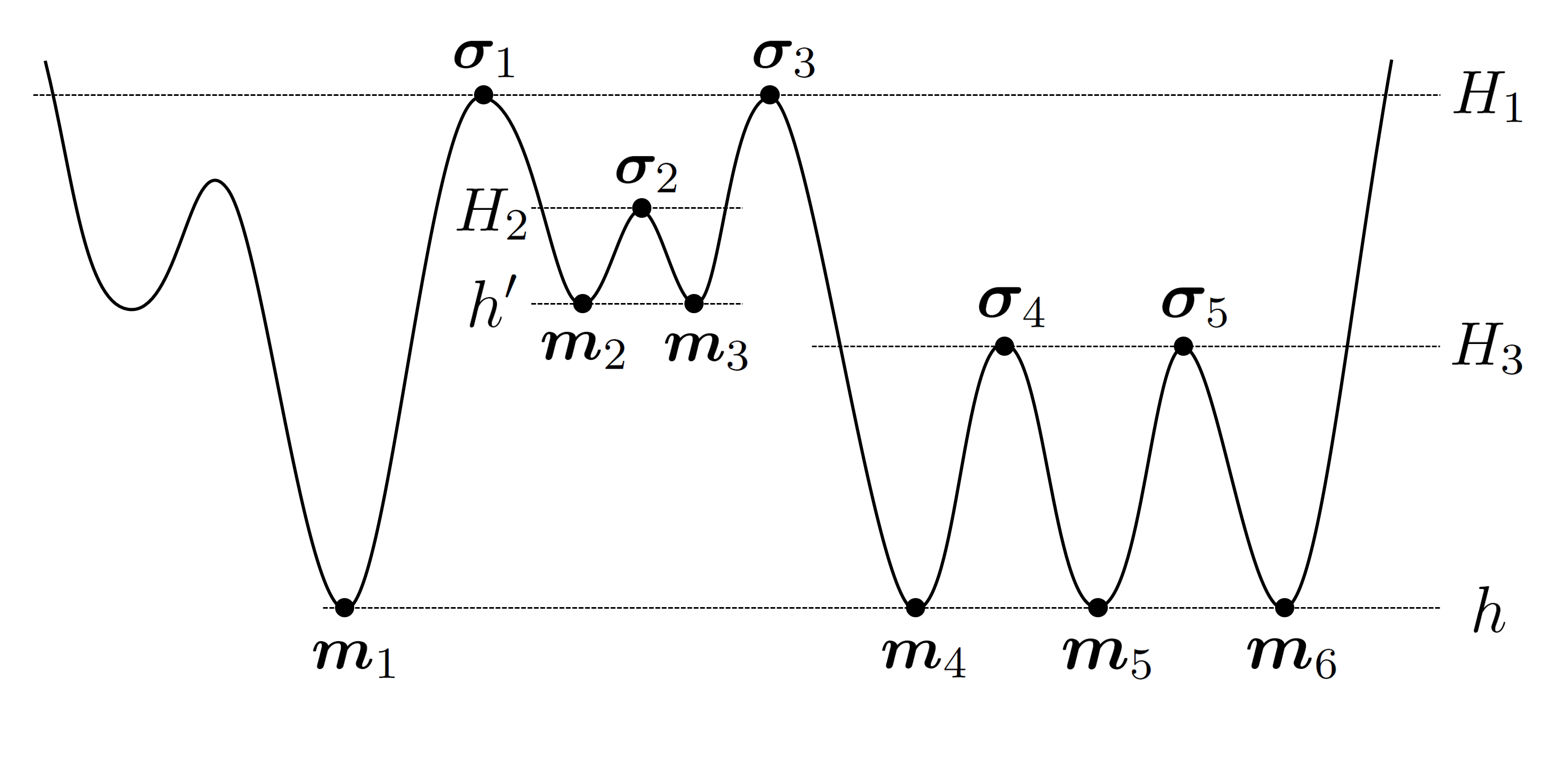}\caption{\label{fig2}In this example of $U$, we have three possible choices
of $H$: $H_{1}$, $H_{2}$, and $H_{3}$. By selecting $H=H_{1}$,
we analyze the transitions between two deepest valleys $\mathcal{D}_{r_{0}}(\boldsymbol{m}_{1})$
and $\mathcal{D}_{r_{0}}(\boldsymbol{m}_{4})\cup\mathcal{D}_{r_{0}}(\boldsymbol{m}_{5})\cup\mathcal{D}_{r_{0}}(\boldsymbol{m}_{6})$.
The time scale for these transitions is $e^{(H_{1}-h)/\epsilon}$,
and $\Sigma^{*}$ with this choice of $H$ is $\{\boldsymbol{\sigma_{1}},\,\boldsymbol{\sigma}_{3}\}$.
Note that these two valleys are not directly connected, and all the
transitions must pass through shallow valleys around $\boldsymbol{m}_{2}$
and $\boldsymbol{m}_{3}$. Hence, to get a precise Markov chain convergence,
we must understand the behavior of the process in these shallow valleys.
If we take $H=H_{2}$, we analyze the transitions between two shallow
valleys $\mathcal{D}_{r_{0}}(\boldsymbol{m}_{2})$ and $\mathcal{D}_{r_{0}}(\boldsymbol{m}_{3})$.
The time scale is now $e^{(H_{2}-h')/\epsilon}$. Finally, if we choose
$H=H_{3}$ , the successive transitions among three valleys $\mathcal{D}_{r_{0}}(\boldsymbol{m}_{4})$,
$\mathcal{D}_{r_{0}}(\boldsymbol{m}_{5})$, and $\mathcal{D}_{r_{0}}(\boldsymbol{m}_{6})$
are investigated in the time scale $e^{(H_{3}-h)/\epsilon}$. Note
that these valleys are not distinguished at the level $H=H_{1}$;
hence, we can analyze the metastable behavior with a higher resolution
by taking this smaller $H$.}
\end{figure}

\subsubsection*{Connection to Eyring--Kramers formula}

Now, we explain the connection between our result and the Eyring--Kramers
formula obtained in \cite{LeeSeo}. For simplicity, we suppose that
$S=S_{\star}$ (i.e., $h_{i}=h$ for all $i\in S$) and that all the
local minima of $U$ are global minima. The explanation below is slightly
more complicated without these assumptions, and we leave the details
to the interested readers. Write $\tau_{\mathcal{A}}$, $\mathcal{A}\subset\mathbb{R}^{d}$,
as the hitting time of the set $\mathcal{A}$. Then, by the Eyring--Kramers
formula obtained in \cite[Theorem 3.5]{LeeSeo}, we have, for $i\in S$
and $\bm{x}\in\mathcal{V}_{i}$,
\[
\begin{aligned}\mathbb{E}_{\bm{x}}^{\epsilon}[\,\tau_{\mathcal{V}_{\star}\setminus\mathcal{V}_{i}}\,]\, & =\,[\,1+o_{\epsilon}(1)\,]\,\frac{\nu_{i}}{\omega_{i}}\,\theta_{\epsilon}\;.\end{aligned}
\]
In other words, for the speeded-up process $\boldsymbol{x}_{\epsilon}(\theta_{\epsilon}\cdot)$
starting from a valley $\mathcal{V}_{i}$, the average of the transition
time to other valleys is approximately $\nu_{i}/\omega_{i}$. This
is in accordance with our result in that the limiting chain $\mathbf{y}(\cdot)$
starting from $i$ jumps to one of the other sites at an average time
of $\big[\,\sum_{j\in S}r_{\mathbf{y}}(i,\,j)\,\big]^{-1}=\nu_{i}/\omega_{i}$.
On the other hand, our result provides more comprehensive information
regarding the metastable behavior compared to the Eyring--Kramers
formula, especially when $S\neq S_{\star}$.

\section{\label{sec_res}Proof Based on Resolvent Approach}

In this section we review the resolvent approach developed in \cite{LMS3}
and then prove Theorems \ref{t33_neg} and \ref{t_main} based on
it.

\subsection{Review of resolvent approach to metastability}

Denote by $L_{\mathbf{y}}$ the generator associated with the limiting
Markov chain $\mathbf{y}(\cdot)$ (defined in the previous section)
that acts on $\mathbf{f}:S_{\star}\rightarrow\mathbb{R}$ such that
\begin{equation}
(L_{\mathbf{y}}\mathbf{f})(i)\,=\,\sum_{j\in S_{\star}}\frac{\beta_{i,\,j}}{\nu_{i}}\,[\,\mathbf{f}(j)-\mathbf{f}(i)\,]\;.\label{e_geny}
\end{equation}
Recall \eqref{conr0} and define, for $i\in S$,
\[
\widehat{\mathcal{V}}_{i}=\bigcup_{\bm{m}\in\mathcal{M}_{i}}\overline{\mathcal{D}_{2r_{\text{0}}}(\bm{m})}\;.
\]
The following analysis of a solution to the resolvent equation is
the main component of the resolvent approach.
\begin{thm}
\label{t_poi}Let $\mathbf{f}:S_{\star}\to\mathbb{R}$ be a given
function and let $\lambda>0$ where both $\mathbf{f}$ and $\lambda$
are independent of $\epsilon$. Then, the unique strong solution $\phi_{\epsilon}^{\mathbf{f}}$
to the resolvent equation (on $u$) on $\mathbb{R}^{d}$
\begin{equation}
(\lambda-\theta_{\epsilon}\mathscr{L}_{\epsilon})\,u\,=\,\sum_{i\in S_{\star}}[(\lambda-L_{\mathbf{y}})\mathbf{f}](i)\,\mathbf{1}_{\mathcal{V}_{i}}\label{res1}
\end{equation}
satisfies
\begin{equation}
\lim_{\epsilon\to\infty}\,\sup_{\bm{x}\in\widehat{\mathcal{V}}_{i}}\,|\,\phi_{\epsilon}^{\mathbf{f}}(\bm{x})-\mathbf{f}(i)\,|\,=\,0\;\;\;\text{for all }i\in S_{\star}\;.\label{e_conphi2}
\end{equation}
\end{thm}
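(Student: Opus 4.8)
The plan is to prove Theorem \ref{t_poi} by the standard two-ingredient strategy of the resolvent approach: first establish a priori bounds and compactness for the family $(\phi_\epsilon^{\mathbf{f}})_\epsilon$, then identify every subsequential limit with the constant $\mathbf{f}(i)$ on each deepest valley $\widehat{\mathcal{V}}_i$. Existence and uniqueness of the strong solution $\phi_\epsilon^{\mathbf{f}}$ is routine: the operator $\lambda - \theta_\epsilon \mathscr{L}_\epsilon$ is invertible on $L^2(\mu_\epsilon)$ since $\mathscr{L}_\epsilon$ generates a Markov semigroup (using \eqref{e_gen}, \eqref{e_genadj} and positive recurrence from Section \ref{sec2}), and elliptic regularity upgrades the $L^2$ solution to a classical one because the coefficients are $C^1$ and the right-hand side is bounded measurable; one also gets the probabilistic representation $\phi_\epsilon^{\mathbf{f}}(\bm{x}) = \mathbb{E}_{\bm{x}}^\epsilon\big[\int_0^\infty \lambda e^{-\lambda t}\,g_\epsilon(\widetilde{\bm{x}}_\epsilon(t))\,dt\big]$ with $g_\epsilon = \sum_{i\in S_\star}[(\lambda - L_{\mathbf{y}})\mathbf{f}](i)\mathbf{1}_{\mathcal{V}_i}$, which immediately yields the uniform bound $\|\phi_\epsilon^{\mathbf{f}}\|_\infty \le \|(\lambda - L_{\mathbf{y}})\mathbf{f}\|_\infty$.

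The heart of the argument is the energy estimate. Testing \eqref{res1} against $\phi_\epsilon^{\mathbf{f}}$ with respect to $\mu_\epsilon$ and using that $\boldsymbol{\ell}$-part of $\mathscr{L}_\epsilon$ is skew-symmetric in $L^2(\mu_\epsilon)$ (a consequence of \eqref{e_conell1}--\eqref{e_conell2}), the non-reversible drift drops out of the quadratic form, leaving
\begin{equation}
\lambda \int (\phi_\epsilon^{\mathbf{f}})^2 \, d\mu_\epsilon + \theta_\epsilon \epsilon \int |\nabla \phi_\epsilon^{\mathbf{f}}|^2 \, d\mu_\epsilon \,=\, \int g_\epsilon \, \phi_\epsilon^{\mathbf{f}} \, d\mu_\epsilon \,\le\, C\;. \label{e_energyplan}
\end{equation}
Thus $\theta_\epsilon \epsilon \int |\nabla \phi_\epsilon^{\mathbf{f}}|^2 d\mu_\epsilon$ is uniformly bounded. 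The key point, exactly as in \cite{RS} and \cite{LMS3}, is that this Dirichlet energy bound, combined with the Eyring--Kramers-level capacity estimates for $\boldsymbol{x}_\epsilon(\cdot)$ proved in the companion paper \cite{LeeSeo}, forces $\phi_\epsilon^{\mathbf{f}}$ to be asymptotically constant on each well $\mathcal{W}_i$ (the energy needed to be non-constant across a well interior is of order $1$, which is incompatible with \eqref{e_energyplan} once one localizes, while crossing a saddle costs the $\theta_\epsilon$-factor): more precisely, one shows $\mathrm{osc}_{\widehat{\mathcal{V}}_i} \phi_\epsilon^{\mathbf{f}} \to 0$ using a Poincaré-type inequality inside $\mathcal{D}_{2r_0}(\bm{m})$ uniformly in $\epsilon$, since on $\widehat{\mathcal{V}}_i$ the measure $\mu_\epsilon$ restricted and renormalized concentrates at $\bm{m}$ with a nondegenerate Gaussian profile. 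Denote the common limiting value on $\widehat{\mathcal{V}}_i$ (along a subsequence) by $a_i$.

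It then remains to show $a_i = \mathbf{f}(i)$ for $i \in S_\star$. Here I would feed test functions into the weak formulation of \eqref{res1}: choosing test functions that are the equilibrium potentials $\mathbf{h}_{\{i\},S_\star\setminus\{i\}}$ of the auxiliary chain $\mathbf{x}(\cdot)$ lifted to $\mathbb{R}^d$ via the known capacity asymptotics (this is the mechanism by which the constants $\beta_{i,j}$ and the generator $L_{\mathbf{y}}$ in \eqref{e_geny} enter), and carefully tracking the contributions of shallow wells $i \in S \setminus S_\star$, which vanish because $\mu_\epsilon(\mathcal{V}_i) = o_\epsilon(1)$ there by Proposition \ref{p32} — this is precisely the place where the trace-process structure of $\mathbf{y}(\cdot)$ over $\mathbf{x}(\cdot)$ (Remark \ref{rem33}) is used. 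Passing to the limit, the vector $(a_i)_{i \in S_\star}$ satisfies the finite linear system $(\lambda - L_{\mathbf{y}})(a_\cdot) = (\lambda - L_{\mathbf{y}})\mathbf{f}$, which has the unique solution $a_i = \mathbf{f}(i)$ since $\lambda > 0$. As the subsequence was arbitrary, \eqref{e_conphi2} follows. The main obstacle I anticipate is the rigorous passage from the global energy bound \eqref{e_energyplan} to the local statement that $\phi_\epsilon^{\mathbf{f}}$ equilibrates on each well and correctly "communicates" across saddles with the rates $\omega^{\boldsymbol{\sigma}}$ — this requires the sharp capacity and harmonic-function estimates near the saddle points $\boldsymbol{\sigma} \in \Sigma^*$ for the non-reversible generator, which is where the non-reversibility genuinely complicates matters and where the results of \cite{LeeSeo} on $\mathbb{H}^{\boldsymbol{\sigma}} + \mathbb{L}^{\boldsymbol{\sigma}}$ must be invoked in full strength.
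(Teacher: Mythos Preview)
Your overall strategy is correct and matches the paper's: uniform $L^\infty$ bound (the paper's Proposition~\ref{p_bddsol}), the energy estimate \eqref{e_energyplan} (Proposition~\ref{p_energy}), flatness of $\phi_\epsilon^{\mathbf f}$ on each well (Proposition~\ref{p55}), and then identification of the plateau values via test functions (Proposition~\ref{p57} through Proposition~\ref{p_test}).

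Two differences are worth flagging. First, in the identification step the paper does \emph{not} extract subsequential limits $a_i$ and solve $(\lambda-L_{\mathbf y})a=(\lambda-L_{\mathbf y})\mathbf f$. Instead it works directly with the $\epsilon$-dependent averages $\mathbf m_\epsilon(i)$, chooses as test function $Q_\epsilon^{\widetilde{\mathbf h}_\epsilon}$ where $\widetilde{\mathbf h}_\epsilon$ is the \emph{harmonic extension} (with respect to the auxiliary chain $\mathbf x(\cdot)$) of the error $\mathbf h_\epsilon=\mathbf m_\epsilon-\mathbf f$, and computes $\int Q_\epsilon^{\widetilde{\mathbf h}_\epsilon}(\lambda-\theta_\epsilon\mathscr L_\epsilon)\phi_\epsilon\,d\mu_\epsilon$ two ways to obtain the coercive identity
\[
\lambda\sum_{i\in S_\star}\mathbf h_\epsilon(i)^2\,\tfrac{\nu_i}{\nu_\star}+D_{\mathbf y}(\mathbf h_\epsilon,\mathbf h_\epsilon)=o_\epsilon(1)\,,
\]
which kills $\mathbf h_\epsilon$ directly. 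This is cleaner than your compactness route and, more importantly, the harmonic extension is precisely what makes the shallow-well contributions disappear (Lemma~\ref{lem59}); your plan to ``track the contributions of shallow wells'' via $\mu_\epsilon(\mathcal V_i)=o_\epsilon(1)$ alone is not quite right, since the relevant object is $\theta_\epsilon\int Q_\epsilon^{\mathbf g}\mathscr L_\epsilon\phi_\epsilon\,d\mu_\epsilon$, which involves the Dirichlet pairing $D_{\mathbf x}(\mathbf g,\mathbf m_\epsilon)$ and sees the shallow wells through the saddle structure, not through the measure.

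Second, the part you flag as the ``main obstacle'' is indeed where almost all the work lies, and it is considerably more delicate than your sketch suggests. The paper constructs $Q_\epsilon^{\mathbf g}$ explicitly (Sections~\ref{sec6}--\ref{sec7}): near each $\boldsymbol\sigma\in\Sigma^*$ it uses the linearized equilibrium potential $p_\epsilon^{\boldsymbol\sigma}$ built from the eigenvector $\boldsymbol v^{\boldsymbol\sigma}$ of $\mathbb H^{\boldsymbol\sigma}-(\mathbb L^{\boldsymbol\sigma})^\dagger$. Unlike the reversible case, $p_\epsilon^{\boldsymbol\sigma}$ does \emph{not} satisfy exact boundary values $0,1$ on $\partial_\pm\mathcal C_\epsilon^{\boldsymbol\sigma}$, so a separate continuation layer of width $\eta=\epsilon^2$ is inserted and shown to have negligible energy (Lemma~\ref{lem63}). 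The evaluation of $\theta_\epsilon\int Q_\epsilon^{\mathbf g}\mathscr L_\epsilon\phi_\epsilon\,d\mu_\epsilon$ then splits into a ``reversible'' boundary term $A_1(\boldsymbol\sigma)$ plus genuinely non-reversible terms $A_2^\pm(\boldsymbol\sigma)$ coming from $\nabla p_\epsilon^{\boldsymbol\sigma}\cdot\boldsymbol\ell$ on the continuation layers; only after a nontrivial algebraic identity involving $\mathbb H^{\boldsymbol\sigma}$, $\mathbb L^{\boldsymbol\sigma}$, $\boldsymbol v^{\boldsymbol\sigma}$ do these combine to yield the Eyring--Kramers weight $\omega^{\boldsymbol\sigma}$. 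Your phrase ``lifted to $\mathbb R^d$ via the known capacity asymptotics'' hides all of this.
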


In \cite[Theorem 2.3]{LMS3}, it has been proven that this theorem
implies Theorems \ref{t33_neg} and \ref{t_main} provided that the
underlying metastable process $\boldsymbol{x}_{\epsilon}(\cdot)$
is a Markov process on discrete set. On the other hand, the proof
of Theorem \ref{t_main} based on Theorem \ref{t_poi} requires a
slight technical modification since the solution $\phi_{\epsilon}^{\mathbf{f}}$
obtained in Theorem \ref{t_poi} does not belong to the core of the
generator $\mathscr{L}_{\epsilon}$ associated with the process $\boldsymbol{x}_{\epsilon}(\cdot)$.
We provide the proof here with emphasis on the modification. We remark
that, we took supremum on $\widehat{\mathcal{V}}_{i}$ (instead of
$\mathcal{V}_{i}$ as in \cite{LMS3}) in order to reserve enough
space to carry out this modification.

The main idea is to replace the indicators in the right-hand side
of \eqref{res1} with smooth functions approximating the indicators
so that we can recall the resolvent theory. Then, by using comparison
argument, we shall solve all the technical problems. To that end,
let us take $r_{1}>r_{0}$ such that $r_{1}$ also satisfies the requirement
\eqref{conr0} and define (cf. \eqref{e_Vi})
\[
\mathcal{V}_{i,\,-}=\bigcup_{\bm{m}\in\mathcal{M}_{i}}\overline{\mathcal{D}_{r_{\text{0}}/2}(\bm{m})}\;\;\;\;\text{and \;\;\;\;}\mathcal{V}_{i,\,+}=\bigcup_{\bm{m}\in\mathcal{M}_{i}}\overline{\mathcal{D}_{r_{\text{1}}}(\bm{m})}
\]
so that $\mathcal{V}_{i,\,-}\subset\mathcal{V}_{i}\subset\mathcal{V}_{i,\,+}$.
Then, for each $i\in S_{\star}$, find smooth functions $\zeta_{i,\,-},\,\zeta_{i,\,+}:\mathbb{R}^{d}\rightarrow[0,\,1]$
such that
\begin{equation}
\mathbf{1}_{\mathcal{V}_{i,\,-}}\le\zeta_{i,\,-}\le\mathbf{1}_{\mathcal{V}_{i}}\le\zeta_{i,\,+}\le\mathbf{1}_{\mathcal{V}_{i,\,+}}\;.\label{order}
\end{equation}

The key idea is to consider the functions $\psi_{\epsilon,\,-}^{\mathbf{f}}$
and $\psi_{\epsilon,\,+}^{\mathbf{f}}$ of the equations
\begin{equation}
(\lambda-\theta_{\epsilon}\mathscr{L}_{\epsilon})u\,=\,\sum_{i\in S_{\star}}[(\lambda-L_{\mathbf{y}})\mathbf{f}](i)\,\zeta_{i,\,\pm}\;,\label{res3}
\end{equation}
respectively. Then, $\psi_{\epsilon,\,\pm}^{\mathbf{f}}$ is now a
smooth function that also satisfies \eqref{e_conphi2} in the following
sense.
\begin{prop}
\label{pro_psi}We have that
\begin{equation}
\lim_{\epsilon\to\infty}\,\sup_{\bm{x}\in\mathcal{V}_{i}}\,|\,\psi_{\epsilon,\,\pm}^{\mathbf{f}}(\bm{x})-\mathbf{f}(i)\,|\,=\,0\;\;\;\text{for all }i\in S_{\star}\;.\label{conphi2-1}
\end{equation}
\end{prop}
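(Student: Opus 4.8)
The plan is to treat $\psi_{\epsilon,\,\pm}^{\mathbf{f}}$ as a small perturbation of the function $\phi_{\epsilon}^{\mathbf{f}}$ produced by Theorem \ref{t_poi}, bound the difference by the resolvent applied to the indicator of a thin spherical shell, and then evaluate that shell resolvent by invoking Theorem \ref{t_poi} two further times with inflated, respectively deflated, radii. First I would set $c_{i}:=[(\lambda-L_{\mathbf{y}})\mathbf{f}](i)$, $C_{0}:=\sum_{i\in S_{\star}}|c_{i}|$, and $w_{\pm}:=\psi_{\epsilon,\,\pm}^{\mathbf{f}}-\phi_{\epsilon}^{\mathbf{f}}$. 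Subtracting \eqref{res1} from \eqref{res3} and using linearity, $w_{\pm}=(\lambda-\theta_{\epsilon}\mathscr{L}_{\epsilon})^{-1}\big(\sum_{i\in S_{\star}}c_{i}(\zeta_{i,\,\pm}-\mathbf{1}_{\mathcal{V}_{i}})\big)$. From \eqref{order} one has $|\zeta_{i,\,\pm}-\mathbf{1}_{\mathcal{V}_{i}}|\le\mathbf{1}_{\mathcal{V}_{i,\,+}\setminus\mathcal{V}_{i,\,-}}$, so, after choosing $r_{1}$ (close enough to $r_{0}$) that the closed balls $\{\overline{\mathcal{D}_{r_{1}}(\boldsymbol{m})}:\boldsymbol{m}\in\mathcal{M}_{\star}\}$ are pairwise disjoint --- possible since distinct minima in $\mathcal{M}_{\star}$ are more than $2r_{0}$ apart by \eqref{conr0} --- the source term above is dominated in modulus by $C_{0}\mathbf{1}_{\mathcal{A}}$, where $\mathcal{A}:=\mathcal{V}_{\star,\,+}\setminus\mathcal{V}_{\star,\,-}$ and $\mathcal{V}_{\star,\,\pm}:=\bigcup_{j\in S_{\star}}\mathcal{V}_{j,\,\pm}$.

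Since $(\lambda-\theta_{\epsilon}\mathscr{L}_{\epsilon})^{-1}$ is positivity-preserving --- equivalently, via the Feynman--Kac identity $[(\lambda-\theta_{\epsilon}\mathscr{L}_{\epsilon})^{-1}g](\boldsymbol{x})=\mathbb{E}_{\boldsymbol{x}}^{\epsilon}[\int_{0}^{\infty}e^{-\lambda t}g(\widetilde{\boldsymbol{x}}_{\epsilon}(t))\,dt]$ for the non-explosive process $\widetilde{\boldsymbol{x}}_{\epsilon}(\cdot)$ --- this yields the pointwise bound $|w_{\pm}|\le\frac{C_{0}}{\lambda}R_{\epsilon}$ with $R_{\epsilon}:=(\lambda-\theta_{\epsilon}\mathscr{L}_{\epsilon})^{-1}(\lambda\mathbf{1}_{\mathcal{A}})$, so the whole proposition reduces to proving $\sup_{\boldsymbol{x}\in\mathcal{V}_{i}}R_{\epsilon}(\boldsymbol{x})\to0$ for each $i\in S_{\star}$. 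Here I would exploit that both $r_{1}$ and $r_{0}/2$ satisfy \eqref{conr0} as well, and that neither $\mathbf{y}(\cdot)$ nor $\theta_{\epsilon}$ depends on the radius used to build the metastable valleys; hence Theorem \ref{t_poi} (or rather its proof) applies unchanged with $r_{0}$ replaced by any admissible $\rho>0$, with \eqref{e_conphi2} then holding on $\bigcup_{\boldsymbol{m}\in\mathcal{M}_{i}}\overline{\mathcal{D}_{2\rho}(\boldsymbol{m})}$. Applying this with the constant test function $\mathbf{f}\equiv1$ --- for which $(\lambda-L_{\mathbf{y}})\mathbf{f}\equiv\lambda$, so the right side of \eqref{res1} becomes $\lambda\mathbf{1}_{\mathcal{V}_{\star}^{(\rho)}}$ with $\mathcal{V}_{\star}^{(\rho)}:=\bigcup_{j\in S_{\star}}\bigcup_{\boldsymbol{m}\in\mathcal{M}_{j}}\overline{\mathcal{D}_{\rho}(\boldsymbol{m})}$ and the solution is exactly $\Phi_{\epsilon}^{(\rho)}:=(\lambda-\theta_{\epsilon}\mathscr{L}_{\epsilon})^{-1}(\lambda\mathbf{1}_{\mathcal{V}_{\star}^{(\rho)}})$ --- for $\rho=r_{1}$ and $\rho=r_{0}/2$ gives $\sup_{\boldsymbol{x}\in\mathcal{V}_{i}}|\Phi_{\epsilon}^{(r_{1})}(\boldsymbol{x})-1|\to0$ (using $\bigcup_{\boldsymbol{m}\in\mathcal{M}_{i}}\overline{\mathcal{D}_{2r_{1}}(\boldsymbol{m})}\supseteq\mathcal{V}_{i}$) and $\sup_{\boldsymbol{x}\in\mathcal{V}_{i}}|\Phi_{\epsilon}^{(r_{0}/2)}(\boldsymbol{x})-1|\to0$ (using $\bigcup_{\boldsymbol{m}\in\mathcal{M}_{i}}\overline{\mathcal{D}_{r_{0}}(\boldsymbol{m})}=\mathcal{V}_{i}$). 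By the disjointness arranged above, $\mathbf{1}_{\mathcal{A}}=\mathbf{1}_{\mathcal{V}_{\star}^{(r_{1})}}-\mathbf{1}_{\mathcal{V}_{\star}^{(r_{0}/2)}}$, so linearity of the resolvent gives $R_{\epsilon}=\Phi_{\epsilon}^{(r_{1})}-\Phi_{\epsilon}^{(r_{0}/2)}$, whence $\sup_{\boldsymbol{x}\in\mathcal{V}_{i}}R_{\epsilon}(\boldsymbol{x})\to0$.

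To finish, I would combine the two estimates by the triangle inequality, using Theorem \ref{t_poi} for the original radius together with $\mathcal{V}_{i}\subseteq\widehat{\mathcal{V}}_{i}$:
\[
\sup_{\boldsymbol{x}\in\mathcal{V}_{i}}|\psi_{\epsilon,\,\pm}^{\mathbf{f}}(\boldsymbol{x})-\mathbf{f}(i)|\,\le\,\frac{C_{0}}{\lambda}\sup_{\boldsymbol{x}\in\mathcal{V}_{i}}R_{\epsilon}(\boldsymbol{x})+\sup_{\boldsymbol{x}\in\widehat{\mathcal{V}}_{i}}|\phi_{\epsilon}^{\mathbf{f}}(\boldsymbol{x})-\mathbf{f}(i)|\;\longrightarrow\;0\;,
\]
which is \eqref{conphi2-1}; this is also where the extra room built into Theorem \ref{t_poi} (supremum over $\widehat{\mathcal{V}}_{i}$, not just $\mathcal{V}_{i}$) pays off, since the inflated radius $r_{1}>r_{0}$ entering through $\zeta_{i,\,+}$ must still land inside a region where the $\rho=r_{1}$ instance of Theorem \ref{t_poi} delivers convergence.

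The step I expect to demand the most care is not the bookkeeping above but the analytic foundations it rests on: within the resolvent framework of \cite{LMS3} adapted to $\boldsymbol{x}_{\epsilon}(\cdot)$, one needs uniqueness of bounded solutions of \eqref{res1}--\eqref{res3}, the positivity and stochastic representation of $(\lambda-\theta_{\epsilon}\mathscr{L}_{\epsilon})^{-1}$ acting on merely bounded measurable data such as $\mathbf{1}_{\mathcal{A}}$, and --- most importantly --- a genuine verification that the proof of Theorem \ref{t_poi} is insensitive to the radius $r_{0}$, so that its $\rho=r_{1}$ and $\rho=r_{0}/2$ instances are legitimately available. The remaining points (the two-radius identity for $\mathbf{1}_{\mathcal{A}}$, the disjointness of the shells, the evaluation $(\lambda-L_{\mathbf{y}})\mathbf{1}\equiv\lambda$) are routine.
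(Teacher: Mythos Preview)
Your proof is correct and rests on exactly the same two ingredients as the paper's: the positivity of the resolvent $(\lambda-\theta_{\epsilon}\mathscr{L}_{\epsilon})^{-1}$ and the fact that Theorem \ref{t_poi} applies equally well with the radii $r_{0}/2$ and $r_{1}$. The paper implements this slightly more directly: it introduces $\phi_{\epsilon,\,\pm}^{\mathbf{f}}$ solving \eqref{res1} with $\mathbf{1}_{\mathcal{V}_{i,\,\pm}}$ in place of $\mathbf{1}_{\mathcal{V}_{i}}$, observes that these satisfy \eqref{conphi2} by Theorem \ref{t_poi} at the modified radii, and then sandwiches $\psi_{\epsilon,\,\pm}^{\mathbf{f}}$ between $\phi_{\epsilon,\,-}^{\mathbf{f}}$ and $\phi_{\epsilon,\,+}^{\mathbf{f}}$ via \eqref{order} and positivity. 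Your route differs only in that you bound $|\psi_{\epsilon,\,\pm}^{\mathbf{f}}-\phi_{\epsilon}^{\mathbf{f}}|$ by the resolvent of the shell indicator $\mathbf{1}_{\mathcal{A}}$ and then evaluate that shell resolvent by specializing to the constant $\mathbf{f}\equiv 1$; this has the minor advantage of handling mixed signs of the coefficients $c_{i}=[(\lambda-L_{\mathbf{y}})\mathbf{f}](i)$ without any case splitting, at the cost of one extra intermediate object $R_{\epsilon}$. Both arguments identify the same ``extra room'' in $\widehat{\mathcal{V}}_{i}$ as the reason the $r_{0}/2$ instance of Theorem \ref{t_poi} still controls the supremum over $\mathcal{V}_{i}$.
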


\begin{proof}
Denote by $\phi_{\epsilon,\,\pm}^{\mathbf{f}}$ the solution to the
equations
\begin{equation}
(\lambda-\theta_{\epsilon}\mathscr{L}_{\epsilon})u\,=\,\sum_{i\in S_{\star}}[(\lambda-L_{\mathbf{y}})\mathbf{f}](i)\,\mathbf{1}_{\mathcal{V}_{i,\,\pm}}\;.\label{res3-2}
\end{equation}
Since Theorem \ref{t_poi} holds for all $r_{0}>0$ satisfying \eqref{conr0},
we can conclude that $\phi_{\epsilon,\,\pm}^{\mathbf{f}}$ also satisfies
\eqref{e_conphi2} in the sense that
\begin{equation}
\lim_{\epsilon\to\infty}\,\sup_{\bm{x}\in\mathcal{V}_{i}}\,|\,\phi_{\epsilon,\,\pm}^{\mathbf{f}}(\bm{x})-\mathbf{f}(i)\,|\,=\,0\;\;\;\text{for all }i\in S_{\star}\;.\label{conphi2}
\end{equation}
Note that the appearance of $\widehat{\mathcal{V}}_{i}$ at \eqref{e_conphi2}
guarantees $\sup_{\bm{x}\in\mathcal{V}_{i}}$ in the previous estimate
for $\phi_{\epsilon,\,-}^{\mathbf{f}}$. Therefore, the statement
of proposition follows from \eqref{order} and the strong positivity
of the operator $\lambda-\theta_{\epsilon}\mathscr{L}_{\epsilon}$.
\end{proof}
Now we use two functions $\psi_{\epsilon,\,-}^{\mathbf{f}}$ and $\psi_{\epsilon,\,+}^{\mathbf{f}}$
to prove Theorems \ref{t33_neg} and \ref{t_main}, respectively.
The huge benefit with these functions is the well-known expressions
\begin{equation}
\psi_{\epsilon,\,\pm}^{\mathbf{f}}(\boldsymbol{x})=\mathbb{E}_{\boldsymbol{x}}^{\epsilon}\left[\int_{0}^{\infty}e^{-\lambda t}G_{\pm}(\boldsymbol{x}_{\epsilon}(t))dt\right]\label{expres}
\end{equation}
where $G_{\pm}=\sum_{i\in S_{\star}}[(\lambda-L_{\mathbf{y}})\mathbf{f}](i)\,\zeta_{i,\,\pm}$
are bounded functions.

\subsection{\label{sec42}Proof of Theorem \ref{t33_neg} }
\begin{proof}[Proof of Theorem \ref{t33_neg}]
For $t>0$, we have
\begin{equation}
\int_{0}^{t}\,\mathbf{1_{\Delta}}(\widetilde{\boldsymbol{x}}_{\epsilon}(s))\,ds\le\int_{0}^{t}\,e^{\lambda t-\lambda s}\,\mathbf{1_{\Delta}}(\widetilde{\boldsymbol{x}}_{\epsilon}(s))\,ds\le e^{\lambda t}\int_{0}^{\infty}\,e^{-\lambda s}\,\mathbf{1_{\Delta}}(\widetilde{\boldsymbol{x}}_{\epsilon}(s))\,ds\ .\label{eq:es1}
\end{equation}
Also, by \eqref{order} and by definition of $\Delta$, we have
\begin{equation}
\mathbf{1_{\Delta}}\le1-\sum_{i\in S_{\star}}\zeta_{i,\,-}\;.\label{eq:es2}
\end{equation}
By \eqref{eq:es1} and \eqref{eq:es2}, the proof of Theorem \ref{t33_neg}
is reduced to show that, for all $i\in S_{\star}$
\[
\lim_{\epsilon\to0}\sup_{\boldsymbol{x}\in\mathcal{V}_{i}}\mathbb{E}_{\boldsymbol{x}}^{\epsilon}\,\Big[\,\int_{0}^{\infty}\,e^{-\lambda s}\Big(1-\sum_{i\in S_{\star}}\zeta_{i,\,-}(\widetilde{\boldsymbol{x}}_{\epsilon}(s))\Big)\,ds\,\Big]\,=\,0
\]
or equivalently
\begin{equation}
\lim_{\epsilon\to0}\sup_{\boldsymbol{x}\in\mathcal{V}_{i}}\mathbb{E}_{\boldsymbol{x}}^{\epsilon}\,\Big[\,\int_{0}^{\infty}\,e^{-\lambda s}\sum_{i\in S_{\star}}\zeta_{i,\,-}(\widetilde{\boldsymbol{x}}_{\epsilon}(s))\,ds\,\Big]\,=\,\frac{1}{\lambda}\;.\label{expres2}
\end{equation}
Note that the constant function $\mathbf{c}:S_{\star}\rightarrow\mathbb{R}$
defined by $\mathbf{c}\equiv\frac{1}{\lambda}$ satisfies $(\lambda-L_{\mathbf{y}})\mathbf{c}\equiv1$
and therefore by \eqref{expres}, we can write
\[
\mathbb{E}_{\boldsymbol{x}}^{\epsilon}\,\Big[\,\int_{0}^{\infty}\,e^{-\lambda s}\sum_{i\in S_{\star}}\zeta_{i,\,-}(\widetilde{\boldsymbol{x}}_{\epsilon}(s))\,ds\,\Big]=\psi_{\epsilon,\,-}^{\mathbf{c}}(\boldsymbol{x})\;.
\]
Therefore, \eqref{expres2} is a direct consequence of Proposition
\ref{pro_psi}.
\end{proof}

\subsection{Proof of Theorem \ref{t_main}}

Next we turn to the proof of Theorem \ref{t_main}. Here we need to
use $\psi_{\epsilon,\,+}^{\mathbf{f}}$ instead. We first recall the
following technical lemma from \cite[Lemma 4.3]{LMS3}.
\begin{lem}
\label{lem3}Theorem \ref{t33_neg} implies that, for all $t>0$,
(cf. \eqref{e_St})
\begin{align*}
 & \lim_{\epsilon\rightarrow0}\sup_{\boldsymbol{x}\in\mathcal{V}_{\star}}\mathbb{E}_{\boldsymbol{x}}^{\epsilon}\left[e^{-\lambda t}-e^{-\lambda S_{\epsilon}(t)}\right]=0\;\;\;\text{and}\\
 & \lim_{\epsilon\rightarrow0}\sup_{\boldsymbol{x}\in\mathcal{V}_{\star}}\mathbb{E}_{\boldsymbol{x}}^{\epsilon}\left[\int_{0}^{t}\left\{ e^{-\lambda s}-e^{-\lambda S_{\epsilon}(s)}\right\} ds\right]=0\;.
\end{align*}
\end{lem}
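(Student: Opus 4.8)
The plan is to establish Lemma \ref{lem3} by an elementary deterministic analysis of the time-change $S_{\epsilon}(\cdot)$, feeding in the single quantitative input provided by Theorem \ref{t33_neg}. The starting observation is that, by construction, $T_{\epsilon}(\cdot)$ is $1$-Lipschitz and nondecreasing, its generalized inverse $S_{\epsilon}(\cdot)$ satisfies $T_{\epsilon}(S_{\epsilon}(t))=t$ for all $t\ge 0$ (since $T_{\epsilon}$ is continuous), and $S_{\epsilon}(t)\ge t$. Moreover the gap $S_{\epsilon}(t)-t$ equals exactly the amount of time the sped-up process $\widetilde{\boldsymbol{x}}_{\epsilon}(\cdot)$ spends in $\Delta$ up to time $S_{\epsilon}(t)$, i.e.
\[
S_{\epsilon}(t)-t\;=\;\int_{0}^{S_{\epsilon}(t)}\mathbf{1}_{\Delta}(\widetilde{\boldsymbol{x}}_{\epsilon}(s))\,ds\;.
\]
This is the key identity; everything else is bookkeeping around it.

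First I would prove the first limit. Using $0\le e^{-\lambda t}-e^{-\lambda S_{\epsilon}(t)}\le \lambda\,(S_{\epsilon}(t)-t)$ (mean value theorem, and $S_{\epsilon}(t)\ge t$), together with the identity above, it suffices to bound $\mathbb{E}_{\boldsymbol{x}}^{\epsilon}[\int_0^{S_{\epsilon}(t)}\mathbf{1}_{\Delta}(\widetilde{\boldsymbol{x}}_{\epsilon}(s))\,ds]$ uniformly in $\boldsymbol{x}\in\mathcal{V}_{\star}$. The obstacle is that the upper endpoint $S_{\epsilon}(t)$ is random and a priori unbounded, so Theorem \ref{t33_neg} (which controls $\mathbb{E}_{\boldsymbol{x}}^{\epsilon}[\int_0^{s}\mathbf{1}_{\Delta}]$ for \emph{fixed} $s$) does not apply directly. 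I would resolve this by a truncation: split according to whether $S_{\epsilon}(t)\le M$ or not, for a large constant $M$. On $\{S_{\epsilon}(t)\le M\}$ the integral is at most $\int_0^{M}\mathbf{1}_{\Delta}(\widetilde{\boldsymbol{x}}_{\epsilon}(s))\,ds$, whose expectation is controlled by Theorem \ref{t33_neg} applied with $t=M$ (uniformly in the starting point in $\mathcal{V}_{\star}$, as stated there). On the complement, $S_{\epsilon}(t)>M$ forces $T_{\epsilon}(M)<t$, i.e. $\int_0^{M}\mathbf{1}_{\mathcal{V}_{\star}}(\widetilde{\boldsymbol{x}}_{\epsilon}(s))\,ds<t$, equivalently $\int_0^{M}\mathbf{1}_{\Delta}(\widetilde{\boldsymbol{x}}_{\epsilon}(s))\,ds>M-t$; by Markov's inequality and Theorem \ref{t33_neg} this event has probability tending to $0$ uniformly in the start, provided $M>t$. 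On that small-probability event one still needs an $L^1$-type control of the tail of $S_{\epsilon}(t)-t$; here I would use that the excursion lengths in $\Delta$ have exponential moments uniform in $\epsilon$ and in the starting point, which follows from the growth conditions \eqref{e_conU} on $U$ (the standard Lyapunov/Gaussian tail estimates for $\boldsymbol{x}_{\epsilon}(\cdot)$, cf. \cite[Theorem 2.2]{LeeSeo}), so a Cauchy--Schwarz split over the rare event closes the bound. Sending $\epsilon\to 0$ and then $M\to\infty$ gives the first claim.

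For the second limit, I would integrate: since the integrand $0\le e^{-\lambda s}-e^{-\lambda S_{\epsilon}(s)}\le \lambda(S_{\epsilon}(s)-t)$ wait — more carefully, it is bounded by $\lambda(S_{\epsilon}(s)-s)\le \lambda(S_{\epsilon}(t)-s)\le \lambda(S_{\epsilon}(t)-0)$ is too crude; instead I would note $\int_0^{t}\{e^{-\lambda s}-e^{-\lambda S_{\epsilon}(s)}\}ds\le \lambda\int_0^{t}(S_{\epsilon}(s)-s)\,ds$, and since $s\mapsto S_{\epsilon}(s)-s$ is nondecreasing this is at most $\lambda t\,(S_{\epsilon}(t)-t)$. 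Then the second limit follows from exactly the same bound on $\mathbb{E}_{\boldsymbol{x}}^{\epsilon}[S_{\epsilon}(t)-t]$ established in the previous paragraph. The main technical obstacle throughout is the random, unbounded upper limit of integration $S_{\epsilon}(t)$, and the remedy is the truncation-at-$M$ argument combined with uniform exponential control of $\Delta$-excursion lengths coming from the coercivity of $U$; the rest is elementary calculus and Markov's inequality.
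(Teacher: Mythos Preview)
The paper does not prove this lemma; it simply cites \cite[Lemma~4.3]{LMS3}. So there is no argument in the paper to compare against, and your task is effectively to supply a self-contained proof.

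Your overall strategy---the identity $S_{\epsilon}(t)-t=\int_{0}^{S_{\epsilon}(t)}\mathbf{1}_{\Delta}(\widetilde{\boldsymbol{x}}_{\epsilon}(s))\,ds$, the truncation at a level $M>t$, and Markov's inequality to show $\sup_{\boldsymbol{x}\in\mathcal{V}_{\star}}\mathbb{P}_{\boldsymbol{x}}^{\epsilon}[S_{\epsilon}(t)>M]\to 0$ via Theorem~\ref{t33_neg}---is correct and is essentially how such statements are proved. However, you introduce an unnecessary and unjustified step. On the event $\{S_{\epsilon}(t)>M\}$ you claim one ``still needs an $L^{1}$-type control of the tail of $S_{\epsilon}(t)-t$'' and invoke exponential moments of $\Delta$-excursion lengths ``uniform in~$\epsilon$''. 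This is not needed: the random variable whose expectation you are bounding is $e^{-\lambda t}-e^{-\lambda S_{\epsilon}(t)}\in[0,1]$ deterministically, so on $\{S_{\epsilon}(t)>M\}$ its contribution is at most $\mathbb{P}_{\boldsymbol{x}}^{\epsilon}[S_{\epsilon}(t)>M]$, which you have already shown vanishes. The exponential-moment claim is moreover not obviously true in the \emph{accelerated} time scale $\theta_{\epsilon}$: when $S_{\star}\subsetneq S$ the process can linger in shallow wells inside $\Delta$, and the coercivity conditions \eqref{e_conU} together with \cite[Theorem~2.2]{LeeSeo} do not by themselves yield such uniform moments. So this detour is both superfluous and a genuine gap as written.

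For the second limit the same remark applies (the integral lies in $[0,t]$ deterministically), but in fact it follows immediately from the first by Fubini and dominated convergence:
\[
\sup_{\boldsymbol{x}\in\mathcal{V}_{\star}}\mathbb{E}_{\boldsymbol{x}}^{\epsilon}\Big[\int_{0}^{t}\big\{e^{-\lambda s}-e^{-\lambda S_{\epsilon}(s)}\big\}\,ds\Big]\;\le\;\int_{0}^{t}\sup_{\boldsymbol{x}\in\mathcal{V}_{\star}}\mathbb{E}_{\boldsymbol{x}}^{\epsilon}\big[e^{-\lambda s}-e^{-\lambda S_{\epsilon}(s)}\big]\,ds\;,
\]
with the right-hand integrand bounded by $1$ and tending to $0$ pointwise in $s$ by the first limit. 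Your route via the monotonicity of $s\mapsto S_{\epsilon}(s)-s$ is correct but ends up relying on the same unnecessary moment bound.
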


Now we turn to the proof of Theorem \ref{t_main}.
\begin{proof}[Proof of Theorem \ref{t_main}]
The argument given in \cite{LMS3} uses the solution $\phi_{\epsilon}^{\mathbf{f}}$
to prove Theorem \ref{t_main} when the underlying metastable Markov
process (in our case, $\boldsymbol{x}_{\epsilon}(\cdot)$) is defined
on a discrete set. However, our proof requires a slight modification
since our underlying Markov process $\boldsymbol{x}_{\epsilon}(\cdot)$
is now defined on $\mathbb{R}^{d}$. Technically speaking, the problem
is the fact that $\phi_{\epsilon}^{\mathbf{f}}\notin C^{2}(\mathbb{R}^{d})$
which implies that $\phi_{\epsilon}^{\mathbf{f}}$ does not belong
to the core of the generator $\mathscr{L}_{\epsilon}$. Thus, we cannot
conclude that
\[
M_{\epsilon}^{\phi}(t)=e^{-\lambda t}\phi_{\epsilon}^{\mathbf{f}}(\widetilde{\boldsymbol{x}}_{\epsilon}(t))-\phi_{\epsilon}^{\mathbf{f}}(\widetilde{\boldsymbol{x}}_{\epsilon}(0))+\int_{0}^{t}e^{-\lambda s}(\lambda-\theta_{\epsilon}\mathscr{L}_{\epsilon})\phi_{\epsilon}^{\mathbf{f}}(\widetilde{\boldsymbol{x}}_{\epsilon}(s))ds
\]
is a martingale. This is the only place at which we can not use this
function as in the proof of \cite[Proposition 4.4]{LMS3}. This is
the reason that we introduced the solution $\psi_{\epsilon,\,+}^{\mathbf{f}}$
which belongs to the core of $\mathscr{L}_{\epsilon}$, and we instead
consider
\[
M_{\epsilon}^{\psi}(t)=e^{-\lambda t}\psi_{\epsilon,\,+}^{\mathbf{f}}(\widetilde{\boldsymbol{x}}_{\epsilon}(t))-\psi_{\epsilon,\,+}^{\mathbf{f}}(\widetilde{\boldsymbol{x}}_{\epsilon}(0))+\int_{0}^{t}e^{-\lambda s}(\lambda-\theta_{\epsilon}\mathscr{L}_{\epsilon})\psi_{\epsilon,\,+}^{\mathbf{f}}(\widetilde{\boldsymbol{x}}_{\epsilon}(s))ds
\]
which is now a martingale.

In the proof of \cite[Proposition 4.4]{LMS3} at which $M_{\epsilon}^{\phi}(t)$
was a martingale, the crucial ingredient of the proof is to consider
$M_{\epsilon}^{\phi}(S_{\epsilon}(t))$ which can be rewritten in
a simple form thanks to \eqref{res1} and \eqref{e_conphi2}. Therefore,
if we can show that
\begin{equation}
\lim_{\epsilon\rightarrow0}\sup_{x\in\mathcal{V}_{\star}}\mathbb{E}_{\boldsymbol{x}}^{\epsilon}\left[\left|M_{\epsilon}^{\psi}(S_{\epsilon}(t))-M_{\epsilon}^{\phi}(S_{\epsilon}(t))\right|\right]=0\;\;\;\;\;\text{for all }t\ge0\;,\label{eq:rep}
\end{equation}
we can argue that $M_{\epsilon}^{\phi}(t)$ is a negligible perturbation
of a martingale and therefore the proof of \cite[Proposition 4.4]{LMS3}
can still be applied. To prove \eqref{eq:rep}, we need to prove that
\begin{align}
 & \lim_{\epsilon\rightarrow0}\sup_{x\in\mathcal{V}_{\star}}\mathbb{E}_{\boldsymbol{x}}^{\epsilon}\left|\psi_{\epsilon,\,+}^{\mathbf{f}}(\widetilde{\boldsymbol{x}}_{\epsilon}(S_{\epsilon}(t)))-\phi_{\epsilon}^{\mathbf{f}}(\widetilde{\boldsymbol{x}}_{\epsilon}(S_{\epsilon}(t)))\right|=0\;\;\;\text{for all }t\ge0\text{, and}\label{re1}\\
 & \lim_{\epsilon\rightarrow0}\sup_{x\in\mathcal{V}_{\star}}\mathbb{E}_{\boldsymbol{x}}^{\epsilon}\left[\int_{0}^{S_{\epsilon}(t)}e^{-\lambda s}\mathbf{1}_{\Delta}(\widetilde{\boldsymbol{x}}_{\epsilon}(s))ds\right]=0\label{re2}
\end{align}
where the second one follows from the observation that, for some $C>0$,
\[
\left|(\lambda-\theta_{\epsilon}\mathscr{L}_{\epsilon})\psi_{\epsilon,\,+}^{\mathbf{f}}-(\lambda-\theta_{\epsilon}\mathscr{L}_{\epsilon})\phi_{\epsilon}^{\mathbf{f}}\right|\le C\sum_{i\in S_{\star}}\mathbf{1}_{\mathcal{V}_{i,\,+}\setminus\mathcal{V}_{i}}\le C\mathbf{1}_{\Delta}\;.
\]
Since $\widetilde{\boldsymbol{x}}_{\epsilon}(S_{\epsilon}(t))\in\mathcal{V}_{\star}$
by the definition \eqref{e_St} of $S_{\epsilon}(t)$, the estimate
\eqref{re1} is a direct consequence of \eqref{e_conphi2} and Proposition
\ref{pro_psi}.

Now it remains to prove \eqref{re2}. By the change of variable $s\leftarrow S_{\epsilon}(u)$,
\[
\int_{0}^{S_{\epsilon}(t)}e^{-\lambda s}\mathbf{1}_{\mathcal{V}_{\star}}(\widetilde{\boldsymbol{x}}_{\epsilon}(s))ds=\int_{0}^{t}e^{-\lambda S_{\epsilon}(u)}\mathbf{1}_{\mathcal{V}_{\star}}(\widetilde{\boldsymbol{x}}_{\epsilon}(S_{\epsilon}(u)))du=\int_{0}^{t}e^{-\lambda S_{\epsilon}(u)}du
\]
where the first equality follows from the definition of $S_{\epsilon}(\cdot)$
and the second one follows from $\widetilde{\boldsymbol{x}}_{\epsilon}(S_{\epsilon}(u))\in\mathcal{V}_{\star}$
for all $u\ge0$ by definition. Therefore,
\begin{align*}
\int_{0}^{S_{\epsilon}(t)}e^{-\lambda s}\mathbf{1}_{\Delta}(\widetilde{\boldsymbol{x}}_{\epsilon}(s))ds & =\int_{0}^{S_{\epsilon}(t)}e^{-\lambda s}\left\{ 1-\mathbf{1}_{\mathcal{V}_{\star}}(\widetilde{\boldsymbol{x}}_{\epsilon}(s))\right\} ds\\
 & =\int_{0}^{S_{\epsilon}(t)}e^{-\lambda s}ds-\int_{0}^{t}e^{-\lambda S_{\epsilon}(u)}du\\
 & =\left[\int_{0}^{S_{\epsilon}(t)}e^{-\lambda s}ds-\int_{0}^{t}e^{-\lambda s}ds\right]+\int_{0}^{t}\left\{ e^{-\lambda s}-e^{-\lambda S_{\epsilon}(s)}\right\} ds\;.
\end{align*}
Thus, \eqref{re2} is a direct consequence of Lemma \ref{lem3}.
\end{proof}
\textbf{The remainder of the article is focused on the proof of Theorem
\ref{t_poi}.} Hence, we shall assume in the remainder of the article
that both $\mathbf{f}:S_{\star}\to\mathbb{R}$ and $\lambda>0$ are
fixed and independent of $\epsilon$. Moreover, we simply write $\phi_{\epsilon}$
the solution $\phi_{\epsilon}^{\mathbf{f}}$ of equation \eqref{res1}.
Moreover, we shall always implicitly assume that $\epsilon>0$ is
sufficiently small, as we are focusing on the asymptotics as $\epsilon\rightarrow0$.

\section{\label{sec5}Analysis of Resolvent Equation}

In this section, we prove Theorem \ref{t_poi} up to the construction
of a certain test function, which will be deferred to Sections \ref{sec6}
and \ref{sec7}.

\subsection{\label{sec52}Energy estimate}

In this subsection, we present a crucial energy estimate for the solutions
of\textcolor{black}{{} resolvent equation. Before proceeding to this
estimate, we first remark that $\phi_{\epsilon}$ is a bounded function
as a consequence of \cite[display (4.2)]{LMS3}. A detailed statement
is given as the following proposition. }
\begin{prop}
\label{p_bddsol}There exists $C>0$ so that $\Vert\phi_{\epsilon}\Vert_{L^{\infty}(\mathbb{R}^{d})}<C$
for all $\epsilon>0$.
\end{prop}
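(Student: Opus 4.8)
The plan is to read off the claim from the probabilistic representation of the solution to the resolvent equation \eqref{res1}, which is exactly the content of \cite[display (4.2)]{LMS3}. First I would recall that, by \cite[Theorem 2.2]{LeeSeo}, the diffusion $\boldsymbol{x}_{\epsilon}(\cdot)$ is non-explosive and positive recurrent; consequently the time-rescaled process $\widetilde{\boldsymbol{x}}_{\epsilon}(\cdot)=\boldsymbol{x}_{\epsilon}(\theta_{\epsilon}\,\cdot)$, whose generator is $\theta_{\epsilon}\mathscr{L}_{\epsilon}$, is also non-explosive and generates a conservative Markovian semigroup $(P_{t}^{\epsilon})_{t\ge0}$ acting on bounded Borel functions. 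Next I would record that the right-hand side of \eqref{res1}, namely $g_{\epsilon}:=\sum_{i\in S_{\star}}[(\lambda-L_{\mathbf{y}})\mathbf{f}](i)\,\mathbf{1}_{\mathcal{V}_{i}}$, is a bounded Borel function with $\Vert g_{\epsilon}\Vert_{L^{\infty}(\mathbb{R}^{d})}\le M_{\mathbf{f},\lambda}:=\max_{i\in S_{\star}}\big|[(\lambda-L_{\mathbf{y}})\mathbf{f}](i)\big|$, and that $M_{\mathbf{f},\lambda}$ does not depend on $\epsilon$ because $\mathbf{f}$ and $\lambda$ are fixed and $\epsilon$-independent.

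The key step is then the standard resolvent identity: the unique bounded solution of $(\lambda-\theta_{\epsilon}\mathscr{L}_{\epsilon})u=g_{\epsilon}$ admits the representation
\[
\phi_{\epsilon}(\boldsymbol{x})\,=\,\mathbb{E}_{\boldsymbol{x}}^{\epsilon}\Big[\int_{0}^{\infty}e^{-\lambda t}\,g_{\epsilon}(\widetilde{\boldsymbol{x}}_{\epsilon}(t))\,dt\Big]\,=\,\int_{0}^{\infty}e^{-\lambda t}\,(P_{t}^{\epsilon}g_{\epsilon})(\boldsymbol{x})\,dt\;.
\]
That the right-hand side solves the equation and lies in the domain of the generator is classical semigroup theory; uniqueness among bounded solutions follows since any bounded $v$ with $(\lambda-\theta_{\epsilon}\mathscr{L}_{\epsilon})v=0$ satisfies $v(\boldsymbol{x})=e^{-\lambda t}(P_{t}^{\epsilon}v)(\boldsymbol{x})\to0$ as $t\to\infty$ by conservativeness of $(P_{t}^{\epsilon})$. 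Interior elliptic regularity for the fixed-$\epsilon$, uniformly elliptic operator $\theta_{\epsilon}\mathscr{L}_{\epsilon}$ with $C^{1}$ coefficients guarantees $\phi_{\epsilon}\in W^{2,p}_{\mathrm{loc}}(\mathbb{R}^{d})$ for every $p<\infty$, so $\phi_{\epsilon}$ is a strong solution in the sense of Theorem \ref{t_poi}; note it need not be $C^{2}$ across the jump set $\bigcup_{i\in S_{\star}}\partial\mathcal{V}_{i}$ of $g_{\epsilon}$, which is precisely the regularity obstruction flagged in the proof of Theorem \ref{t_main}.

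Finally, bounding the representation termwise gives
\[
\Vert\phi_{\epsilon}\Vert_{L^{\infty}(\mathbb{R}^{d})}\,\le\,\Vert g_{\epsilon}\Vert_{L^{\infty}(\mathbb{R}^{d})}\int_{0}^{\infty}e^{-\lambda t}\,dt\,\le\,\frac{M_{\mathbf{f},\lambda}}{\lambda}\,=:\,C\;,
\]
which is the asserted $\epsilon$-independent bound. I do not expect a genuine obstacle in this proposition: it is essentially a quotation of the resolvent framework of \cite{LMS3} combined with the non-explosivity established in \cite[Theorem 2.2]{LeeSeo}. The only point deserving a line of care is checking that the notion of strong solution in Theorem \ref{t_poi} is compatible with the semigroup representation above — i.e. that no mass escapes to infinity — which is exactly what conservativeness of the semigroup $(P_{t}^{\epsilon})$ delivers.
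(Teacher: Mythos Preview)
Your proposal is correct and is precisely the argument the paper has in mind: the paper does not give a proof but simply cites \cite[display (4.2)]{LMS3}, which is exactly the probabilistic resolvent representation you wrote out, yielding the $\epsilon$-free bound $\Vert\phi_{\epsilon}\Vert_{L^{\infty}}\le M_{\mathbf{f},\lambda}/\lambda$. Your additional remarks on non-explosivity and on $W^{2,p}_{\mathrm{loc}}$ regularity are the natural justifications behind that citation.
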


\begin{notation}
Here and later, we write $C>0$ as a constant independent of $\epsilon$
and $\boldsymbol{x}$ (of course, $C$ can possibly depend on $\mathbf{f}$
and $\lambda$). Different appearances of $C$ possibly express different
values.
\end{notation}

For sufficiently smooth function $f$, let us define the Dirichlet
form $\mathscr{D}_{\epsilon}(f)$ with respect to the process $\boldsymbol{x}_{\epsilon}(\cdot)$
as
\begin{equation}
\mathscr{D}_{\epsilon}(f)\,:=\,\int_{\mathbb{R}^{d}}f\,(-\mathscr{L}_{\epsilon}f)\,d\mu_{\epsilon}\,=\,\epsilon\,\int_{\mathbb{R}^{d}}\,|\nabla f|^{2}\,d\mu_{\epsilon}\;,\label{e_diri}
\end{equation}
where the latter equality follows from an application of divergence
theorem. Then, the flatness of the solution of resolvent equation
\eqref{res1} on each valley essentially follows from the following
energy estimate (cf. \cite{LMS2,OR,RS}).
\begin{prop}
\label{p_energy}There exists $C>0$ such that, for the solution $\phi_{\epsilon}$
of \eqref{res1},
\begin{equation}
\mathscr{D}_{\epsilon}(\phi_{\epsilon})\,\le\,C\,\theta_{\epsilon}^{-1}\;.\label{e_energy}
\end{equation}
\end{prop}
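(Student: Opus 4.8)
\textbf{Proof strategy for Proposition \ref{p_energy}.}
The plan is to test the resolvent equation \eqref{res1} against the solution $\phi_{\epsilon}$ itself with respect to the invariant measure $\mu_{\epsilon}$, and to exploit the skew-symmetric structure of the drift $\boldsymbol{\ell}$ to kill the non-reversible part. Starting from
\[
(\lambda-\theta_{\epsilon}\mathscr{L}_{\epsilon})\phi_{\epsilon}\,=\,\sum_{i\in S_{\star}}[(\lambda-L_{\mathbf{y}})\mathbf{f}](i)\,\mathbf{1}_{\mathcal{V}_{i}}\,=:\,g\;,
\]
I would multiply both sides by $\phi_{\epsilon}$ and integrate $d\mu_{\epsilon}$. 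The left-hand side produces $\lambda\int\phi_{\epsilon}^{2}\,d\mu_{\epsilon}+\theta_{\epsilon}\int\phi_{\epsilon}(-\mathscr{L}_{\epsilon}\phi_{\epsilon})\,d\mu_{\epsilon}$. The first term is nonnegative and can be discarded. For the second term, the key point is that although $\mathscr{L}_{\epsilon}$ is not self-adjoint on $L^{2}(\mu_{\epsilon})$, its symmetric part is exactly the reversible generator: writing $\mathscr{L}_{\epsilon}=\mathscr{S}_{\epsilon}+\mathscr{A}_{\epsilon}$ with $\mathscr{S}_{\epsilon}=\tfrac12(\mathscr{L}_{\epsilon}+\mathscr{L}_{\epsilon}^{*})$, one checks from \eqref{e_gen} and \eqref{e_genadj} that $\mathscr{S}_{\epsilon}f=\epsilon\,e^{U/\epsilon}\nabla\cdot(e^{-U/\epsilon}\nabla f)$ and that $\mathscr{A}_{\epsilon}$ is skew-symmetric, so $\int\phi_{\epsilon}(-\mathscr{A}_{\epsilon}\phi_{\epsilon})\,d\mu_{\epsilon}=0$. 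Hence $\int\phi_{\epsilon}(-\mathscr{L}_{\epsilon}\phi_{\epsilon})\,d\mu_{\epsilon}=\mathscr{D}_{\epsilon}(\phi_{\epsilon})=\epsilon\int|\nabla\phi_{\epsilon}|^{2}\,d\mu_{\epsilon}$, exactly as in \eqref{e_diri}. (One should note this computation is formal for $\phi_{\epsilon}$ itself since it is only a strong solution; I would either quote the regularity/approximation already used for \eqref{e_diri}, or run the argument with a smooth cutoff and pass to the limit.)

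Putting this together gives
\[
\theta_{\epsilon}\,\mathscr{D}_{\epsilon}(\phi_{\epsilon})\,\le\,\int_{\mathbb{R}^{d}}\phi_{\epsilon}\,g\,d\mu_{\epsilon}\;.
\]
Now $g$ is supported on $\mathcal{V}_{\star}=\bigcup_{i\in S_{\star}}\mathcal{V}_{i}$, its sup-norm is bounded by a constant depending only on $\mathbf{f}$ and $\lambda$, and $\phi_{\epsilon}$ is bounded uniformly in $\epsilon$ by Proposition \ref{p_bddsol}. Therefore the right-hand side is bounded by $C\,\mu_{\epsilon}(\mathcal{V}_{\star})\le C$, which yields $\mathscr{D}_{\epsilon}(\phi_{\epsilon})\le C\,\theta_{\epsilon}^{-1}$ as claimed. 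In fact, using $\mu_{\epsilon}(\Delta)=o_{\epsilon}(1)$ from Proposition \ref{p32} one even gets $\mu_{\epsilon}(\mathcal{V}_{\star})=1-o_{\epsilon}(1)$, though only the crude bound $\le C$ is needed here.

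The only genuinely delicate point is the justification of the identity $\int\phi_{\epsilon}(-\mathscr{L}_{\epsilon}\phi_{\epsilon})\,d\mu_{\epsilon}=\epsilon\int|\nabla\phi_{\epsilon}|^{2}\,d\mu_{\epsilon}$ for the strong solution $\phi_{\epsilon}$, together with the vanishing of the boundary/antisymmetric contribution. This is where one must invoke the growth conditions \eqref{e_conU} (which guarantee enough decay of $e^{-U/\epsilon}$ at infinity to discard boundary terms, as in the bound \eqref{e_tight}) and the elliptic regularity of solutions of \eqref{res1}; alternatively, one argues by density using that $\phi_{\epsilon}$ lies in the form domain of $\mathscr{L}_{\epsilon}$. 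I expect this functional-analytic bookkeeping — not the algebra — to be the main obstacle, and it parallels the corresponding step in \cite{LMS2,RS}.
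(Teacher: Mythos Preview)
Your proposal is correct and follows essentially the same route as the paper: multiply \eqref{res1} by $\phi_{\epsilon}\,d\mu_{\epsilon}$, integrate, invoke the identity \eqref{e_diri} (which already encodes the vanishing of the antisymmetric part you spell out), and bound the right-hand side by Proposition \ref{p_bddsol} together with the fact that $g$ is bounded and compactly supported. The paper's proof is terser and simply cites \eqref{e_diri} rather than decomposing $\mathscr{L}_{\epsilon}$ into symmetric and skew parts, and it does not dwell on the regularity/approximation caveats you raise.
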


\begin{proof}
By multiplying both sides of \eqref{res1} by $\phi_{\epsilon}d\mu_{\epsilon}$
and by performing the integral over $\mathbb{R}^{d}$, we get
\[
\int_{\mathbb{R}^{d}}\phi_{\epsilon}(\lambda\phi_{\epsilon}-\theta_{\epsilon}\mathscr{L}_{\epsilon}\phi_{\epsilon})\,d\mu_{\epsilon}\le C
\]
by Proposition \ref{p_bddsol}, since the right-hand side of \eqref{res1}
is a compactly supported bounded function independent of $\epsilon$.
The proof is completed by definition \eqref{e_diri} of the Dirichlet
form.
\end{proof}

\subsection{\label{sec53}Flatness of solution on each well}

We first define
\begin{equation}
\delta\,:=\,\delta(\epsilon)\,=\,\Big(\,\epsilon\,\log\frac{1}{\epsilon}\,\Big)^{1/2}\;,\label{e_delta}
\end{equation}
which is an important scale in the analyses around saddle points carried
out in the next section. Let $J>0$ be a sufficiently large constant,
and let $c_{0}>0$ be a constant that will be specified later in \eqref{e_c0}.
For $i\in S$, define
\begin{equation}
\widehat{\mathcal{W}}_{i}\,:=\,\widehat{\mathcal{W}}_{i,\,\epsilon}\,=\,\{\bm{x}\in\mathcal{W}_{i}:U(\bm{x})\leq H-c_{0}J^{2}\delta^{2}\}\;.\label{e_hatW}
\end{equation}
Note that this set is connected if $\epsilon$ is sufficiently small,
and we have $\mathcal{V}_{i}\subset\widehat{\mathcal{V}}_{i}\subset\widehat{\mathcal{W}}_{i}\subset\mathcal{W}_{i}$.
For $i\in S$, denote by $\mathbf{m}_{\epsilon}(i)$ the average of
$\phi_{\epsilon}$ on $\widehat{\mathcal{W}}_{i}$, i.e.,
\begin{align*}
\mathbf{m}_{\epsilon}(i) & \,=\,\frac{1}{\text{vol}(\widehat{\mathcal{W}}_{i})}\,\int_{\widehat{\mathcal{W}}_{i}}\,\phi_{\epsilon}(\bm{x})\,d\bm{x}\;,
\end{align*}
where $\text{vol}(\mathcal{A})=\int_{\mathcal{A}}d\boldsymbol{x}$
denotes the volume of a Lebesgue measurable set $\mathcal{A\subset\mathbb{R}}^{d}$
with respect to the Lebesgue measure. Remark from Proposition \ref{p_bddsol}
that there exists $C>0$ such that
\begin{equation}
\max_{i\in S}\left|\mathbf{m}_{\epsilon}(i)\right|\le C\label{eq:bdd_m}
\end{equation}
for all $\epsilon>0$. Our next objective is to prove that the function
$\psi_{\epsilon}$ is close to its average value $\mathbf{m}_{\epsilon}(i)$
in $\widehat{\mathcal{W}}_{i}$ in the $L^{\infty}$-sense.
\begin{prop}
\label{p55}For all $i\in S$, we have
\[
\|\phi_{\epsilon}-\mathbf{m}_{\epsilon}(i)\|_{L^{\infty}(\widehat{\mathcal{W}}_{i})}\,=\,o_{\epsilon}(1)\;.
\]
\end{prop}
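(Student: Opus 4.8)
The statement asserts $L^\infty$-flatness of $\phi_\epsilon$ on the set $\widehat{\mathcal W}_i$, and the natural route is to combine the energy estimate of Proposition \ref{p_energy} with a Poincaré-type inequality on $\widehat{\mathcal W}_i$ to first obtain flatness in an $L^2(d\mu_\epsilon)$ (or $L^2(d\bm x)$) sense, and then to upgrade this $L^2$ control to $L^\infty$ control by an elliptic regularity / De Giorgi--Nash--Moser argument applied to the resolvent equation \eqref{res1}. I would carry this out in three steps.

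\textbf{Step 1: $L^2$-flatness via a weighted Poincaré inequality.} On the connected set $\widehat{\mathcal W}_i$, away from the saddle level $H$, the potential $U$ restricted to $\widehat{\mathcal W}_i$ has its values bounded below $H - c_0 J^2 \delta^2$, so the Gibbs weight $e^{-U/\epsilon}$ is comparable (up to subexponential factors) on the whole set, and in particular there is a weighted Poincaré inequality
\[
\int_{\widehat{\mathcal W}_i} |f - \langle f\rangle_{\widehat{\mathcal W}_i}|^2\, d\mu_\epsilon \,\le\, C_{\mathrm{Poi}}(\epsilon)\int_{\widehat{\mathcal W}_i} |\nabla f|^2\, d\mu_\epsilon\,,
\]
where $\langle f\rangle_{\widehat{\mathcal W}_i}$ is the $\mu_\epsilon$-average. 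The constant $C_{\mathrm{Poi}}(\epsilon)$ degenerates as $\epsilon\to 0$, but only subexponentially: the worst obstruction is the narrow "necks" of $\widehat{\mathcal W}_i$ near the saddle level, which by the very definition \eqref{e_hatW} are cut off at height $H - c_0 J^2\delta^2$, so a bridge-counting / Cheeger-type estimate gives $C_{\mathrm{Poi}}(\epsilon) \le e^{c' J^2 \delta^2/\epsilon} = \epsilon^{-c' c_0 J^2}$ for some geometric constant $c'$ — a polynomial factor in $1/\epsilon$. Plugging the energy estimate $\mathscr D_\epsilon(\phi_\epsilon) = \epsilon\int|\nabla\phi_\epsilon|^2 d\mu_\epsilon \le C\theta_\epsilon^{-1}$ into the Poincaré inequality yields $\int_{\widehat{\mathcal W}_i}|\phi_\epsilon - \langle\phi_\epsilon\rangle|^2 d\mu_\epsilon \le C\,\epsilon^{-1-c'c_0 J^2}\,\theta_\epsilon^{-1}$, which is exponentially small because $\theta_\epsilon^{-1} = e^{-(H-h)/\epsilon}$ beats any polynomial. (One also checks that the $\mu_\epsilon$-average and the Lebesgue average $\mathbf m_\epsilon(i)$ differ by a negligible amount, again by Laplace asymptotics on $\widehat{\mathcal W}_i$.) This is the step where the precise choice of $c_0$ and $J$ in \eqref{e_hatW} matters, and it will be fixed so that the polynomial loss is harmless.

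\textbf{Step 2: from $L^2$ to $L^\infty$ by elliptic regularity.} The function $w_\epsilon := \phi_\epsilon - \mathbf m_\epsilon(i)$ solves, on the interior of $\widehat{\mathcal W}_i$, the equation $(\lambda - \theta_\epsilon\mathscr L_\epsilon) w_\epsilon = g_\epsilon$ with $g_\epsilon = -\lambda\,\mathbf m_\epsilon(i)$ on $\widehat{\mathcal W}_i \supset \widehat{\mathcal V}_i$ whenever the right-hand side of \eqref{res1} is constant there — more care is needed on the part of $\widehat{\mathcal W}_i$ outside $\bigcup_{i}\mathcal V_i$, but there the source is still a bounded function independent of $\epsilon$, and in any case $\theta_\epsilon^{-1}\mathscr L_\epsilon$-times a bounded function is an admissible right-hand side. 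Rescaling the operator by $\theta_\epsilon$ and writing $\mathscr L_\epsilon$ in the divergence form \eqref{e_gen}, one applies a local maximum-principle / Moser iteration estimate for the non-self-adjoint second-order operator $\theta_\epsilon\mathscr L_\epsilon$: on a ball $\mathcal D_{r}(\bm x)\subset\widehat{\mathcal W}_i$ one has
\[
\|w_\epsilon\|_{L^\infty(\mathcal D_{r/2}(\bm x))} \,\le\, C_\epsilon\Big(\,r^{-d/2}\|w_\epsilon\|_{L^2(\mathcal D_r(\bm x))} + r^{2}\|g_\epsilon\|_{L^\infty}\,\Big)\,,
\]
where the constant $C_\epsilon$ depends on the ellipticity ratio and on the $L^\infty$-norm of the (rescaled) drift coefficients; since $\theta_\epsilon\,\epsilon = \theta_\epsilon\,\epsilon$ and the drift $\theta_\epsilon(\nabla U+\bm\ell)$ is large, one should first rescale space or, more cleanly, absorb $\theta_\epsilon$ by noting that the equation $(\lambda\theta_\epsilon^{-1} - \mathscr L_\epsilon)w_\epsilon = \theta_\epsilon^{-1} g_\epsilon$ has $\epsilon$-order coefficients and a zeroth-order term $\lambda\theta_\epsilon^{-1}$ that is exponentially small, so the standard interior estimate for $\mathscr L_\epsilon$ on a ball of radius $r_0$ applies with a constant that is at worst $e^{C/\epsilon}$-type but can be controlled by taking $r$ comparable to a suitable $\epsilon$-dependent scale. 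Covering $\widehat{\mathcal W}_i$ by finitely many such balls and using $\|w_\epsilon\|_{L^2(\widehat{\mathcal W}_i, d\bm x)}^2 \le C\,\epsilon^{-d/2}e^{(H-c_0J^2\delta^2)/\epsilon}\int|w_\epsilon|^2 d\mu_\epsilon$ (Laplace), which is still exponentially small by Step 1, gives $\|w_\epsilon\|_{L^\infty(\widehat{\mathcal W}_i)} = o_\epsilon(1)$.

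\textbf{Main obstacle.} The delicate point is the interplay between the exponentially large prefactors coming from (i) the degeneracy of the Poincaré constant near the saddle necks, (ii) the conversion between $\mu_\epsilon$- and Lebesgue norms on $\widehat{\mathcal W}_i$, and (iii) the $\epsilon$-dependence of the constant in the elliptic $L^2\to L^\infty$ estimate for the drift-dominated operator $\theta_\epsilon\mathscr L_\epsilon$; all of these must be kept subexponential (polynomial or stretched-exponential in $\delta^2/\epsilon = \log(1/\epsilon)$) so that they are beaten by the genuinely exponential gain $\theta_\epsilon^{-1} = e^{-(H-h)/\epsilon}$ from the energy estimate. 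The cleanest way to manage (i) and (iii) simultaneously is probably not to prove a global Poincaré inequality on $\widehat{\mathcal W}_i$ but rather to propagate flatness step by step along a chain of overlapping balls of radius $\asymp\delta$ — on each such ball the local Poincaré constant is $O(\delta^2) = o(1)$ and the local elliptic estimate has a controlled constant — thereby avoiding the global Cheeger estimate altogether; the number of balls needed to connect any two points of $\widehat{\mathcal W}_i$ below level $H - c_0 J^2\delta^2$ is at most polynomial in $1/\delta$, so the accumulated error stays $o_\epsilon(1)$. I expect the authors to take essentially this local-propagation route, which is also what appears in \cite{RS} and \cite{LMS2} for the reversible case; the only genuinely new ingredient here is checking that the non-reversible drift $\bm\ell$, being bounded on the compact set $\widehat{\mathcal W}_i$ and divergence-free, does not spoil the local elliptic estimates — but since $\bm\ell$ contributes only a first-order term with $C^1$ coefficients, this is a routine modification.
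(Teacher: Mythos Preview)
Your overall strategy---energy estimate feeds a Poincar\'e-type bound, then elliptic regularity upgrades $L^2$ to $L^\infty$---is the right one, and in fact the paper does not give an independent proof at all: it simply cites \cite[Section~4]{RS}, observing that the argument there uses reversibility \emph{only} to obtain the energy estimate \eqref{e_energy}, which here is already secured by Proposition~\ref{p_energy}. Your final ``local-propagation along a chain of overlapping balls'' description is an accurate summary of what \cite{RS} does, and your remark that the first-order term $\boldsymbol\ell$ is harmless for the local elliptic estimates is exactly the reason the paper can invoke \cite{RS} verbatim.

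That said, two statements in your Step~1 are wrong as written and would derail a naive execution. First, the claim that ``the Gibbs weight $e^{-U/\epsilon}$ is comparable (up to subexponential factors) on the whole set $\widehat{\mathcal W}_i$'' is false: $U$ ranges from $h_i$ at the bottom to $H-c_0J^2\delta^2$ near the top, so the weight varies by the genuinely exponential factor $e^{(H-h_i)/\epsilon+O(\log\epsilon)}$. Second, the global weighted Poincar\'e constant on $\widehat{\mathcal W}_i$ need not be polynomial in $1/\epsilon$: if $\mathcal W_i$ contains several local minima separated by internal saddles (which is allowed---see Figure~\ref{fig31}), the Poincar\'e constant is of order $e^{(H_i'-h_i)/\epsilon}$ where $H_i'<H$ is the highest internal saddle height. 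This is still beaten by $\theta_\epsilon^{-1}=e^{-(H-h)/\epsilon}$ since $H_i'-h_i<H-h$, so the conclusion of Step~1 survives, but the justification you gave does not. Your own ``Main obstacle'' paragraph already contains the fix: abandon the global Poincar\'e inequality and instead propagate along chains of small balls, tracking only polynomial losses at each step. That refinement is precisely the route taken in \cite{RS}, so after correcting Step~1 your proposal coincides with the paper's (outsourced) proof.
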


In \cite[Section 4]{RS}, it has been generally proven that the energy
estimate of the form \eqref{e_energy} is sufficient to prove Proposition
\ref{p55} for the solution $\phi_{\epsilon}$. The argument presented
therein is quite robust, and the reversibility is used only when the
energy estimate is obtained. Hence, the methodology developed in \cite{RS}
can be applied to Proposition \ref{p55} without any modification.
\begin{rem}
In fact, the $L^{\infty}$-boundedness such as Proposition \ref{p_bddsol}
was not available when \cite{RS} started to prove the flatness result
similar to Proposition \ref{p55}. They obtained this boundedness
as a byproduct of the proof of Proposition \ref{p55}. Since we know
this boundedness \textit{a priori }owing to Proposition \ref{p_bddsol},
the proof can indeed be written in even more concise form.
\end{rem}

\subsection{\label{sec54}Characterization of $\mathbf{m}_{\epsilon}$ on deepest
valleys via a test function}

Since $\widehat{\mathcal{V}}_{i}\subset\widehat{\mathcal{W}}_{i}$
for all $i\in S$ by \eqref{conr0}, it remains to prove the following
proposition.
\begin{prop}
\label{p57}We have that
\[
|\,\mathbf{m}_{\epsilon}(i)-\mathbf{f}(i)\,|\,=\,o_{\epsilon}(1)\;\;\;\text{for all }i\in S_{\star}\;.
\]
\end{prop}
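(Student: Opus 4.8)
The plan is to exploit the resolvent equation \eqref{res1} tested against the equilibrium potentials of the auxiliary chain $\mathbf{x}(\cdot)$, together with the flatness result of Proposition~\ref{p55}, to derive a closed system of linear equations for $\mathbf{m}_\epsilon$. Concretely, I would multiply both sides of \eqref{res1} by $h^{\boldsymbol{\sigma}}_{\epsilon}\,d\mu_\epsilon$ for a suitable family of test functions $h^{\boldsymbol{\sigma}}_\epsilon$ (one for each well, built by pasting together local equilibrium potentials near the saddle points in $\Sigma^*$ and constants on the bulk of each well) and integrate over $\mathbb{R}^d$. On the left, the term $\int \theta_\epsilon\mathscr{L}_\epsilon\phi_\epsilon \cdot h^{\boldsymbol{\sigma}}_\epsilon\,d\mu_\epsilon$ is rewritten via the divergence form \eqref{e_gen} as a weighted Dirichlet-type pairing $\theta_\epsilon\int e^{-U/\epsilon}(\nabla\phi_\epsilon - \epsilon^{-1}\phi_\epsilon\boldsymbol{\ell})\cdot\nabla h^{\boldsymbol{\sigma}}_\epsilon\,d\boldsymbol{x}$; the key point is that this pairing concentrates, as $\epsilon\to0$, on neighbourhoods of the saddle points, where the local analysis of Sections~\ref{sec6} and~\ref{sec7} (the ``construction of a certain test function'' deferred there) gives the asymptotics in terms of the Eyring--Kramers constants $\omega^{\boldsymbol{\sigma}}$.

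\textbf{Key steps, in order.} First I would record the consequences of flatness: by Proposition~\ref{p55} and \eqref{eq:bdd_m}, $\phi_\epsilon = \mathbf{m}_\epsilon(i) + o_\epsilon(1)$ uniformly on $\widehat{\mathcal{W}}_i$, and by the energy estimate \eqref{e_energy} the gradient of $\phi_\epsilon$ is $L^2(\mu_\epsilon)$-small at scale $\theta_\epsilon^{-1/2}$, so the ``flux'' of $\phi_\epsilon$ across any saddle is governed by the jump $\mathbf{m}_\epsilon(i)-\mathbf{m}_\epsilon(j)$ across the two adjacent wells $\mathcal{W}_i,\mathcal{W}_j$. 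Second, I would compute the right-hand side contribution: since the RHS of \eqref{res1} is $\sum_{i\in S_\star}[(\lambda-L_{\mathbf{y}})\mathbf{f}](i)\mathbf{1}_{\mathcal{V}_i}$ and $\mu_\epsilon(\mathcal{V}_i)=[1+o_\epsilon(1)]\nu_i/\nu_\star$ by Proposition~\ref{p32}, pairing with the test function (which is $\approx$ a constant $c^{\boldsymbol{\sigma},i}$ on each deepest valley) produces terms proportional to $\nu_i$. Third — the analytic heart — I would invoke the local test-function construction near each $\boldsymbol{\sigma}\in\Sigma^*$ to evaluate the Dirichlet pairing: the correct choice of $h^{\boldsymbol{\sigma}}_\epsilon$ (the solution of the local non-reversible resolvent/Poisson problem, whose existence and asymptotics are the content of Sections~\ref{sec6}--\ref{sec7}) makes the left side equal $[1+o_\epsilon(1)]\,\omega_{i,j}\,[\mathbf{m}_\epsilon(i)-\mathbf{m}_\epsilon(j)]$ plus a $\lambda$-term $[1+o_\epsilon(1)]\,\lambda\,\nu_i\,\mathbf{m}_\epsilon(i)$ coming from the $\lambda\phi_\epsilon$ term integrated against the part of the test function supported near $\mathcal{V}_i$. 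Fourth, assembling these over all wells and passing to the limit, any limit point $\mathbf{m}_\star$ of $(\mathbf{m}_\epsilon)$ (which exists along subsequences by \eqref{eq:bdd_m}) must satisfy the same linear system that characterizes $\mathbf{f}$: for $i\in S_\star$, $\lambda\nu_i\mathbf{m}_\star(i) + \sum_j \omega_{i,j}[\mathbf{m}_\star(i)-\mathbf{m}_\star(j)] = \nu_i[(\lambda-L_{\mathbf{y}})\mathbf{f}](i)$, after eliminating the shallow-well values $\mathbf{m}_\star(i)$, $i\notin S_\star$, via the harmonic relation $\sum_j\omega_{i,j}[\mathbf{m}_\star(i)-\mathbf{m}_\star(j)]=0$ they satisfy (no $\lambda$-mass sits on shallow valleys since $\mu_\epsilon(\mathcal{V}_i)\to0$ there). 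This Schur-complement elimination is exactly the definition of $\beta_{i,j}$ in terms of capacities of $\mathbf{x}(\cdot)$, so the reduced system reads $(\lambda-L_{\mathbf{y}})\mathbf{m}_\star = (\lambda-L_{\mathbf{y}})\mathbf{f}$ on $S_\star$; since $\lambda>0$ and $L_{\mathbf{y}}$ generates a Markov chain, $\lambda-L_{\mathbf{y}}$ is invertible, forcing $\mathbf{m}_\star=\mathbf{f}$, and uniqueness of the limit gives $\mathbf{m}_\epsilon(i)\to\mathbf{f}(i)$.

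\textbf{Main obstacle.} The hard part will be Step~3: showing that the Dirichlet pairing $\theta_\epsilon\int e^{-U/\epsilon}(\nabla\phi_\epsilon-\epsilon^{-1}\phi_\epsilon\boldsymbol{\ell})\cdot\nabla h^{\boldsymbol{\sigma}}_\epsilon$ is, up to $o_\epsilon(1)$, exactly $\omega_{i,j}[\mathbf{m}_\epsilon(i)-\mathbf{m}_\epsilon(j)]$ despite the non-gradient drift $\boldsymbol{\ell}$. The reversible argument of \cite{RS} uses symmetry of the bilinear form here; in the non-reversible case the antisymmetric part generated by $\boldsymbol{\ell}$ must be controlled, which is precisely why one needs a carefully designed test function $h^{\boldsymbol{\sigma}}_\epsilon$ adapted to $\mathbb{H}^{\boldsymbol{\sigma}}+\mathbb{L}^{\boldsymbol{\sigma}}$ rather than the naive reversible one — this is the construction postponed to Sections~\ref{sec6} and~\ref{sec7}. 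A secondary technical point is handling the cross term $\epsilon^{-1}\phi_\epsilon\boldsymbol{\ell}\cdot\nabla h^{\boldsymbol{\sigma}}_\epsilon$: using $\nabla U\cdot\boldsymbol{\ell}\equiv0$ and $\nabla\cdot\boldsymbol{\ell}\equiv0$ together with an integration by parts, this term should be shown to be lower order (or to cancel against part of the main term) on the bulk of each well where $\phi_\epsilon$ is nearly constant, with the only genuine contribution localized at the saddles and already absorbed into the definition of $\omega^{\boldsymbol{\sigma}}$ via the negative eigenvalue $\mu^{\boldsymbol{\sigma}}$ of $\mathbb{H}^{\boldsymbol{\sigma}}+\mathbb{L}^{\boldsymbol{\sigma}}$.
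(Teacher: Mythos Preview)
Your proposal is correct and follows essentially the same route as the paper: both arguments hinge on Proposition~\ref{p_test}, i.e.\ the construction of the test function $Q_\epsilon^{\mathbf g}$ (your $h^{\boldsymbol\sigma}_\epsilon$) satisfying $Q_\epsilon^{\mathbf g}\equiv\mathbf g(i)$ on $\mathcal V_i$ and $\theta_\epsilon\int Q_\epsilon^{\mathbf g}(\mathscr L_\epsilon\phi_\epsilon)\,d\mu_\epsilon=-\nu_\star^{-1}D_{\mathbf x}(\mathbf g,\mathbf m_\epsilon)+o_\epsilon(1)$, which is exactly the content of Sections~\ref{sec6}--\ref{sec7} and encodes the saddle-point analysis you describe in Step~3.

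The only difference is in the linear-algebra endgame. You test against a basis (one $\mathbf g=\mathbf 1_{\{k\}}$ per well), obtain the full linear system on $S$, pass to a subsequential limit $\mathbf m_\star$ via \eqref{eq:bdd_m}, Schur-complement out the shallow wells, and invoke invertibility of $\lambda-L_{\mathbf y}$. The paper instead tests against the \emph{single} function $\mathbf g=\widetilde{\mathbf h}_\epsilon$, the harmonic extension to $S$ of $\mathbf h_\epsilon:=\mathbf m_\epsilon|_{S_\star}-\mathbf f$; computing the integral of $Q_\epsilon^{\widetilde{\mathbf h}_\epsilon}(\lambda\phi_\epsilon-\theta_\epsilon\mathscr L_\epsilon\phi_\epsilon)$ two ways and using Lemma~\ref{lem59} (your Schur-complement observation) yields directly the coercive identity
\[
\lambda\sum_{i\in S_\star}\mathbf h_\epsilon(i)^2\,\frac{\nu_i}{\nu_\star}+D_{\mathbf y}(\mathbf h_\epsilon,\mathbf h_\epsilon)=o_\epsilon(1)\;,
\]
which forces $\mathbf h_\epsilon\to0$ without any subsequence argument. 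Your approach is equally valid and perhaps more transparent conceptually; the paper's choice of test function is a shortcut that makes the estimate immediately quantitative.
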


Before proving Proposition \ref{p57}, let us formally conclude the
proof of Theorem \ref{t_poi} .
\begin{proof}[Proof of Theorem \ref{t_poi}]
By Propositions \ref{p55} and \ref{p57}, we have $\|\phi_{\epsilon}-\mathbf{f}(i)\|_{L^{\infty}(\widehat{\mathcal{W}}_{i})}=o_{\epsilon}(1)$
for all $i\in S_{\star}$. Since $\widehat{\mathcal{V}}_{i}\subset\widehat{\mathcal{W}}_{i}$,
the proof is completed.
\end{proof}
Now, we turn to Proposition \ref{p57}. The following proposition
is the key in the proof of Proposition \ref{p57}.
\begin{prop}
\label{p_test}Let $\mathbf{g}=\mathbf{g}_{\epsilon}:S\rightarrow\mathbb{R}$
be a function that might depend on $\epsilon$ which is uniformly
bounded in the sense that
\begin{equation}
\sup_{\epsilon>0}\max_{i\in S}|\mathbf{g}(i)|<\infty\;.\label{bddge}
\end{equation}
Then, there exists a uniformly (in $\epsilon$) bounded continuous
function $Q_{\epsilon}^{\mathbf{g}}:\mathbb{R}^{d}\rightarrow\mathbb{R}$
that satisfies, for all $i\in S$,
\begin{align}
 & Q_{\epsilon}^{\mathbf{g}}(\boldsymbol{x})\,\equiv\,\text{\ensuremath{\mathbf{g}(i)}}\text{ \;\;for all \ensuremath{\boldsymbol{x}\in\mathcal{V}_{i}}\;\;\;\;\ensuremath{\text{and}}}\label{e_conQ1}\\
 & \theta_{\epsilon}\,\int_{\mathbb{R}^{d}}Q_{\epsilon}^{\mathbf{g}}\,(\mathscr{L}_{\epsilon}\phi_{\epsilon})\,d\mu_{\epsilon}\,=\,-\frac{1}{\nu_{\star}}\,D_{\mathbf{x}}(\mathbf{g},\mathbf{m}_{\epsilon})+o_{\epsilon}(1)\;.\label{e_conQ2}
\end{align}
\end{prop}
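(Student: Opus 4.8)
\textbf{Plan for the proof of Proposition \ref{p_test}.}

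The plan is to build the test function $Q_{\epsilon}^{\mathbf{g}}$ by hand out of the equilibrium potentials of the process $\boldsymbol{x}_{\epsilon}(\cdot)$, mimicking the construction of \cite{RS} but keeping track of the non-reversible drift $\boldsymbol{\ell}$. First I would fix, for each ordered pair of neighboring wells $(i,j)$ with $\Sigma_{i,j}\ne\emptyset$ and each saddle $\boldsymbol{\sigma}\in\Sigma_{i,j}$, a local coordinate patch around $\boldsymbol{\sigma}$ in which $U$ is put in Morse normal form and $\mathbb{H}^{\boldsymbol\sigma}+\mathbb{L}^{\boldsymbol\sigma}$ controls the local flow; inside a $\delta$-neighborhood of $\boldsymbol{\sigma}$ (with $\delta$ as in \eqref{e_delta}) one declares $Q_{\epsilon}^{\mathbf{g}}$ to interpolate smoothly from $\mathbf{g}(i)$ to $\mathbf{g}(j)$ along the unstable direction, using the one-dimensional profile that solves the corresponding effective one-dimensional resolvent/Poisson problem (this is where the Eyring--Kramers constant $\omega^{\boldsymbol\sigma}$ of \eqref{e_EKconst} enters). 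Away from all saddles, on each well $\mathcal{W}_i$ I set $Q_{\epsilon}^{\mathbf{g}}\equiv\mathbf{g}(i)$, so that in particular \eqref{e_conQ1} holds on every $\mathcal{V}_i$; the transition between the constant bulk value and the saddle profiles is carried out in an annular region where $U$ is bounded below by $H-O(\delta^2)$ but bounded above by $H$, so that the $\mu_\epsilon$-mass of that region is exponentially subdominant compared to $\theta_\epsilon^{-1}$. Uniform boundedness of $Q_{\epsilon}^{\mathbf{g}}$ is immediate from \eqref{bddge} and the fact that the interpolating profiles take values between $\mathbf{g}(i)$ and $\mathbf{g}(j)$.

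Next I would establish \eqref{e_conQ2}. The left-hand side $\theta_\epsilon\int Q_{\epsilon}^{\mathbf{g}}(\mathscr{L}_\epsilon\phi_\epsilon)\,d\mu_\epsilon$ is rewritten, using the divergence form \eqref{e_gen} of $\mathscr{L}_\epsilon$ and an integration by parts, as
\[
-\theta_\epsilon\,\epsilon\int_{\mathbb{R}^d}\Big(\nabla Q_{\epsilon}^{\mathbf{g}}-\tfrac1\epsilon Q_{\epsilon}^{\mathbf{g}}\boldsymbol\ell\Big)\cdot\nabla\phi_\epsilon\;e^{-U/\epsilon}\,d\boldsymbol{x}\,+\,(\text{boundary terms that vanish}).
\]
Because $\nabla Q_{\epsilon}^{\mathbf{g}}$ is supported only in the $\delta$-neighborhoods of the saddles in $\Sigma^*$ (on the wells $Q_{\epsilon}^{\mathbf{g}}$ is locally constant), the integral localizes to a sum over $\boldsymbol\sigma\in\Sigma^*$ of integrals over those neighborhoods. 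On each such neighborhood I use the local Morse coordinates, the Gaussian approximation of $e^{-U/\epsilon}$, the explicit form of the chosen one-dimensional profile, and Proposition \ref{p55} (flatness: $\phi_\epsilon\approx\mathbf{m}_\epsilon(i)$ on $\widehat{\mathcal W}_i$ and $\approx\mathbf{m}_\epsilon(j)$ on $\widehat{\mathcal W}_j$) to evaluate the leading-order contribution. The antisymmetric term $\tfrac1\epsilon Q_{\epsilon}^{\mathbf{g}}\boldsymbol\ell\cdot\nabla\phi_\epsilon$ is handled exactly as in \cite{LeeSeo}: since $\nabla\cdot\boldsymbol\ell=0$ and $\boldsymbol\ell\cdot\nabla U=0$, its contribution either integrates to zero by parts or is absorbed into the determinant factor that turns $\sqrt{-\det\mathbb{H}^{\boldsymbol\sigma}}$ into the correct Eyring--Kramers weight, producing precisely $\omega^{\boldsymbol\sigma}[\mathbf{g}(i)-\mathbf{g}(j)][\mathbf{m}_\epsilon(i)-\mathbf{m}_\epsilon(j)]$ up to $o_\epsilon(1)$. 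Summing over all saddles and all pairs and recalling $\omega_{i,j}=\sum_{\boldsymbol\sigma\in\Sigma_{i,j}}\omega^{\boldsymbol\sigma}$ together with the normalization by $\nu_\star$ (which comes from $Z_\epsilon$ via \eqref{e_Zeps}), one obtains exactly $-\nu_\star^{-1}D_{\mathbf{x}}(\mathbf{g},\mathbf{m}_\epsilon)+o_\epsilon(1)$ by definition \eqref{e_Dx} of $D_{\mathbf{x}}$.

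The main obstacle will be the careful microlocal analysis near each saddle $\boldsymbol\sigma\in\Sigma^*$: one must show that the chosen smooth profile for $Q_{\epsilon}^{\mathbf{g}}$ both (a) produces the correct leading constant $\omega^{\boldsymbol\sigma}$ when paired against $\nabla\phi_\epsilon\,e^{-U/\epsilon}$, and (b) has gradient small enough, on the complement of the saddle neighborhoods, that the energy estimate $\mathscr{D}_\epsilon(\phi_\epsilon)\le C\theta_\epsilon^{-1}$ of Proposition \ref{p_energy} — applied through Cauchy--Schwarz to the cross term $\int\nabla Q_{\epsilon}^{\mathbf{g}}\cdot\nabla\phi_\epsilon\,d\mu_\epsilon$ — controls all error terms by $o_\epsilon(1)$. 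Quantifying the interplay between the width $\delta$ of the transition layer, the exponential smallness of $\mu_\epsilon$ there, and the $\theta_\epsilon^{-1}$-sized Dirichlet energy of $\phi_\epsilon$ is the delicate accounting at the heart of the argument, and this is exactly the part that is deferred to Sections \ref{sec6} and \ref{sec7} for the explicit construction of the profile. The non-reversibility enters only through the extra $\boldsymbol\ell$-term, and the two structural assumptions $\boldsymbol\ell\cdot\nabla U\equiv0$, $\nabla\cdot\boldsymbol\ell\equiv0$ are precisely what make that term either vanish or contribute the correct modification $\mu^{\boldsymbol\sigma}$ (the negative eigenvalue of $\mathbb{H}^{\boldsymbol\sigma}+\mathbb{L}^{\boldsymbol\sigma}$) in place of the reversible $|\lambda_1^{\boldsymbol\sigma}|$, so that no genuinely new phenomenon beyond \cite{RS} and \cite{LeeSeo} appears at this stage.
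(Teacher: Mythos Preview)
Your outline is broadly on target --- build $Q_\epsilon^{\mathbf g}$ as constants on wells glued to a tilted one-dimensional profile near each $\boldsymbol\sigma\in\Sigma^*$, integrate by parts, localize via $\nabla Q_\epsilon^{\mathbf g}$, and use Proposition~\ref{p55} and Proposition~\ref{p_energy} to control errors --- and this is exactly the paper's strategy. But your treatment of the non-reversible term contains a genuine gap.

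First a minor correction: after one integration by parts the paper's form is $-\theta_\epsilon\epsilon\int\nabla Q_\epsilon^{\mathbf g}\cdot\big(\nabla\phi_\epsilon-\tfrac1\epsilon\phi_\epsilon\boldsymbol\ell\big)\,d\mu_\epsilon$, with $\boldsymbol\ell$ multiplying $\phi_\epsilon$, not $Q_\epsilon^{\mathbf g}$. Your version is equivalent after a further integration by parts (using $\nabla\cdot(\boldsymbol\ell\,e^{-U/\epsilon})=0$), but only the paper's form is manifestly localized by the factor $\nabla Q_\epsilon^{\mathbf g}$.

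The real issue is your claim that the $\boldsymbol\ell$-term ``either integrates to zero by parts or is absorbed into the determinant factor.'' Neither happens. The profile $p_\epsilon^{\boldsymbol\sigma}$ is taken along the eigenvector $\bm v^{\boldsymbol\sigma}$ of $\mathbb H^{\boldsymbol\sigma}-(\mathbb L^{\boldsymbol\sigma})^\dagger$, and since $\bm v^{\boldsymbol\sigma}\ne\bm e_1^{\boldsymbol\sigma}$ in the non-reversible case, $p_\epsilon^{\boldsymbol\sigma}$ does \emph{not} reach exactly $0$ and $1$ on the box faces $\partial_\pm\mathcal C_\epsilon^{\boldsymbol\sigma}$. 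The paper therefore inserts a linear continuation in thin layers $\widehat\partial_\pm\mathcal C_\epsilon^{\boldsymbol\sigma}$ of width $\eta=\epsilon^2$ (this is new compared to \cite{RS}, and the mollification used in \cite{LeeSeo} does not work here). After localization the integral splits as $A_1(\boldsymbol\sigma)+A_2^+(\boldsymbol\sigma)+A_2^-(\boldsymbol\sigma)$, where $A_1$ lives on the box $\mathcal B_\epsilon^{\boldsymbol\sigma}$ and $A_2^\pm$ are the integrals of $\phi_\epsilon\,(\nabla p_\epsilon^{\boldsymbol\sigma}\cdot\boldsymbol\ell)$ over the continuation layers. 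The surprise is that $A_1$ alone produces the prefactor $\lambda_1^{\boldsymbol\sigma}/(2\pi\nu_\star\sqrt{-\det\mathbb H^{\boldsymbol\sigma}})$ --- the \emph{reversible} constant, not $\omega^{\boldsymbol\sigma}$. The layer terms $A_2^\pm$ are of the same order and, via the identity
\[
\big(\bm v^{\boldsymbol\sigma}+\mathbb L^{\boldsymbol\sigma}(\mathbb H^{\boldsymbol\sigma})^{-1}\bm v^{\boldsymbol\sigma}\big)\cdot\bm e_1^{\boldsymbol\sigma}
=\frac{\mu^{\boldsymbol\sigma}}{\lambda_1^{\boldsymbol\sigma}}\,(\bm v^{\boldsymbol\sigma}\cdot\bm e_1^{\boldsymbol\sigma})\,,
\]
they upgrade $\lambda_1^{\boldsymbol\sigma}$ to $\mu^{\boldsymbol\sigma}$ in the sum $A_1+A_2^++A_2^-$. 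So the non-reversible correction to the Eyring--Kramers constant comes precisely from the $\boldsymbol\ell$-integral on the continuation layer; it neither vanishes nor is a determinant effect, and you cannot obtain the right answer without computing it. Your plan needs to incorporate this layer construction and the order-one contribution it carries.
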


The construction of the test function $Q_{\epsilon}^{\mathbf{g}}$
stated in the proposition above is the most crucial part of the proof
and hence its proof is postponed to the next sections. At this moment,
we prove Proposition \ref{p57} by assuming Proposition \ref{p_test}.
Recall the bi-linear form $D_{\mathbf{x}}(\cdot,\,\cdot)$ defined
in \eqref{e_Dx} and define another bi-linear form $D_{\mathbf{y}}(\mathbf{f},\,\mathbf{g})$
for $\mathbf{f},\,\mathbf{g}:S_{\star}\rightarrow\mathbb{R}$ as
\begin{equation}
D_{\mathbf{y}}(\mathbf{f},\,\mathbf{g})\,:=\,\sum_{i\in S_{\star}}\mathbf{f}(i)\,(-L_{\mathbf{y}}\mathbf{g})(i)\,\frac{\nu_{i}}{\nu_{\star}}\,=\,\frac{1}{\nu_{\star}}\sum_{i\in S}\,\beta_{i,\,j}\,(\mathbf{f}(j)-\mathbf{f}(i))\,(\mathbf{g}(j)-\mathbf{g}(i))\;.\label{e_Dy}
\end{equation}
We recall some relations between $D_{\mathbf{x}}(\cdot,\cdot)$ and
$D_{\mathbf{y}}(\cdot,\cdot)$ proved in \cite{RS}. For $\mathbf{u}:S_{\star}\rightarrow\mathbb{R}$,
we define the harmonic extension $\widetilde{\mathbf{u}}:S\rightarrow\mathbb{R}$
as the extension of $\mathbf{u}$ to $S$ satisfying $(L_{\mathbf{x}}\widetilde{\mathbf{u}})(i)\,=\,0$
for all $i\in S\setminus S_{\star}$.
\begin{lem}
\label{lem59}Let $\mathbf{u},\,\mathbf{v}:S_{\star}\rightarrow\mathbb{R}$
and let $\widetilde{\mathbf{u}}$ and $\widetilde{\mathbf{v}}$ be
the harmonic extensions of $\mathbf{u}$ and $\mathbf{v}$, respectively.
Then, we have $D_{\mathbf{x}}(\widetilde{\mathbf{u}},\,\widetilde{\mathbf{v}})=\nu_{\star}\,D_{\mathbf{y}}(\mathbf{u},\:\mathbf{v})$.
Moreover, for any extensions $\mathbf{v}_{1},\mathbf{v}_{2}$ of $\mathbf{v}$,
we have $D_{\mathbf{x}}(\widetilde{\mathbf{u}},\,\mathbf{v}_{1})=D_{\mathbf{x}}(\widetilde{\mathbf{u}},\,\mathbf{v}_{2})$.
\end{lem}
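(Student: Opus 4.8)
The plan is to reduce everything to reversibility of $\mathbf{x}(\cdot)$ and to the linearity of the harmonic extension, after which both statements become bookkeeping with capacities. First I would record three elementary facts: $D_{\mathbf{x}}(\cdot,\cdot)$ is a symmetric bilinear form (immediate from the second expression in \eqref{e_Dx}, or from reversibility with respect to $m$); the harmonic extension $\mathbf{u}\mapsto\widetilde{\mathbf{u}}$ is well-defined and linear, because the system $(L_{\mathbf{x}}\mathbf{w})(i)=0$ for $i\in S\setminus S_{\star}$ with $\mathbf{w}|_{S_{\star}}=\mathbf{u}$ has a unique solution (the chain $\mathbf{x}(\cdot)$ is irreducible and $S_{\star}\neq\emptyset$); and $\widetilde{\mathbf{1}}=\mathbf{1}$ with $D_{\mathbf{x}}(\mathbf{1},\cdot)=0$. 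For the second assertion, set $\mathbf{w}=\mathbf{v}_{1}-\mathbf{v}_{2}$, which vanishes on $S_{\star}$; then by reversibility and the first expression in \eqref{e_Dx},
\[
D_{\mathbf{x}}(\widetilde{\mathbf{u}},\mathbf{w})\,=\,D_{\mathbf{x}}(\mathbf{w},\widetilde{\mathbf{u}})\,=\,\sum_{i\in S}m(i)\,\mathbf{w}(i)\,[-(L_{\mathbf{x}}\widetilde{\mathbf{u}})(i)]\,=\,0\,,
\]
since for $i\in S_{\star}$ the factor $\mathbf{w}(i)$ vanishes and for $i\in S\setminus S_{\star}$ the factor $(L_{\mathbf{x}}\widetilde{\mathbf{u}})(i)$ vanishes by definition of the harmonic extension. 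Hence $D_{\mathbf{x}}(\widetilde{\mathbf{u}},\mathbf{v}_{1})=D_{\mathbf{x}}(\widetilde{\mathbf{u}},\mathbf{v}_{2})$.

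For the first assertion, both $(\mathbf{u},\mathbf{v})\mapsto D_{\mathbf{x}}(\widetilde{\mathbf{u}},\widetilde{\mathbf{v}})$ and $D_{\mathbf{y}}(\cdot,\cdot)$ are symmetric bilinear forms on functions on $S_{\star}$, so by polarization it suffices to verify $D_{\mathbf{x}}(\widetilde{\mathbf{u}},\widetilde{\mathbf{u}})=\nu_{\star}\,D_{\mathbf{y}}(\mathbf{u},\mathbf{u})$. For $k\in S_{\star}$ let $\widetilde{\mathbf{e}}_{k}$ be the harmonic extension of the indicator $\mathbf{1}_{\{k\}}$ on $S_{\star}$, so that $\widetilde{\mathbf{u}}=\sum_{k\in S_{\star}}\mathbf{u}(k)\,\widetilde{\mathbf{e}}_{k}$ and $\sum_{k\in S_{\star}}\widetilde{\mathbf{e}}_{k}=\mathbf{1}$. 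The key observation is that for $A\subseteq S_{\star}$ the equilibrium potential $\mathbf{h}_{A,\,S_{\star}\setminus A}$ is $L_{\mathbf{x}}$-harmonic on $S\setminus S_{\star}$ and equals $\mathbf{1}_{A}$ on $S_{\star}$, so by uniqueness of the harmonic extension $\mathbf{h}_{A,\,S_{\star}\setminus A}=\widetilde{\mathbf{1}_{A}}$; in particular $\widetilde{\mathbf{e}}_{k}=\mathbf{h}_{\{k\},\,S_{\star}\setminus\{k\}}$ and $\widetilde{\mathbf{e}}_{k}+\widetilde{\mathbf{e}}_{l}=\mathbf{h}_{\{k,l\},\,S_{\star}\setminus\{k,l\}}$ for $k\neq l$. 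Consequently, by \eqref{e_capx}, $D_{\mathbf{x}}(\widetilde{\mathbf{e}}_{k},\widetilde{\mathbf{e}}_{k})=\textrm{cap}_{\mathbf{x}}(\{k\},\,S_{\star}\setminus\{k\})$, and expanding $\textrm{cap}_{\mathbf{x}}(\{k,l\},\,S_{\star}\setminus\{k,l\})=D_{\mathbf{x}}(\widetilde{\mathbf{e}}_{k}+\widetilde{\mathbf{e}}_{l},\,\widetilde{\mathbf{e}}_{k}+\widetilde{\mathbf{e}}_{l})$ together with the definition of $\beta_{k,l}$ gives $D_{\mathbf{x}}(\widetilde{\mathbf{e}}_{k},\widetilde{\mathbf{e}}_{l})=-\beta_{k,l}$ for $k\neq l$. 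Finally $0=D_{\mathbf{x}}(\widetilde{\mathbf{e}}_{k},\mathbf{1})=\sum_{l\in S_{\star}}D_{\mathbf{x}}(\widetilde{\mathbf{e}}_{k},\widetilde{\mathbf{e}}_{l})$ yields $D_{\mathbf{x}}(\widetilde{\mathbf{e}}_{k},\widetilde{\mathbf{e}}_{k})=\sum_{l\in S_{\star}}\beta_{k,l}$. Substituting these into $D_{\mathbf{x}}(\widetilde{\mathbf{u}},\widetilde{\mathbf{u}})=\sum_{k,l\in S_{\star}}\mathbf{u}(k)\mathbf{u}(l)\,D_{\mathbf{x}}(\widetilde{\mathbf{e}}_{k},\widetilde{\mathbf{e}}_{l})$ and using $\beta_{k,l}=\beta_{l,k}$ collapses the sum to $\tfrac{1}{2}\sum_{k,l\in S_{\star}}\beta_{k,l}(\mathbf{u}(k)-\mathbf{u}(l))^{2}=\nu_{\star}\,D_{\mathbf{y}}(\mathbf{u},\mathbf{u})$, as claimed.

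I do not expect a genuine obstacle here: this is the standard trace-process / electrical-network computation, and the same identities are recorded in \cite{RS} and \cite{BL1}. The only points requiring mild care are the well-posedness of the harmonic extension (needed both for its linearity and for its identification with the equilibrium potentials), the verification that equilibrium potentials are harmonic off their obstacle sets, and the bookkeeping of the factor $\tfrac{1}{2}$ relating the symmetrized double sums in \eqref{e_Dx} and \eqref{e_Dy}.
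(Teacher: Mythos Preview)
Your proof is correct; the paper itself does not give an argument but simply defers to \cite[Lemma~4.3]{RS}, and what you have written is exactly the standard trace-process computation that reference records (reversibility of $\mathbf{x}(\cdot)$, identification of the harmonic extension of $\mathbf{1}_A$ with the equilibrium potential $\mathbf{h}_{A,\,S_\star\setminus A}$, and polarization). The only cosmetic point is the edge case $|S_\star|=2$, where $\textrm{cap}_{\mathbf{x}}(\{k,l\},\,\emptyset)$ should be read as $0$; this is consistent with your identity $\widetilde{\mathbf{e}}_k+\widetilde{\mathbf{e}}_l=\mathbf{1}$ and $D_{\mathbf{x}}(\mathbf{1},\mathbf{1})=0$, so no change to the argument is needed.
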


\begin{proof}
See \cite[Lemma 4.3]{RS}.
\end{proof}
Now, we prove Proposition \ref{p57}.
\begin{proof}[Proof of Proposition \ref{p57}]
Let us define $\mathbf{h}_{\epsilon}:S_{\star}\rightarrow\mathbb{R}$
as
\begin{equation}
\mathbf{h}_{\epsilon}(i)\,:=\,{\bf m}_{\epsilon}(i)-\mathbf{f}(i)\;\;\text{ for all }i\in S_{\star}\;,\label{e_he}
\end{equation}
and let $\widetilde{\mathbf{h}}_{\epsilon}$ be the harmonic extension
of $\mathbf{h}_{\epsilon}$. Then, by the maximum principle and \eqref{eq:bdd_m},
there exists $C>0$ such that
\begin{equation}
\max_{i\in S}\left|\widetilde{\mathbf{h}}_{\epsilon}(i)\right|=\max_{i\in S_{\star}}\left|\mathbf{h}_{\epsilon}(i)\right|\le C\;.\label{bdH}
\end{equation}
Therefore, we can construct a test function $Q_{\epsilon}^{\mathbf{\widetilde{\mathbf{h}}_{\epsilon}}}$
constructed in Proposition \ref{p_test}.

Now, by Proposition \ref{p32}, \eqref{res1} and \eqref{e_conQ1},
we have
\begin{align}
\int_{\mathbb{R}^{d}}Q_{\epsilon}^{\widetilde{\mathbf{h}}_{\epsilon}}\,(\lambda\phi_{\epsilon}-\theta_{\epsilon}\mathscr{L}_{\epsilon}\phi_{\epsilon})\,d\mu_{\epsilon}\, & =\,(1+o_{\epsilon}(1))\sum_{i\in S_{\star}}\mathbf{h}_{\epsilon}(i)\,(\lambda\mathbf{f}-L_{\mathbf{y}}\mathbf{f})(i)\,\frac{\nu_{i}}{\nu_{\star}}\label{eeQl}\\
 & =\lambda\sum_{i\in S_{\star}}\mathbf{h}_{\epsilon}(i)\,\mathbf{f}(i)\,\frac{\nu_{i}}{\nu_{\star}}+D_{\mathbf{y}}(\mathbf{h}_{\epsilon},\,\mathbf{f})+o_{\epsilon}(1)\;,\nonumber
\end{align}
where the last line follows from the definition of $D_{\mathbf{y}}$
and \eqref{bdH}. The crucial idea in the proof is to compute the
left-hand side of \eqref{eeQl} in a different way and to compare
with the previous computation. To that end, we first observe from
Propositions \ref{p32}, \ref{p_bddsol}, \ref{p55} and \eqref{e_conQ1}
that
\begin{equation}
\lambda\int_{\mathbb{R}^{d}}Q_{\epsilon}^{\widetilde{\mathbf{h}}_{\epsilon}}\,\phi_{\epsilon}\,d\mu_{\epsilon}=\lambda\sum_{i\in S_{\star}}\mathbf{h}_{\epsilon}(i)\,\mathbf{m}_{\epsilon}(i)\,\frac{\nu_{i}}{\nu_{\star}}+o_{\epsilon}(1)\;.\label{eeql2}
\end{equation}
By Proposition \ref{p_test} and uniform boundedness of $Q_{\epsilon}^{\widetilde{\mathbf{h}}_{\epsilon}}\,$,
we have
\begin{equation}
\int_{\mathbb{R}^{d}}Q_{\epsilon}^{\widetilde{\mathbf{h}}_{\epsilon}}\,(-\theta_{\epsilon}\mathscr{L}_{\epsilon}\phi_{\epsilon})\,d\mu_{\epsilon}\,=\,\frac{1}{\nu_{\star}}D_{\mathbf{x}}(\widetilde{\mathbf{h}}_{\epsilon},\,{\bf m}_{\epsilon})\,+o_{\epsilon}(1)\;.\label{eeql3}
\end{equation}
Denote by ${\bf m}_{\epsilon}^{\star}:S_{\star}\rightarrow\mathbb{R}$
the restriction of ${\bf m}_{\epsilon}:S\rightarrow\mathbb{R}$ on
$S_{\star}$, and denote by $\widetilde{{\bf m}}_{\epsilon}^{\star}$
the harmonic extension of ${\bf m}_{\epsilon}^{\star}$. Then, by
Lemma \ref{lem59}, we have
\[
D_{\mathbf{x}}(\widetilde{\mathbf{h}}_{\epsilon},\,{\bf m}_{\epsilon})\,=\,D_{\mathbf{x}}(\widetilde{\mathbf{h}}_{\epsilon},\,\widetilde{{\bf m}}_{\epsilon}^{\star})=\nu_{\star}\,D_{\mathbf{y}}({\bf h}_{\epsilon},\,{\bf m}_{\epsilon}^{\star})\;.
\]
Inserting this into \eqref{eeql3} and combining with \eqref{eeql2},
we can conclude that
\[
\int_{\mathbb{R}^{d}}Q_{\epsilon}^{\widetilde{\mathbf{h}}_{\epsilon}}\,(\lambda\phi_{\epsilon}-\theta_{\epsilon}\mathscr{L}_{\epsilon}\phi_{\epsilon})\,d\mu_{\epsilon}=\lambda\sum_{i\in S_{\star}}\mathbf{h}_{\epsilon}(i)\,\mathbf{m}_{\epsilon}(i)\,\frac{\nu_{i}}{\nu_{\star}}+D_{\mathbf{y}}({\bf h}_{\epsilon},\,{\bf m}_{\epsilon}^{\star})+o_{\epsilon}(1)\;.
\]
Comparing this with \eqref{eeQl} and inserting \eqref{e_he}, we
get
\[
\lambda\sum_{i\in S_{\star}}\mathbf{h}_{\epsilon}(i)^{2}\frac{\nu_{i}}{\nu_{\star}}+D_{\mathbf{y}}(\mathbf{h}_{\epsilon},\,\mathbf{h}_{\epsilon})=o_{\epsilon}(1)\;.
\]
This implies that $\max_{i\in S_{\star}}|\mathbf{h}_{\epsilon}(i)|=o_{\epsilon}(1)$
and therefore by recalling the definition \eqref{e_he} of $\mathbf{h}_{\epsilon}$
completes the proof.
\end{proof}

\section{\label{sec6}Construction of Test Function $Q_{\epsilon}^{\mathbf{g}}$}

In this section, we explicitly define the test function $Q_{\epsilon}^{\mathbf{g}}$,
which is an approximating solution to the following elliptic equation:
\begin{equation}
\begin{cases}
\,\mathscr{L}_{\epsilon}^{*}u\,=\,0 & \text{on }\mathbb{R}^{d}\setminus(\cup_{i\in S}\mathcal{V}_{i})\;\;\;\text{and}\\
\,u\,=\,\mathbf{g}(i) & \text{on }\mathcal{V}_{i}\text{ for each }i\in S\;.
\end{cases}\label{e_idea}
\end{equation}
Although we share the same philosophy is this construction with the
reversible case \cite{RS}, the detailed construction and entailed
computations are more complicated compared to the ones therein because
of the non-reversibility.

\subsection{\label{sec61}Neighborhood of saddle points}

To construct the approximating solution to \eqref{e_idea}, we mainly
focus on a neighborhood of each saddle point $\boldsymbol{\sigma}\in\Sigma_{i,\,j}$
for some $i,\,j\in S$, as the function $u$ suddenly changes its
value from $\mathbf{g}(i)$ to $\mathbf{g}(j)$ around such a saddle
point. Therefore, we carefully define several notations regarding
this neighborhood. In this subsection, we fix $i,\,j\in S$ and consider
a saddle point $\bm{\sigma}\in\Sigma_{i,\,j}$. \textbf{\textit{In
addition, we assume that $i<j$ in this subsection.}}
\begin{notation}
\label{not61}We use the following notations in this subsection.
\begin{enumerate}
\item We abbreviate $\mathbb{H}=\mathbb{H}^{\boldsymbol{\sigma}}$ and $\mathbb{L}=\mathbb{L}^{\boldsymbol{\sigma}}$.
\item Since the symmetric matrix $\mathbb{H}$ has only one negative eigenvalue,
we denote by $-\lambda_{1},\,\lambda_{2},\,\dots,\,\lambda_{d}\,(=-\lambda_{1}^{\boldsymbol{\sigma}},\,\lambda_{2}^{\boldsymbol{\sigma}},\,\dots,\,\lambda_{d}^{\boldsymbol{\sigma}})$
the eigenvalues of $\mathbb{H}$, where $-\lambda_{1}$ denotes the
unique negative eigenvalue.
\item Denote by $\boldsymbol{e}_{1}^{\boldsymbol{\sigma}}$ the unit eigenvector
associated with the eigenvalue $-\lambda_{1}$, and by $\boldsymbol{e}_{k}^{\boldsymbol{\sigma}}$,
$k\ge2$, the unit eigenvector associated with the eigenvalue $\lambda_{k}$.
In addition, we assume that the direction of $\boldsymbol{e}_{1}^{\boldsymbol{\sigma}}$
is toward $\mathcal{W}_{i}$, i.e., for all sufficiently small $a>0$,
$\bm{\sigma}+a\boldsymbol{e}_{1}^{\boldsymbol{\sigma}}\in\mathcal{W}_{i}$.
Then, for $\boldsymbol{x}\in\mathbb{R}^{d}$ and $k=1,\,\dots,\,d$,
we write $x_{k}=(\boldsymbol{x}-\boldsymbol{\sigma})\cdot\boldsymbol{e}_{k}^{\boldsymbol{\sigma}}$.
In other words, we have $\boldsymbol{x}=\bm{\sigma}+\sum_{m=1}^{d}x_{m}\bm{e}_{m}^{\boldsymbol{\sigma}}$.
\end{enumerate}
\end{notation}

\begin{figure}

\includegraphics[scale=0.2]{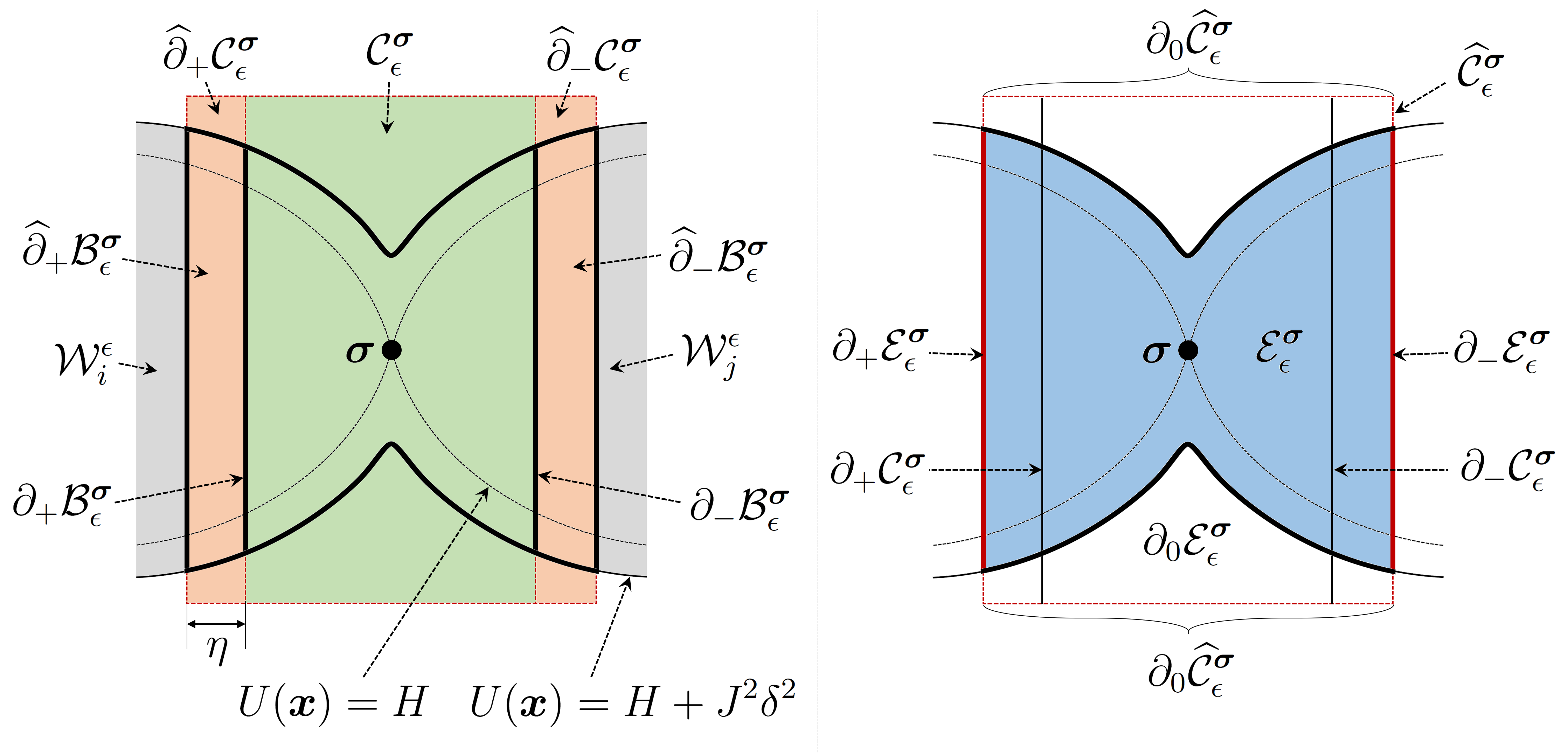}\caption{\label{fig3}Illustration of various sets around a saddle point $\boldsymbol{\sigma}$
introduced in Section \ref{sec61}.}

\end{figure}

Now, we define several sets around $\boldsymbol{\sigma}.$ Figure
\ref{fig3} illustrates the sets appearing in this section. Recall
$\delta$ from \eqref{e_delta} and recall that $J>0$ is a sufficiently
large constant. Define an auxiliary set
\[
\mathcal{T}_{\epsilon}^{\boldsymbol{\sigma}}\,:=\,\Big\{\,\boldsymbol{x}\in\mathbb{R}^{d}:x_{k}\,\in\,\big[\,-\frac{2J\delta}{\lambda_{k}^{1/2}},\,\frac{2J\delta}{\lambda_{k}^{1/2}}\,\big]\,\,\text{ for }\,2\leq k\leq d\,\Big\}\;.
\]
Then, define a box $\mathcal{C}_{\epsilon}^{\bm{\sigma}}$ centered
at $\bm{\sigma}$ as
\[
\mathcal{C}_{\epsilon}^{\bm{\sigma}}\,:=\,\Big\{\,\boldsymbol{x}\in\mathbb{R}^{d}:x_{1}\,\in\,\big[\,-\frac{J\delta}{\lambda_{1}^{1/2}},\,\frac{J\delta}{\lambda_{1}^{1/2}}\,\big]\,\Big\}\,\cap\,\mathcal{T}_{\epsilon}^{\boldsymbol{\sigma}}\;.
\]
The boundary sets $\partial_{+}\mathcal{C}_{\epsilon}^{\bm{\sigma}}$
and $\partial_{-}\mathcal{C}_{\epsilon}^{\bm{\sigma}}$ defined below
will be used later.
\begin{equation}
\partial_{\pm}\mathcal{C}_{\epsilon}^{\bm{\sigma}}\,=\,\Big\{\,\boldsymbol{x}\in\mathcal{C}_{\epsilon}^{\bm{\sigma}}:x_{1}=\pm\frac{J\delta}{\lambda_{1}^{1/2}}\,\Big\}\;.\label{e_bdC}
\end{equation}
We define another scale
\begin{equation}
\eta\,:=\,\eta(\epsilon)\,=\,\epsilon^{2}\;.\label{e_eta}
\end{equation}
Then, define the enlargements of boundaries $\partial_{+}\mathcal{C}_{\epsilon}^{\bm{\sigma}}$
and $\partial_{-}\mathcal{C}_{\epsilon}^{\bm{\sigma}}$ as
\begin{align*}
\widehat{\partial}_{+}\mathcal{C}_{\epsilon}^{\bm{\sigma}}\,=\, & \Big\{\,\boldsymbol{x}\in\mathbb{R}^{d}:x_{1}\,\in\,\big[\,\frac{J\delta}{\lambda_{1}^{1/2}},\,\frac{J\delta}{\lambda_{1}^{1/2}}+\eta\,\big]\,\Big\}\,\cap\,\mathcal{T}_{\epsilon}^{\boldsymbol{\sigma}}\;,\\
\widehat{\partial}_{-}\mathcal{C}_{\epsilon}^{\bm{\sigma}}\,=\, & \Big\{\,\boldsymbol{x}\in\mathbb{R}^{d}:x_{1}\,\in\,\big[\,-\frac{J\delta}{\lambda_{1}^{1/2}}-\eta,\,-\frac{J\delta}{\lambda_{1}^{1/2}}\,\big]\,\Big\}\,\cap\mathcal{\,T}_{\epsilon}^{\boldsymbol{\sigma}}\ .
\end{align*}
With these enlarged boundaries, we can expand $\mathcal{C}_{\epsilon}^{\boldsymbol{\sigma}}$
to
\[
\widehat{\mathcal{C}}_{\epsilon}^{\boldsymbol{\sigma}}\,=\,\mathcal{C}_{\epsilon}^{\bm{\sigma}}\,\cup\,\widehat{\partial}_{+}\mathcal{C}_{\epsilon}^{\bm{\sigma}}\,\cup\,\widehat{\partial}_{-}\mathcal{C}_{\epsilon}^{\bm{\sigma}}\;.
\]
Let
\[
\partial_{0}\mathcal{\widehat{C}}_{\epsilon}^{\bm{\sigma}}\,=\,\Big\{\,\boldsymbol{x}\in\widehat{\mathcal{C}}_{\epsilon}^{\boldsymbol{\sigma}}:x_{k}=\pm\frac{2J\delta}{\lambda_{k}^{1/2}}\,\,\text{ for some }\,2\leq k\leq d\,\Big\}\,\;.
\]
Then, by a Taylor expansion of $U$ around $\boldsymbol{\sigma}$,
we can readily verify that
\begin{equation}
U(\bm{x})\,\geq\,H+\frac{3}{2}\,J^{2}\,\delta^{2}\,[\,1+o_{\epsilon}(1)\,]\text{ \;\;for all }\bm{x}\in\partial_{0}\mathcal{\widehat{C}}_{\epsilon}^{\bm{\sigma}}\;.\label{e_boU}
\end{equation}
For the detailed proof, we refer to \cite[Lemma 8.3]{LeeSeo}. Now,
we define
\[
\mathcal{K}_{\epsilon}\,=\,\{\bm{x}\in\mathbb{R}^{d}:U(\bm{x})<H+J^{2}\delta^{2}\}\;,
\]
so that, by \eqref{e_boU}, the boundary $\partial_{0}\mathcal{\widehat{C}}_{\epsilon}^{\bm{\sigma}}$
does not belong to $\mathcal{K}_{\epsilon}$ provided that $\epsilon$
is sufficiently small. Then, we define
\[
\mathcal{B}_{\epsilon}^{\bm{\sigma}}\,=\,\mathcal{C}_{\epsilon}^{\bm{\sigma}}\cap\mathcal{K}_{\epsilon}\;\;\;\text{,\;\;}\widehat{\partial}_{\pm}\mathcal{B}_{\epsilon}^{\bm{\sigma}}\,=\,\widehat{\partial}_{\pm}\mathcal{C}_{\epsilon}^{\bm{\sigma}}\cap\mathcal{K}_{\epsilon}\;\;\;\text{and\;\;\;}\mathcal{E}_{\epsilon}^{\bm{\sigma}}\,=\,\widehat{\mathcal{C}}_{\epsilon}^{\bm{\sigma}}\cap\mathcal{K}_{\epsilon}\;
\]
so that $\mathcal{E}_{\epsilon}^{\bm{\sigma}}=\mathcal{B}_{\epsilon}^{\bm{\sigma}}\cup\widehat{\partial}_{+}\mathcal{B}_{\epsilon}^{\bm{\sigma}}\cup\widehat{\partial}_{-}\mathcal{B}_{\epsilon}^{\bm{\sigma}}$.
Denote by $\partial\mathcal{E}_{\epsilon}^{\bm{\sigma}}$ the boundary
of the set $\mathcal{E}_{\epsilon}^{\bm{\sigma}}$ and decompose it
into
\[
\partial\mathcal{E}_{\epsilon}^{\bm{\sigma}}\,=\,\partial_{+}\mathcal{E}_{\epsilon}^{\bm{\sigma}}\,\cup\,\partial_{-}\mathcal{E}_{\epsilon}^{\bm{\sigma}}\,\cup\,\partial_{0}\mathcal{E}_{\epsilon}^{\bm{\sigma}}
\]
such that
\begin{align*}
\partial_{\pm}\mathcal{E}_{\epsilon}^{\bm{\sigma}} & \,=\,\Big\{\,\boldsymbol{x}\in\partial\mathcal{E}_{\epsilon}^{\bm{\sigma}}:x_{1}=\pm\,\big(\,\frac{J\delta}{\lambda_{1}^{1/2}}+\eta\,\big)\,\Big\}\,\;\;\text{and\;,}\\
\partial_{0}\mathcal{E}_{\epsilon}^{\bm{\sigma}} & \,=\,\Big\{\,\boldsymbol{x}\in\partial\mathcal{E}_{\epsilon}^{\bm{\sigma}}:x_{1}\neq\pm\,\big(\,\frac{J\delta}{\lambda_{1}^{1/2}}+\eta\,\big)\,\Big\}\;.
\end{align*}
Then, by \eqref{e_boU} (one can readily check from Figure \ref{fig3}),
for sufficiently small $\epsilon>0$,
\begin{equation}
U(\bm{x})\,=\,H+J^{2}\delta^{2}\ \text{\;\;for all}\ \bm{x}\in\partial_{0}\mathcal{E}_{\epsilon}^{\bm{\sigma}}\;.\label{e_bdE1}
\end{equation}
Furthermore, by our selection of the direction of vector $\boldsymbol{e}_{1}^{\boldsymbol{\sigma}}$
(cf. Notation \ref{not61}-(3)), we have
\begin{equation}
\partial_{+}\mathcal{E}_{\epsilon}^{\bm{\sigma}}\,\subset\,\partial\mathcal{W}_{i}^{\epsilon}\;\;\text{and\;\;}\partial_{-}\mathcal{E}_{\epsilon}^{\bm{\sigma}}\,\subset\,\partial\mathcal{W}_{j}^{\epsilon}\;.\label{e_bdE2}
\end{equation}
Similarly, we decompose $\partial\mathcal{B}_{\epsilon}^{\bm{\sigma}}$
into $\partial_{+}\mathcal{B}_{\epsilon}^{\bm{\sigma}}$, $\partial_{-}\mathcal{B}_{\epsilon}^{\bm{\sigma}}$,
and $\partial_{0}\mathcal{B}_{\epsilon}^{\bm{\sigma}}$ such that
\begin{align}
\partial_{\pm}\mathcal{B}_{\epsilon}^{\bm{\sigma}} & \,=\,\Big\{\,\boldsymbol{x}\in\partial\mathcal{B}_{\epsilon}^{\bm{\sigma}}:x_{1}=\pm\frac{J\delta}{\lambda_{1}^{1/2}}\,\Big\}\,\;\;\;\text{and\;\;\;}\partial_{0}\mathcal{B}_{\epsilon}^{\bm{\sigma}}\,=\,\Big\{\,\boldsymbol{x}\in\partial\mathcal{B}_{\epsilon}^{\bm{\sigma}}:x_{1}\neq\pm\frac{J\delta}{\lambda_{1}^{1/2}}\,\Big\}\,\;.\label{e_bdB}
\end{align}

\subsection{\label{sec62}Decomposition of $\mathcal{K}_{\epsilon}$}

Now, we turn to the global picture. Recall $\Sigma^{*}$ from \eqref{e_sig*}.
By \eqref{e_bdE1}, we can observe that $\mathcal{K}_{\epsilon}\setminus(\cup_{\bm{\sigma}\in\Sigma^{*}}\mathcal{E}_{\epsilon}^{\bm{\sigma}})$
consists of $K$ connected components, and we denote by $\mathcal{W}_{i}^{\epsilon}$,
$i\in S$, the component among them containing $\mathcal{V}_{i}$.
Then, we can decompose $\mathcal{K}_{\epsilon}$ such that
\begin{equation}
\mathcal{K}_{\epsilon}\,=\,\Big[\,\bigcup_{i\in S}\mathcal{W}_{i}^{\epsilon}\,\Big]\,\cup\,\Big[\,\bigcup_{\bm{\sigma}\in\Sigma^{*}}\mathcal{E}_{\epsilon}^{\bm{\sigma}}\,\Big]\;.\label{e_decK}
\end{equation}
The test function $Q_{\epsilon}^{\mathbf{g}}$ is constructed on this
global structure of $\mathcal{K}_{\epsilon}$.

\subsection{\label{sec63}Construction of function $Q_{\epsilon}^{\mathbf{g}}$}

\subsubsection*{Construction around a saddle point}

We start by introducing the building block for the construction of
$Q_{\epsilon}^{\mathbf{g}}$, which is a function on $\mathcal{E}_{\epsilon}^{\bm{\sigma}}=\mathcal{B}_{\epsilon}^{\bm{\sigma}}\cup\widehat{\partial}_{+}\mathcal{B}_{\epsilon}^{\bm{\sigma}}\cup\widehat{\partial}_{-}\mathcal{B}_{\epsilon}^{\bm{\sigma}}$.
First, let us focus on the set $\mathcal{B}_{\epsilon}^{\bm{\sigma}}$.
Recall that $\mathbb{H}^{\boldsymbol{\sigma}}+\mathbb{L^{\boldsymbol{\sigma}}}$
has a unique negative eigenvalue $-\mu^{\boldsymbol{\sigma}}$. Denote
by $\mathbb{A}^{\dagger}$ the transpose of the matrix $\mathbb{A}$.
Then, we can readily verify that (cf. \cite[display (8.1)]{LeeSeo})
the matrix $\mathbb{H}^{\boldsymbol{\sigma}}-(\mathbb{L^{\boldsymbol{\sigma}}})^{\dagger}$
is similar to $\mathbb{H}^{\boldsymbol{\sigma}}+\mathbb{L^{\boldsymbol{\sigma}}}$
and hence has the unique negative eigenvalue $-\mu^{\boldsymbol{\sigma}}$.
We denote by $\boldsymbol{v}^{\boldsymbol{\sigma}}$ the unit eigenvector
of $\mathbb{H}^{\boldsymbol{\sigma}}-(\mathbb{L^{\boldsymbol{\sigma}}})^{\dagger}$
associated with $-\mu^{\boldsymbol{\sigma}}$. We assume that $\boldsymbol{v}^{\boldsymbol{\sigma}}\cdot\boldsymbol{e}_{1}^{\boldsymbol{\sigma}}>0$,
as we can take $-\boldsymbol{v}^{\boldsymbol{\sigma}}$ instead if
this inner product is negative. We note that $\boldsymbol{v}^{\boldsymbol{\sigma}}\cdot\boldsymbol{e}_{1}^{\boldsymbol{\sigma}}\neq0$
by \cite[Lemma 8.1]{LeeSeo}.

Define a function $p_{\epsilon}^{\bm{\sigma}}:\mathbb{R}^{d}\rightarrow\mathbb{R}$
as
\begin{equation}
p_{\epsilon}^{\bm{\sigma}}(\bm{x})\,:=\,\frac{1}{C_{\epsilon}^{\boldsymbol{\sigma}}}\int_{-\infty}^{(\bm{x}-\bm{\sigma})\cdot\bm{v}^{\boldsymbol{\sigma}}}\,e^{-\frac{\mu^{\boldsymbol{\sigma}}}{2\epsilon}t^{2}}\,dt\;\;\;\;;\;\boldsymbol{x}\in\mathcal{C}_{\epsilon}^{\boldsymbol{\sigma}}\;,\label{e_pesB}
\end{equation}
where the normalizing constant $C_{\epsilon}^{\boldsymbol{\sigma}}$
is given by
\begin{equation}
C_{\epsilon}^{\boldsymbol{\sigma}}\,=\,\int_{-\infty}^{\infty}\,e^{-\frac{\mu^{\boldsymbol{\sigma}}}{2\epsilon}t^{2}}\,dt\,=\,\sqrt{\frac{2\pi\epsilon}{\mu^{\boldsymbol{\sigma}}}}\;.\label{e_Ces}
\end{equation}
Note that we defined the function on $\mathcal{C}_{\epsilon}^{\boldsymbol{\sigma}}$
containing $\mathcal{B}_{\epsilon}^{\boldsymbol{\sigma}}.$ The function
$p_{\epsilon}^{\bm{\sigma}}$ introduced here is identical to the
one introduced in the companion paper \cite[display (8.8)]{LeeSeo}.
It is remarkable that the test function for the Eyring--Kramers formula
and that for the Markov chain convergence share the building block,
while the global construction from this building block is carried
out in a different manner.

The function $p_{\epsilon}^{\bm{\sigma}}$ is an approximating solution
$\mathscr{L}_{\epsilon}^{*}f\simeq0$ with approximating boundary
conditions $f\simeq1$ on $\partial_{+}\mathcal{C}_{\epsilon}^{\boldsymbol{\sigma}}$
and $f\simeq0$ on $\partial_{-}\mathcal{C}_{\epsilon}^{\boldsymbol{\sigma}}$
by our assumption that $\boldsymbol{v}^{\boldsymbol{\sigma}}\cdot\boldsymbol{e}_{1}^{\boldsymbol{\sigma}}>0$.
The approximating property $\mathscr{L}_{\epsilon}^{*}p_{\epsilon}^{\boldsymbol{\sigma}}\simeq0$
can be quantified in the following proposition, which has been proven
in \cite[Proposition 8.5]{LeeSeo}.
\begin{prop}
\label{p62}For all $\boldsymbol{\sigma}\in\Sigma^{*}$, we have
\[
\theta_{\epsilon}\,\int_{\mathcal{B}_{\epsilon}^{\boldsymbol{\sigma}}}\,|\,\mathscr{L}_{\epsilon}^{*}p_{\epsilon}^{\boldsymbol{\sigma}}\,|\,d\mu_{\epsilon}\,=\,o_{\epsilon}(1)\;.
\]
\end{prop}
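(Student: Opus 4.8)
The plan is to reduce the claim to a pointwise estimate of $\mathscr{L}_{\epsilon}^{*}p_{\epsilon}^{\boldsymbol{\sigma}}$ on the box $\mathcal{B}_{\epsilon}^{\boldsymbol{\sigma}}$, followed by an integration against $d\mu_{\epsilon}$ over that box and a comparison with the order of $\theta_{\epsilon}^{-1}=e^{-(H-h)/\epsilon}$. First I would fix $\boldsymbol{\sigma}\in\Sigma^{*}$, abbreviate $\mathbb{H}=\mathbb{H}^{\boldsymbol{\sigma}}$, $\mathbb{L}=\mathbb{L}^{\boldsymbol{\sigma}}$, $\mu=\mu^{\boldsymbol{\sigma}}$, $\boldsymbol{v}=\boldsymbol{v}^{\boldsymbol{\sigma}}$, and work in the coordinates $x_{1},\dots,x_{d}$ of Notation~\ref{not61}. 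Since $p_{\epsilon}^{\boldsymbol{\sigma}}$ depends on $\boldsymbol{x}$ only through the scalar $s:=(\boldsymbol{x}-\boldsymbol{\sigma})\cdot\boldsymbol{v}$, we have $\nabla p_{\epsilon}^{\boldsymbol{\sigma}} = (C_{\epsilon}^{\boldsymbol{\sigma}})^{-1}e^{-\mu s^{2}/(2\epsilon)}\,\boldsymbol{v}$ and $\nabla^{2}p_{\epsilon}^{\boldsymbol{\sigma}} = -(C_{\epsilon}^{\boldsymbol{\sigma}})^{-1}(\mu s/\epsilon)e^{-\mu s^{2}/(2\epsilon)}\,\boldsymbol{v}\boldsymbol{v}^{\dagger}$. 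Plugging these into the expression \eqref{e_genadj} for $\mathscr{L}_{\epsilon}^{*}$ and expanding, one obtains
\[
\mathscr{L}_{\epsilon}^{*}p_{\epsilon}^{\boldsymbol{\sigma}}(\boldsymbol{x})
\,=\,\frac{1}{C_{\epsilon}^{\boldsymbol{\sigma}}}\,e^{-\frac{\mu s^{2}}{2\epsilon}}\,\Big[\,{-\mu s}\,-\,\big(\nabla U(\boldsymbol{x})-\boldsymbol{\ell}(\boldsymbol{x})\big)\cdot\boldsymbol{v}\,+\,\epsilon\,(\nabla\cdot\boldsymbol{\ell})(\boldsymbol{x})\,\Big]\;.
\]
Here the divergence term vanishes by \eqref{e_conell2}, so the whole point is that $(\nabla U - \boldsymbol{\ell})\cdot\boldsymbol{v}$ agrees with $\mu s$ to high order near $\boldsymbol{\sigma}$. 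By a second-order Taylor expansion, $\nabla U(\boldsymbol{x}) = \mathbb{H}(\boldsymbol{x}-\boldsymbol{\sigma}) + O(|\boldsymbol{x}-\boldsymbol{\sigma}|^{2})$ and $\boldsymbol{\ell}(\boldsymbol{x}) = \mathbb{L}(\boldsymbol{x}-\boldsymbol{\sigma}) + O(|\boldsymbol{x}-\boldsymbol{\sigma}|^{2})$; consequently $(\nabla U - \boldsymbol{\ell})\cdot\boldsymbol{v} = (\boldsymbol{x}-\boldsymbol{\sigma})^{\dagger}(\mathbb{H}-\mathbb{L}^{\dagger})\boldsymbol{v} + O(|\boldsymbol{x}-\boldsymbol{\sigma}|^{2}) = -\mu\,(\boldsymbol{x}-\boldsymbol{\sigma})\cdot\boldsymbol{v} + O(|\boldsymbol{x}-\boldsymbol{\sigma}|^{2}) = -\mu s + O(|\boldsymbol{x}-\boldsymbol{\sigma}|^{2})$, since $\boldsymbol{v}$ is the unit eigenvector of $\mathbb{H}-\mathbb{L}^{\dagger}$ with eigenvalue $-\mu$. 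Therefore, on $\mathcal{B}_{\epsilon}^{\boldsymbol{\sigma}}\subset\mathcal{C}_{\epsilon}^{\boldsymbol{\sigma}}$, where $|\boldsymbol{x}-\boldsymbol{\sigma}| = O(\delta) = O((\epsilon\log\tfrac1\epsilon)^{1/2})$, we get the pointwise bound $|\mathscr{L}_{\epsilon}^{*}p_{\epsilon}^{\boldsymbol{\sigma}}(\boldsymbol{x})| \le C\,(C_{\epsilon}^{\boldsymbol{\sigma}})^{-1}\,\delta^{2}\,e^{-\mu s^{2}/(2\epsilon)}$, and recalling $C_{\epsilon}^{\boldsymbol{\sigma}} = \sqrt{2\pi\epsilon/\mu}$ from \eqref{e_Ces}, this is $\le C\,\epsilon^{-1/2}\,\delta^{2}\,e^{-\mu s^{2}/(2\epsilon)}$.

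Next I would carry out the integral. On $\mathcal{B}_{\epsilon}^{\boldsymbol{\sigma}}$ one has $U(\boldsymbol{x}) = H + \tfrac12(\boldsymbol{x}-\boldsymbol{\sigma})^{\dagger}\mathbb{H}(\boldsymbol{x}-\boldsymbol{\sigma}) + O(\delta^{3})$, hence $e^{-U(\boldsymbol{x})/\epsilon} = e^{-H/\epsilon}\,e^{-\frac{1}{2\epsilon}(\boldsymbol{x}-\boldsymbol{\sigma})^{\dagger}\mathbb{H}(\boldsymbol{x}-\boldsymbol{\sigma})}\,[1+o_{\epsilon}(1)]$ uniformly on the box. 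Writing $d\mu_{\epsilon} = Z_{\epsilon}^{-1}e^{-U/\epsilon}\,d\boldsymbol{x}$ and using $Z_{\epsilon} = [1+o_{\epsilon}(1)](2\pi\epsilon)^{d/2}e^{-h/\epsilon}\nu_{\star}$ from \eqref{e_Zeps}, and $\theta_{\epsilon} = e^{(H-h)/\epsilon}$, the prefactor $\theta_{\epsilon}\,Z_{\epsilon}^{-1}\,e^{-H/\epsilon}$ is of order $\epsilon^{-d/2}$. Thus
\[
\theta_{\epsilon}\int_{\mathcal{B}_{\epsilon}^{\boldsymbol{\sigma}}}|\mathscr{L}_{\epsilon}^{*}p_{\epsilon}^{\boldsymbol{\sigma}}|\,d\mu_{\epsilon}
\,\le\, C\,\epsilon^{-d/2}\,\epsilon^{-1/2}\,\delta^{2}\int_{\mathcal{B}_{\epsilon}^{\boldsymbol{\sigma}}}e^{-\frac{\mu s^{2}}{2\epsilon}}\,e^{-\frac{1}{2\epsilon}(\boldsymbol{x}-\boldsymbol{\sigma})^{\dagger}\mathbb{H}(\boldsymbol{x}-\boldsymbol{\sigma})}\,d\boldsymbol{x}\;.
\]
The remaining Gaussian-type integral requires a little care because $\mathbb{H}$ has one negative eigenvalue $-\lambda_{1}$; but the factor $e^{-\mu s^{2}/(2\epsilon)}$ supplies decay in the $\boldsymbol{v}$-direction, and the constraint defining $\mathcal{B}_{\epsilon}^{\boldsymbol{\sigma}}$ (namely $|x_{1}|\le J\delta/\lambda_{1}^{1/2}$ together with $U<H+J^{2}\delta^{2}$) confines the integration. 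A change of variables $\boldsymbol{y} = (\boldsymbol{x}-\boldsymbol{\sigma})/\sqrt{\epsilon}$ shows the integral is at most $C\,\epsilon^{d/2}$ (the exponent being a genuinely negative-definite quadratic in the directions orthogonal to $\boldsymbol{v}$, with the $\boldsymbol{v}$-direction handled by $e^{-\mu s^2/(2\epsilon)}$, and the $x_{1}$-range being bounded). Collecting all factors gives the overall bound $C\,\epsilon^{-1/2}\,\delta^{2} = C\,\epsilon^{-1/2}\,\epsilon\,\log\tfrac1\epsilon = C\,\epsilon^{1/2}\log\tfrac1\epsilon = o_{\epsilon}(1)$, which is exactly the claim — and in fact matches the error rate $O(\epsilon^{1/2}\log\tfrac1\epsilon)$ announced after Theorem~\ref{t_main}.

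The main obstacle is the Gaussian integral with the indefinite Hessian: unlike the reversible Laplace-asymptotics setting, one must exploit the precise compatibility between the exponential weight $e^{-\mu s^2/(2\epsilon)}$ built from the eigenvector $\boldsymbol{v}$ of $\mathbb{H}-\mathbb{L}^{\dagger}$ and the Gaussian weight $e^{-(\boldsymbol{x}-\boldsymbol{\sigma})^{\dagger}\mathbb{H}(\boldsymbol{x}-\boldsymbol{\sigma})/(2\epsilon)}$ built from $\mathbb{H}$, to confirm that the combined quadratic form, restricted to the slab $\mathcal{B}_{\epsilon}^{\boldsymbol{\sigma}}$, is controlled. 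This is precisely the linear-algebra computation carried out in the companion paper around \cite[Lemma 8.1]{LeeSeo} showing $\boldsymbol{v}\cdot\boldsymbol{e}_{1}\neq0$ and establishing the needed spectral picture; rather than redo it, I would invoke \cite[Proposition 8.5]{LeeSeo} directly, since the integrand here is literally the one analyzed there and the bound is identical. The rest — the Taylor expansions, the cancellation of the divergence term via \eqref{e_conell2}, and the bookkeeping of the $\epsilon$-powers from $C_{\epsilon}^{\boldsymbol{\sigma}}$, $Z_{\epsilon}$, and $\theta_{\epsilon}$ — is routine.
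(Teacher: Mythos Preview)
Your approach is correct and is essentially the same as the paper's: the paper simply states that the proposition ``has been proven in \cite[Proposition~8.5]{LeeSeo}'', and your sketch is precisely the content of that companion-paper result---compute $\mathscr{L}_{\epsilon}^{*}p_{\epsilon}^{\boldsymbol{\sigma}}$ explicitly, observe that the eigenvector property of $\boldsymbol{v}$ forces the leading terms to cancel so the bracket is $O(\delta^{2})$, and then integrate. One small imprecision: the combined quadratic form $\mathbb{H}+\mu\,\boldsymbol{v}\otimes\boldsymbol{v}$ is only positive \emph{semi}-definite (one checks $\boldsymbol{v}^{\dagger}\mathbb{H}^{-1}\boldsymbol{v}=-1/\mu$ using the skew-symmetry of $\mathbb{H}\mathbb{L}$, so the determinant vanishes), with a one-dimensional kernel, so the Gaussian integral over $\mathcal{B}_{\epsilon}^{\boldsymbol{\sigma}}$ is $O(\delta\,\epsilon^{(d-1)/2})$ rather than $O(\epsilon^{d/2})$; this costs an extra $(\log\tfrac{1}{\epsilon})^{1/2}$ in the final bound but of course does not affect the $o_{\epsilon}(1)$ conclusion.
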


Now, we focus on the properties $p_{\epsilon}^{\boldsymbol{\sigma}}\simeq1$
on $\partial_{+}\mathcal{C}_{\epsilon}^{\boldsymbol{\sigma}}$ and
$p_{\epsilon}^{\boldsymbol{\sigma}}\simeq0$ on $\partial_{-}\mathcal{C}_{\epsilon}^{\boldsymbol{\sigma}}$.
When suitably \textcolor{black}{extending this function to get a continuous
function on $\mathbb{R}^{d}$,}\textcolor{red}{{} }these asymptotic
equalities along the boundaries cause technical problems. They become
the exact equality for the reversible case considered in \cite{RS}
as $\boldsymbol{v}^{\boldsymbol{\sigma}}=\boldsymbol{e}_{1}^{\boldsymbol{\sigma}}.$
For our case, the discontinuity is a natural consequence of the non-reversibility;
hence, we need an additional continuation procedure. In \cite{LeeSeo},
this continuation has been carried out by mollification via a smooth
mollifier. For the current problem, such a procedure does not work,
and we take a different path of construction. The enlarged set $\mathcal{E}_{\epsilon}^{\boldsymbol{\sigma}}$
is introduced for performing this continuation procedure.

Now, we continuously extend $p_{\epsilon}^{\boldsymbol{\sigma}}$
to $\widehat{\mathcal{C}}_{\epsilon}^{\bm{\sigma}}$. For each $\bm{x}=\bm{\sigma}+\sum_{k=1}^{d}x_{k}\bm{e}_{k}^{\boldsymbol{\sigma}}\in\widehat{\partial}_{\pm}\mathcal{C}_{\epsilon}^{\bm{\sigma}}$,\textbf{\textcolor{blue}{{}
}}\textcolor{black}{we write
\[
\overline{\boldsymbol{x}}\,=\,\bm{\sigma}\,\pm\,\frac{J\delta}{(\lambda_{1}^{\bm{\sigma}})^{1/2}}\,\bm{e}_{1}^{\bm{\sigma}}\,+\,\sum_{k=2}^{d}x_{k}\,\bm{e}_{k}^{\bm{\sigma}}\,\in\,\partial_{\pm}\mathcal{C}_{\epsilon}^{\bm{\sigma}}\;,
\]
where the boundaries $\partial_{\pm}\mathcal{C}_{\epsilon}^{\bm{\sigma}}$}
are defined in \eqref{e_bdC}. Then, define $p_{\epsilon}^{\bm{\sigma}}$
on the enlarged boundaries $\widehat{\partial}_{\pm}\mathcal{C}_{\epsilon}^{\bm{\sigma}}$
as
\begin{equation}
p_{\epsilon}^{\bm{\sigma}}(\bm{x})\,=\,\begin{cases}
1+\frac{1}{\eta}\,\Big[\,(\bm{x}-\boldsymbol{\sigma})\cdot\bm{e}_{1}^{\bm{\sigma}}-\frac{J\delta}{(\lambda_{1}^{\bm{\sigma}})^{1/2}}-\eta\,\Big]\,(1-p_{\epsilon}^{\bm{\sigma}}(\overline{\boldsymbol{x}})) & \text{for }\bm{x}\in\widehat{\partial}_{+}\mathcal{C}_{\epsilon}^{\bm{\sigma}}\;,\\
\frac{1}{\eta}\,\Big[\,(\bm{x}-\boldsymbol{\sigma})\cdot\bm{e}_{1}^{\bm{\sigma}}+\frac{J\delta}{(\lambda_{1}^{\bm{\sigma}})^{1/2}}+\eta\,\Big]\,p_{\epsilon}^{\bm{\sigma}}(\overline{\boldsymbol{x}}) & \text{for }\bm{x}\in\widehat{\partial}_{-}\mathcal{C}_{\epsilon}^{\bm{\sigma}}\;.
\end{cases}\label{e_pes}
\end{equation}
By such an extension, we can check that $p_{\epsilon}^{\bm{\sigma}}$
is continuous on $\widehat{\mathcal{C}}_{\epsilon}^{\bm{\sigma}}$.
Now, we regard $p_{\epsilon}^{\bm{\sigma}}$ as a function on $\mathcal{E}_{\epsilon}^{\boldsymbol{\sigma}}$.
Then, we can check that $p_{\epsilon}^{\bm{\sigma}}$ satisfies the
exact boundary conditions
\begin{equation}
p_{\epsilon}^{\bm{\sigma}}(\boldsymbol{x})\,=\,\begin{cases}
1 & \text{if }\boldsymbol{x}\in\partial_{+}\mathcal{E}_{\epsilon}^{\boldsymbol{\sigma}}\;,\\
0 & \text{if }\boldsymbol{x}\in\partial_{-}\mathcal{E}_{\epsilon}^{\boldsymbol{\sigma}}\;.
\end{cases}\label{e_pesbd}
\end{equation}
Now, we claim that the cost of this continuation procedure is tolerable.
\begin{lem}
\label{lem63}For all $\boldsymbol{\sigma}\in\Sigma^{*}$, we have
\[
\theta_{\epsilon}\,\epsilon\,\int_{\widehat{\partial}_{\pm}\mathcal{C}_{\epsilon}^{\bm{\sigma}}}\,|\nabla p_{\epsilon}^{\bm{\sigma}}|^{2}\,d\mu_{\epsilon}\,=\,o_{\epsilon}(1)\;.
\]
\end{lem}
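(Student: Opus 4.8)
The plan is to estimate the integral directly from the explicit formula \eqref{e_pes} for $p_{\epsilon}^{\bm{\sigma}}$ on the enlarged boundary, using the Gaussian structure \eqref{e_pesB}--\eqref{e_Ces}, the second-order Taylor expansion of $U$ at $\bm{\sigma}$, and the asymptotics of $Z_{\epsilon}$ from Proposition \ref{p32}. I would treat $\widehat{\partial}_{+}\mathcal{C}_{\epsilon}^{\bm{\sigma}}$; the case $\widehat{\partial}_{-}\mathcal{C}_{\epsilon}^{\bm{\sigma}}$ is symmetric (replace $p_{\epsilon}^{\bm{\sigma}}$ by $1-p_{\epsilon}^{\bm{\sigma}}$, i.e. $\bm{v}^{\bm{\sigma}}$ by $-\bm{v}^{\bm{\sigma}}$). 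Put $a=J\delta(\lambda_{1}^{\bm{\sigma}})^{-1/2}$. First I would differentiate \eqref{e_pes}: for $\bm{x}\in\widehat{\partial}_{+}\mathcal{C}_{\epsilon}^{\bm{\sigma}}$ with projection $\overline{\bm{x}}\in\partial_{+}\mathcal{C}_{\epsilon}^{\bm{\sigma}}$ and $s:=(\overline{\bm{x}}-\bm{\sigma})\cdot\bm{v}^{\bm{\sigma}}$, the $\bm{e}_{1}^{\bm{\sigma}}$-component of $\nabla p_{\epsilon}^{\bm{\sigma}}(\bm{x})$ equals $\eta^{-1}(1-p_{\epsilon}^{\bm{\sigma}}(\overline{\bm{x}}))$, while its components orthogonal to $\bm{e}_{1}^{\bm{\sigma}}$ agree with those of $-\nabla p_{\epsilon}^{\bm{\sigma}}(\overline{\bm{x}})$ up to the factor $\eta^{-1}((\bm{x}-\bm{\sigma})\cdot\bm{e}_{1}^{\bm{\sigma}}-a-\eta)\in[-1,0]$; since $|\nabla p_{\epsilon}^{\bm{\sigma}}(\overline{\bm{x}})|=(C_{\epsilon}^{\bm{\sigma}})^{-1}e^{-\mu^{\bm{\sigma}}s^{2}/(2\epsilon)}$ by \eqref{e_pesB}, the Gaussian-tail estimate $1-p_{\epsilon}^{\bm{\sigma}}(\overline{\bm{x}})=(C_{\epsilon}^{\bm{\sigma}})^{-1}\int_{s}^{\infty}e^{-\mu^{\bm{\sigma}}t^{2}/(2\epsilon)}\,dt\le e^{-\mu^{\bm{\sigma}}(s^{+})^{2}/(2\epsilon)}$ (with $s^{+}:=\max\{s,0\}$), $(C_{\epsilon}^{\bm{\sigma}})^{-2}\le C\epsilon^{-1}$ and $\eta=\epsilon^{2}$ (cf. \eqref{e_eta}) combine to the pointwise bound $|\nabla p_{\epsilon}^{\bm{\sigma}}(\bm{x})|^{2}\le C\,\epsilon^{-4}\,e^{-\mu^{\bm{\sigma}}(s^{+})^{2}/\epsilon}$ on $\widehat{\partial}_{+}\mathcal{C}_{\epsilon}^{\bm{\sigma}}$. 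Writing $\bm{x}=\bm{\sigma}+\sum_{k}x_{k}\bm{e}_{k}^{\bm{\sigma}}$ as in Notation \ref{not61}, one has $s=a\,\alpha_{1}+\sum_{k\ge2}x_{k}\,\alpha_{k}$ with $\alpha_{k}:=\bm{e}_{k}^{\bm{\sigma}}\cdot\bm{v}^{\bm{\sigma}}$, $\alpha_{1}>0$.

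Next I would bring in the weight. On the slab $(\bm{x}-\bm{\sigma})\cdot\bm{e}_{1}^{\bm{\sigma}}=a+O(\eta)$ and $|x_{k}|\le2J\delta/(\lambda_{k}^{\bm{\sigma}})^{1/2}$, so $|\bm{x}-\bm{\sigma}|$ is of order $\delta$, and the Taylor expansion of $U$ at $\bm{\sigma}$ together with $\delta^{2}=\epsilon\log\tfrac{1}{\epsilon}$ (cf. \eqref{e_delta}) gives $e^{-U(\bm{x})/\epsilon}\le C\,\epsilon^{-J^{2}(1+o_{\epsilon}(1))/2}\,e^{-H/\epsilon}\,e^{-\frac{1}{2\epsilon}\sum_{k\ge2}\lambda_{k}^{\bm{\sigma}}x_{k}^{2}}$ (the slab lies at energy $\approx H-\tfrac12 J^{2}\delta^{2}<H$, which is the origin of the unfavourable factor $\epsilon^{-J^{2}/2}$). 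Since $\theta_{\epsilon}=e^{(H-h)/\epsilon}$ and $Z_{\epsilon}=(1+o_{\epsilon}(1))(2\pi\epsilon)^{d/2}e^{-h/\epsilon}\nu_{\star}$ by Proposition \ref{p32}, this yields $\theta_{\epsilon}Z_{\epsilon}^{-1}e^{-U(\bm{x})/\epsilon}\le C\,\epsilon^{-d/2-J^{2}(1+o_{\epsilon}(1))/2}\,e^{-\frac{1}{2\epsilon}\sum_{k\ge2}\lambda_{k}^{\bm{\sigma}}x_{k}^{2}}$. Inserting the pointwise bound for $|\nabla p_{\epsilon}^{\bm{\sigma}}|^{2}$, carrying out the (trivial) $x_{1}$-integration across the slab of width $\eta=\epsilon^{2}$, and evaluating the remaining Gaussian integral over $(x_{k})_{k\ge2}$ --- whose exponent $\mu^{\bm{\sigma}}(s^{+})^{2}+\tfrac12\sum_{k\ge2}\lambda_{k}^{\bm{\sigma}}x_{k}^{2}$ is minimised on $\{s\ge0\}$ with minimum value $\gamma^{\bm{\sigma}}J^{2}\delta^{2}$, where
\[
\gamma^{\bm{\sigma}}\;:=\;\frac{q^{\bm{\sigma}}}{2q^{\bm{\sigma}}-1}\,,\qquad q^{\bm{\sigma}}\;:=\;\frac{\mu^{\bm{\sigma}}\,(\bm{e}_{1}^{\bm{\sigma}}\cdot\bm{v}^{\bm{\sigma}})^{2}}{\lambda_{1}^{\bm{\sigma}}}\,,
\]
so that the integral is $O\big(\epsilon^{(d-1)/2+\gamma^{\bm{\sigma}}J^{2}}\big)$ up to polylogarithmic factors --- I obtain
\[
\theta_{\epsilon}\,\epsilon\int_{\widehat{\partial}_{+}\mathcal{C}_{\epsilon}^{\bm{\sigma}}}|\nabla p_{\epsilon}^{\bm{\sigma}}|^{2}\,d\mu_{\epsilon}\;\le\;C\,\epsilon^{\,J^{2}(\gamma^{\bm{\sigma}}-1/2)(1+o_{\epsilon}(1))\,-\,3/2}\,\big(\log\tfrac{1}{\epsilon}\big)^{O(1)}\,.
\]

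The hard part is to confirm that $\gamma^{\bm{\sigma}}>1/2$: one must ensure that the decay of the boundary value $1-p_{\epsilon}^{\bm{\sigma}}$ along $\partial_{+}\mathcal{C}_{\epsilon}^{\bm{\sigma}}$ beats both the weight $e^{-U/\epsilon}\asymp\epsilon^{-J^{2}/2}e^{-H/\epsilon}$ and the factor $\eta^{-2}=\epsilon^{-4}$ coming from the finite difference across the thin slab. In the reversible case $\bm{v}^{\bm{\sigma}}=\bm{e}_{1}^{\bm{\sigma}}$ and $\mu^{\bm{\sigma}}=\lambda_{1}^{\bm{\sigma}}$, so $q^{\bm{\sigma}}=1$ and $\gamma^{\bm{\sigma}}=1$. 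In general, the relation $\mathbb{H}^{\bm{\sigma}}\mathbb{L}^{\bm{\sigma}}+(\mathbb{L}^{\bm{\sigma}})^{\dagger}\mathbb{H}^{\bm{\sigma}}=0$ --- a consequence of $\nabla U\cdot\bm{\ell}\equiv0$, already used in \cite{LeeSeo} --- makes $(\mathbb{H}^{\bm{\sigma}})^{-1}(\mathbb{L}^{\bm{\sigma}})^{\dagger}$ skew-symmetric, whence $(\bm{v}^{\bm{\sigma}})^{\dagger}(\mathbb{H}^{\bm{\sigma}})^{-1}(\mathbb{L}^{\bm{\sigma}})^{\dagger}\bm{v}^{\bm{\sigma}}=0$; inserting this into $(\mathbb{H}^{\bm{\sigma}}-(\mathbb{L}^{\bm{\sigma}})^{\dagger})\bm{v}^{\bm{\sigma}}=-\mu^{\bm{\sigma}}\bm{v}^{\bm{\sigma}}$ gives the identity $(\bm{v}^{\bm{\sigma}})^{\dagger}(\mathbb{H}^{\bm{\sigma}})^{-1}\bm{v}^{\bm{\sigma}}=-1/\mu^{\bm{\sigma}}$. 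As the left-hand side equals $\sum_{k\ge2}\alpha_{k}^{2}/\lambda_{k}^{\bm{\sigma}}-\alpha_{1}^{2}/\lambda_{1}^{\bm{\sigma}}$ and $\sum_{k\ge2}\alpha_{k}^{2}/\lambda_{k}^{\bm{\sigma}}\ge0$, this forces $q^{\bm{\sigma}}\ge1$, hence $\gamma^{\bm{\sigma}}=q^{\bm{\sigma}}/(2q^{\bm{\sigma}}-1)\in(1/2,1]$. Consequently the exponent $J^{2}(\gamma^{\bm{\sigma}}-1/2)(1+o_{\epsilon}(1))-3/2$ is positive for all sufficiently small $\epsilon$ once $J$ is taken large enough (uniformly over the finite set $\Sigma^{*}$) --- one of the conditions fixing how large $J$, and the constant $c_{0}$ in \eqref{e_hatW}, must be --- so the bound is $o_{\epsilon}(1)$, and the same estimate holds on $\widehat{\partial}_{-}\mathcal{C}_{\epsilon}^{\bm{\sigma}}$.
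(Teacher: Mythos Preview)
Your argument is correct, and it takes a genuinely different route from the paper's proof. The paper does not work with a single global Gaussian exponent; instead it invokes Lemma~\ref{lem_decC} (which comes from \cite[Lemma~8.10]{LeeSeo}) to split $\widehat{\partial}_{+}\mathcal{C}_{\epsilon}^{\bm{\sigma}}$ into a region where $\overline{\bm{x}}\cdot\bm{v}^{\bm{\sigma}}\ge aJ\delta$ and a region where $U(\bm{x})\ge H+aJ^{2}\delta^{2}$. On the first region the Gaussian tail bound for $1-p_{\epsilon}^{\bm{\sigma}}$ is applied and the remaining exponent is $\tfrac{1}{2}\bm{x}\cdot(\mathbb{H}^{\bm{\sigma}}+2\mu^{\bm{\sigma}}\bm{v}^{\bm{\sigma}}\otimes\bm{v}^{\bm{\sigma}})\bm{x}$, whose positive-definiteness is quoted from \cite[Lemma~8.2]{LeeSeo}; on the second region the high value of $U$ alone does the job. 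Your approach instead keeps the full slab, combines the tail bound and the Taylor weight into the single exponent $\mu^{\bm{\sigma}}(s^{+})^{2}+\tfrac12\sum_{k\ge2}\lambda_{k}^{\bm{\sigma}}x_{k}^{2}$, computes its minimum $\gamma^{\bm{\sigma}}J^{2}\delta^{2}$ explicitly, and verifies $\gamma^{\bm{\sigma}}>\tfrac12$ through the identity $(\bm{v}^{\bm{\sigma}})^{\dagger}(\mathbb{H}^{\bm{\sigma}})^{-1}\bm{v}^{\bm{\sigma}}=-1/\mu^{\bm{\sigma}}$, which you derive from the skew-symmetry of $\mathbb{H}^{\bm{\sigma}}\mathbb{L}^{\bm{\sigma}}$. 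The two arguments rest on the same algebraic fact (indeed, the matrix determinant lemma shows that the positive-definiteness of $\mathbb{H}^{\bm{\sigma}}+2\mu^{\bm{\sigma}}\bm{v}^{\bm{\sigma}}\otimes\bm{v}^{\bm{\sigma}}$ is equivalent to your identity together with $q^{\bm{\sigma}}>\tfrac12$, and your stronger conclusion $q^{\bm{\sigma}}\ge1$ is exactly what that identity gives), but yours is more self-contained and produces the sharp threshold $\gamma^{\bm{\sigma}}=q^{\bm{\sigma}}/(2q^{\bm{\sigma}}-1)$ explicitly, while the paper's is shorter in context because it recycles the decomposition and the positive-definiteness lemma already established in \cite{LeeSeo}. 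Two minor remarks: the $o_{\epsilon}(1)$ you insert in the exponent of $\epsilon^{-J^{2}/2}$ is unnecessary since $\delta^{3}/\epsilon\to0$ makes the Taylor remainder a bounded multiplicative factor; and your Laplace estimate for the $(x_{k})_{k\ge2}$-integral can be replaced by the cruder bound $\mathrm{vol}(\text{box})\cdot e^{-\gamma^{\bm{\sigma}}J^{2}\delta^{2}/\epsilon}$, which already yields the stated power of~$\epsilon$.
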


We defer the technical proof of this lemma to the next subsection.

\subsubsection*{Global construction}

For $\mathbf{g}=\mathbf{g}_{\epsilon}:S\rightarrow\mathbb{R}$, we
can now define the function $Q_{\epsilon}^{\mathbf{g}}:\mathbb{R}^{d}\to\mathbb{R}$.
First, we define this function on $\mathcal{K}_{\epsilon}$ (cf. \eqref{e_decK})
such that
\begin{equation}
Q_{\epsilon}^{\mathbf{g}}(\boldsymbol{x})\,=\,\begin{cases}
\mathbf{g}(i) & \text{for }\bm{x}\in\mathcal{W}_{i}^{\epsilon},\,i\in S\;,\\
\mathbf{g}(j)+(\mathbf{g}(i)-\mathbf{g}(j))\,p_{\epsilon}^{\bm{\sigma}}(\bm{x}) & \text{for }\bm{x}\in\mathcal{E}_{\epsilon}^{\bm{\sigma}},\,\bm{\sigma}\in\Sigma_{i,\,j}\text{ for}\;i<j\;.
\end{cases}\label{e_Q}
\end{equation}
By \eqref{e_bdE2} and \eqref{e_pesbd}, the function $Q_{\epsilon}^{\mathbf{g}}$
is continuous on $\mathcal{K}_{\epsilon}$. Since for all $\boldsymbol{\sigma}\in\Sigma^{*}$,
it holds that
\[
p_{\epsilon}^{\bm{\sigma}}(\boldsymbol{x})\,\in\,[0,\,1]\;\;\;\text{and\;\;\;|\,}\nabla p_{\epsilon}^{\bm{\sigma}}(\boldsymbol{x})\,|\,\le\,C\,\eta^{-1}\text{\;\;\;for all }\boldsymbol{x}\in\mathcal{E}_{\epsilon}^{\boldsymbol{\sigma}}\;,
\]
we can check that
\[
\lVert\,Q_{\epsilon}^{\mathbf{g}}\,\rVert_{L^{\infty}(\mathcal{K}_{\epsilon})}\,=\,\lVert\,\mathbf{g}\,\rVert_{\infty}\;\;\;\;\text{and\;\;\;\;}\lVert\nabla Q_{\epsilon}^{\mathbf{g}}\rVert_{L^{\infty}(\mathcal{K}_{\epsilon})}\,\leq\,C\,\eta^{-1}\,\lVert\,\mathbf{g}\,\rVert_{\infty}\;,
\]
where $\Vert\mathbf{g}\Vert=\max_{i\in S}|\mathbf{g}(i)|$. Note that
$Q_{\epsilon}^{\mathbf{g}}$ is not differentiable along the boundary
of $\mathcal{E}_{\epsilon}^{\boldsymbol{\sigma}}$ for each $\boldsymbol{\sigma}\in\Sigma^{*}$.
In this computation and subsequent computations, we implicitly regard
$\nabla Q_{\epsilon}^{\mathbf{g}}$ as an a.e. defined function except
for these discontinuity surfaces. Then, we can continuously extend
this function to $\mathbb{R}^{d}$ such that
\begin{equation}
\lVert\,Q_{\epsilon}^{\mathbf{g}}\,\rVert_{L^{\infty}(\mathbb{R}^{d})}\,=\,\lVert\,\mathbf{g}\,\rVert_{\infty}\;\;\;\;\text{and\;\;\;\;}\lVert\,\nabla Q_{\epsilon}^{\mathbf{g}}\,\rVert_{L^{\infty}(\mathbb{R}^{d})}\,\leq\,C\,\eta^{-1}\,\lVert\,\mathbf{g}\,\rVert_{\infty}\;.\label{e_conQ3}
\end{equation}
In particular, we have uniformly boundedness of $Q_{\epsilon}^{\mathbf{g}}$
since we assumed in Proposition \ref{p_test} that $\Vert\mathbf{g}\Vert$
is uniformly bounded in $\epsilon$. Note also that the condition
\eqref{e_conQ1} of Proposition \ref{p_test} is satisfied by $Q_{\epsilon}^{\mathbf{g}}$
immediately from its definition in \eqref{e_Q}. The last and the
most technical part is to check that $Q_{\epsilon}^{\mathbf{g}}$
satisfies \eqref{e_conQ2}. This will be carried out in the next section.
Before doing that, we conclude the proof of Lemma \ref{lem63}.

\subsection{\label{sec64}Proof of Lemma \ref{lem63}}

Before proving Lemma \ref{lem63}, we explain a decomposition of the
extended boundary $\widehat{\partial}_{+}\mathcal{C}_{\epsilon}^{\bm{\sigma}}$,
which will be used several times later. Define, for $a>0$,
\begin{align}
 & \widehat{\partial}_{+}^{\,1,\,a}\mathcal{C}_{\epsilon}^{\boldsymbol{\sigma}}\,=\,\{\,\boldsymbol{x}\in\widehat{\partial}_{+}\mathcal{C}_{\epsilon}^{\boldsymbol{\sigma}}:\overline{\bm{x}}\cdot\bm{v}\ge aJ\delta\,\}\;,\label{ebd_C1}\\
 & \widehat{\partial}_{+}^{\,2,\,a}\mathcal{C}_{\epsilon}^{\boldsymbol{\sigma}}\,=\,\{\,\boldsymbol{x}\in\widehat{\partial}_{+}\mathcal{C}_{\epsilon}^{\boldsymbol{\sigma}}:U(\boldsymbol{x})\geq H+aJ^{2}\delta^{2}\,\}\;.\label{ebd_C2}
\end{align}

\begin{lem}
\label{lem_decC}There exists $a_{0}>0$ such that, for all $a\in(0,\,a_{0})$,
\[
\widehat{\partial}_{+}^{\,1,\,a}\mathcal{C}_{\epsilon}^{\boldsymbol{\sigma}}\cup\widehat{\partial}_{+}^{\,2,\,a}\mathcal{C}_{\epsilon}^{\boldsymbol{\sigma}}\,=\,\widehat{\partial}_{+}\mathcal{C}_{\epsilon}\;.
\]
\end{lem}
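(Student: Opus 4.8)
The plan is to convert the claimed set identity into a pointwise dichotomy and then reduce it, via the Morse lemma at $\boldsymbol{\sigma}$, to a single quadratic inequality. Fix $\boldsymbol{\sigma}\in\Sigma_{i,\,j}$ with $i<j$ and adopt Notation \ref{not61}; write $\mu=\mu^{\boldsymbol{\sigma}}$, $\boldsymbol{v}=\boldsymbol{v}^{\boldsymbol{\sigma}}$, $v_{k}=\boldsymbol{v}\cdot\boldsymbol{e}_{k}^{\boldsymbol{\sigma}}$, and recall $v_{1}>0$, $|\boldsymbol{v}|=1$, and (following Section \ref{sec61}) that for $\boldsymbol{x}\in\widehat{\partial}_{+}\mathcal{C}_{\epsilon}^{\boldsymbol{\sigma}}$ the symbol $\overline{\boldsymbol{x}}\cdot\boldsymbol{v}$ stands for $(\overline{\boldsymbol{x}}-\boldsymbol{\sigma})\cdot\boldsymbol{v}$. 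Since $\widehat{\partial}_{+}^{\,1,\,a}\mathcal{C}_{\epsilon}^{\boldsymbol{\sigma}}$ is exactly the set of $\boldsymbol{x}\in\widehat{\partial}_{+}\mathcal{C}_{\epsilon}^{\boldsymbol{\sigma}}$ with $\overline{\boldsymbol{x}}\cdot\boldsymbol{v}\ge aJ\delta$, it is enough to produce $a_{0}>0$ so that, for every $a\in(0,a_{0})$ and every sufficiently small $\epsilon>0$, any $\boldsymbol{x}\in\widehat{\partial}_{+}\mathcal{C}_{\epsilon}^{\boldsymbol{\sigma}}$ with $\overline{\boldsymbol{x}}\cdot\boldsymbol{v}<aJ\delta$ satisfies $U(\boldsymbol{x})\ge H+aJ^{2}\delta^{2}$, i.e. belongs to $\widehat{\partial}_{+}^{\,2,\,a}\mathcal{C}_{\epsilon}^{\boldsymbol{\sigma}}$.

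To carry this out, first I would Taylor expand. On $\widehat{\partial}_{+}\mathcal{C}_{\epsilon}^{\boldsymbol{\sigma}}$ one has $|\boldsymbol{x}-\boldsymbol{\sigma}|\le CJ\delta\to0$ and $x_{1}=J\delta/\lambda_{1}^{1/2}+O(\eta)$ with $\eta=\epsilon^{2}=o_{\epsilon}(\delta)$; since $U$ is Morse and $\boldsymbol{\sigma}$ is a critical point with $U(\boldsymbol{\sigma})=H$, a second-order Taylor expansion gives, uniformly on $\widehat{\partial}_{+}\mathcal{C}_{\epsilon}^{\boldsymbol{\sigma}}$,
\[
U(\boldsymbol{x})\,=\,H-\tfrac{1}{2}\,J^{2}\delta^{2}+\tfrac{1}{2}\sum_{k=2}^{d}\lambda_{k}\,x_{k}^{2}+o_{\epsilon}(\delta^{2})\;.
\]
Next, writing $\overline{\boldsymbol{x}}-\boldsymbol{\sigma}=(J\delta/\lambda_{1}^{1/2})\,\boldsymbol{e}_{1}^{\boldsymbol{\sigma}}+\sum_{k\ge2}x_{k}\boldsymbol{e}_{k}^{\boldsymbol{\sigma}}$ and setting $\alpha:=v_{1}/\lambda_{1}^{1/2}>0$ and $\beta:=\sum_{k\ge2}v_{k}^{2}/\lambda_{k}\ge0$, the condition $\overline{\boldsymbol{x}}\cdot\boldsymbol{v}<aJ\delta$ becomes $\alpha J\delta+\sum_{k\ge2}x_{k}v_{k}<aJ\delta$, so for $a<\alpha$ it forces $|\sum_{k\ge2}x_{k}v_{k}|>(\alpha-a)J\delta$, and weighted Cauchy--Schwarz, $(\sum_{k\ge2}x_{k}v_{k})^{2}\le\beta\sum_{k\ge2}\lambda_{k}x_{k}^{2}$, then yields $\sum_{k\ge2}\lambda_{k}x_{k}^{2}\ge(\alpha-a)^{2}J^{2}\delta^{2}/\beta$. (If $\beta=0$ then $\boldsymbol{v}=\boldsymbol{e}_{1}^{\boldsymbol{\sigma}}$ and the hypothesis is vacuous once $a<\alpha=1/\lambda_{1}^{1/2}$, so from now on assume $\beta>0$.)

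Combining these, $U(\boldsymbol{x})\ge H-\tfrac{1}{2}J^{2}\delta^{2}+\tfrac{(\alpha-a)^{2}}{2\beta}J^{2}\delta^{2}+o_{\epsilon}(\delta^{2})$, so everything reduces to choosing $a_{0}$ so that $\varphi(a):=(\alpha-a)^{2}-\beta(1+2a)$ stays bounded below by a positive constant on $(0,a_{0})$. This is the one place where non-reversibility really matters: in the reversible case $\boldsymbol{v}=\boldsymbol{e}_{1}^{\boldsymbol{\sigma}}$ and $\beta=0$, but in general I would use the identity $\mathbb{H}\mathbb{L}+\mathbb{L}^{\dagger}\mathbb{H}=0$ at $\boldsymbol{\sigma}$ (equivalently the similarity recorded in \cite[display (8.1)]{LeeSeo}), which makes $\mathbb{L}\mathbb{H}^{-1}$ skew-symmetric; multiplying $(\mathbb{H}-\mathbb{L}^{\dagger})\boldsymbol{v}=-\mu\boldsymbol{v}$ by $\mathbb{H}^{-1}$ and pairing with $\boldsymbol{v}$ then gives $1=-\mu\,\boldsymbol{v}^{\dagger}\mathbb{H}^{-1}\boldsymbol{v}=\mu(\alpha^{2}-\beta)$, hence $\alpha^{2}=\beta+\mu^{-1}>\beta$. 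Consequently $\varphi$ is strictly decreasing on $[0,\alpha)$ with $\varphi(0)=\mu^{-1}>0$, so \emph{any} $a_{0}\in(0,\alpha)$ with $\varphi(a_{0})>0$ works: for $a\in(0,a_{0})$ we get $\varphi(a)\ge\varphi(a_{0})>0$, whence $U(\boldsymbol{x})\ge H+aJ^{2}\delta^{2}+\tfrac{\varphi(a_{0})}{2\beta}J^{2}\delta^{2}+o_{\epsilon}(\delta^{2})\ge H+aJ^{2}\delta^{2}$ once $\epsilon$ is small; and since $\inf_{(0,a_{0})}\varphi=\varphi(a_{0})$, the $\epsilon$-threshold depends only on $\boldsymbol{\sigma}$, $J$, $a_{0}$, not on $a$. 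This establishes the dichotomy and hence the lemma.

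The only genuinely non-routine ingredient is the strict inequality $\alpha^{2}>\beta$, i.e. $(v_{1}^{\boldsymbol{\sigma}})^{2}/\lambda_{1}^{\boldsymbol{\sigma}}>\sum_{k\ge2}(v_{k}^{\boldsymbol{\sigma}})^{2}/\lambda_{k}^{\boldsymbol{\sigma}}$, which quantifies the tilt of the drift-twisted descent direction $\boldsymbol{v}^{\boldsymbol{\sigma}}$ relative to the $\mathbb{H}^{\boldsymbol{\sigma}}$-eigenframe; everything else is a Morse-lemma Taylor expansion together with bookkeeping of the scales $\delta=(\epsilon\log\tfrac{1}{\epsilon})^{1/2}$ and $\eta=\epsilon^{2}$. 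Two routine points to watch: that the $o_{\epsilon}(\delta^{2})$ error in the Taylor expansion is uniform over $\widehat{\partial}_{+}\mathcal{C}_{\epsilon}^{\boldsymbol{\sigma}}$ (it is, since this set shrinks to $\boldsymbol{\sigma}$ at rate $O(J\delta)$ and $U\in C^{2}$), and that the final smallness threshold in $\epsilon$ is uniform in $a$, as noted above.
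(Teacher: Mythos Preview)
Your proof is correct. The paper itself does not give an argument here: it simply writes ``The proof is a direct consequence of \cite[Lemma 8.10]{LeeSeo} as $\eta\ll\delta$ and is omitted.'' You have instead supplied a complete, self-contained proof, and your route is precisely the one underlying that cited lemma: Taylor expand $U$ on the thin slab $\widehat{\partial}_{+}\mathcal{C}_{\epsilon}^{\boldsymbol{\sigma}}$, use weighted Cauchy--Schwarz to convert the linear constraint $\overline{\boldsymbol{x}}\cdot\boldsymbol{v}<aJ\delta$ into a lower bound on $\sum_{k\ge2}\lambda_{k}x_{k}^{2}$, and then close with the algebraic identity $\alpha^{2}-\beta=1/\mu^{\boldsymbol{\sigma}}>0$. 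Your derivation of that identity from the skew-symmetry of $\mathbb{L}^{\boldsymbol{\sigma}}(\mathbb{H}^{\boldsymbol{\sigma}})^{-1}$ (equivalently, of $\mathbb{H}^{\boldsymbol{\sigma}}\mathbb{L}^{\boldsymbol{\sigma}}$, cf.\ \cite[Lemma 4.5]{LeeSeo}) and the eigenvector relation $(\mathbb{H}^{\boldsymbol{\sigma}}-(\mathbb{L}^{\boldsymbol{\sigma}})^{\dagger})\boldsymbol{v}^{\boldsymbol{\sigma}}=-\mu^{\boldsymbol{\sigma}}\boldsymbol{v}^{\boldsymbol{\sigma}}$ is clean and correct. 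Two small bonuses in your write-up that the paper does not make explicit: you handle the degenerate case $\beta=0$ (i.e.\ $\boldsymbol{v}^{\boldsymbol{\sigma}}=\boldsymbol{e}_{1}^{\boldsymbol{\sigma}}$, the reversible situation) separately, and you observe that the smallness threshold on $\epsilon$ can be taken uniform in $a\in(0,a_{0})$ via the monotonicity of $\varphi$---not strictly needed for the later applications, where $a$ is fixed, but a nice touch.
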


The proof is a direct consequence of \cite[Lemma 8.10]{LeeSeo} as
$\eta\ll\delta$ and is omitted.
\begin{proof}[Proof of Lemma \ref{lem63}]
Fix $\boldsymbol{\sigma}\in\Sigma^{*}$, and for convenience of notation,
we assume that $\boldsymbol{\sigma}=\boldsymbol{0}.$ We only consider
the integral on $\widehat{\partial}_{+}\mathcal{C}_{\epsilon}^{\bm{\sigma}}$
since the proof for the case $\widehat{\partial}_{-}\mathcal{C}_{\epsilon}^{\bm{\sigma}}$
is essentially the same. Write $\boldsymbol{e}_{1}^{\boldsymbol{\sigma}}=(e_{1},\,\dots,\,e_{d})$
and $\boldsymbol{v}^{\boldsymbol{\sigma}}=(v_{1},\,\dots,\,v_{d})$.
Then, by the explicit formula \eqref{e_pes} for $p_{\epsilon}^{\bm{\sigma}}$,
we have, for $\boldsymbol{x}\in\widehat{\partial}_{+}\mathcal{C}_{\epsilon}^{\bm{\sigma}}$,
\begin{align}
\nabla_{1}\,p_{\epsilon}^{\bm{\sigma}}(\bm{x}) & \,=\,\frac{e_{1}}{\eta}\,[\,1-p_{\epsilon}^{\bm{\sigma}}(\overline{\boldsymbol{x}})\,]\;,\label{e617}\\
\nabla_{i}\,p_{\epsilon}^{\bm{\sigma}}(\bm{x}) & \,=\,\frac{e_{i}}{\eta}\,[\,1-p_{\epsilon}^{\bm{\sigma}}(\overline{\boldsymbol{x}})\,]\,+\frac{v_{i}^{\boldsymbol{\sigma}}}{\eta\,C_{\epsilon}^{\boldsymbol{\sigma}}}\,e^{-\frac{\mu}{2\epsilon}(\overline{\boldsymbol{x}}\cdot\bm{v}^{\boldsymbol{\sigma}})^{2}}\,\Big[\,\bm{x}\cdot\bm{e}_{1}^{\bm{\sigma}}-\frac{J\delta}{(\lambda_{1}^{\bm{\sigma}})^{1/2}}-\eta\,\Big]\,\;\;\;;\;i\ge2\;.\label{e618}
\end{align}
Since the absolute value of the term in the second pair of brackets
in \eqref{e618} is bounded by $\eta$ for $\boldsymbol{x}\in\widehat{\partial}_{+}\mathcal{C}_{\epsilon}^{\bm{\sigma}}$,
we can conclude that
\[
|\nabla p_{\epsilon}^{\bm{\sigma}}(\bm{x})|^{2}\,\le\,\frac{C}{\eta^{2}}\,[\,1-p_{\epsilon}^{\bm{\sigma}}(\overline{\boldsymbol{x}})\,]^{2}+\frac{C}{\epsilon}\,e^{-\frac{\mu}{\epsilon}(\overline{\boldsymbol{x}}\cdot\bm{v}^{\boldsymbol{\sigma}})^{2}}\;.
\]
Now, for $a\in(0,\,a_{0})$ where $a_{0}$ is the constant appearing
in Lemma \ref{lem_decC}, it suffices to prove that
\begin{equation}
\theta_{\epsilon}\,\epsilon\,\int_{\widehat{\partial}_{+}^{\,k,\,a}\mathcal{C}_{\epsilon}^{\boldsymbol{\sigma}}}\,\Big[\,\frac{1}{\eta^{2}}\,[\,1-p_{\epsilon}^{\bm{\sigma}}(\overline{\boldsymbol{x}})\,]^{2}+\frac{1}{\epsilon}e^{-\frac{\mu}{\epsilon}(\overline{\boldsymbol{x}}\cdot\bm{v}^{\boldsymbol{\sigma}})^{2}}\,\Big]\,\mu_{\epsilon}(\bm{x})\,d\bm{x}\,=\,o_{\epsilon}(1)\;\;\;\text{for }k=1,\,2\;.\label{e619}
\end{equation}

We first consider the case $k=1$. By the elementary inequality $\int_{b}^{\infty}e^{-t^{2}/2}dt\leq\frac{1}{b}e^{-b^{2}/2}$
for $b>0$, we can deduce that
\[
1-p_{\epsilon}^{\bm{\sigma}}(\overline{\boldsymbol{x}})\,\le\,\frac{C}{\overline{\boldsymbol{x}}\cdot\boldsymbol{v}^{\boldsymbol{\sigma}}}\,e^{-\frac{\mu}{2\epsilon}(\overline{\boldsymbol{x}}\cdot\bm{v}^{\boldsymbol{\sigma}})^{2}}\,\le\,\frac{C}{\delta}\,e^{-\frac{\mu}{2\epsilon}(\overline{\boldsymbol{x}}\cdot\bm{v}^{\boldsymbol{\sigma}})^{2}}\;\;\;\text{for}\;\boldsymbol{x}\in\widehat{\partial}_{+}^{\,1,\,a}\mathcal{C}_{\epsilon}^{\boldsymbol{\sigma}}\;.
\]
Hence, the left-hand side of \eqref{e619} is bounded from above by
\begin{equation}
C\frac{\theta_{\epsilon}\epsilon}{\eta^{2}\delta^{2}}\int_{\widehat{\partial}_{+}\mathcal{C}_{\epsilon}^{\boldsymbol{\sigma}}}e^{-\frac{\mu}{\epsilon}(\overline{\boldsymbol{x}}\cdot\bm{v}^{\boldsymbol{\sigma}})^{2}}\mu_{\epsilon}(\bm{x})d\bm{x}\,\le\,C\frac{1}{\epsilon^{d/2+3}\delta^{2}}\int_{\widehat{\partial}_{+}\mathcal{C}_{\epsilon}^{\boldsymbol{\sigma}}}\,e^{-\frac{1}{2\epsilon}\boldsymbol{x}\cdot[\,\mathbb{H}^{\boldsymbol{\sigma}}+2\mu^{\boldsymbol{\sigma}}\,\bm{v}^{\boldsymbol{\sigma}}\otimes\bm{v}^{\boldsymbol{\sigma}}\,]\boldsymbol{x}}d\bm{x}\;,\label{e620}
\end{equation}
where we applied \eqref{e_Zeps}, the Taylor expansion of $U$ around
$\boldsymbol{\sigma}=\boldsymbol{0}$, and the fact that
\[
\begin{split}e^{-\frac{\mu}{\epsilon}(\overline{\boldsymbol{x}}\cdot\bm{v}^{\boldsymbol{\sigma}})^{2}} & \,=\,[\,1+o_{\epsilon}(1)\,]\,e^{-\frac{\mu}{\epsilon}(\boldsymbol{x}\cdot\bm{v}^{\boldsymbol{\sigma}})^{2}}\end{split}
\;.
\]
Since the matrix $\mathbb{H}^{\boldsymbol{\sigma}}+2\mu^{\boldsymbol{\sigma}}\bm{v}^{\boldsymbol{\sigma}}\otimes\bm{v}^{\boldsymbol{\sigma}}$
is positive definite by \cite[Lemma 8.2]{LeeSeo}, we can write
\begin{equation}
\boldsymbol{x}\cdot\,(\mathbb{H}^{\boldsymbol{\sigma}}+2\,\mu^{\boldsymbol{\sigma}}\,\bm{v}^{\boldsymbol{\sigma}}\otimes\bm{v}^{\boldsymbol{\sigma}})\,\boldsymbol{x}\,\ge\,C\,|\boldsymbol{x}|^{2}\,\ge\,C\,J^{2}\,\delta^{2}\ ,\label{e621}
\end{equation}
as there exists $C>0$ such that $|\boldsymbol{x}-\boldsymbol{\sigma}|\ge CJ\delta$
for all $\boldsymbol{x}\in\widehat{\partial}_{+}\mathcal{C}_{\epsilon}^{\boldsymbol{\sigma}}$.
By inserting \eqref{e621} into \eqref{e620}, we can bound the right-hand
side of \eqref{e620} from above by
\[
C\,\frac{1}{\epsilon^{d/2+3}\delta^{2}}\,\text{vol}(\widehat{\partial}_{+}\mathcal{C}_{\epsilon}^{\boldsymbol{\sigma}})\,\epsilon^{CJ^{2}}\,=\,o_{\epsilon}(1)\;,
\]
where the equality holds for sufficiently large $J$ since $\text{vol}(\widehat{\partial}_{+}\mathcal{C}_{\epsilon}^{\boldsymbol{\sigma}})=O(\eta\delta^{d-1})$.

Next, we consider the case $k=2$ of \eqref{e619}. For this case,
by \eqref{e_Zeps}, the left-hand side of \eqref{e619} is bounded
from above by
\[
C\,\epsilon^{1-d/2}\,\int_{\widehat{\partial}_{+}^{\,2,\,a}\mathcal{C}_{\epsilon}^{\boldsymbol{\sigma}\,}}\frac{1}{\eta^{2}}\,e^{-\frac{U(\boldsymbol{x})-H}{\epsilon}}\,d\bm{x}\,\le\frac{C\,\epsilon^{1-d/2}}{\eta^{2}}\,\epsilon^{aJ^{2}}\,\text{vol}(\widehat{\partial}_{+}\mathcal{C}_{\epsilon}^{\boldsymbol{\sigma}})\,=\,o_{\epsilon}(1)\;,
\]
where the inequality holds from the definition of $\widehat{\partial}_{+}^{\,2,\,a}\mathcal{C}_{\epsilon}^{\boldsymbol{\sigma}}$,
and the last equality holds for sufficiently large $J$ since $\text{vol}(\widehat{\partial}_{+}\mathcal{C}_{\epsilon}^{\boldsymbol{\sigma}})=O(\eta\delta^{d-1})$.
This completes the proof.
\end{proof}

\section{\label{sec7}Proof of Proposition \ref{p_test}}

We fix $\mathbf{g}=\mathbf{g}_{\epsilon}:S\rightarrow\mathbb{R}$
throughout this section which is\emph{ uniformly bounded in $\epsilon$}
in the sense of \eqref{bddge}. The function $Q_{\epsilon}^{\mathbf{g}}$
appearing in this section is the one defined in \eqref{e_Q}.

\subsection{\label{sec71}Reduction to local computations around saddle points}

In this subsection, we reduce the proof of Proposition \ref{p_test}
to two local estimates around saddle point $\boldsymbol{\sigma}\in\Sigma^{*}$.
We perform this reduction via the following proposition.
\begin{prop}
\label{p71}It holds that\textbf{ }
\begin{align*}
 & \theta_{\epsilon}\,\int_{\mathbb{R}^{d}}Q_{\epsilon}^{\mathbf{g}}\,(\mathscr{L}_{\epsilon}\phi_{\epsilon})\,d\mu_{\epsilon}\\
=\, & o_{\epsilon}(1)\,+\sum_{i,\,j\in S,\,i<j}\,\sum_{\boldsymbol{\sigma}\in\Sigma_{i,\,j}}(\,\mathbf{g}(i)-\mathbf{g}(j)\,)\,[\,A_{1}(\boldsymbol{\sigma})+A_{2}^{+}(\boldsymbol{\sigma})+A_{2}^{-}(\boldsymbol{\sigma})\,]\;,
\end{align*}
where
\begin{align*}
A_{1}(\boldsymbol{\sigma}) & \,=\,-\theta_{\epsilon}\,\epsilon\,\int_{\mathcal{B}_{\epsilon}^{\bm{\sigma}}}\,\nabla p_{\epsilon}^{\boldsymbol{\sigma}}\cdot\,\Big[\,\nabla\phi_{\epsilon}-\frac{1}{\epsilon}\phi_{\epsilon}\boldsymbol{\ell}\,\Big]\,d\mu_{\epsilon}\;\;\text{\;and}\;\\
A_{2}^{\pm}(\boldsymbol{\sigma}) & \,=\,\theta_{\epsilon}\,\int_{\widehat{\partial}_{\pm}\mathcal{B}_{\epsilon}^{\bm{\sigma}}}\,\phi_{\epsilon}\,(\nabla p_{\epsilon}^{\boldsymbol{\sigma}}\cdot\boldsymbol{\ell})\,d\mu_{\epsilon}\;.
\end{align*}
\end{prop}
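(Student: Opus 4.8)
The plan is to start from the divergence form \eqref{e_gen} of the generator and integrate by parts. Writing $d\mu_\epsilon=Z_\epsilon^{-1}e^{-U/\epsilon}\,dx$ and using the fast decay of $e^{-U/\epsilon}$ together with the boundedness of $\phi_\epsilon$ (Proposition \ref{p_bddsol}) and of $Q_\epsilon^{\mathbf{g}}$ (display \eqref{e_conQ3}), the divergence theorem gives
\[
\theta_\epsilon\int_{\mathbb{R}^d}Q_\epsilon^{\mathbf{g}}\,(\mathscr{L}_\epsilon\phi_\epsilon)\,d\mu_\epsilon\,=\,-\,\theta_\epsilon\,\epsilon\int_{\mathbb{R}^d}\nabla Q_\epsilon^{\mathbf{g}}\cdot\Big(\nabla\phi_\epsilon-\frac{1}{\epsilon}\,\phi_\epsilon\,\boldsymbol{\ell}\Big)\,d\mu_\epsilon\;,
\]
where $\nabla Q_\epsilon^{\mathbf{g}}$ is interpreted as the a.e.-defined gradient (the function $Q_\epsilon^{\mathbf{g}}$ is only Lipschitz, not $C^1$, across the surfaces $\partial\mathcal{E}_\epsilon^{\boldsymbol{\sigma}}$, but this causes no boundary contribution in the integration by parts). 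Next I would split $\mathbb{R}^d$ into the three regions provided by \eqref{e_decK}: the wells $\mathcal{W}_i^\epsilon$, the saddle neighborhoods $\mathcal{E}_\epsilon^{\boldsymbol{\sigma}}$ with $\boldsymbol{\sigma}\in\Sigma^*$, and the exterior $\mathbb{R}^d\setminus\mathcal{K}_\epsilon$. On each $\mathcal{W}_i^\epsilon$ one has $Q_\epsilon^{\mathbf{g}}\equiv\mathbf{g}(i)$ by \eqref{e_Q}, hence $\nabla Q_\epsilon^{\mathbf{g}}=0$ there and this region contributes nothing.

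On a saddle neighborhood $\mathcal{E}_\epsilon^{\boldsymbol{\sigma}}$ with $\boldsymbol{\sigma}\in\Sigma_{i,\,j}$ and $i<j$, \eqref{e_Q} gives $\nabla Q_\epsilon^{\mathbf{g}}=(\mathbf{g}(i)-\mathbf{g}(j))\nabla p_\epsilon^{\boldsymbol{\sigma}}$, and since $\mathcal{E}_\epsilon^{\boldsymbol{\sigma}}=\mathcal{B}_\epsilon^{\boldsymbol{\sigma}}\cup\widehat{\partial}_+\mathcal{B}_\epsilon^{\boldsymbol{\sigma}}\cup\widehat{\partial}_-\mathcal{B}_\epsilon^{\boldsymbol{\sigma}}$ the associated integral splits into three pieces. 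The piece over $\mathcal{B}_\epsilon^{\boldsymbol{\sigma}}$ is, by definition, exactly $(\mathbf{g}(i)-\mathbf{g}(j))\,A_1(\boldsymbol{\sigma})$. Over $\widehat{\partial}_\pm\mathcal{B}_\epsilon^{\boldsymbol{\sigma}}$ the term originating from $-\tfrac1\epsilon\phi_\epsilon\boldsymbol{\ell}$ is precisely $(\mathbf{g}(i)-\mathbf{g}(j))\,A_2^\pm(\boldsymbol{\sigma})$ (the factor $\epsilon$ cancels), while the term originating from $\nabla\phi_\epsilon$, namely $-(\mathbf{g}(i)-\mathbf{g}(j))\,\theta_\epsilon\epsilon\int_{\widehat{\partial}_\pm\mathcal{B}_\epsilon^{\boldsymbol{\sigma}}}\nabla p_\epsilon^{\boldsymbol{\sigma}}\cdot\nabla\phi_\epsilon\,d\mu_\epsilon$, I would dispose of by Cauchy--Schwarz: it is bounded by $C\,\theta_\epsilon\epsilon\big(\int_{\widehat{\partial}_\pm\mathcal{C}_\epsilon^{\boldsymbol{\sigma}}}|\nabla p_\epsilon^{\boldsymbol{\sigma}}|^2\,d\mu_\epsilon\big)^{1/2}\big(\int_{\mathbb{R}^d}|\nabla\phi_\epsilon|^2\,d\mu_\epsilon\big)^{1/2}$, which is $o_\epsilon(1)$ by combining Lemma \ref{lem63} (giving $\theta_\epsilon\epsilon\int_{\widehat{\partial}_\pm\mathcal{C}_\epsilon^{\boldsymbol{\sigma}}}|\nabla p_\epsilon^{\boldsymbol{\sigma}}|^2\,d\mu_\epsilon=o_\epsilon(1)$, and recalling $\widehat{\partial}_\pm\mathcal{B}_\epsilon^{\boldsymbol{\sigma}}\subset\widehat{\partial}_\pm\mathcal{C}_\epsilon^{\boldsymbol{\sigma}}$) with the energy estimate $\epsilon\int|\nabla\phi_\epsilon|^2\,d\mu_\epsilon\le C\theta_\epsilon^{-1}$ of Proposition \ref{p_energy}. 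As $\mathbf{g}$ is uniformly bounded by \eqref{bddge} and $\Sigma^*$ is finite, these error terms sum to $o_\epsilon(1)$.

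It remains to handle the exterior contribution $-\theta_\epsilon\epsilon\int_{\mathbb{R}^d\setminus\mathcal{K}_\epsilon}\nabla Q_\epsilon^{\mathbf{g}}\cdot(\nabla\phi_\epsilon-\tfrac1\epsilon\phi_\epsilon\boldsymbol{\ell})\,d\mu_\epsilon$, which I expect to be the main bookkeeping obstacle. Here one uses that the continuous extension of $Q_\epsilon^{\mathbf{g}}$ can be (and is, cf.\ \eqref{e_conQ3}) chosen so that $\nabla Q_\epsilon^{\mathbf{g}}$ is supported in a bounded subset of $\{U\ge H+J^2\delta^2\}$ with $|\nabla Q_\epsilon^{\mathbf{g}}|\le C\eta^{-1}$, a region on which $\boldsymbol{\ell}$ is bounded. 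By the tightness bound \eqref{e_tight} and the asymptotics \eqref{e_Zeps} for $Z_\epsilon$ together with the choice \eqref{e_delta} of $\delta$, one has $\mu_\epsilon(\{U\ge H+J^2\delta^2\})\le C\epsilon^{-d/2}\theta_\epsilon^{-1}e^{-J^2\delta^2/\epsilon}=C\epsilon^{-d/2}\theta_\epsilon^{-1}\epsilon^{J^2}$. Feeding this into Cauchy--Schwarz for the $\nabla\phi_\epsilon$ part (again with the energy estimate), and using $|\phi_\epsilon|\le C$, $|\boldsymbol{\ell}|\le C$ for the $\boldsymbol{\ell}$ part, both bounded by $C\epsilon^{-A}\epsilon^{J^2}$ for some fixed $A=A(d)$ depending only on the powers of $\epsilon$ in $\eta$ and $Z_\epsilon$, the exterior contribution is $o_\epsilon(1)$ once $J$ is taken large enough. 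Collecting the three regions yields the asserted identity. The genuine difficulty is thus not a single estimate but the careful accounting: justifying the integration by parts for the merely Lipschitz $Q_\epsilon^{\mathbf{g}}$, and calibrating the exponents of $\epsilon,\delta,\eta,\theta_\epsilon$ so that every error term is absorbed into $o_\epsilon(1)$ for $J$ sufficiently large.
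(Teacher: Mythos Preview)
Your proposal is correct and follows essentially the same route as the paper: integrate by parts using the divergence form \eqref{e_gen}, split according to \eqref{e_decK}, discard the wells where $\nabla Q_\epsilon^{\mathbf g}=0$, on each $\mathcal{E}_\epsilon^{\boldsymbol\sigma}$ identify the $\mathcal{B}_\epsilon^{\boldsymbol\sigma}$-piece as $A_1(\boldsymbol\sigma)$ and the $\boldsymbol\ell$-part on $\widehat{\partial}_\pm\mathcal{B}_\epsilon^{\boldsymbol\sigma}$ as $A_2^\pm(\boldsymbol\sigma)$, kill the remaining $\nabla\phi_\epsilon$-piece on $\widehat{\partial}_\pm\mathcal{B}_\epsilon^{\boldsymbol\sigma}$ via Cauchy--Schwarz with Lemma~\ref{lem63} and Proposition~\ref{p_energy}, and bound the exterior $\mathbb{R}^d\setminus\mathcal{K}_\epsilon$ using \eqref{e_conQ3}, \eqref{e_tight}, \eqref{e_Zeps}, Proposition~\ref{p_bddsol} and the energy estimate, taking $J$ large. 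Your remark that the extension of $Q_\epsilon^{\mathbf g}$ should have $\nabla Q_\epsilon^{\mathbf g}$ supported in a bounded set (so that $\boldsymbol\ell$ is bounded there) is a point the paper leaves implicit.
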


\begin{proof}
By the divergence theorem, we can write
\begin{align*}
\theta_{\epsilon}\int_{\mathbb{R}^{d}}Q_{\epsilon}^{\mathbf{g}}\,(\mathscr{L}_{\epsilon}\phi_{\epsilon})\,d\mu_{\epsilon}\,= & \,-\theta_{\epsilon}\,\epsilon\,\int_{\mathbb{R}^{d}}\,\nabla Q_{\epsilon}^{\mathbf{g}}\cdot\,\Big[\,\nabla\phi_{\epsilon}-\frac{1}{\epsilon}\phi_{\epsilon}\boldsymbol{\ell}\,\Big]\,d\mu_{\epsilon}\;.
\end{align*}
Note that we can apply the divergence theorem since $Q_{\epsilon}^{\mathbf{g}}$
is continuous, while its gradient is not defined along $\partial\mathcal{E}_{\epsilon}^{\boldsymbol{\sigma}}$.
Now, let us investigate the right-hand side. First, note that $\nabla Q_{\epsilon}^{\mathbf{g}}\equiv0$
on $\mathcal{W}_{i}^{\epsilon}$ by definition. Hence, we can rewrite
the right-hand side as
\begin{equation}
-\theta_{\epsilon}\,\epsilon\,\Big(\,\int_{\mathbb{R}^{d}\setminus\mathcal{K}_{\epsilon}}+\sum_{\bm{\sigma}\in\Sigma^{*}}\int_{\mathcal{\mathcal{E}}_{\epsilon}^{\bm{\sigma}}}\,\Big)\,\nabla Q_{\epsilon}^{\mathbf{g}}\cdot\,\Big[\,\nabla\phi_{\epsilon}-\frac{1}{\epsilon}\phi_{\epsilon}\boldsymbol{\ell}\,\Big]\,d\mu_{\epsilon}\;.\label{e_decs}
\end{equation}
Now we consider two integrals separately.

\smallskip{}

\noindent\textbf{First integral of \eqref{e_decs}:} We will separately
show that
\begin{align}
\theta_{\epsilon}\,\epsilon\,\int_{\mathbb{R}^{d}\setminus\mathcal{K}_{\epsilon}}\,(\nabla Q_{\epsilon}^{\mathbf{g}}\cdot\nabla\phi_{\epsilon})\,d\mu_{\epsilon} & \,=\,o_{\epsilon}(1)\;\;\;\text{and}\;\label{e_negQ1}\\
\theta_{\epsilon}\,\int_{\mathbb{R}^{d}\setminus\mathcal{K}_{\epsilon}}\,(\nabla Q_{\epsilon}^{\mathbf{g}}\cdot\boldsymbol{\ell})\,\phi_{\epsilon}\,d\mu_{\epsilon} & \,=\,o_{\epsilon}(1)\;.\label{e_negQ2}
\end{align}
For \eqref{e_negQ1}, by \eqref{e_conQ3} and the Cauchy--Schwarz
inequality,
\begin{equation}
\Big|\,\theta_{\epsilon}\,\epsilon\,\int_{\mathbb{R}^{d}\setminus\mathcal{K}_{\epsilon}}\,\nabla Q_{\epsilon}^{\mathbf{g}}\cdot\nabla\phi_{\epsilon}\,d\mu_{\epsilon}\,\Big|^{2}\,\le\,C\,\theta_{\epsilon}^{2}\,\epsilon\,\frac{1}{\eta^{2}}\,\mu_{\epsilon}(\mathbb{R}^{d}\setminus\mathcal{K}_{\epsilon})\,\mathscr{D}_{\epsilon}(\phi_{\epsilon})\;.\label{e_negQ3}
\end{equation}
Note that the uniform boundedness of $\mathbf{g}$ is implicitly used
here. We shall repeatedly use this boundedness in the later arguments
as well. Since $U>H+\delta^{2}J^{2}$ on $\mathbb{R}^{d}\setminus\mathcal{K}_{\epsilon}$,
by \eqref{e_tight} and Proposition \ref{p32},
\begin{equation}
\mu_{\epsilon}(\mathbb{R}^{d}\setminus\mathcal{K}_{\epsilon})\,\le\,\frac{C}{Z_{\epsilon}}\,e^{-\frac{H+\delta^{2}J^{2}}{\epsilon}}\,\le\,C\,\theta_{\epsilon}^{-1}\,\epsilon^{J^{2}-\frac{d}{2}}\;.\label{e_negQ4}
\end{equation}
Applying this and Proposition \ref{p_energy} to \eqref{e_negQ3}
yields
\[
\Big|\,\theta_{\epsilon}\,\epsilon\,\int_{\mathbb{R}^{d}\setminus\mathcal{K}_{\epsilon}}\,(\nabla Q_{\epsilon}^{\mathbf{g}}\cdot\nabla\phi_{\epsilon})\,d\mu_{\epsilon}\,\Big|\,\le\,\frac{C}{\eta}\,\epsilon^{\frac{1}{2}J^{2}-\frac{d}{4}+\frac{1}{2}}\;.
\]
Since $\eta=\epsilon^{2}$, we obtain \eqref{e_negQ1} by taking $J$
to be sufficiently large.

Now, we turn to \eqref{e_negQ2}. By \eqref{e_conQ3}, Proposition
\ref{p_bddsol}, \eqref{e_negQ4} we can bound the absolute value
of the left-hand side of \eqref{e_negQ2} by $C\theta_{\epsilon}\eta^{-1}\mu_{\epsilon}(\mathbb{R}^{d}\setminus\mathcal{K}_{\epsilon})=o_{\epsilon}(1)$.

\noindent\textbf{Second integral of \eqref{e_decs}:} For each $\boldsymbol{\sigma}\in\Sigma_{i,\,j}$
with $i,\,j\in S$, we have $\nabla Q_{\epsilon}^{\mathbf{g}}=(\mathbf{g}(i)-\mathbf{g}(j))\nabla p_{\epsilon}^{\boldsymbol{\sigma}}$
on $\mathcal{E}_{\epsilon}^{\bm{\sigma}}$. Therefore, we can prove
that this integral is $A_{1}(\boldsymbol{\sigma})+A_{2}^{+}(\boldsymbol{\sigma})+A_{2}^{-}(\boldsymbol{\sigma})$
provided that we can prove
\[
\theta_{\epsilon}\,\epsilon\,\int_{\widehat{\partial}_{\pm}\mathcal{B}_{\epsilon}^{\bm{\sigma}}}\,(\nabla p_{\epsilon}^{\boldsymbol{\sigma}}\cdot\nabla\phi_{\epsilon})\,d\mu_{\epsilon}\,=\,o_{\epsilon}(1)\;.
\]
This follows from the Cauchy--Schwarz inequality, Proposition \ref{p_energy},
and Lemma \ref{lem63}.
\end{proof}
Based on the previous proposition, it suffices to estimate $A_{1}(\boldsymbol{\sigma})$
and $A_{2}^{\pm}(\boldsymbol{\sigma})$ for each $\boldsymbol{\sigma}\in\Sigma^{*}$.
These estimates are carried out via the following proposition.
\begin{prop}
\label{p72}For $i,\,j\in S$ with $i<j$ and $\boldsymbol{\sigma}\in\Sigma_{i,\,j}$,
we have
\begin{equation}
A_{1}(\boldsymbol{\sigma})\,=\,\frac{\lambda_{1}^{\boldsymbol{\sigma}}}{2\pi\nu_{\star}\sqrt{-\det\mathbb{H}^{\boldsymbol{\sigma}}}}\,(\mathbf{m}_{\epsilon}(j)-\mathbf{m}_{\epsilon}(i))\,+o_{\epsilon}(1)\;,\label{e_A1}
\end{equation}
and
\begin{align}
A_{2}^{+}(\boldsymbol{\sigma}) & \,=\,-\frac{\lambda_{1}^{\boldsymbol{\sigma}}}{2\pi\nu_{\star}\sqrt{-\det\mathbb{H}^{\boldsymbol{\sigma}}}}\,\frac{(\mathbb{L}^{\boldsymbol{\sigma}}\,(\mathbb{H^{\boldsymbol{\sigma}}})^{-1}\,\bm{v}^{\boldsymbol{\sigma}})\cdot\bm{e}_{1}^{\boldsymbol{\sigma}}}{\bm{v}^{\boldsymbol{\sigma}}\cdot\bm{e}_{1}^{\boldsymbol{\sigma}}}\,\mathbf{m}_{\epsilon}(i)+o_{\epsilon}(1)\;,\label{e_A2+}\\
A_{2}^{-}(\boldsymbol{\sigma}) & \,=\,+\frac{\lambda_{1}^{\boldsymbol{\sigma}}}{2\pi\nu_{\star}\sqrt{-\det\mathbb{H}^{\boldsymbol{\sigma}}}}\,\frac{(\mathbb{L}^{\boldsymbol{\sigma}}\,(\mathbb{H^{\boldsymbol{\sigma}}})^{-1}\,\bm{v}^{\boldsymbol{\sigma}})\cdot\bm{e}_{1}^{\boldsymbol{\sigma}}}{\bm{v}^{\boldsymbol{\sigma}}\cdot\bm{e}_{1}^{\boldsymbol{\sigma}}}\,\mathbf{m}_{\epsilon}(j)+o_{\epsilon}(1)\;.\label{e_A2-}
\end{align}
\end{prop}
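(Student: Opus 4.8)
The plan is to evaluate $A_1(\boldsymbol{\sigma})$ by integration by parts and $A_2^{\pm}(\boldsymbol{\sigma})$ directly, in both cases reducing to a Laplace-type asymptotic evaluation of an explicit Gaussian integral concentrated near $\boldsymbol{\sigma}$. For $A_1(\boldsymbol{\sigma})$, writing the integrand in divergence form and using the key identity $\nabla\cdot(e^{-U/\epsilon}\boldsymbol{\ell})=0$ (which is exactly conditions \eqref{e_conell1}--\eqref{e_conell2}), an integration by parts on $\mathcal{B}_{\epsilon}^{\boldsymbol{\sigma}}$ turns $A_1(\boldsymbol{\sigma})$ into $\theta_{\epsilon}\int_{\mathcal{B}_{\epsilon}^{\boldsymbol{\sigma}}}\phi_{\epsilon}\,(\mathscr{L}_{\epsilon}^{*}p_{\epsilon}^{\boldsymbol{\sigma}})\,d\mu_{\epsilon}$ minus a boundary flux $\theta_{\epsilon}\epsilon\int_{\partial\mathcal{B}_{\epsilon}^{\boldsymbol{\sigma}}}\phi_{\epsilon}\,(\nabla p_{\epsilon}^{\boldsymbol{\sigma}}\cdot\mathbf{n})\,Z_{\epsilon}^{-1}e^{-U/\epsilon}\,dS$, all the boundary contributions containing $\boldsymbol{\ell}$ cancelling against one another. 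The bulk term is $o_{\epsilon}(1)$ by Proposition \ref{p62} together with the $L^{\infty}$-bound of Proposition \ref{p_bddsol}. In the boundary flux, the part over $\partial_{0}\mathcal{B}_{\epsilon}^{\boldsymbol{\sigma}}$ is negligible because there $U=H+J^{2}\delta^{2}$ while $|\nabla p_{\epsilon}^{\boldsymbol{\sigma}}|\le C\epsilon^{-1/2}$ and $|\phi_{\epsilon}|\le C$, so choosing $J$ large makes it $o_{\epsilon}(1)$; on the faces $\partial_{\pm}\mathcal{B}_{\epsilon}^{\boldsymbol{\sigma}}$, which lie on the $\mathcal{W}_{i}$- and $\mathcal{W}_{j}$-sides of $\boldsymbol{\sigma}$ respectively, I replace $\phi_{\epsilon}$ by $\mathbf{m}_{\epsilon}(i)$ (resp.\ $\mathbf{m}_{\epsilon}(j)$), where the error from the part of the face outside $\widehat{\mathcal{W}}_{i}$ has polynomially small $\mu_{\epsilon}$-measure (this is where $c_{0}$ in \eqref{e_hatW} is chosen, aided by the decay of $\nabla p_{\epsilon}^{\boldsymbol{\sigma}}$ and a decomposition of the type of Lemma \ref{lem_decC}) and the error inside $\widehat{\mathcal{W}}_{i}$ is $o_{\epsilon}(1)$ times a convergent integral by Proposition \ref{p55}.

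It then remains to compute $\theta_{\epsilon}\epsilon\int_{\partial_{\pm}\mathcal{B}_{\epsilon}^{\boldsymbol{\sigma}}}(\nabla p_{\epsilon}^{\boldsymbol{\sigma}}\cdot\mathbf{n})\,Z_{\epsilon}^{-1}e^{-U/\epsilon}\,dS$. Using the explicit Gaussian form \eqref{e_pesB} of $p_{\epsilon}^{\boldsymbol{\sigma}}$, the Taylor expansion of $U$ around $\boldsymbol{\sigma}$, the partition-function asymptotics \eqref{e_Zeps}, and the linear-algebraic identities relating $\bm{v}^{\boldsymbol{\sigma}}$, $\bm{e}_{1}^{\boldsymbol{\sigma}}$, $\mathbb{H}^{\boldsymbol{\sigma}}$, $\mathbb{L}^{\boldsymbol{\sigma}}$, $\lambda_{1}^{\boldsymbol{\sigma}}$, $\mu^{\boldsymbol{\sigma}}$ established in \cite[Section~8]{LeeSeo} (in particular the positive definiteness and determinant formula for $\mathbb{H}^{\boldsymbol{\sigma}}+2\mu^{\boldsymbol{\sigma}}\bm{v}^{\boldsymbol{\sigma}}\otimes\bm{v}^{\boldsymbol{\sigma}}$), this integral converges to $\lambda_{1}^{\boldsymbol{\sigma}}/(2\pi\nu_{\star}\sqrt{-\det\mathbb{H}^{\boldsymbol{\sigma}}})$; the value is the same on the two faces (up to orientation), so the surviving combination is $\tfrac{\lambda_{1}^{\boldsymbol{\sigma}}}{2\pi\nu_{\star}\sqrt{-\det\mathbb{H}^{\boldsymbol{\sigma}}}}(\mathbf{m}_{\epsilon}(j)-\mathbf{m}_{\epsilon}(i))$, which is \eqref{e_A1}.

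For $A_2^{\pm}(\boldsymbol{\sigma})$, the slab $\widehat{\partial}_{+}\mathcal{B}_{\epsilon}^{\boldsymbol{\sigma}}$ (resp.\ $\widehat{\partial}_{-}\mathcal{B}_{\epsilon}^{\boldsymbol{\sigma}}$) lies entirely on the $\mathcal{W}_{i}$-side (resp.\ $\mathcal{W}_{j}$-side), so exactly as above one may replace $\phi_{\epsilon}$ by $\mathbf{m}_{\epsilon}(i)$ (resp.\ $\mathbf{m}_{\epsilon}(j)$) up to $o_{\epsilon}(1)$, using Propositions \ref{p55} and \ref{p_bddsol} and the decomposition of Lemma \ref{lem_decC} for the edge region; hence $A_2^{+}(\boldsymbol{\sigma})=\mathbf{m}_{\epsilon}(i)\,\theta_{\epsilon}\int_{\widehat{\partial}_{+}\mathcal{B}_{\epsilon}^{\boldsymbol{\sigma}}}(\nabla p_{\epsilon}^{\boldsymbol{\sigma}}\cdot\boldsymbol{\ell})\,d\mu_{\epsilon}+o_{\epsilon}(1)$. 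On the slab I insert the explicit expressions \eqref{e617}--\eqref{e618} for $\nabla p_{\epsilon}^{\boldsymbol{\sigma}}$ and the expansion $\boldsymbol{\ell}(\bm{x})=\mathbb{L}^{\boldsymbol{\sigma}}(\bm{x}-\boldsymbol{\sigma})+O(|\bm{x}-\boldsymbol{\sigma}|^{2})$, valid since $\boldsymbol{\ell}(\boldsymbol{\sigma})=0$ (because $\boldsymbol{\sigma}$ is an equilibrium of \eqref{e_odex} by Theorem \ref{t21} while $\nabla U(\boldsymbol{\sigma})=0$); the second bracket in \eqref{e618} is $O(\eta)$, so only the $\bm{e}_{1}^{\boldsymbol{\sigma}}$-component and the $\bm{v}^{\boldsymbol{\sigma}}$-directional derivative survive, and a Gaussian integration over the transverse variables — again via \eqref{e_pesB}, the Taylor expansion of $U$, \eqref{e_Zeps}, and the identities of \cite[Section~8]{LeeSeo} — produces the factor $\tfrac{(\mathbb{L}^{\boldsymbol{\sigma}}(\mathbb{H}^{\boldsymbol{\sigma}})^{-1}\bm{v}^{\boldsymbol{\sigma}})\cdot\bm{e}_{1}^{\boldsymbol{\sigma}}}{\bm{v}^{\boldsymbol{\sigma}}\cdot\bm{e}_{1}^{\boldsymbol{\sigma}}}$, giving \eqref{e_A2+}; the computation for $A_2^{-}(\boldsymbol{\sigma})$ is identical with $i,j$ interchanged and a sign flip from the opposite orientation of the slab, giving \eqref{e_A2-}.

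The main obstacle is the explicit Laplace/Gaussian asymptotics and, within them, the non-reversible features. Since $\bm{v}^{\boldsymbol{\sigma}}$ is in general not parallel to $\bm{e}_{1}^{\boldsymbol{\sigma}}$, the combined exponent $\tfrac{1}{2\epsilon}\big[(\bm{x}-\boldsymbol{\sigma})\cdot\mathbb{H}^{\boldsymbol{\sigma}}(\bm{x}-\boldsymbol{\sigma})+\mu^{\boldsymbol{\sigma}}((\bm{x}-\boldsymbol{\sigma})\cdot\bm{v}^{\boldsymbol{\sigma}})^{2}\big]$ develops cross terms between the $x_{1}$-coordinate (which is of order $\delta\gg\sqrt{\epsilon}$ on the faces and slabs) and the transverse coordinates, so one must complete the square carefully and invoke the precise relations from \cite{LeeSeo} to see that the net power of $\epsilon$ is exactly zero, so that the limit is finite and nonzero. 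The second delicate point is the bookkeeping of error terms: the replacement of $\phi_{\epsilon}$ by $\mathbf{m}_{\epsilon}$, the Taylor remainders of $U$ and $\boldsymbol{\ell}$, the curvature of $\partial_{0}$, and the edge regions of the boxes each contribute errors that must be shown to be $o_{\epsilon}(1)$, which forces $J$ large, $c_{0}$ small, and $\eta=\epsilon^{2}\ll\delta$, and relies on combining Propositions \ref{p_bddsol}, \ref{p_energy}, and \ref{p55} with the decomposition of Lemma \ref{lem_decC}.
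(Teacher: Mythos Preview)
Your proposal is correct and follows essentially the same route as the paper's proof in Sections~\ref{sec73}--\ref{sec74}: for $A_1(\boldsymbol{\sigma})$ the paper performs exactly the integration by parts you describe (the $\boldsymbol{\ell}$ boundary terms indeed cancel by $\nabla\cdot(e^{-U/\epsilon}\boldsymbol{\ell})=0$), discards the bulk via Proposition~\ref{p62} and the $\partial_0$-piece by \eqref{e_bdE1}, and evaluates the $\partial_\pm$-fluxes by replacing $\phi_\epsilon\to\mathbf{m}_\epsilon(i),\mathbf{m}_\epsilon(j)$ through Proposition~\ref{p55} and Lemma~\ref{lem74}; for $A_2^{\pm}(\boldsymbol{\sigma})$ it inserts \eqref{e617}--\eqref{e618}, drops the $O(\eta)$-bracket term (Lemma~\ref{lem0076}), uses Lemma~\ref{lem_decC} to localize, replaces $\phi_\epsilon\to\mathbf{m}_\epsilon$, and computes the remaining Gaussian via the Mills-ratio asymptotics for $1-p_\epsilon^{\boldsymbol{\sigma}}(\overline{\boldsymbol{x}})$ together with \cite[Lemma~8.11]{LeeSeo}. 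The only refinements relative to your sketch are that the face/slab Gaussian integrals are handled through the explicit change of coordinates $\Pi_\epsilon$ and the determinant identity of Lemma~\ref{lem73} (involving $\widetilde{\mathbb{H}}+\mu\,\widetilde{\bm{v}}\otimes\widetilde{\bm{v}}$, not $\mathbb{H}+2\mu\,\bm{v}\otimes\bm{v}$), and the replacement $\phi_\epsilon\to\mathbf{m}_\epsilon$ is done after, not before, the decomposition into $M_1,M_2$.
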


Before proving this proposition, we conclude the demonstration of
Proposition \ref{p_test} by assuming this proposition.
\begin{proof}[Proof of Proposition \ref{p_test}]
First, we check that
\begin{align*}
 & (\bm{v}^{\boldsymbol{\sigma}}+\mathbb{L}^{\boldsymbol{\sigma}}\,(\mathbb{H^{\boldsymbol{\sigma}}})^{-1}\,\bm{v}^{\boldsymbol{\sigma}})\cdot\bm{e}_{1}^{\boldsymbol{\sigma}}\,=\,(\mathbb{I}-(\mathbb{H^{\boldsymbol{\sigma}}})^{-1}\,(\mathbb{L}^{\boldsymbol{\sigma}})^{\dagger})\,\bm{v}^{\boldsymbol{\sigma}}\cdot\bm{e}_{1}^{\boldsymbol{\sigma}}\\
 & \;=\,(\mathbb{H^{\boldsymbol{\sigma}}})^{-1}\,(\mathbb{H}^{\boldsymbol{\sigma}}-(\mathbb{L}^{\boldsymbol{\sigma}})^{\dagger})\,\bm{v}^{\boldsymbol{\sigma}}\cdot\bm{e}_{1}^{\boldsymbol{\sigma}}\,=\,-\mu^{\boldsymbol{\sigma}}\,(\mathbb{H^{\boldsymbol{\sigma}}})^{-1}\,\bm{v}^{\boldsymbol{\sigma}}\cdot\bm{e}_{1}^{\boldsymbol{\sigma}}\,=\,\frac{\mu^{\boldsymbol{\sigma}}}{\lambda_{1}^{\boldsymbol{\sigma}}}\,(\bm{v}^{\boldsymbol{\sigma}}\cdot\bm{e}_{1}^{\boldsymbol{\sigma}})\;,
\end{align*}
where the first identity follows from the fact that the matrix $\mathbb{H}^{\boldsymbol{\sigma}}\mathbb{L}^{\boldsymbol{\sigma}}$
is a skew-symmetric matrix by \cite[Lemma 4.5]{LeeSeo}, and the last
identity follows from the fact that $\boldsymbol{e}_{1}^{\boldsymbol{\sigma}}$
is the eigenvector of $\mathbb{H}^{\boldsymbol{\sigma}}$ associated
with the eigenvalue $-\lambda_{1}^{\boldsymbol{\sigma}}.$ We can
combine this computation with Proposition \ref{p72} to get
\begin{equation}
A_{1}(\boldsymbol{\sigma})+A_{2}^{+}(\boldsymbol{\sigma})+A_{2}^{-}(\boldsymbol{\sigma})\,=\,\frac{\omega^{\boldsymbol{\sigma}}}{\nu_{\star}}\,(\mathbf{m}_{\epsilon}(j)-\mathbf{m}_{\epsilon}(i))+o_{\epsilon}(1)\;,\label{e79}
\end{equation}
where the Eyring--Kramers constant $\omega^{\boldsymbol{\sigma}}$
is defined in \eqref{e_EKconst}, and we implicitly used \eqref{eq:bdd_m}.
Inserting \eqref{e79} into Proposition \ref{p71} completes the proof.
\end{proof}
Now, it remains to prove Proposition \ref{p72}. We provide the estimates
of $A_{1}(\boldsymbol{\sigma})$ and $A_{2}^{\pm}(\boldsymbol{\sigma})$
in Sections \ref{sec73} and \ref{sec74}, respectively.

\subsection{\label{sec72}A Change of coordinate on $\partial_{+}\mathcal{B}_{\epsilon}^{\boldsymbol{\sigma}}$}

Hereafter, it suffices to focus only on a single saddle point $\boldsymbol{\sigma}$;
hence, in the remainder of the current section, we recall Notation
\ref{not61} and use the following conventions: \textit{we fix $\boldsymbol{\sigma}\in\Sigma_{i,\,j}$
for some $i,\,j\in S$ with $i<j$, assume that $\boldsymbol{\sigma}=\boldsymbol{0}$
for simplicity of notation, and drop the superscript $\boldsymbol{\sigma}$
from the notations, e.g., we write $p_{\epsilon}$ and $\mathcal{B}_{\epsilon}$
instead of $p_{\epsilon}^{\boldsymbol{\sigma}}$ and $\mathcal{B}_{\epsilon}^{\boldsymbol{\sigma}}$,
respectively. }

Before proceeding to the proof of Proposition \ref{p72}, we recall
in this subsection a change of coordinate introduced in \cite[Section 8.4]{LeeSeo},
which maps $\partial_{+}\mathcal{B}_{\epsilon}$ to a subset of $\mathbb{R}^{d-1}$.
For $\mathbb{A}\in\mathbb{R}^{d\times d}$ and $\boldsymbol{u}=(u_{1},\,\dots,\,u_{d})\in\mathbb{R}^{d}$,
we define $\widetilde{\mathbb{A}}\in\mathbb{R}^{(d-1)\times(d-1)}$
and $\widetilde{\boldsymbol{u}}\in\mathbb{R}^{d-1}$ as
\begin{equation}
\widetilde{\mathbb{A}}\,:=\,(\mathbb{A}_{i,\,j})_{2\leq i,\,j\leq d}\;\;\;\text{and\;\;\;}\widetilde{\boldsymbol{u}}\,:=\,(u_{2},\,\dots,\,u_{d})\;,\label{e_tildes}
\end{equation}
respectively. Define a vector $\boldsymbol{\gamma}=(\gamma_{2},\,\dots,\,\gamma_{d})\in\mathbb{R}^{d-1}$
by\textbf{ $\gamma_{k}=\frac{v_{k}}{v_{1}}\cdot\frac{\lambda_{1}^{1/2}}{\lambda_{k}}J\delta$
}for $2\le k\le d$, where $\boldsymbol{v}=\boldsymbol{v}^{\boldsymbol{\sigma}}=(v_{1},\,\dots,\,v_{d})$
denotes the eigenvector introduced in Section \ref{sec63}, at which
it has been mentioned that $v_{1}=\boldsymbol{v}\cdot\boldsymbol{e}\neq0$.
Define
\[
\mathcal{P}_{\delta}\,:=\,\Big\{\,\bm{x}=(x_{1},\,\dots,\,x_{d})\in\mathbb{R}^{d}:x_{1}=\frac{J\delta}{\lambda_{1}^{1/2}}\,\Big\}\,\subset\,\mathbb{R}^{d}\;,
\]
so that $\partial_{+}\mathcal{B}_{\epsilon},\,\partial_{+}\mathcal{C}_{\epsilon}\subset\mathcal{P}_{\delta}$,
and define a map $\Pi_{\epsilon}:\mathcal{P}_{\delta}\rightarrow\mathbb{R}^{d-1}$
by $\Pi_{\epsilon}(\boldsymbol{x})=\widetilde{\boldsymbol{x}}+\boldsymbol{\gamma}$.
This maps the change of coordinate from $\partial_{+}\mathcal{B}_{\epsilon}$
to $\mathbb{R}^{d-1}$, which simplifies computations significantly.
For instance, we have the following result.
\begin{lem}
\label{lem73}For all $\boldsymbol{x}\in\partial_{+}\mathcal{B}_{\epsilon}$,
we have
\[
\bm{x}\cdot(\mathbb{H}+\mu\,\bm{v}\otimes\bm{v})\,\bm{x}\,=\,\Pi_{\epsilon}(\boldsymbol{x})\cdot(\widetilde{\mathbb{H}}+\mu\,\widetilde{\boldsymbol{v}}\otimes\widetilde{\boldsymbol{v}})\,\Pi_{\epsilon}(\boldsymbol{x})\;.
\]
In addition, the matrix $\widetilde{\mathbb{H}}+\mu\,\widetilde{\bm{v}}\otimes\widetilde{\bm{v}}$
is positive definite and \textbf{
\[
\det\,(\widetilde{\mathbb{H}}+\mu\,\widetilde{\bm{v}}\otimes\widetilde{\bm{v}})\,=\,(\boldsymbol{v}\cdot\boldsymbol{e}_{1})^{2}\,\frac{\mu}{\lambda_{1}}\,\prod_{k=2}^{d}\lambda_{k}\;.
\]
}
\end{lem}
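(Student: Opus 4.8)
The plan is to carry out every computation in the orthonormal eigenbasis $\boldsymbol{e}_1,\dots,\boldsymbol{e}_d$ of $\mathbb{H}$, in which $\mathbb{H}$ acts diagonally with entries $-\lambda_1,\lambda_2,\dots,\lambda_d$; then $\widetilde{\mathbb{H}}$ is the diagonal matrix with entries $\lambda_2,\dots,\lambda_d$, and $x_k=\boldsymbol{x}\cdot\boldsymbol{e}_k$ with $x_1=J\delta/\lambda_1^{1/2}$ on $\partial_{+}\mathcal{B}_{\epsilon}\subset\mathcal{P}_{\delta}$. The one algebraic input on which all three assertions rest is the scalar identity
\[
\boldsymbol{v}\cdot\mathbb{H}^{-1}\boldsymbol{v}\,=\,-\frac1{\mu}\,,\qquad\text{equivalently}\qquad\sum_{k=2}^{d}\frac{v_k^{2}}{\lambda_k}\,=\,\frac{v_1^{2}}{\lambda_1}-\frac1{\mu}\;,
\]
where $v_1=\boldsymbol{v}\cdot\boldsymbol{e}_1$. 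To prove it I would use that $\mathbb{H}\mathbb{L}$ is skew-symmetric (\cite[Lemma 4.5]{LeeSeo}): since $\mathbb{H}$ is symmetric this gives $\mathbb{L}^{\dagger}\mathbb{H}=-\mathbb{H}\mathbb{L}$, hence $\mathbb{L}^{\dagger}=-\mathbb{H}\mathbb{L}\mathbb{H}^{-1}$, and also $(\mathbb{L}\mathbb{H}^{-1})^{\dagger}=\mathbb{H}^{-1}\mathbb{L}^{\dagger}=-\mathbb{L}\mathbb{H}^{-1}$, i.e.\ $\mathbb{L}\mathbb{H}^{-1}$ is skew-symmetric. Substituting $\mathbb{L}^{\dagger}=-\mathbb{H}\mathbb{L}\mathbb{H}^{-1}$ into the defining relation $(\mathbb{H}-\mathbb{L}^{\dagger})\boldsymbol{v}=-\mu\boldsymbol{v}$ and multiplying by $\mathbb{H}^{-1}$ yields $(\mathbb{I}+\mathbb{L}\mathbb{H}^{-1})\boldsymbol{v}=-\mu\,\mathbb{H}^{-1}\boldsymbol{v}$; taking the inner product with $\boldsymbol{v}$ and using $|\boldsymbol{v}|=1$ together with $\boldsymbol{v}\cdot\mathbb{L}\mathbb{H}^{-1}\boldsymbol{v}=0$ (skew-symmetry) gives $1=-\mu\,\boldsymbol{v}\cdot\mathbb{H}^{-1}\boldsymbol{v}$.

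For the quadratic-form identity I would fix $\boldsymbol{x}$ with $x_1=J\delta/\lambda_1^{1/2}$ and regard both sides as quadratic polynomials in $\widetilde{\boldsymbol{x}}=(x_2,\dots,x_d)$. Expanding
\[
\boldsymbol{x}\cdot(\mathbb{H}+\mu\,\boldsymbol{v}\otimes\boldsymbol{v})\,\boldsymbol{x}\,=\,-\lambda_1x_1^{2}+\sum_{k=2}^{d}\lambda_kx_k^{2}+\mu\Big(v_1x_1+\sum_{k=2}^{d}v_kx_k\Big)^{2}
\]
and the right-hand side with $\Pi_{\epsilon}(\boldsymbol{x})=\widetilde{\boldsymbol{x}}+\boldsymbol{\gamma}$, the purely quadratic parts in $\widetilde{\boldsymbol{x}}$ agree term by term. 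For the linear parts, after rewriting $\lambda_k\gamma_k=(v_k/v_1)\lambda_1x_1$ (this is just the definition of $\gamma_k$) the required equality reduces to the single scalar relation $\lambda_1x_1/v_1+\mu\,(\widetilde{\boldsymbol{v}}\cdot\boldsymbol{\gamma})=\mu v_1x_1$; since $\widetilde{\boldsymbol{v}}\cdot\boldsymbol{\gamma}=(\lambda_1x_1/v_1)\sum_{k\ge2}v_k^{2}/\lambda_k$, this is precisely the scalar identity above, and the constant terms are checked by the same substitution and the same identity. Equivalently — and this is the conceptual reason the lemma is true — $\widetilde{\boldsymbol{x}}\mapsto\widetilde{\boldsymbol{x}}+\boldsymbol{\gamma}$ is exactly the translation that completes the square of the left-hand side in the variables $x_2,\dots,x_d$, and $\boldsymbol{\gamma}$ was defined to be that shift.

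Finally, positive definiteness of $\widetilde{\mathbb{H}}+\mu\,\widetilde{\boldsymbol{v}}\otimes\widetilde{\boldsymbol{v}}$ is immediate, being the sum of the positive definite diagonal matrix $\widetilde{\mathbb{H}}$ (all $\lambda_k>0$ for $k\ge2$) and the positive semidefinite rank-one matrix $\mu\,\widetilde{\boldsymbol{v}}\otimes\widetilde{\boldsymbol{v}}$ ($\mu>0$). For the determinant I would invoke the matrix determinant lemma:
\[
\det\big(\widetilde{\mathbb{H}}+\mu\,\widetilde{\boldsymbol{v}}\otimes\widetilde{\boldsymbol{v}}\big)\,=\,(\det\widetilde{\mathbb{H}})\,\big(1+\mu\,\widetilde{\boldsymbol{v}}\cdot\widetilde{\mathbb{H}}^{-1}\widetilde{\boldsymbol{v}}\big)\,=\,\Big(\prod_{k=2}^{d}\lambda_k\Big)\Big(1+\mu\sum_{k=2}^{d}\frac{v_k^{2}}{\lambda_k}\Big)\;,
\]
and the scalar identity turns the second factor into $1+\mu(v_1^{2}/\lambda_1-1/\mu)=\mu v_1^{2}/\lambda_1$, giving the claimed value $(\boldsymbol{v}\cdot\boldsymbol{e}_1)^{2}(\mu/\lambda_1)\prod_{k=2}^{d}\lambda_k$. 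The only step that is more than routine bookkeeping is the scalar identity $\boldsymbol{v}\cdot\mathbb{H}^{-1}\boldsymbol{v}=-1/\mu$, so that is where the argument should be concentrated; once it is in hand, the rest is a short direct computation.
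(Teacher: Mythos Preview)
Your argument is correct: the scalar identity $\boldsymbol{v}\cdot\mathbb{H}^{-1}\boldsymbol{v}=-1/\mu$ (derived exactly as you describe from the skew-symmetry of $\mathbb{H}\mathbb{L}$) is indeed the only nontrivial input, and with it the quadratic-form identity, positive definiteness, and the determinant formula all follow by the direct computations you outline. The paper itself gives no proof here, deferring entirely to \cite[Lemmas 8.7 and 8.9]{LeeSeo}; your self-contained computation is precisely the content of those lemmas, so the approaches coincide.
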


\begin{proof}
We refer to \cite[Lemmas 8.7 and 8.9]{LeeSeo} for the proof.
\end{proof}
In \cite[Lemma 8.8]{LeeSeo}, it has been verified that the image
of $\Pi_{\epsilon}(\partial_{+}\mathcal{B}_{\epsilon}^{\boldsymbol{\sigma}})$
is comparable with a ball in $\mathbb{R}^{d-1}$ centered at the origin
with radius of order $\delta$. In the next lemma, we slightly strengthen
this result. Recall the definition of $\widehat{\mathcal{W}}_{i}$
from \eqref{e_hatW}.
\begin{lem}
\label{lem74}There exist constants $r,\,R>0$ such that
\[
\mathcal{D}_{r\delta}^{(d-1)}\,\subset\,\Pi_{\epsilon}(\partial_{+}\mathcal{B}_{\epsilon}^{\boldsymbol{\sigma}}\cap\widehat{\mathcal{W}}_{i})\,\subset\,\Pi_{\epsilon}(\partial_{+}\mathcal{B}_{\epsilon}^{\boldsymbol{\sigma}})\,\subset\,\mathcal{D}_{R\delta}^{(d-1)}\;,
\]
where $\mathcal{D}_{a}^{(d-1)}$ represents a ball in $\mathbb{R}^{d-1}$
of radius $a$ centered at the origin.
\end{lem}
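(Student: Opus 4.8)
The plan is to deduce Lemma~\ref{lem74} from the quantitative description of the saddle-point neighborhood already established in \cite{LeeSeo}, together with a Taylor analysis of $U$ near $\boldsymbol{\sigma}=\boldsymbol{0}$. First I would recall from \cite[Lemma 8.8]{LeeSeo} that $\Pi_{\epsilon}(\partial_{+}\mathcal{B}_{\epsilon}^{\boldsymbol{\sigma}})$ is sandwiched between two balls of radius of order $\delta$ centered at the origin; this immediately gives the third inclusion $\Pi_{\epsilon}(\partial_{+}\mathcal{B}_{\epsilon}^{\boldsymbol{\sigma}})\subset\mathcal{D}_{R\delta}^{(d-1)}$ for a suitable $R>0$, and the middle inclusion is trivial. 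So the only new content is the first inclusion: every point $\boldsymbol{y}\in\mathbb{R}^{d-1}$ with $|\boldsymbol{y}|<r\delta$ is the image $\Pi_{\epsilon}(\boldsymbol{x})$ of some $\boldsymbol{x}\in\partial_{+}\mathcal{B}_{\epsilon}^{\boldsymbol{\sigma}}$ that \emph{moreover} lies in $\widehat{\mathcal{W}}_{i}$.

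The key steps are as follows. Given $\boldsymbol{y}$ with $|\boldsymbol{y}|<r\delta$, set $\widetilde{\boldsymbol{x}}=\boldsymbol{y}-\boldsymbol{\gamma}$ and $x_{1}=J\delta/\lambda_{1}^{1/2}$, which defines a unique candidate point $\boldsymbol{x}\in\mathcal{P}_{\delta}$. Since $|\boldsymbol{\gamma}|=O(\delta)$, we have $|\widetilde{\boldsymbol{x}}|=O(\delta)$, so in particular $x_k=O(\delta)$ for each $k\ge 2$; for $r$ small enough this forces $x_{k}\in[-2J\delta/\lambda_{k}^{1/2},\,2J\delta/\lambda_{k}^{1/2}]$, i.e. $\boldsymbol{x}\in\mathcal{T}_{\epsilon}^{\boldsymbol{\sigma}}$ and hence $\boldsymbol{x}\in\mathcal{C}_{\epsilon}^{\boldsymbol{\sigma}}\cap\mathcal{P}_\delta=\partial_{+}\mathcal{C}_{\epsilon}^{\boldsymbol{\sigma}}$. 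Next I would use the Taylor expansion $U(\boldsymbol{x})=H+\tfrac12\boldsymbol{x}\cdot\mathbb{H}\boldsymbol{x}+O(|\boldsymbol{x}|^{3})=H-\tfrac12\lambda_{1}x_{1}^{2}+\tfrac12\sum_{k\ge2}\lambda_{k}x_{k}^{2}+O(\delta^{3})$. With $x_{1}=J\delta/\lambda_{1}^{1/2}$ we get $-\tfrac12\lambda_{1}x_{1}^{2}=-\tfrac12 J^{2}\delta^{2}$, and the remaining quadratic terms are $O(r^{2}\delta^{2})$ once $|\widetilde{\boldsymbol{x}}|\le Cr\delta$. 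Choosing $r$ small (relative to $J$, the eigenvalues, and the constant $c_0$ in \eqref{e_hatW}) makes $U(\boldsymbol{x})\le H-\tfrac14 J^{2}\delta^{2}<H+J^{2}\delta^{2}$, so $\boldsymbol{x}\in\mathcal{K}_{\epsilon}$ and therefore $\boldsymbol{x}\in\partial_{+}\mathcal{B}_{\epsilon}^{\boldsymbol{\sigma}}$; simultaneously, the same bound (adjusting $r$ if needed so that $\tfrac14 J^2 \ge c_0 J^2$, i.e. $c_0\le 1/4$, or just shrinking $r$) gives $U(\boldsymbol{x})\le H-c_{0}J^{2}\delta^{2}$. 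It remains to check that $\boldsymbol{x}$ lies in the connected component $\widehat{\mathcal{W}}_{i}$ rather than some other well: by Notation~\ref{not61}-(3) the direction $\boldsymbol{e}_{1}^{\boldsymbol{\sigma}}$ points toward $\mathcal{W}_i$, and since $x_1=J\delta/\lambda_1^{1/2}>0$ with the transversal coordinates small, the standard Morse-chart argument (as in the construction of $\mathcal{W}_i^\epsilon$ after \eqref{e_bdE2}) places $\boldsymbol{x}$ on the $\mathcal{W}_i$-side; as $U(\boldsymbol{x})\le H-c_0J^2\delta^2$ and the sublevel set at this height within the chart is connected to $\mathcal{V}_i$, we conclude $\boldsymbol{x}\in\widehat{\mathcal{W}}_{i}$. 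Finally $\Pi_{\epsilon}(\boldsymbol{x})=\widetilde{\boldsymbol{x}}+\boldsymbol{\gamma}=\boldsymbol{y}$ by construction, proving $\mathcal{D}_{r\delta}^{(d-1)}\subset\Pi_{\epsilon}(\partial_{+}\mathcal{B}_{\epsilon}^{\boldsymbol{\sigma}}\cap\widehat{\mathcal{W}}_{i})$.

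The main obstacle I anticipate is the last point: verifying rigorously that the candidate point $\boldsymbol{x}$ belongs to the \emph{correct} connected component $\widehat{\mathcal{W}}_{i}$ (and not merely to $\mathcal{K}_\epsilon\cap\mathcal{W}_i$ in some other chart sense), which requires the Morse lemma normal form around $\boldsymbol{\sigma}$ and a careful tracking of how the local stable/unstable directions connect to the wells $\mathcal{W}_i$ and $\mathcal{W}_j$ — exactly the geometry encoded in \eqref{e_bdE2} and in \cite[Lemma 8.8 and Section 8.4]{LeeSeo}. The rest is a routine Taylor estimate combined with choosing the constant $r$ small enough; the nontrivial bookkeeping is only to make sure all smallness choices ($r$ vs. $J$, $c_0$, the spectral gaps of $\mathbb{H}$, and the cubic remainder) are mutually consistent and uniform in $\epsilon$, which they are since $\delta=\delta(\epsilon)\to0$ and all the implied constants are $\epsilon$-independent.
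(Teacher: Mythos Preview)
There is a genuine gap in your Taylor estimate. You set $\widetilde{\boldsymbol{x}}=\boldsymbol{y}-\boldsymbol{\gamma}$ and then claim that the transversal quadratic part $\tfrac12\sum_{k\ge2}\lambda_k x_k^2$ is $O(r^2\delta^2)$ ``once $|\widetilde{\boldsymbol{x}}|\le Cr\delta$''. But that hypothesis is not satisfied: the shift vector has $\gamma_k=\frac{v_k}{v_1}\frac{\lambda_1^{1/2}}{\lambda_k}J\delta$, so $|\boldsymbol{\gamma}|$ is of order $J\delta$, and hence $|\widetilde{\boldsymbol{x}}|\sim J\delta$, not $r\delta$. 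The positive transversal contribution is therefore of the \emph{same} order $J^2\delta^2$ as the negative term $-\tfrac12\lambda_1 x_1^2=-\tfrac12 J^2\delta^2$, and there is no reason \emph{a priori} that their sum is negative. (In the reversible case $\boldsymbol{\ell}=\boldsymbol{0}$ one has $\boldsymbol{v}=\boldsymbol{e}_1$, hence $\boldsymbol{\gamma}=\boldsymbol{0}$, and your argument would work; its failure is precisely a non-reversible phenomenon.)

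The paper handles this by invoking the identity \cite[display (8.16)]{LeeSeo}, which gives
\[
U(\overline{\boldsymbol{\gamma}})\,=\,H-\frac{\lambda_1}{2\mu v_1^2}\,J^2\delta^2+O(\delta^3)
\]
for the center point $\overline{\boldsymbol{\gamma}}:=\Pi_\epsilon^{-1}(\boldsymbol{0})$; the strict negativity of the second-order term is a nontrivial algebraic fact relating $\boldsymbol{v}$, $\mu$, and $\mathbb{H}$ (essentially what underlies Lemma~\ref{lem73} at $\Pi_\epsilon(\boldsymbol{x})=\boldsymbol{0}$). Note also that the constant $c_0$ in \eqref{e_hatW} is \emph{defined} at this very step, via \eqref{e_c0}, to be smaller than $\frac{\lambda_1}{4\mu v_1^2}$; you treat $c_0$ as already fixed and try to adjust $r$, which inverts the logical order. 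Once $U(\overline{\boldsymbol{\gamma}})<H-2c_0J^2\delta^2$ is established, a continuity argument yields a small $r>0$ with $\mathcal{D}_{r\delta}(\overline{\boldsymbol{\gamma}})\cap\mathcal{P}_\delta\subset\partial_+\mathcal{B}_\epsilon^{\boldsymbol{\sigma}}\cap\widehat{\mathcal{W}}_i$, and applying $\Pi_\epsilon$ finishes the proof.
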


\begin{proof}
In view of Lemma \cite[Lemma 8.8]{LeeSeo}, it suffices to show the
first inclusion. Define
\begin{align*}
\mathcal{P}_{\delta} & \,:=\,\Big\{\,\bm{x}=(x_{1},\,\dots,\,x_{d})\in\mathbb{R}^{d}:x_{1}=\frac{J\delta}{\lambda_{1}^{1/2}}\,\Big\}\,\subset\mathbb{R}^{d}\;\;\;\text{and}\\
\overline{\boldsymbol{\gamma}} & \,:=\,\Big(\,\frac{J\delta}{\lambda_{1}^{1/2}},\,-\gamma_{2},\,\cdots,\,-\gamma_{d}\,\Big)\,\in\mathcal{P}_{\delta}\;.
\end{align*}
Then, it has been shown in \cite[display (8.16)]{LeeSeo} that
\begin{equation}
U(\overline{\boldsymbol{\gamma}})\,=\,H-\frac{\lambda_{1}}{2\,\mu\,v_{1}^{2}}\,J^{2}\,\delta^{2}+O(\delta^{3})\,<\,H-2\,c_{0}\,J^{2}\,\delta^{2}\label{e_c0}
\end{equation}
for all sufficiently small $\epsilon>0$ if we take $c_{0}>0$ as
a sufficiently small constant. By inserting this $c_{0}$ into the
definition of $\widehat{\mathcal{W}}_{i}$ in \eqref{e_hatW}, we
can find sufficiently small $r>0$ such that $\mathcal{D}_{r\delta}(\overline{\boldsymbol{\gamma}})\cap\mathcal{P}_{\delta}\subset\partial_{+}\mathcal{B}_{\epsilon}^{\boldsymbol{\sigma}}\cap\widehat{\mathcal{W}}_{i}$.
Since $\Pi_{\epsilon}(\overline{\boldsymbol{\gamma}})=\bm{0}$, we
have $\mathcal{D}_{r\delta}^{(d-1)}=\Pi_{\epsilon}(\mathcal{D}_{r\delta}(\overline{\boldsymbol{\gamma}})\cap\mathcal{P}_{\delta})$
and the proof is completed.
\end{proof}

\subsection{\label{sec73}Estimate of $A_{1}(\boldsymbol{\sigma})$}

Now, we prove \eqref{e_A1} of Proposition \ref{p72}. By the divergence
theorem, we can write
\[
A_{1}(\boldsymbol{\sigma})\,=\,-\theta_{\epsilon}\,\epsilon\,\int_{\partial\mathcal{B}_{\epsilon}}\,\phi_{\epsilon}\,[\,\nabla p_{\epsilon}\cdot\bm{n}_{\mathcal{B}_{\epsilon}}\,]\,\sigma(d\mu_{\epsilon})+\theta_{\epsilon}\,\int_{\mathcal{B}_{\epsilon}}(\mathscr{L}_{\epsilon}^{*}p_{\epsilon})\,\phi_{\epsilon}\,d\mu_{\epsilon}\;.
\]
By Propositions \ref{p_bddsol} and \ref{p62}, we can observe that
the second term at the right-hand side is $o_{\epsilon}(1)$. Therefore,
we can write
\begin{equation}
A_{1}(\boldsymbol{\sigma})\,=\,K_{0}+K_{+}+K_{-}+o_{\epsilon}(1)\;,\label{e_A1dec}
\end{equation}
where
\begin{align*}
K_{0} & \,=\,-\theta_{\epsilon}\,\epsilon\,\int_{\partial_{0}\mathcal{B}_{\epsilon}}\,\phi_{\epsilon}\,[\,\nabla p_{\epsilon}\cdot\bm{n}_{\mathcal{B}_{\epsilon}}\,]\,\sigma(d\mu_{\epsilon})\;\;\;\text{and}\\
K_{\pm} & \,=\,-\theta_{\epsilon}\,\epsilon\,\int_{\partial_{\pm}\mathcal{B}_{\epsilon}}\,\phi_{\epsilon}\,[\,\nabla p_{\epsilon}\cdot\bm{n}_{\mathcal{B}_{\epsilon}}\,]\,\sigma(d\mu_{\epsilon})\;.
\end{align*}
First, we show that $K_{0}=o_{\epsilon}(1)$. For $\bm{x}\in\partial_{0}\mathcal{B}_{\epsilon}$,
by \eqref{e_pesB} and \eqref{e_Ces},
\[
\begin{aligned}|\,\nabla p_{\epsilon}(\bm{x})\cdot\bm{n}_{\mathcal{B}_{\epsilon}}(\bm{x})\,| & \,=\,\Big|\,\frac{1}{C_{\epsilon}^{\boldsymbol{\sigma}}}\,e^{-\frac{\mu}{2\epsilon}(\bm{x}\cdot\bm{v})^{2}}\,\bm{v}\cdot\bm{n}_{\mathcal{B}_{\epsilon}}(\bm{x})\,\Big|\,\le\,\frac{C}{\epsilon^{1/2}}\;.\end{aligned}
\]
Therefore, by Proposition \ref{p_bddsol} and \eqref{e_bdE1} along
with the fact that $\partial_{0}\mathcal{B}_{\epsilon}^{\bm{\sigma}}\subset\partial_{0}\mathcal{E}_{\epsilon}^{\bm{\sigma}}$,
we have
\begin{equation}
|\,K_{0}\,|\,\le\,C\,\theta_{\epsilon}\,\epsilon^{1/2}\,Z_{\epsilon}^{-1}\,e^{-(H+J^{2}\delta^{2})/\epsilon}\,\sigma(\partial_{0}\mathcal{B}_{\epsilon})\,\le\,C\,\epsilon^{J^{2}-(d+1)/2}\,\delta^{d-1}\,=\,o_{\epsilon}(1)\label{e_K0}
\end{equation}
for sufficiently large $J$, where we used $\sigma(\partial_{0}\mathcal{B}_{\epsilon})=O(\delta^{d-1})$
in the second inequality. Next, we estimate $K_{+}$ and $K_{-}$.
\begin{lem}
\label{lem_Kpm}We have
\begin{align*}
K_{+} & \,=\,-\frac{\lambda_{1}}{2\pi\nu_{\star}\sqrt{-\det\mathbb{H}}}\,\mathbf{m}_{\epsilon}(i)+o_{\epsilon}(1)\;\;\text{and\;\;}K_{-}\,=\,\frac{\lambda_{1}}{2\pi\nu_{\star}\sqrt{-\det\mathbb{H}}}\,\mathbf{m}_{\epsilon}(j)+o_{\epsilon}(1)\;.
\end{align*}
\end{lem}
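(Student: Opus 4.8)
The plan is to evaluate $K_{+}$ directly, the case $K_{-}$ being symmetric. On $\partial_{+}\mathcal{B}_{\epsilon}$ the outward normal of $\mathcal{B}_{\epsilon}$ is $\bm{e}_{1}$, and since $\partial_{+}\mathcal{B}_{\epsilon}\subset\mathcal{C}_{\epsilon}^{\boldsymbol{\sigma}}$, the error-function formula \eqref{e_pesB} gives $\nabla p_{\epsilon}(\bm{x})=(C_{\epsilon}^{\boldsymbol{\sigma}})^{-1}e^{-\frac{\mu}{2\epsilon}(\bm{x}\cdot\bm{v})^{2}}\bm{v}$, hence $\nabla p_{\epsilon}\cdot\bm{n}_{\mathcal{B}_{\epsilon}}=(C_{\epsilon}^{\boldsymbol{\sigma}})^{-1}(\bm{v}\cdot\bm{e}_{1})\,e^{-\frac{\mu}{2\epsilon}(\bm{x}\cdot\bm{v})^{2}}$. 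Taylor expanding $U$ at $\boldsymbol{\sigma}=\boldsymbol{0}$ and using that $|\bm{x}|=O(\delta)$ on $\partial_{+}\mathcal{B}_{\epsilon}$ together with $\delta^{3}/\epsilon=\epsilon^{1/2}(\log\tfrac1\epsilon)^{3/2}\to0$, one gets $e^{-U(\bm{x})/\epsilon}=[1+o_{\epsilon}(1)]e^{-H/\epsilon}e^{-\frac{1}{2\epsilon}\bm{x}\cdot\mathbb{H}\bm{x}}$ uniformly on $\partial_{+}\mathcal{B}_{\epsilon}$; multiplying the two exponentials produces the quadratic form $\mathbb{H}+\mu\,\bm{v}\otimes\bm{v}$, so that
\[
K_{+}=-[1+o_{\epsilon}(1)]\,\theta_{\epsilon}\,\epsilon\,\frac{\bm{v}\cdot\bm{e}_{1}}{C_{\epsilon}^{\boldsymbol{\sigma}}\,Z_{\epsilon}}\,e^{-H/\epsilon}\int_{\partial_{+}\mathcal{B}_{\epsilon}}\phi_{\epsilon}(\bm{x})\,e^{-\frac{1}{2\epsilon}\bm{x}\cdot(\mathbb{H}+\mu\,\bm{v}\otimes\bm{v})\bm{x}}\,\sigma(d\bm{x})\;.
\]

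Next I would pass to the coordinates $\bm{y}=\Pi_{\epsilon}(\bm{x})$ on the hyperplane $\{x_{1}=J\delta/\lambda_{1}^{1/2}\}$; since $\Pi_{\epsilon}$ is a translation in the orthonormal coordinates $(x_{2},\dots,x_{d})$, the surface measure is preserved, and Lemma \ref{lem73} rewrites the quadratic form as $\bm{y}\cdot M\bm{y}$ with $M:=\widetilde{\mathbb{H}}+\mu\,\widetilde{\bm{v}}\otimes\widetilde{\bm{v}}$ positive definite. To handle the factor $\phi_{\epsilon}$ I would invoke Lemma \ref{lem74}: on $\mathcal{D}_{r\delta}^{(d-1)}\subset\Pi_{\epsilon}(\partial_{+}\mathcal{B}_{\epsilon}\cap\widehat{\mathcal{W}}_{i})$ Proposition \ref{p55} gives $\phi_{\epsilon}(\Pi_{\epsilon}^{-1}(\bm{y}))=\mathbf{m}_{\epsilon}(i)+o_{\epsilon}(1)$, while on the remaining annulus $\Pi_{\epsilon}(\partial_{+}\mathcal{B}_{\epsilon})\setminus\mathcal{D}_{r\delta}^{(d-1)}\subset\mathcal{D}_{R\delta}^{(d-1)}\setminus\mathcal{D}_{r\delta}^{(d-1)}$ the Gaussian is bounded by $e^{-cr^{2}\delta^{2}/(2\epsilon)}=\epsilon^{cr^{2}/2}$, $\phi_{\epsilon}$ is bounded (Proposition \ref{p_bddsol}), and the area is $O(\delta^{d-1})$, so this piece contributes $O(\epsilon^{cr^{2}/2}(\log\tfrac1\epsilon)^{(d-1)/2})=o_{\epsilon}(1)$ once the prefactor (which is of order $\epsilon^{-d/2}$) is taken into account. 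The same tail bound, together with $r\delta/\sqrt{\epsilon}=r\sqrt{\log(1/\epsilon)}\to\infty$, gives $\int_{\mathcal{D}_{r\delta}^{(d-1)}}e^{-\frac{1}{2\epsilon}\bm{y}\cdot M\bm{y}}\,d\bm{y}=[1+o_{\epsilon}(1)]\,(2\pi\epsilon)^{(d-1)/2}(\det M)^{-1/2}$.

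Finally I would collect the constants using $C_{\epsilon}^{\boldsymbol{\sigma}}=\sqrt{2\pi\epsilon/\mu}$ from \eqref{e_Ces}, $Z_{\epsilon}=[1+o_{\epsilon}(1)](2\pi\epsilon)^{d/2}e^{-h/\epsilon}\nu_{\star}$ from Proposition \ref{p32}, and $\theta_{\epsilon}e^{-H/\epsilon}e^{h/\epsilon}=1$; the surviving powers of $\epsilon$ and $2\pi$ collapse to the constant $\sqrt{\mu}/(2\pi\nu_{\star})$, yielding $K_{+}=-[1+o_{\epsilon}(1)]\,\frac{(\bm{v}\cdot\bm{e}_{1})\sqrt{\mu}}{2\pi\,\nu_{\star}\sqrt{\det M}}\,(\mathbf{m}_{\epsilon}(i)+o_{\epsilon}(1))$. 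By the determinant identity in Lemma \ref{lem73}, $\det M=(\bm{v}\cdot\bm{e}_{1})^{2}\,\frac{\mu}{\lambda_{1}}\,\prod_{k=2}^{d}\lambda_{k}$, whence $\frac{(\bm{v}\cdot\bm{e}_{1})\sqrt{\mu}}{\sqrt{\det M}}=\big(\lambda_{1}/\prod_{k=2}^{d}\lambda_{k}\big)^{1/2}=\lambda_{1}/\sqrt{-\det\mathbb{H}}$ since $-\det\mathbb{H}=\lambda_{1}\prod_{k=2}^{d}\lambda_{k}$ (recall $\bm{v}\cdot\bm{e}_{1}>0$); together with the a priori bound \eqref{eq:bdd_m} this gives the claimed expression for $K_{+}$. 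For $K_{-}$ the reflection $\bm{x}\mapsto-\bm{x}$ maps $\partial_{+}\mathcal{B}_{\epsilon}$ onto $\partial_{-}\mathcal{B}_{\epsilon}$, turns $\bm{n}_{\mathcal{B}_{\epsilon}}$ into $-\bm{e}_{1}$ while leaving the even factors $e^{-U/\epsilon}$ and $e^{-\frac{\mu}{2\epsilon}(\bm{x}\cdot\bm{v})^{2}}$ essentially unchanged, so $\nabla p_{\epsilon}\cdot\bm{n}_{\mathcal{B}_{\epsilon}}$ picks up a sign; since $\partial_{-}\mathcal{B}_{\epsilon}$ is adjacent to the well $\mathcal{W}_{j}$ (cf. \eqref{e_bdE2}, and the analogue of Lemma \ref{lem74} with $\widehat{\mathcal{W}}_{j}$), Proposition \ref{p55} now contributes $\mathbf{m}_{\epsilon}(j)$, and repeating the argument yields $K_{-}=\frac{\lambda_{1}}{2\pi\nu_{\star}\sqrt{-\det\mathbb{H}}}\mathbf{m}_{\epsilon}(j)+o_{\epsilon}(1)$. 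The one genuinely delicate point is the interplay between the coordinate change and the flatness estimate needed to discard the part of $\partial_{+}\mathcal{B}_{\epsilon}$ sticking out of $\widehat{\mathcal{W}}_{i}$ (handled by Lemma \ref{lem74} and the concentration of the Gaussian on the scale $\sqrt{\epsilon}\ll\delta$); the remaining constant bookkeeping is routine once Lemma \ref{lem73} is in hand.
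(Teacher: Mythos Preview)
Your proof is correct and follows essentially the same route as the paper: compute $\nabla p_{\epsilon}\cdot\bm{e}_{1}$ explicitly, Taylor-expand $U$ to produce the quadratic form $\mathbb{H}+\mu\,\bm{v}\otimes\bm{v}$, pass to the $\Pi_{\epsilon}$-coordinates via Lemma~\ref{lem73}, use Lemma~\ref{lem74} together with Proposition~\ref{p55} on the inner ball and Proposition~\ref{p_bddsol} plus Gaussian tails on the annulus, and simplify with the determinant identity. One small slip: the prefactor in front of the surface integral is of order $\epsilon^{-(d-1)/2}$, not $\epsilon^{-d/2}$ (you can read this off directly from $\epsilon/(C_{\epsilon}^{\boldsymbol{\sigma}}Z_{\epsilon})\,\theta_{\epsilon}e^{-H/\epsilon}$); with the correct exponent your annulus bound $O\big((\log\tfrac1\epsilon)^{(d-1)/2}\epsilon^{cr^{2}/2}\big)$ is immediately $o_{\epsilon}(1)$ for any $r>0$, so the argument is in fact cleaner than you wrote.
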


\begin{proof}
We only prove the estimate for $K_{+}$ since the proof for $K_{-}$
is identical. Since $\bm{n}_{\mathcal{B}_{\epsilon}}=\bm{e}_{1}$
on $\partial_{+}\mathcal{B}_{\epsilon}$, by the Taylor expansion
of $U$ around $\boldsymbol{\sigma}$, explicit formula \eqref{e_pesB}
for $p_{\epsilon}$, and \eqref{e_Zeps}, we can write
\[
K_{+}\,=\,-[\,1+o_{\epsilon}(1)\,]\,\frac{\epsilon\,\mu^{1/2}\,(\bm{v}\cdot\bm{e}_{1})}{(2\pi\epsilon)^{(d+1)/2}\,\nu_{\star}}\,\int_{\partial_{+}\mathcal{B}_{\epsilon}}\,e^{-\frac{1}{2\epsilon}\bm{x}\cdot\,(\mathbb{H}+\mu\,\bm{v}\otimes\bm{v})\,\bm{x}}\,\phi_{\epsilon}(\bm{x})\,\sigma(d\bm{x})\;.
\]
With the notations introduced in Section \ref{sec72}, we perform
the change of variable $\boldsymbol{y}=\Pi_{\epsilon}(\boldsymbol{x})$
in the previous integral to deduce that
\begin{equation}
K_{+}\,=\,-[\,1+o_{\epsilon}(1)\,]\,\frac{\epsilon\,\mu^{1/2}\,(\bm{v}\cdot\bm{e}_{1})}{(2\pi\epsilon)^{(d+1)/2}\,\nu_{\star}}\,\int_{\Pi_{\epsilon}(\partial_{+}\mathcal{B}_{\epsilon})}\,e^{-\frac{1}{2\epsilon}\bm{y}\cdot\,(\widetilde{\mathbb{H}}+\mu\,\widetilde{\boldsymbol{v}}\otimes\widetilde{\boldsymbol{v}})\,\bm{y}}\,\phi_{\epsilon}(\Pi_{\epsilon}^{-1}(\bm{y}))\,d\bm{y}\;,\label{e_K+}
\end{equation}
where we applied Lemma \ref{lem73} to the exponential term. Let $r>0$
be the constant appearing in Lemma \ref{lem74}. By Proposition \ref{p55}
and Lemma \ref{lem74}, we have $\phi_{\epsilon}(\Pi_{\epsilon}^{-1}(\bm{y}))=\mathbf{m}_{\epsilon}(i)+o_{\epsilon}(1)$
for $\boldsymbol{y}\in\mathcal{D}_{r\delta}^{(d-1)}$. Thus, we have
\[
\int_{\mathcal{D}_{r\delta}^{(d-1)}(\boldsymbol{0})}\,e^{-\frac{1}{2\epsilon}\bm{y}\cdot\,(\widetilde{\mathbb{H}}+\mu\,\widetilde{\boldsymbol{v}}\otimes\widetilde{\boldsymbol{v}})\,\bm{y}}\,\phi_{\epsilon}(\Pi_{\epsilon}^{-1}(\bm{y}))\,d\bm{y}\,=\,\frac{(2\pi\epsilon)^{(d-1)/2}}{\sqrt{\det\,(\widetilde{\mathbb{H}}+\mu\widetilde{\boldsymbol{v}}\otimes\widetilde{\boldsymbol{v}})}}\,[\,\mathbf{m}_{\epsilon}(i)+o_{\epsilon}(1)\,]\;.
\]
Since the integral on $\Pi_{\epsilon}(\partial_{+}\mathcal{B}_{\epsilon})\setminus\mathcal{D}_{r\delta}^{(d-1)}(\boldsymbol{0})\subset\mathbb{R}^{d-1}\setminus\mathcal{D}_{r\delta}^{(d-1)}(\boldsymbol{0})$
is $o_{\epsilon}(1)$ by Proposition \ref{p_bddsol}, we can conclude
from \eqref{e_K+} that
\[
K_{+}\,=\,-\,[\,1+o_{\epsilon}(1)\,]\,\frac{\mu^{1/2}\,(\bm{v}\cdot\bm{e}_{1})}{2\pi\nu_{\star}\sqrt{\det\,(\widetilde{\mathbb{H}}+\mu\widetilde{\boldsymbol{v}}\otimes\widetilde{\boldsymbol{v}})}}\,\mathbf{m}_{\epsilon}(i)+o_{\epsilon}(1)\;.
\]
The proof is completed by the second part of Lemma \ref{lem73}.
\end{proof}
Now, \eqref{e_A1} can be obtained by combining \eqref{e_A1dec},
\eqref{e_K0}, and Lemma \ref{lem_Kpm}.

\subsection{\label{sec74}Estimate of $A_{2}^{\pm}(\boldsymbol{\sigma})$}

Now, we estimate $A_{2}^{+}(\boldsymbol{\sigma})$ and $A_{2}^{-}(\boldsymbol{\sigma})$.
Since the proof is identical, it suffices to consider $A_{2}^{+}(\boldsymbol{\sigma})$,
i.e., \eqref{e_A2+}. Write $\boldsymbol{\ell}=(\ell_{1},\,\dots,\,\ell_{d})$
and $\boldsymbol{v}=(v_{1},\,\dots,\,v_{d})$. Then, by \eqref{e617}
and \eqref{e618}, we can write
\begin{equation}
A_{2}^{+}(\boldsymbol{\sigma})\,=\,M_{1}+M_{2}\;,\label{e_decA2}
\end{equation}
where
\begin{align*}
M_{1} & \,=\,\frac{\theta_{\epsilon}}{\eta C_{\epsilon}}\int_{\widehat{\partial}_{+}\mathcal{B}_{\epsilon}}\phi_{\epsilon}(\bm{x})\,\Big[\,\boldsymbol{x}\cdot\boldsymbol{e}_{1}-\frac{J\delta}{\lambda_{1}^{1/2}}-\eta\,\Big]\,e^{-\frac{\mu}{2\epsilon}(\overline{\boldsymbol{x}}\cdot\bm{v})^{2}}\,\sum_{k=2}^{d}v_{k}\,\ell_{k}(\bm{x})\,\mu_{\epsilon}(d\bm{x})\;\;\;\text{and}\\
M_{2} & \,=\,\theta_{\epsilon}\int_{\widehat{\partial}_{+}\mathcal{B}_{\epsilon}}\phi_{\epsilon}(\bm{x})\,\frac{1-p_{\epsilon}(\overline{\boldsymbol{x}})}{\eta}\,\ell_{1}(\bm{x})\,\mu_{\epsilon}(d\bm{x})\;.
\end{align*}
First, we show that $M_{1}$ is negligible.
\begin{lem}
\label{lem0076}We have that $M_{1}=o_{\epsilon}(1)$.
\end{lem}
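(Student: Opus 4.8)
The plan is to show $|M_{1}|\le C\,\eta/\epsilon$, which gives $M_{1}=o_{\epsilon}(1)$ since $\eta=\epsilon^{2}$ by \eqref{e_eta}. The estimate rests on three ingredients: uniform bounds on the non-exponential factors, the standard combination of the Gaussian weight $e^{-\mu(\overline{\bm{x}}\cdot\bm{v})^{2}/(2\epsilon)}$ with the Boltzmann factor $e^{-U(\bm{x})/\epsilon}$, and the positive-definiteness on the hyperplane $\mathcal{P}_{\delta}$ furnished by Lemma \ref{lem73}.

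First I would dispose of the non-exponential factors. By Proposition \ref{p_bddsol} we have $\|\phi_{\epsilon}\|_{L^{\infty}}\le C$; since $\widehat{\partial}_{+}\mathcal{B}_{\epsilon}$ lies in a fixed compact neighbourhood of $\boldsymbol{\sigma}$ for all small $\epsilon$ and $\boldsymbol{\ell}\in C^{1}$, we have $|\sum_{k=2}^{d}v_{k}\ell_{k}(\bm{x})|\le C$ there; and since $\bm{x}\cdot\bm{e}_{1}\in[\,J\delta\lambda_{1}^{-1/2},\,J\delta\lambda_{1}^{-1/2}+\eta\,]$ on $\widehat{\partial}_{+}\mathcal{B}_{\epsilon}$, the bracket factor satisfies $|\bm{x}\cdot\bm{e}_{1}-J\delta\lambda_{1}^{-1/2}-\eta|\le\eta$. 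Recalling $C_{\epsilon}\asymp\epsilon^{1/2}$ from \eqref{e_Ces}, these bounds give
\[
|M_{1}|\,\le\,C\,\theta_{\epsilon}\,\epsilon^{-1/2}\int_{\widehat{\partial}_{+}\mathcal{B}_{\epsilon}}e^{-\frac{\mu}{2\epsilon}(\overline{\bm{x}}\cdot\bm{v})^{2}}\,\mu_{\epsilon}(d\bm{x})\;.
\]

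The heart of the matter is to estimate the remaining integral. I would decompose each $\bm{x}\in\widehat{\partial}_{+}\mathcal{B}_{\epsilon}$ as $\bm{x}=\overline{\bm{x}}+s\,\bm{e}_{1}$ with $\overline{\bm{x}}\in\mathcal{P}_{\delta}$ and $s\in[0,\eta]$. A Taylor expansion of $U$ at $\boldsymbol{\sigma}$ together with $\mathbb{H}\bm{e}_{1}=-\lambda_{1}\bm{e}_{1}$ yields $U(\bm{x})=H+\tfrac{1}{2}\,\overline{\bm{x}}\cdot\mathbb{H}\,\overline{\bm{x}}+O(\delta^{3})$ on this set (the $s$-dependent corrections being $O(\eta\delta)$), while $|(\overline{\bm{x}}\cdot\bm{v})^{2}-(\bm{x}\cdot\bm{v})^{2}|=O(\eta\delta)$; since $\delta^{3}/\epsilon=o_{\epsilon}(1)$ and $\eta\delta/\epsilon=o_{\epsilon}(1)$ by \eqref{e_delta} and \eqref{e_eta}, this gives, uniformly in $\overline{\bm{x}}$,
\[
e^{-\frac{\mu}{2\epsilon}(\overline{\bm{x}}\cdot\bm{v})^{2}}\,e^{-\frac{U(\bm{x})-H}{\epsilon}}\,=\,[1+o_{\epsilon}(1)]\,e^{-\frac{1}{2\epsilon}\,\overline{\bm{x}}\cdot(\mathbb{H}+\mu\,\bm{v}\otimes\bm{v})\,\overline{\bm{x}}}\;.
\]
Although $\mathbb{H}+\mu\,\bm{v}\otimes\bm{v}$ is not positive definite on $\mathbb{R}^{d}$, Lemma \ref{lem73} identifies $\overline{\bm{x}}\cdot(\mathbb{H}+\mu\,\bm{v}\otimes\bm{v})\overline{\bm{x}}=\Pi_{\epsilon}(\overline{\bm{x}})\cdot(\widetilde{\mathbb{H}}+\mu\,\widetilde{\bm{v}}\otimes\widetilde{\bm{v}})\Pi_{\epsilon}(\overline{\bm{x}})$ for $\overline{\bm{x}}\in\mathcal{P}_{\delta}$, with $\widetilde{\mathbb{H}}+\mu\,\widetilde{\bm{v}}\otimes\widetilde{\bm{v}}$ positive definite. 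Carrying out the $s$-integration first (an interval of length at most $\eta$, over which the right-hand side does not depend), then extending the $(d-1)$-dimensional integral over $\Pi_{\epsilon}(\overline{\bm{x}})$ to all of $\mathbb{R}^{d-1}$, and using $Z_{\epsilon}=[1+o_{\epsilon}(1)](2\pi\epsilon)^{d/2}e^{-h/\epsilon}\nu_{\star}$ from Proposition \ref{p32} with $\theta_{\epsilon}=e^{(H-h)/\epsilon}$, I obtain
\[
\int_{\widehat{\partial}_{+}\mathcal{B}_{\epsilon}}e^{-\frac{\mu}{2\epsilon}(\overline{\bm{x}}\cdot\bm{v})^{2}}\,\mu_{\epsilon}(d\bm{x})\,\le\,\frac{[1+o_{\epsilon}(1)]\,\eta\,e^{-H/\epsilon}}{Z_{\epsilon}}\,\frac{(2\pi\epsilon)^{(d-1)/2}}{\sqrt{\det(\widetilde{\mathbb{H}}+\mu\,\widetilde{\bm{v}}\otimes\widetilde{\bm{v}})}}\,\le\,C\,\eta\,\epsilon^{-1/2}\,\theta_{\epsilon}^{-1}\;.
\]
Inserting this into the previous display gives $|M_{1}|\le C\,\eta/\epsilon=C\epsilon=o_{\epsilon}(1)$.

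The one genuinely delicate point is this combination: a crude bound on $\mu_{\epsilon}(\widehat{\partial}_{+}\mathcal{B}_{\epsilon})$ alone is useless, since $\theta_{\epsilon}\,\mu_{\epsilon}(\widehat{\partial}_{+}\mathcal{B}_{\epsilon})$ in fact diverges as $\epsilon\to0$ once $J$ is large; it is only after absorbing the Gaussian weight $e^{-\mu(\overline{\bm{x}}\cdot\bm{v})^{2}/(2\epsilon)}$ into the Boltzmann factor and invoking the positive-definiteness on $\mathcal{P}_{\delta}$ (Lemma \ref{lem73}) that the relevant mass behaves like that of a thin slab of width $\eta$ over a $(d-1)$-dimensional Gaussian, which is what supplies the gain $\eta/\epsilon\to0$. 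Everything else is bookkeeping with the scales $\delta$ and $\eta$.
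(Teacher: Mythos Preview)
Your proof is correct and follows essentially the same route as the paper: bound the bracket by $\eta$ and $\phi_{\epsilon}$ via Proposition~\ref{p_bddsol}, combine the Gaussian weight $e^{-\mu(\overline{\bm{x}}\cdot\bm{v})^{2}/(2\epsilon)}$ with the Taylor expansion of $U$ around $\boldsymbol{\sigma}$ to produce the quadratic form $\overline{\bm{x}}\cdot(\mathbb{H}+\mu\,\bm{v}\otimes\bm{v})\overline{\bm{x}}$, invoke Lemma~\ref{lem73} for positive-definiteness on the hyperplane, and conclude $|M_{1}|\le C\eta/\epsilon=o_{\epsilon}(1)$. Your closing remark that the Gaussian weight is indispensable (a crude volume bound on $\mu_{\epsilon}(\widehat{\partial}_{+}\mathcal{B}_{\epsilon})$ alone would not suffice) correctly isolates the one genuine idea in the argument.
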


\begin{proof}
Since $\big|\boldsymbol{x}\cdot\boldsymbol{e}_{1}-\frac{J\delta}{\lambda_{1}^{1/2}}-\eta\,\big|\le\eta$
for $\boldsymbol{x}\in\widehat{\partial}_{+}\mathcal{B}_{\epsilon}$,
by Proposition \ref{p_bddsol}, it suffices to prove that
\begin{equation}
\frac{\theta_{\epsilon}}{C_{\epsilon}}\,\frac{1}{Z_{\epsilon}}\,\int_{\widehat{\partial}_{+}\mathcal{C}_{\epsilon}}\,e^{-\frac{\mu}{2\epsilon}(\overline{\boldsymbol{x}}\cdot\bm{v})^{2}}\,e^{-\frac{U(\boldsymbol{x})}{\epsilon}}\,d\bm{x}\,=\,o_{\epsilon}(1)\;.\label{e717}
\end{equation}
By applying $U(\boldsymbol{x})=U(\overline{\boldsymbol{x}})+O(\epsilon^{2})$
and then applying the Taylor expansion to $U(\overline{\boldsymbol{x}})$
(with respect to $\boldsymbol{\sigma}=\boldsymbol{0}$), the left-hand
side of the above equality can be bounded from above by
\[
\frac{C}{\epsilon^{(d+1)/2}}\,\int_{\widehat{\partial}_{+}\mathcal{C}_{\epsilon}}\,e^{-\frac{1}{2\epsilon}\overline{\boldsymbol{x}}\cdot(\mathbb{H}+\mu\,\bm{v}\otimes\bm{v})\overline{\boldsymbol{x}}}\,d\boldsymbol{x}\,=\,\frac{C\eta}{\epsilon^{(d+1)/2}}\int_{\partial_{+}\mathcal{C}_{\epsilon}}\,e^{-\frac{1}{2\epsilon}\boldsymbol{x}\cdot\,(\mathbb{H}+\mu\,\bm{v}\otimes\bm{v})\,\boldsymbol{x}}\,\sigma(d\boldsymbol{x})\;.
\]
Using the change of variable $\boldsymbol{y}=\Pi_{\epsilon}(\boldsymbol{x})$
and applying Lemma \ref{lem73}, we can check that the last integral
is bounded by $C\epsilon^{(d-1)/2}$. Hence, the left-hand side of
\eqref{e717} is bounded from above by $\frac{C\eta}{\epsilon^{(d+1)/2}}\times C\epsilon^{(d-1)/2}=o_{\epsilon}(1)$.
This proves the lemma.
\end{proof}
Next, we estimate $M_{2}$.
\begin{lem}
\label{lem76}We have
\begin{equation}
M_{2}\,=\,-\frac{1}{2\pi\nu_{\star}}\,\sqrt{\frac{\mu}{\det(\widetilde{\mathbb{H}}+\mu\widetilde{\boldsymbol{v}}\otimes\widetilde{\boldsymbol{v}})}}\,[\,(\mathbb{L}\,\mathbb{H}^{-1}\,\bm{v})\cdot\bm{e}_{1}\,]\,\mathbf{m}_{\epsilon}(i)+o_{\epsilon}(1)\;.\label{e_M2}
\end{equation}
\end{lem}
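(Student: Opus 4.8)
The plan is to carry out a Laplace-type asymptotic analysis of the integral defining $M_{2}$ on the thin slab $\widehat{\partial}_{+}\mathcal{B}_{\epsilon}$, reducing it via the change of coordinate $\Pi_{\epsilon}$ of Section~\ref{sec72} to a Gaussian integral on a $(d-1)$-dimensional ball. First I would approximate the integrand factor by factor. The slab $\widehat{\partial}_{+}\mathcal{B}_{\epsilon}$ has thickness $\eta=\epsilon^{2}$ in the $\boldsymbol{e}_{1}$-direction, so for $\boldsymbol{x}\in\widehat{\partial}_{+}\mathcal{B}_{\epsilon}$ we may replace $\boldsymbol{x}$ by its projection $\overline{\boldsymbol{x}}\in\partial_{+}\mathcal{C}_{\epsilon}$ up to errors of order $\eta$: in particular $U(\boldsymbol{x})=U(\overline{\boldsymbol{x}})+O(\eta)$, $\ell_{1}(\boldsymbol{x})=\ell_{1}(\overline{\boldsymbol{x}})+O(\eta)$, and by Proposition~\ref{p_bddsol} and Proposition~\ref{p55} the factor $\phi_{\epsilon}(\boldsymbol{x})$ contributes $\mathbf{m}_{\epsilon}(i)+o_{\epsilon}(1)$ on the portion of the slab lying over $\widehat{\mathcal{W}}_{i}$ (which by Lemma~\ref{lem74} maps onto a full ball $\mathcal{D}_{r\delta}^{(d-1)}$), while the complementary region contributes $o_{\epsilon}(1)$ after the $\theta_{\epsilon}Z_{\epsilon}^{-1}$ normalization. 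Integrating the trivial $\boldsymbol{e}_{1}$-variable over the slab of width $\eta$ cancels the $1/\eta$ prefactor, leaving an integral over $\partial_{+}\mathcal{C}_{\epsilon}\subset\mathcal{P}_{\delta}$.

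Next I would Taylor-expand $U$ around $\boldsymbol{\sigma}=\boldsymbol{0}$, write $U(\overline{\boldsymbol{x}})-H=\tfrac12\,\overline{\boldsymbol{x}}\cdot\mathbb{H}\,\overline{\boldsymbol{x}}+O(\delta^{3})$, and combine with the Gaussian weight $e^{-\frac{\mu}{2\epsilon}(\overline{\boldsymbol{x}}\cdot\boldsymbol{v})^{2}}$ coming from $1-p_{\epsilon}(\overline{\boldsymbol{x}})$. The key point is that $1-p_{\epsilon}(\overline{\boldsymbol{x}})=\frac{1}{C_{\epsilon}}\int_{\overline{\boldsymbol{x}}\cdot\boldsymbol{v}}^{\infty}e^{-\frac{\mu}{2\epsilon}t^{2}}dt$; however, for the purpose of this lemma the factor $(1-p_{\epsilon}(\overline{\boldsymbol{x}}))/\eta$ paired with $\ell_{1}$ must be handled by first integrating by parts (or by directly recognizing the structure) — actually the cleaner route is to note that on $\widehat{\partial}_{+}\mathcal{B}_{\epsilon}$ one has the exact identity from \eqref{e_pes} giving $\nabla_{1}p_{\epsilon}=\frac{e_{1}}{\eta}(1-p_{\epsilon}(\overline{\boldsymbol{x}}))$, but since we already reduced to the slab integral the relevant object is $\int_{\partial_{+}\mathcal{C}_{\epsilon}}(1-p_{\epsilon}(\overline{\boldsymbol{x}}))\ell_{1}(\overline{\boldsymbol{x}})\,e^{-U(\overline{\boldsymbol{x}})/\epsilon}\,\sigma(d\overline{\boldsymbol{x}})$. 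One then expands $\ell_{1}(\overline{\boldsymbol{x}})=\ell_{1}(\boldsymbol{0})+\nabla\ell_{1}(\boldsymbol{0})\cdot\overline{\boldsymbol{x}}+O(\delta^{2})$; the constant term vanishes because $\boldsymbol{\ell}(\boldsymbol{\sigma})=\boldsymbol{0}$ (as $\nabla U(\boldsymbol{\sigma})\cdot\boldsymbol{\ell}(\boldsymbol{\sigma})=0$ and more directly $\boldsymbol{\sigma}$ is a critical point, cf.\ the companion paper), so only the linear term $(\mathbb{L}\,\overline{\boldsymbol{x}})\cdot\boldsymbol{e}_{1}$ survives (here $\mathbb{L}=\mathbb{L}^{\boldsymbol{\sigma}}=D\boldsymbol{\ell}(\boldsymbol{\sigma})$, and $\ell_{1}=\boldsymbol{e}_{1}\cdot\boldsymbol{\ell}$ so its gradient is $\mathbb{L}^{\dagger}\boldsymbol{e}_{1}$, i.e.\ $\nabla\ell_{1}\cdot\overline{\boldsymbol{x}}=(\mathbb{L}\overline{\boldsymbol{x}})\cdot\boldsymbol{e}_{1}$). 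After applying $\Pi_{\epsilon}$ and Lemma~\ref{lem73}, the remaining task is the Gaussian moment computation $\int_{\mathbb{R}^{d-1}}(\mathbb{L}\,\overline{\boldsymbol{x}})\cdot\boldsymbol{e}_{1}\cdot(1-p_{\epsilon})\,e^{-\frac{1}{2\epsilon}\boldsymbol{y}\cdot(\widetilde{\mathbb{H}}+\mu\widetilde{\boldsymbol{v}}\otimes\widetilde{\boldsymbol{v}})\boldsymbol{y}}\,d\boldsymbol{y}$, which after evaluating the one-dimensional $t$-integral in $1-p_{\epsilon}$ and the $(d-1)$-dimensional Gaussian produces the factor $(\mathbb{L}\mathbb{H}^{-1}\boldsymbol{v})\cdot\boldsymbol{e}_{1}$; tracking the constant $\sqrt{\mu/\det(\widetilde{\mathbb{H}}+\mu\widetilde{\boldsymbol{v}}\otimes\widetilde{\boldsymbol{v}})}$ and the normalization $\theta_{\epsilon}Z_{\epsilon}^{-1}=(1+o_{\epsilon}(1))(2\pi\epsilon)^{-d/2}\nu_{\star}^{-1}e^{H/\epsilon}$ from \eqref{e_Zeps} yields the stated formula.

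The main obstacle I anticipate is the bookkeeping in the coupled Gaussian integral: the weight $1-p_{\epsilon}(\overline{\boldsymbol{x}})$ depends on $\overline{\boldsymbol{x}}$ only through $\overline{\boldsymbol{x}}\cdot\boldsymbol{v}$, while the Gaussian $e^{-\frac{1}{2\epsilon}\overline{\boldsymbol{x}}\cdot\mathbb{H}\overline{\boldsymbol{x}}}$ and the linear factor $(\mathbb{L}\overline{\boldsymbol{x}})\cdot\boldsymbol{e}_{1}$ involve the full vector; one must correctly complete the square, using that $\mathbb{H}+\mu\boldsymbol{v}\otimes\boldsymbol{v}$ restricted to $\mathcal{P}_{\delta}$ (equivalently $\widetilde{\mathbb{H}}+\mu\widetilde{\boldsymbol{v}}\otimes\widetilde{\boldsymbol{v}}$) is positive definite, and then compute a first moment of a centered Gaussian against a linear functional — the answer being $-\epsilon\,\mu\,(\mathbb{L}(\widetilde{\mathbb{H}}+\mu\widetilde{\boldsymbol{v}}\otimes\widetilde{\boldsymbol{v}})^{-1}\cdot)\cdot\boldsymbol{e}_{1}$ type expressions, which must be simplified down to $(\mathbb{L}\mathbb{H}^{-1}\boldsymbol{v})\cdot\boldsymbol{e}_{1}$ using the eigenrelation $(\mathbb{H}-\mathbb{L}^{\dagger})\boldsymbol{v}=-\mu\boldsymbol{v}$ and the Sherman--Morrison formula. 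This algebraic reduction, together with carefully showing that all the $O(\delta)$, $O(\eta)$, and tail-of-the-ball error terms are genuinely $o_{\epsilon}(1)$ after multiplication by the large factor $\theta_{\epsilon}$, is where the real work lies; the analytic inputs (Propositions~\ref{p_bddsol}, \ref{p55}, Lemmas~\ref{lem73}, \ref{lem74}) are already in place and I would invoke them directly.
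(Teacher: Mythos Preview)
Your overall architecture is right --- collapse the slab of width $\eta$, Taylor-expand $U$ and $\boldsymbol{\ell}$, replace $\phi_{\epsilon}$ by $\mathbf{m}_{\epsilon}(i)$ on the part mapped by $\Pi_{\epsilon}$ into a small ball --- and this is exactly the skeleton of the paper's argument. But there is a genuine gap in how you treat the factor $1-p_{\epsilon}(\overline{\boldsymbol{x}})$, and it is precisely the step that produces the correct constant.

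You write at one point that $1-p_{\epsilon}(\overline{\boldsymbol{x}})$ contributes ``the Gaussian weight $e^{-\frac{\mu}{2\epsilon}(\overline{\boldsymbol{x}}\cdot\boldsymbol{v})^{2}}$'', and then later you keep the full $(1-p_{\epsilon})$ in an integral that already has the quadratic form $\widetilde{\mathbb{H}}+\mu\widetilde{\boldsymbol{v}}\otimes\widetilde{\boldsymbol{v}}$ in the exponent. These cannot both be right: the $\mu\boldsymbol{v}\otimes\boldsymbol{v}$ term only appears \emph{after} you replace $1-p_{\epsilon}$ by its asymptotic, and that asymptotic is not just a Gaussian. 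The paper uses the sharp tail estimate
\[
1-p_{\epsilon}(\overline{\boldsymbol{x}})\,=\,[1+o_{\epsilon}(1)]\,\frac{\epsilon^{1/2}}{(2\pi\mu)^{1/2}\,(\overline{\boldsymbol{x}}\cdot\boldsymbol{v})}\,e^{-\frac{\mu}{2\epsilon}(\overline{\boldsymbol{x}}\cdot\boldsymbol{v})^{2}}\;,
\]
valid only where $\overline{\boldsymbol{x}}\cdot\boldsymbol{v}\ge aJ\delta$. This introduces an unavoidable factor $1/(\overline{\boldsymbol{x}}\cdot\boldsymbol{v})$, so the resulting integrand is
\[
\frac{(\mathbb{L}\overline{\boldsymbol{x}})\cdot\boldsymbol{e}_{1}}{\overline{\boldsymbol{x}}\cdot\boldsymbol{v}}\,e^{-\frac{1}{2\epsilon}\overline{\boldsymbol{x}}\cdot(\mathbb{H}+\mu\boldsymbol{v}\otimes\boldsymbol{v})\overline{\boldsymbol{x}}}\;,
\]
which is a \emph{ratio} of linear forms against a Gaussian, not a first moment of a linear functional as you suggest. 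Your Sherman--Morrison route for a linear first moment would give the wrong answer; the paper instead invokes \cite[Lemma~8.11]{LeeSeo} to evaluate this ratio integral and obtain $(-\mu\,\mathbb{L}\mathbb{H}^{-1}\boldsymbol{v})\cdot\boldsymbol{e}_{1}$. Because the tail asymptotic requires $\overline{\boldsymbol{x}}\cdot\boldsymbol{v}$ bounded below, the paper also first splits the domain via Lemma~\ref{lem_decC} into a high-potential piece $\widehat{\partial}_{+}^{\,2,a}\mathcal{B}_{\epsilon}$ (shown to be $o_{\epsilon}(1)$ directly) and the complementary piece where the asymptotic is legitimate; this preliminary decomposition is missing from your plan.
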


\begin{proof}
Let $a\in(0,\,a_{0})$, where $a_{0}$ is the constant appearing in
Lemma \ref{lem_decC}. Let us define
\[
\widehat{\partial}_{+}^{\,2,\,a}\mathcal{B}_{\epsilon}^{\boldsymbol{\sigma}}\,:=\,\widehat{\partial}_{+}\mathcal{B}_{\epsilon}^{\boldsymbol{\sigma}}\,\cap\,\widehat{\partial}_{+}^{\,2,\,a}\mathcal{C}_{\epsilon}^{\boldsymbol{\sigma}}
\]
and write
\begin{equation}
M_{2}\,=\,M_{2,\,1}+M_{2,\,2}\;,\label{e_decM2}
\end{equation}
where
\begin{align*}
M_{2,\,1} & \,=\,\theta_{\epsilon}\int_{\widehat{\partial}_{+}^{\,2,\,a}\mathcal{B}_{\epsilon}}\phi_{\epsilon}(\bm{x})\frac{1-p_{\epsilon}(\overline{\boldsymbol{x}})}{\eta}\ell_{1}(\bm{x})\,\mu_{\epsilon}(\bm{x})\,d\bm{x}\;\;\text{and\;,}\\
M_{2,\,2} & \,=\,\theta_{\epsilon}\int_{\widehat{\partial}_{+}\mathcal{B}_{\epsilon}\setminus\widehat{\partial}_{+}^{\,2,\,a}\mathcal{B}_{\epsilon}}\phi_{\epsilon}(\bm{x})\frac{1-p_{\epsilon}(\overline{\boldsymbol{x}})}{\eta}\ell_{1}(\bm{x})\,\mu_{\epsilon}(\bm{x})\,d\bm{x}\;.
\end{align*}
First, we check that $M_{2,\,1}=o_{\epsilon}(1)$. By Proposition
\ref{p_bddsol}, it suffices to show that $\frac{\theta_{\epsilon}}{\eta}\mu_{\epsilon}(\widehat{\partial}_{+}^{\,2,\,a}\mathcal{B}_{\epsilon})=o_{\epsilon}(1)$.
This is a consequence of the bound $U(\bm{x})\ge H+aJ^{2}\delta^{2}$
on $\widehat{\partial}_{+}^{2,\,a}\mathcal{B}_{\epsilon}$, which
holds by definition, the bound $\text{vol}(\widehat{\partial}_{+}^{2,\,a}\mathcal{B}_{\epsilon})\le\text{vol}(\widehat{\partial}_{+}\mathcal{B}_{\epsilon})\le C\eta\delta^{d-1}$,
and \eqref{e_Zeps}.

Now, we turn to $M_{2,\,2}$. By Lemma \ref{lem_decC}, we have $\overline{\bm{x}}\cdot\bm{v}\ge cJ\delta$
for $\boldsymbol{x}\in\widehat{\partial}_{+}\mathcal{B}_{\epsilon}\setminus\widehat{\partial}_{+}^{2,\,a}\mathcal{B}_{\epsilon}$;
hence, we can use the elementary inequality
\[
\frac{b}{b^{2}+1}e^{-b^{2}/2}\,\leq\,\int_{b}^{\infty}e^{-t^{2}/2}dt\,\leq\,\frac{1}{b}e^{-b^{2}/2}\;\;\;\text{for }b>0
\]
to obtain
\[
1-p_{\epsilon}(\overline{\boldsymbol{x}})\,=\,[\,1+o_{\epsilon}(1)\,]\,\frac{\epsilon^{1/2}}{(2\pi\mu)^{1/2}\,(\overline{\boldsymbol{x}}\cdot\bm{v})}\,e^{-\frac{\mu}{2\epsilon}(\overline{\boldsymbol{x}}\cdot\bm{v})^{2}}\;.
\]
Now, we apply this result along with the Taylor expansions of $U$
and $\boldsymbol{\ell}$ around $\boldsymbol{\sigma}=\boldsymbol{0}$
to $M_{2,\,2}$ to get
\[
M_{2,\,2}\,=\,\frac{1+o_{\epsilon}(1)}{(2\pi)^{(d+1)/2}\,\nu_{\star}\,\mu^{1/2}\,\epsilon{}^{(d+3)/2}}\,\int_{\widehat{\partial}_{+}\mathcal{B}_{\epsilon}\setminus\widehat{\partial}_{+}^{\,2,\,a}\mathcal{B}_{\epsilon}}\,\phi_{\epsilon}(\bm{x})\,\frac{\mathbb{L}\overline{\boldsymbol{x}}\cdot\boldsymbol{e}_{1}}{\overline{\boldsymbol{x}}\cdot\bm{v}}\,e^{-\frac{1}{2\epsilon}\overline{\boldsymbol{x}}\cdot\,(\mathbb{H}+\mu\,\bm{v}\otimes\bm{v})\,\overline{\boldsymbol{x}}}\,d\bm{x}\;.
\]
Note here that we have replaced several $\boldsymbol{x}$'s with $\overline{\boldsymbol{x}}$'s
without changing the error term since $|\overline{\boldsymbol{x}}-\boldsymbol{x}|=O(\eta)$.
Let $r>0$ be the constant appearing in Lemma \ref{lem74}. Then,
we claim that, for all sufficiently small $\epsilon>0$,
\[
\Big(\,\frac{J\delta}{\lambda_{1}^{1/2}},\,\frac{J\delta}{\lambda_{1}^{1/2}}+\eta\,\Big]\,\times\Pi_{\epsilon}^{-1}(\mathcal{D}_{r\delta/2}(\boldsymbol{0}))\,\subset\,\widehat{\partial}_{+}\mathcal{B}_{\epsilon}\cap\widehat{\mathcal{W}}_{i}\,\subset\,\widehat{\partial}_{+}\mathcal{B}_{\epsilon}\setminus\widehat{\partial}_{+}^{\,2,\,a}\mathcal{B}_{\epsilon}\;.
\]
The second inclusion is immediate from the definitions of $\widehat{\mathcal{W}}_{i}$
and $\widehat{\partial}_{+}^{\,2,\,a}\mathcal{B}_{\epsilon}$. On
the other hand, the first inclusion is a consequence of Lemma \ref{lem74}
and the fact that $\eta=\epsilon^{2}$. For convenience, we write
\[
\mathcal{A}_{\epsilon}\,=\,\Big(\,\frac{J\delta}{\lambda_{1}^{1/2}},\,\frac{J\delta}{\lambda_{1}^{1/2}}+\eta\,\Big]\,\times\Pi_{\epsilon}^{-1}(\mathcal{D}_{r\delta/2}(\boldsymbol{0}))\;.
\]

We further decompose $M_{2,\,2}\,=\,M_{2,\,2,\,1}+M_{2,\,2,\,2}$
where $M_{2,\,2,\,1}$ and $M_{2,\,2,\,2}$ are obtained from $M_{2,\,2}$
by replacing the integral $\int_{\widehat{\partial}_{+}\mathcal{B}_{\epsilon}\setminus\widehat{\partial}_{+}^{2,\,a}\mathcal{B}_{\epsilon}}$
with $\int_{(\widehat{\partial}_{+}\mathcal{B}_{\epsilon}\setminus\widehat{\partial}_{+}^{2,\,a}\mathcal{B}_{\epsilon})\setminus\mathcal{A}_{\epsilon}}$
and $\int_{\mathcal{A}_{\epsilon}}$, respectively. We argue that
$M_{2,\,2,\,1}\,=\,o_{\epsilon}(1)$. By Proposition \ref{p_bddsol}
and the fact that $\overline{\bm{x}}\cdot\bm{v}\ge aJ\delta$ on $\widehat{\partial}_{+}\mathcal{B}_{\epsilon}\setminus\widehat{\partial}_{+}^{\,2,\,a}\mathcal{B}_{\epsilon}$,
it suffices to show that
\[
\int_{\mathbb{R}^{d-1}\setminus\Pi_{\epsilon}^{-1}(\mathcal{D}_{r\delta/2}(\boldsymbol{0}))}\,e^{-\frac{1}{2\epsilon}\overline{\boldsymbol{x}}\cdot\,(\mathbb{H}+\mu\,\bm{v}\otimes\bm{v})\,\overline{\boldsymbol{x}}}\,d\widetilde{\bm{x}}\,=\,\epsilon{}^{(d-1)/2}\,o_{\epsilon}(1)\;,
\]
where $\widetilde{\boldsymbol{x}}$ is defined in \eqref{e_tildes}.
The previous identity can be directly verified by the change of variable
$\boldsymbol{y}=\Pi_{\epsilon}(\overline{\boldsymbol{x}})$.

Next, we turn to $M_{2,\,2,\,2}$. On $\mathcal{A}_{\epsilon}$, we
have $\phi_{\epsilon}(\bm{x})=\mathbf{m}_{\epsilon}(i)+o_{\epsilon}(1)$
by Proposition \ref{p55}. Hence, we can write
\begin{align}
M_{2,\,2,\,2}\,=\; & \frac{1+o_{\epsilon}(1)}{(2\pi)^{(d+1)/2}\,\nu_{\star}\,\mu^{1/2}\,\epsilon{}^{(d-1)/2}}\,[\,\mathbf{m}_{\epsilon}(i)+o_{\epsilon}(1)\,]\nonumber \\
 & \times\int_{\Pi_{\epsilon}^{-1}(\mathcal{D}_{r\delta/2}(\boldsymbol{0}))}\,\frac{\mathbb{L}\overline{\boldsymbol{x}}\cdot\boldsymbol{e}_{1}}{\overline{\boldsymbol{x}}\cdot\bm{v}}\,e^{-\frac{1}{2\epsilon}\overline{\boldsymbol{x}}\cdot\,(\mathbb{H}+\mu\,\bm{v}\otimes\bm{v})\,\overline{\boldsymbol{x}}}\,d\widetilde{\boldsymbol{x}}\;.\label{e_M222}
\end{align}
We can use \cite[Lemma 8.11]{LeeSeo} to show that the last integral
can be written as
\[
[\,1+o_{\epsilon}(1)\,]\,\frac{(2\pi\epsilon)^{(d-1)/2}\,(-\mu\,\mathbb{L}\mathbb{H}^{-1}\bm{v})\cdot\bm{e}_{1}}{\sqrt{\textrm{det\,}(\widetilde{\mathbb{H}}+\mu\widetilde{\bm{v}}\otimes\widetilde{\bm{v}})}}\;.
\]
Inserting this into \eqref{e_M222} along with the fact that $M_{2,\,2,\,1}=o_{\epsilon}(1)$
proves that

\begin{equation}
M_{2,\,2}\,=\,-\frac{1}{2\pi\nu_{\star}}\,\sqrt{\frac{\mu}{\det(\widetilde{\mathbb{H}}+\mu\widetilde{\boldsymbol{v}}\otimes\widetilde{\boldsymbol{v}})}}\,[\,(\mathbb{L}\mathbb{H}^{-1}\bm{v})\cdot\bm{e}_{1}\,]\,\mathbf{m}_{\epsilon}(i)+o_{\epsilon}(1)\;.\label{e_M22c}
\end{equation}
Combining this estimate with the fact that $M_{2,\,1}=o_{\epsilon}(1)$
completes the proof.
\end{proof}
Now, the proof of \eqref{e_A2+} follows immediately from \eqref{e_decA2}
and Lemmas \ref{lem73}, \ref{lem0076}, \ref{lem76}. This concludes
the proof of Proposition \ref{p_test}.
\begin{acknowledgement*}
IS and JL was supported by the National Research Foundation of Korea
(NRF) grant funded by the Korea government (MSIT) (No. 2016K2A9A2A13003815,
2017R1A5A1015626 and 2018R1C1B6006896) and the Samsung Science and
Technology Foundation (Project Number SSTF-BA1901-03). The datasets
generated during and/or analysed during the current study are available
from the corresponding author on reasonable request.
\end{acknowledgement*}

\end{document}